\newcommand\supp{\operatorname{supp}}
\newcommand\simP{\asymp}
\newcommand{\BA}[1]{{\overrightarrow{#1}}}
\newcommand{\eps}{\varepsilon}
\newcommand{\RR}{\mathbb{R}}
\newcommand{\NN}{\mathbb{N}}
\newcommand{\HHH}{\dot{\mathcal{H}}}
\newcommand{\III}{\mathcal{I}}
\newcommand{\JJJ}{\mathcal{J}}
\newcommand{\TTT}{\mathcal{T}}
\newcommand{\brt}{\breve{t}}
\newcommand{\brw}{\breve{w}}
\newcommand{\brv}{\breve{v}}
\newcommand{\barw}{\bar{w}}
\newcommand{\barR}{\bar{R}}
\newcommand{\tu}{\tilde{u}}
\newcommand{\tw}{\tilde{w}}
\newcommand{\tE}{\widetilde{E}}
\newcommand{\tR}{\widetilde{R}}
\newcommand{\tW}{\widetilde{W}}
\newcommand{\tlambda}{\tilde{\lambda}}
\newcommand{\trho}{\tilde{\rho}}
\newcommand{\vecc}{\overrightarrow}
\newcommand{\vw}{\vec{w}}
\newcommand{\vz}{\vec{z}}
\newcommand{\vW}{\vecc{W}}
\newcommand{\ch}{\check{h}}
\newcommand{\cw}{\check{w}}
\newcommand{\HSC}{\dot{H}^{s_c}\times \dot{H}^{s_c-1}}
\newcommand{\lin}{\texttt{L}}
\theoremstyle{plain}
\newtheorem{thm}{Theorem}
\newtheorem{definition}[thm]{Definition}
\newtheorem{rem}[thm]{Remark}
\newtheorem{prop}[thm]{Proposition}
\newtheorem{lem}[thm]{Lemma}
\newtheorem{res}[thm]{Result}
\numberwithin{equation}{section} \numberwithin{thm}{section}
\begin{document}

\title[Radial solutions of supercritical wave equations]{Blow-up of the critical Sobolev norm for nonscattering radial solutions of supercritical wave equations on $\mathbb{R}^{3}$}

\author{Thomas Duyckaerts$^1$}
\thanks{$^1$LAGA, Universit\'e Paris 13 (UMR 7539). Partially supported by ERC Starting Grant no. 257293 Dispeq, ERC Advanced Grant  no. 291214 BLOWDISOL and ANR Grant SchEq}

\author{Tristan Roy$^2$}
\thanks{$^2$Nagoya University. Partially supported by ERC Advanced Grant  no. 291214, BLOWDISOL}

%\address{Universite de Cergy-Pontoise}
%\email{}

%\address{Universite de Cergy-Pontoise  }
%\email{}

%\subjclass{35Q55} \keywords{nonlinear Schr\"odinger equation,
%well-posedness}

\vspace{-0.3in}

\begin{abstract}
We consider the wave equation in space dimension $3$, with an energy-supercritical nonlinearity which can be either focusing or defocusing. For any radial solution of
the equation, with positive maximal time of existence $T$, we prove that one of the following holds: (i) the norm of the solution in the
critical Sobolev space
goes to infinity as $t$ goes to $T$, or (ii) $T$ is infinite and the solution scatters to a linear solution forward in time. We use a variant of the channel of energy method, relying on a generalized $L^p$-energy which is almost conserved by the flow of the radial linear wave equation.
\end{abstract}

\maketitle
\tableofcontents

\section{Introduction}
In many recent works, global existence and scattering for solutions of a supercritical dispersive equation were proved assuming that an appropriate critical norm remains bounded close to the maximal time of existence. The goal of this article is to prove, in a specific case, a slightly stronger result, namely that it is sufficient to assume the boundedness of the critical norm only along a sequence of times going to the time of existence. More precisely, we consider the supercritical wave equation in space dimension $3$:
\begin{equation}
%\left\{
%\begin{array}{ll}
\partial_{tt} w  - \triangle w   = \iota |w|^{p-1} w,  \quad (t,x)\in I\times \RR^3,% \\
%u(0,x) & :=  u_{0}(x) \\
%\partial_{t} u(0,x) & := u_{1}(x)
%\end{array}
%\right.
\label{Eqn:WaveSup}
\end{equation}
with real-valued initial data
\begin{equation}
 \label{Eqn:WaveSupData}
w(0,x):=w_0(x)\in \dot{H}^{s_c}(\mathbb{R}^{3}),\quad \partial_tw(0,x):=w_{1}(x) \in \dot{H}^{s_{c}-1}(\mathbb{R}^{3}),
\end{equation}
where $(t,x)\in I\times \RR^3$, $I$ is an interval such that $0\in I\subset \RR$, $p>5$, $\iota=+1$ (focusing case) or $\iota=-1$ (defocusing case), $s_{c}$ is the critical Sobolev exponent, i.e $s_{c} =\frac{3}{2} - \frac{2}{p-1}$, and $\dot{H}^{s_{c}}(\RR^3)$ is the usual homogeneous Sobolev space.

Equation (\ref{Eqn:WaveSup}) is locally well-posed in $\dot{H}^{s_c}(\mathbb{R}^{3}) \times \dot{H}^{s_c-1}(\mathbb{R}^{3})$: for any initial data $(w_0,w_1)$, there exists a solution
$$\vec{w} :=(w,\partial_tw)\in C^0\big((T_-(w),T_+(w)),\dot{H}^{s_c}(\mathbb{R}^{3}) \times \dot{H}^{s_c-1}(\mathbb{R}^{3})\big)$$
defined on a maximal interval of existence $(T_-(w),T_+(w))$ that satisfies (\ref{Eqn:WaveSup}), (\ref{Eqn:WaveSupData}) in the Duhamel sense, and is unique in a natural class of functions.
It has the following scaling invariance: if $\lambda>0$ and $w$ is a solution, then $w_{\lambda}$, defined by
$$w_{\lambda}(t,x) :=\lambda^{\frac{2}{p-1}} w(\lambda t,\lambda x)$$
is also a solution of (\ref{Eqn:WaveSup}), with maximal interval of existence $\left(\lambda^{-1}T_-(w),\lambda^{-1}T_+(w)\right)$, that satisfies
\begin{equation}
 \label{Eqn:scale_norm}
\left\|\vw_{\lambda}(0)\right\|_{\dot{H}^{s_c}(\mathbb{R}^{3}) \times \dot{H}^{s_c-1}(\mathbb{R}^{3})}=\left\|(w_0,w_1)\right\|_{\dot{H}^{s_c}(\mathbb{R}^{3}) \times \dot{H}^{s_c-1}(\mathbb{R}^{3})}\cdot
\end{equation}
With the additional assumption $(w_0,w_1)\in \dot{H}^1(\mathbb{R}^{3}) \times L^2(\mathbb{R}^{3})$, the energy
$$
E(\vec{w}(t)):=\frac{1}{2}\int_{\mathbb{R}^{3}} |\nabla w(t,x)|^2\,dx+\frac{1}{2}\int_{\mathbb{R}^{3}} |\partial_t w(t,x)|^2\,dx
- \iota \frac{1}{p+1} \int_{\mathbb{R}^{3}} |w(t,x)|^{p+1}\,dx
$$
is well-defined for all $t$ and independent of time. The assumption $p>5$ is equivalent to $s_c>1$: the equation is energy-supercritical, and the energy has little utility in the study of global well-posedness or related properties.

Our goal is to classify the solutions of (\ref{Eqn:WaveSup}) according to their dynamics. The solution $w$ is said to \emph{scatter} forward in time if $T_+(w)=+\infty$ and if there exists a solution $w_{\lin}$ of the linear wave equation
\begin{equation}
 \label{Eqn:WaveLin}
\partial_{tt}w_{\lin}-\triangle w_{\lin}=0
\end{equation}
such that
\begin{equation}
\label{Eqn:scattering}
\lim_{t\to +\infty}\left\|\vec{w}_{\lin}(t)-\vec{w}(t)\right\|_{\dot{H}^{s_c}(\mathbb{R}^{3}) \times \dot{H}^{s_c-1}(\mathbb{R}^{3})}=0.
\end{equation}
% Scattering solutions of (\ref{Eqn:WaveSup} are known: indeed, solutions with small data in the critical space always scatter. Furthermore, for any solution of (\ref{Eqn:WaveLin} with initial data in $\dot{H}^{s_c}\times \dot{H}^{s_c-1}$, there exists a solution $w$ of (\ref{Eqn:WaveSup} satisfying (\ref{Eqn:scattering}.

It was proved in \cite{KenMerleSuper} in the defocusing case that radial solutions of (\ref{Eqn:WaveSup}) that are bounded in the critical space scatter. More precisely, for any solution of (\ref{Eqn:WaveSup}),  (\ref{Eqn:WaveSupData}) with $\iota=-1$, and radial initial data $(w_0,w_1)$
\begin{equation}
 \label{Eqn:ResultKM}
\limsup_{t\to T_+(w)}\|\vec{w}(t)\|_{\dot{H}^{s_c} (\mathbb{R}^{3}) \times \dot{H}^{s_c-1}(\mathbb{R}^{3})}<\infty
\Longrightarrow w\text{ scatters forward in time.}
\end{equation}
This was later extended to the nonradial, defocusing case in \cite{KillipVisanSuper}, and the radial, focusing case in \cite{DuyKenMerScattSuper} (see also \cite{KenigMerleN5}, \cite{Bulut}, \cite{DodsonLawrie} in higher dimensions). Note that it follows from the scaling invariance of the equation and (\ref{Eqn:scale_norm}) that it is impossible to give a lower a priori bound of $T_+(w)$ only in terms of the $\dot{H}^{s_c}(\mathbb{R}^{3}) \times \dot{H}^{s_c-1} (\mathbb{R}^{3})$-norm of $(w_0,w_1)$: in particular, even the implication
\begin{equation*}
\limsup_{t\to T_+(w)}\|\vec{w}(t)\|_{\dot{H}^{s_c} (\mathbb{R}^{3})\times \dot{H}^{s_c-1} (\mathbb{R}^{3})}<\infty\Longrightarrow T_+(w)=+\infty,
\end{equation*}
weaker than (\ref{Eqn:ResultKM}), does not follow from the local Cauchy theory for equation (\ref{Eqn:WaveSup}).

In this paper, we improve (\ref{Eqn:ResultKM})  in the radial case:
\begin{thm}
Assume $p>5$.
Let $w$ be a solution of (\ref{Eqn:WaveSup}) with radial data $(w_{0},w_{1}) \in \dot{H}^{s_{c}}(\mathbb{R}^{3}) \times
\dot{H}^{s_{c}-1}(\mathbb{R}^{3})$. Then:
\begin{itemize}
\item either
\begin{align}
\lim_{t \rightarrow T_{+}(w)} \left\| ( w(t), \partial_{t} w(t) ) \right\|_{\dot{H}^{s_{c}}(\mathbb{R}^{3}) \times \dot{H}^{s_{c}-1}(\mathbb{R}^{3})}
=+\infty
\label{Eqn:Blowuptype1}
\end{align}
\item or $T_{+}(w) =+ \infty$ and $w$ scatters forward in time.
\end{itemize}
An analogous statement holds for negative times.
\label{Thm:Main}
\end{thm}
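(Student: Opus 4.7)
The plan is to establish Theorem \ref{Thm:Main} by combining the already known implication (\ref{Eqn:ResultKM}) with a new rigidity statement that upgrades subsequential boundedness of the critical norm to actual boundedness. Precisely, by (\ref{Eqn:ResultKM}) it suffices to show that if there is a sequence $t_n \to T_+(w)$ with $\sup_n \|\vec{w}(t_n)\|_{\HSC}<\infty$, then in fact $\limsup_{t\to T_+(w)}\|\vec{w}(t)\|_{\HSC}<\infty$. I would argue this by contradiction: assuming the contrary, after extracting subsequences there exist $t_n<s_n$ with $t_n,s_n\to T_+(w)$, a constant $M>0$ with $\|\vec{w}(t_n)\|_{\HSC}\leq M$, and $\|\vec{w}(s_n)\|_{\HSC}\to\infty$. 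Rescaling around the would-be singular behavior (using (\ref{Eqn:scale_norm})) normalizes the scale on the intervals $[t_n,s_n]$.

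Next, I would decompose the bounded sequence $\vec{w}(t_n)$ via a Bahouri--G\'erard linear profile decomposition in $\HSC$, adapted to the radial symmetry so that only scales $\lambda_{j,n}$ and time shifts $t_{j,n}/\lambda_{j,n}$ appear, with spatial centers at the origin. Each linear profile $\vec{U}^j_{\lin}$ is matched to a nonlinear profile $\vec{U}^j$ solving (\ref{Eqn:WaveSup}), and on an appropriate time interval after $t_n$ the solution $\vec{w}$ is approximated by a superposition of rescaled nonlinear profiles plus a remainder that is small in a Strichartz-type norm. The core new ingredient, signalled in the abstract, is a generalized $L^p$-type energy for the radial $3$D linear wave equation, scale-invariant at the $\dot H^{s_c}$ level and almost conserved by the linear flow. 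Via the reduction $u\mapsto ru$ to a $1$D wave equation with explicit d'Alembert propagation, this quantity should satisfy a channel estimate outside forward light cones: for any nontrivial radial datum in the critical space, a positive universal fraction of the $L^p$-energy escapes every forward cone. Perturbation theory then transfers this lower bound to the nonlinear flow for profiles of bounded critical norm.

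The contradiction is closed as follows. The hypothesis $\|\vec{w}(t_n)\|_{\HSC}\leq M$ bounds the generalized $L^p$-energy of $\vec{w}(t_n)$; the channel estimate guarantees that a fixed fraction of it is radiated into exterior cones and does not return, so that $\|\vec{w}(s)\|_{\HSC}$ for $s\in[t_n,T_+(w))$ is controlled by a constant $C(M)$ independent of $s$. This contradicts $\|\vec{w}(s_n)\|_{\HSC}\to\infty$ and reduces the theorem to (\ref{Eqn:ResultKM}). The principal obstacle I foresee is the construction of the generalized $L^p$-energy together with the proofs of its almost-conservation and channel-of-energy lower bound at supercritical Sobolev regularity: unlike the energy-critical setting of the earlier channel-of-energy program, no usable Hilbert-space energy is available here, so one must work with carefully weighted d'Alembert identities in radial coordinates and control how the nonlinearity perturbs the resulting $L^p$ conservation laws, particularly across the different scales $\lambda_{j,n}$ produced by the profile decomposition.
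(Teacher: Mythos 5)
Your proposal correctly identifies the key new tool of the paper (the scale-invariant generalized $L^m$-energy $E_m$, its almost conservation under the linear flow, and an exterior lower bound of channel-of-energy type), but the way you assemble these ingredients has a genuine gap at the decisive step. You claim that the exterior channel estimate, applied to $\vec{w}(t_n)$ with $\|\vec{w}(t_n)\|_{\HSC}\leq M$, yields an upper bound $\|\vec{w}(s)\|_{\HSC}\leq C(M)$ for all $s\in[t_n,T_+(w))$. This does not follow. The channel estimate is a \emph{lower} bound on the generalized energy \emph{outside} a light cone; it says nothing about, and gives no control on, what happens \emph{inside} the cone, which is exactly where the critical norm can concentrate and blow up (this is what happens for the ODE-type blow-up in the focusing case). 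Since there is no coercive conserved quantity at the $\dot H^{s_c}$ level, "energy escapes and does not return" cannot be converted into a bound on the full critical norm at later times. For this reason the paper never attempts to prove that a bounded subsequence forces $\limsup_{t\to T_+}\|\vec w(t)\|_{\HSC}<\infty$; instead it argues directly: it extracts a free wave $w_{\lin}$ approximating $w$ in the exterior region $|x|\geq t-A$ (Proposition \ref{Prop:FreeWave}), decomposes $\vec w(t_n)$ into profiles, and uses the exterior channel estimates (Propositions \ref{Prop:NonzeroSol}, \ref{Prop:NonzeroSol2}, \ref{Prop:DispPropScaling}) together with Proposition \ref{Prop:ProfileDecompSpec} to show that every profile other than $w_{\lin}$ must vanish, whence $w$ scatters by the perturbation theory. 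In the finite-time case one additionally needs the regular part $v_+$ of Proposition \ref{Prop:ConstructionV}, which your outline does not address.

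A second, related problem is that your asserted channel estimate --- ``for any nontrivial radial datum in the critical space, a positive universal fraction of the $L^p$-energy escapes every forward cone'' --- is false as stated. Even for the linear equation the lower bound (Lemma \ref{Lem:Nrjchannel}) holds only for all $t\geq 0$ \emph{or} for all $t\leq 0$, not in a prescribed direction. More seriously, for the nonlinear equation there are nonzero data with no exterior energy channel at all in the naive sense: those which coincide for large $r$ with one of the singular stationary solutions $Z_\ell$ of Proposition \ref{Prop:SolStat}. Handling these is a substantial part of the paper (Proposition \ref{Prop:NonzeroSol2}, and Cases 2 and 3 of Lemma \ref{Lem:Cases}, where one must track the propagation of $\sigma_1$ and iterate in time to reduce to the case $\sigma_1>S_1$). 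Your proposal does not account for this obstruction, and without it the contradiction you aim for cannot be closed.
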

Note that Theorem \ref{Thm:Main} is equivalent to (\ref{Eqn:ResultKM}), with the limit superior replaced by a limit inferior. We do not know any direct application of this qualitative improvement, however its analog in the case $p=5$, $\iota=1$ is crucial in the proof of the soliton resolution conjecture for the energy-critical wave equation in \cite{DuyKenMerClass}.

Theorem \ref{Thm:Main} means exactly that solutions of (\ref{Eqn:WaveSup}) are of one of the following three types: scattering solutions, solutions blowing-up in finite time with a critical norm going to infinity at the maximal time of existence, and global solutions with a critical norm going to infinity for infinite times.

In the defocusing case $\iota=-1$, it is conjectured that all solutions of (\ref{Eqn:WaveSup}) with initial data in the critical space $\HSC$ scatter. The difficulty of this conjecture is of course the lack of conservation law at the level of regularity of this critical space. The only supercritical dispersive equations for which scattering was proved for all solutions are wave and Schr\"odinger equations with defocusing barely supercritical nonlinearities: see e.g \cite{RoyLog,RoyLogSchrod,Shih,TaoLog} \footnote{In \cite{RoyLog} the scattering with data in $\tilde{H}^{2}:=\dot{H}^{2}\cap \dot{H}^1 (\mathbb{R}^3)$ is not explicitely mentionned. However, it can be easily derived from the finite bounds of the $L_{t}^{4}(\RR, L_{x}^{12})$ norm and the $L_{t}^{\infty}(\RR, \tilde{H}^2)$ norm of the solution, by using a similar argument to that in \cite{RoyLogSchrod} to prove scattering.}.

In the focusing case $\iota=1$, solutions blowing up in finite time are known. One type of finite time blow-up is given by the ODE $y''=|y|^{p-1}y$, and is believed to be stable: see \cite{DonningerSchorkhuber,DonningerSchorkhuber2} for stability inside the wave cone. For large space dimensions ($\geq 11$) and large $p$, ``geometric" blow-up solutions, based on a stationary solution of (\ref{Eqn:WaveSup}) which is not in $\dot{H}^{s_c}$ are obtained in \cite{Collot}. These solutions  belong to the critical space $\dot{H}^{s_c}\times \dot{H}^{s_c-1}$ and satisfy  (\ref{Eqn:Blowuptype1}),
however all subcritical Sobolev norms remain bounded.

It is not known if there exist global solutions of (\ref{Eqn:WaveSup}) satisfying (\ref{Eqn:Blowuptype1}). This type of solution is excluded in the energy-critical case $p=5$ in \cite{DuyKenMerClass} using conservation of energy and a monotonicity formula, two tools that are not available in the case $p>5$. Let us mention the construction, in the case $p=7$, of a solution $w$ of (\ref{Eqn:WaveSup}), with initial data $(w_0,w_1)$ which does not belong to $\dot{H}^{s_c}\times \dot{H}^{s_c-1}$ but is in all higher order Sobolev spaces $\dot{H}^s\times \dot{H}^{s-1}$, $s>s_c$.

Conditional global well-posedness results similar to (\ref{Eqn:ResultKM}) are also known for other type of equations: radial wave equations with energy-subcritical nonlinearities (see \cite{Shen12P}), defocusing Schr\"odinger equations (see \cite{KenMerleSchrod} for cubic nonlinearity in space dimension $3$ and \cite{KillipVisanNLS} for supercritical nonlinearities in large space dimensions). In \cite{MerleRaphael08}, a lower bound of the critical norm (of the form $|\log(T_+-t)|^{\eps}$) is obtained for finite energy, radial solutions of focusing energy-subcritical nonlinear Schr\"odinger equations blowing-up in finite time $T_+$. This can be seen as a quantitative version of Theorem \ref{Thm:Main}, restricted to finite time blow-up solutions, and in a subcritical context.  Except for this work we do not know any result of the type of Theorem \ref{Thm:Main} for dispersive equations.

Results in the same spirit than (\ref{Eqn:ResultKM}) for the Navier-Stokes equation are known since \cite{IsSeSh03}, where it is proved that an a priori bound of the scale-invariant $L^3$-norm implies global well-posedness and regularity. We refer to \cite{KenigKoch11}, \cite{GaKoPl13} and \cite{GaKoPl14}, for related statements with proofs based on profile decomposition. In \cite{Seregin12}, it is proved that blow-up in finite time implies that the $L^3$-norm goes to infinity, which might be seen as an analog of Theorem \ref{Thm:Main} for Navier-Stokes equation.

\medskip

The proof of Theorem \ref{Thm:Main} relies on the channel of energy method initiated in \cite{DuyKenMerSmall,DuyKenMerProf} to study the radial focusing energy-critical wave equation. In \cite{DuyKenMerClass}, this method was used to get the resolution into solitons for any radial solution of the equation that does not satisfy (\ref{Eqn:Blowuptype1}) with $s_c=1$. It is based on the observation that the exterior energy of any nonzero, finite-energy solution
$w_{\lin}$ of the linear wave equation (\ref{Eqn:WaveLin}) satisfies a lower bound. Namely, for some $\eta,r_0>0$
\begin{equation}
\label{Eqn:ext_channel}
\forall t\geq 0\text{ or }\forall t\leq 0,\quad \int_{|x|\geq r_0+|t|} |\nabla w_{\lin}(t)|^2+|\partial_t w_{\lin}(t)|^2\,dx\geq \eta.
\end{equation}
A key step in the proof is the characterization of all (global) solutions of the nonlinear equation that do not satisfy the preceding dispersive property, i.e. such that
\begin{equation}
\label{Eqn:no_channel}
\forall r_0>0,\quad \liminf_{t\to \pm\infty}\int_{|x|\geq r_0+|t|} |\nabla w(t)|^2+|\partial_t w(t)|^2\,dx=0.
\end{equation}
This rigidity result is then used to determine the profiles in a profile decomposition for a bounded sequence $\{\vec{w}(t_n)\}_n$, $t_n\to T_+(w)$.

The exterior energy appearing in (\ref{Eqn:ext_channel}) is not invariant by the scaling of (\ref{Eqn:WaveSup}) and not well-defined for solutions with initial data in $\dot{H}^{s_c}\times \dot{H}^{s_c-1}$ in the supercritical range $p>5$. It is tempting to replace, in (\ref{Eqn:ext_channel}) $w_{\lin}$ by $D^{s_c-1}w_{\lin}$ to obtain a scale invariant quantity, however this would lose the local character of the norm which is crucial when using finite speed of propagation for equation (\ref{Eqn:WaveSup}).
The main novelty of this article is to replace the standard energy of the linear wave equation by a generalized energy which is local and invariant by the scaling of (\ref{Eqn:WaveSup}). Namely, if $m>2$, and $w_{\lin}$ is a radial solution to the wave equation, we define its $L^m$-generalized energy  by:
$$E_m(t)=E_m[w_{\lin}](t)=\int_0^{\infty}\left|\partial_r (rw_{\lin}(t,r))\right|^m+\left|\partial_r (rw_{\lin}(t,r))\right|^m\,dr.$$
Then $E_m$ is almost conserved by the linear flow: their exists a constant $C>0$, depending only on $m$ such that
$$ \forall t,\quad C^{-1}E_m(0)\leq E_m(t)\leq CE_m(0).$$
Furthermore, for $m=\frac{p-1}{2}$, $E_m$ is well-defined if $(w_0,w_1)\in \dot{H}^{s_c}(\mathbb{R}^{3}) \times \dot{H}^{s_c-1}(\mathbb{R}^{3})$, and is invariant by the scaling of (\ref{Eqn:WaveSup}). Finally, it is possible to prove an exterior energy property similar to (\ref{Eqn:ext_channel}) for the generalized energy.

The proof of Theorem \ref{Thm:Main} follows the lines of the proof of \cite{DuyKenMerClass} (using also some of the arguments of \cite{DuyKenMerScattSuper}, specific to the supercritical wave equation), replacing the usual energy by the generalized energy $E_m$. The main obstruction to the exterior energy property for the nonlinear equation is the existence of a nonzero radial stationary solution. In the energy-critical case, this solution belongs to the space $\dot{H}^1(\RR^3)$, is of order $1/r$ for large $r$, and unique up to scaling and sign change. In the supercritical case, there is no solution in the critical space $\dot{H}^{s_c}(\RR^3)$, but there exist singular stationary solutions of the same order $1/r$ for large $r$ (see \cite{DuyKenMerScattSuper} for the focusing case and Proposition \ref{Prop:SolStat} below for the defocusing case).

The outline of the article is as follows: after some preliminaires on equation (\ref{Eqn:WaveSup})  (Section \ref{Sec:preliminaries}) we define, and give some properties of generalized energies for the radial linear wave equation (Section \ref{Sec:energy}). In Section \ref{Sec:channels}, we prove lower bound for the exterior (generalized) energy of nonzero solutions of (\ref{Eqn:WaveSup}). Section \ref{Sec:global} gives the proof of Theorem \ref{Thm:Main} in the global case. Section \ref{Sec:blowup} concerns the finite time blow-up case.

We start with some notation.

\subsection*{Acknowledgments}
The first author would like to thank Pierre-Gilles Lemari\'e-Rieusset for references on Navier-Stokes.

\subsection*{Notation}

If $a$ and $b$ are two positive quantities we will write $a\lesssim b$ when there exists a constant $C>0$ (which depends only on $p$) such that $a\leq Cb$. When the constant is allowed to depend on another quantity $M$, we will write $a\lesssim_M b$. We will write $a\approx b$ when $a\lesssim b$ and $b\lesssim a$. We will write  $a \ll b$ (resp. $a \gg b$) if there exists a large constant $C>0$ such that $b \geq C a$ (resp. $a \geq C b$). \\
\\
If $f$ is a function depending on $t$ and $r:=|x|$, let
\begin{equation*}
\vec{f}:= (f,\partial_{t} f).
\end{equation*}
Given $f_{1}$,...,$f_{N}$ $N$ functions depending on $t$ and $r$, and $F$ a map, let
\begin{equation*}
F \left( \partial_{r,t} f_{1}, ..., \partial_{r,t}f_{N} \right) :=
F \left( \partial_{r} f_{1},...,\partial_{r}f_{N} \right) +  F \left( \partial_{t}f_{1},...\partial_{t}f_{N}  \right).
\end{equation*}
Given $s \geq 0$ and $n$ a positive integer, we define
\begin{equation*}
\dot{\mathcal{H}}^{s}(\mathbb{R}^{n}) := \dot{H}^{s} (\mathbb{R}^{n}) \times \dot{H}^{s-1}(\mathbb{R}^{n}),
\end{equation*}
where $\dot{H}^{s}$ denotes the standard homogeous Sobolev space. \\
To lighten the notation, if $n=3$, then we will write $\dot{H}^{s}$ instead of $\dot{H}^s(\RR^3)$, and we will proceed similarly for the other spaces that we use in this paper, such as $L^p(\RR^3)$, etc... We let $L^p_t(I,L^q_x)$ be the space of measurable functions $f$ on $I\times \RR^3$ such that
$$ \|f\|_{L^p_t(I,L^q_x)}=\left(\int_I \|f(t)\|_{L^p}^q\,dt  \right)^{1/q}<\infty.$$ \\
In all the paper, we let
\begin{equation*}
 m:=\frac{p-1}{2}.
\end{equation*}
Hence $s_c=\frac{3}{2}-\frac{1}{m}$.\\
\\
Let $\chi$ be a radial smooth function such that
\begin{equation}
\left\{
\begin{array}{l}
\chi(|x|) = 1, \, |x| \geq \frac{1}{2} \\
\chi(|x|) = 0, \, |x| \leq \frac{1}{4}\cdot
\end{array}
\right.
\nonumber
\end{equation}
Given $R > 0$, we denote by $\chi_{R}(x) := \chi \left( \frac{|x|}{R} \right)$. We let $B_R$ be the Euclidean ball of $\RR^3$:
$$B_R=\left\{x\in \RR^3\;:\;|x|<R\right\}.$$
We denote by $\TTT_{R}$ the operator
\begin{equation}
\begin{array}{ll}
f & \rightarrow \TTT_{R}(f) :=
\left\{
\begin{array}{ll}
f(R), \, |x| \leq R \\
f(|x|), \,  |x| \geq R.
\end{array}
\right.
\end{array}
\nonumber
\end{equation}
\\
Let $S(t)$ denote the linear propagator, i.e
\begin{equation*}
S(t)(w_0,w_1) :=
\cos{(tD)}w_0+\frac{\sin{(tD)}}{D}w_1,\quad D=\sqrt{-\triangle}.
\end{equation*}
\\
The notation $\simP$ is defined in Subsection \ref{SS:def_profiles}.\\
\\
\underline{Assumption}: In this paper we consider only radial functions, i.e functions depending on $r:=|x|$ with $|\cdot|$ denoting the Euclidean norm on $\RR^3$. \footnote{Hence, if we write for instance $f \in \dot{H}^{s}$, then we do not only assume that $f$ lies in $\dot{H}^{s}(\mathbb{R}^{3})$ but also that $f$ is radial.}

\section{Preliminaries}

\label{Sec:preliminaries}
We recall in this section some facts about local well-posedness, singular statio\-nary solutions of (\ref{Eqn:WaveSup}) and profile decomposition.

\subsection{Local well-posedness}

We recall a local-wellposedness result:
\begin{prop}{(see for example \cite{KenMerleSuper})}
There exists $\delta_0  \ll 1$ and $C_{0} \gg1$ with the following properties.
Let $\overrightarrow{w_0} \in \HHH^{s_c}$. Then
\begin{itemize}
\item if an open interval $I$ containing $0$ satisfies
\begin{align}
\| S(t) \overrightarrow{w_0} \|_{L^{4m}_t(I,L^{4m}_x) }< \delta_0,
\label{Eqn:Smallness}
\end{align}
then there exists a unique solution
$$\vec{w}\in C \left( I, \, \HHH^{s_c}\right), $$
of (\ref{Eqn:WaveSup}) with initial data $\vec{w}(0):= \overrightarrow{w_0}$ such that

\begin{align*}
\| w \|_{L^{4m}_t (I,L^{4m}_x)}  < 2 \delta_0, \quad \| D^{s_{c} - \frac{1}{2}} w \|_{L^{4}_t(I,L^{4}_x)}
< \infty  ;
\end{align*}
\item  if $\| \overrightarrow{w_{0}} \|_{\HHH^{s_c}} \leq \frac{\delta_{0}}{C_{0}}$ then (\ref{Eqn:Smallness}) is satisfied with $I=\RR$, $w$ is global, scatters, and
\begin{equation*}
\max  \left( \| w \|_{L^{4m}_t (\mathbb{R},L^{4m}_x)}, \,
\| D^{s_{c} - \frac{1}{2}} w \|_{L^{4}_t (\mathbb{R},L^4_x)}, \,
\|  \vec{w} \|_{L_{t}^{\infty} \left(\mathbb{R},\HHH^{s_c}\right)} \right) <  \delta_0 \cdot
\end{equation*}
\end{itemize}
\label{Prop:CauchyPbHsc}
\end{prop}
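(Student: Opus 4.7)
The proof is a standard Strichartz-based contraction argument in a scale-invariant Banach space, with the nonlinearity $\iota|w|^{p-1}w=\iota|w|^{2m}w$ estimated via H\"older and a fractional chain rule. Recall $m=\tfrac{p-1}{2}>2$, so $s_c-\tfrac12=1-\tfrac1m\in(\tfrac12,1)$. One checks from the scaling identity $s=\tfrac32-\tfrac1q-\tfrac3r$ that the 3D wave pair $(q,r)=(4m,4m)$ is admissible at regularity $s_c$, and $(4,4)$ is admissible at regularity $\tfrac12$; combining the standard homogeneous/inhomogeneous Strichartz estimates with the commutation of $D^{s_c-\frac12}$ and $S(t)$ then gives
$$\bigl\|S(t)\vec{w_0}\bigr\|_{L^{4m}_t L^{4m}_x(\RR)}+\bigl\|D^{s_c-\frac12}S(t)\vec{w_0}\bigr\|_{L^4_t L^4_x(\RR)}\lesssim\|\vec{w_0}\|_{\HHH^{s_c}},$$
together with a Duhamel counterpart controlled by $\|D^{s_c-\frac12}F\|_{L^{4/3}_t L^{4/3}_x}$ for source term $F$.

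Given data $\vec{w_0}$ and an interval $I$ satisfying (\ref{Eqn:Smallness}), I would look for a fixed point of the Duhamel map
$$\Phi(w)(t):=S(t)\vec{w_0}+\iota\int_0^t\frac{\sin((t-s)D)}{D}\bigl(|w|^{2m}w\bigr)(s)\,ds$$
in $X(I):=\bigl\{w\colon \|w\|_{L^{4m}_t(I,L^{4m}_x)}\le 2\delta_0,\ \|D^{s_c-\frac12}w\|_{L^4_t(I,L^4_x)}\le M\bigr\}$, with $M$ a fixed multiple of the linear $L^4L^4$-norm of $D^{s_c-\frac12}S(t)\vec{w_0}$. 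The crucial nonlinear bound is
$$\bigl\|D^{s_c-\frac12}(|w|^{2m}w)\bigr\|_{L^{4/3}_t L^{4/3}_x}\lesssim\|w\|_{L^{4m}_tL^{4m}_x}^{2m}\,\bigl\|D^{s_c-\frac12}w\bigr\|_{L^4_tL^4_x},$$
proved by H\"older (the exponents match since $\tfrac{2m}{4m}+\tfrac14=\tfrac34$) together with a Christ-Weinstein type fractional chain rule; the latter is available because $z\mapsto|z|^{2m}z$ has a H\"older-continuous derivative of exponent $2m-1>1>s_c-\tfrac12$. Inserting into the Strichartz estimates produces $\|\Phi(w)\|_{L^{4m}_tL^{4m}_x}\le\delta_0+C(2\delta_0)^{2m}M$ and the analogous bound on $\|D^{s_c-\frac12}\Phi(w)\|_{L^4_tL^4_x}$, so $\Phi$ sends $X(I)$ into itself when $\delta_0$ is small. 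The pointwise inequality $\bigl||w|^{2m}w-|\tw|^{2m}\tw\bigr|\lesssim(|w|^{2m}+|\tw|^{2m})|w-\tw|$ gives contractivity in the metric $d(w,\tw)=\|w-\tw\|_{L^{4m}_tL^{4m}_x}+\|D^{s_c-\frac12}(w-\tw)\|_{L^4_tL^4_x}$; Banach's fixed point theorem yields the unique solution, and continuity $\vec{w}\in C(I,\HHH^{s_c})$ follows by applying the Strichartz energy estimate to $\vec{w}-S(t)\vec{w_0}$ on sub-intervals.

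For the small-data part of the statement I would take $I=\RR$: Strichartz gives $\|S(t)\vec{w_0}\|_{L^{4m}_tL^{4m}_x(\RR)}\le C\|\vec{w_0}\|_{\HHH^{s_c}}\le C\delta_0/C_0<\delta_0$ as soon as $C_0\gg C$, and the previous step then provides a global solution with the stated bounds; scattering follows because the Duhamel integral is Cauchy in $\HHH^{s_c}$ as $t\to+\infty$ (again by the nonlinear estimate above, applied to tails), so $S(-t)\vec{w}(t)$ admits a limit. The main obstacle in writing the argument from scratch is precisely the nonlinear estimate: since $p$ need not be an integer, one must justify a fractional chain rule at the non-integer regularity $s_c-\tfrac12=1-\tfrac1m\in(\tfrac12,1)$, and the restriction $p>5$ (equivalently $m>2$) is what provides enough smoothness of $|z|^{2m}z$ to make this go through. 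Once that estimate is in hand the rest is routine bookkeeping with Strichartz norms.
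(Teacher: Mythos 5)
Your overall strategy is exactly the one the paper points to: it gives no proof of Proposition \ref{Prop:CauchyPbHsc}, citing \cite{KenMerleSuper} and recording only the Strichartz estimate (\ref{Eqn:Strich}) on which the argument rests. Your exponent bookkeeping is correct ($(4m,4m)$ admissible at regularity $s_c$, $(4,4)$ at regularity $\tfrac12$, dual pair $(\tfrac43,\tfrac43)$), and your key nonlinear estimate is precisely the Kenig--Ponce--Vega chain-rule bound $\|D^{s_c-\frac12}(|w|^{p-1}w)\|_{L^{4/3}_tL^{4/3}_x}\lesssim \|D^{s_c-\frac12}w\|_{L^4_tL^4_x}\|w\|^{p-1}_{L^{4m}_tL^{4m}_x}$ that the paper itself invokes later (in the proof of Proposition \ref{Prop:CauchyPb}). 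So the stability part, the small-data part, and the scattering argument are all fine.

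The one step that does not work as written is the contraction. You propose to contract in the metric $d(w,\tw)=\|w-\tw\|_{L^{4m}_tL^{4m}_x}+\|D^{s_c-\frac12}(w-\tw)\|_{L^4_tL^4_x}$ and justify it by the pointwise inequality $\bigl||w|^{2m}w-|\tw|^{2m}\tw\bigr|\lesssim(|w|^{2m}+|\tw|^{2m})|w-\tw|$. That inequality controls $\|F(w)-F(\tw)\|$ in an undifferentiated Lebesgue norm, but to close the second component of your metric you must bound $\|D^{s_c-\frac12}(F(w)-F(\tw))\|_{L^{4/3}_tL^{4/3}_x}$, and since $p$ is not an integer you cannot expand $F(w)-F(\tw)$ algebraically; the fractional chain rule for \emph{differences} is a genuinely different (and more delicate) estimate. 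Two standard fixes: either invoke a difference chain rule valid because $F\in C^2$ here ($p>5$), which produces an extra term of the form $(\|D^{s_c-\frac12}w\|+\|D^{s_c-\frac12}\tw\|)(\|w\|+\|\tw\|)^{p-2}\|w-\tw\|$ in suitable Strichartz norms and still closes for $\delta_0$ small; or run the contraction only in the weaker metric $\|w-\tw\|_{L^{4m}_tL^{4m}_x}$ on the ball $X(I)$, which is complete for that metric by Fatou, at the price of needing an inhomogeneous Strichartz estimate landing in $L^{4m}_tL^{4m}_x$ without derivatives on the source. Either route is routine, but as stated your Lipschitz step has a hole.
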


The proof is based upon the Strichartz estimates (see for example \cite{GinibVelo,LinSog}):
\begin{multline}
 \| \vec{w} \|_{L^{\infty}_t\left(I,\HHH^{s_c} \right)} +
 \| D^{s_{c} - \frac{1}{2}} w \|_{L^{4}_t(I,L^{4}_x)}  \\
 + \| w \|_{L^{4m}_t(I,L^{4m}_x)} + \| w \|_{L_{t}^{\frac{5m}{2}} \left(I,L_{x}^{5m}\right) } \\
 \lesssim \left\| \overrightarrow{w_{0}} \right\|_{\HHH^{s_c} }
 + \| D^{s_{c} - \frac{1}{2}} F  \|_{L_{t}^{\frac{4}{3}} (I, L_{x}^{\frac{4}{3}})},
\label{Eqn:Strich}
\end{multline}
for any solution of $\partial_t^2w-\Delta w=F$ with initial data $\overrightarrow{w_0}$.
\begin{rem}
\begin{enumerate}

\item

More generally, if an $I$ is an interval containing $t_{0} \in \mathbb{R}$ satisfies $\| S(t-t_0) \overrightarrow{w_0} \|_{L_t^{4m} (I, L_x^{4m})} < \delta_0$,
then there exists a unique solution $\vec{w} \in C \left( I, \HHH^{s_c} \right)$ of (\ref{Eqn:WaveSup}) with $\vec{w}(t_0) := \overrightarrow{w_0}$ that satisfies the same conclusions as Proposition \ref{Prop:CauchyPbHsc}.

\item

This allows to define the maximal interval of existence $$I_{max}(w):= \left( T_{-}(w), T_{+}(w) \right),$$
 i.e the union of all the open
intervals  $J$ containing $t_0$ for which there exists a solution $v$ of (\ref{Eqn:WaveSup}) with the same initial data such that $\vec{v} \in C (J, \HHH^{s_c})$, $\| D^{s_c - \frac{1}{2}} v \|_{L_t^{4} (J; L_{x}^4)} < \infty$ and $\|v \|_{L^{4m}_t (J,L^{4m}_x)} < \infty$.

\item Let us also mention the following standard scattering criterion: if
$$ \| w \|_{L^{4m}_t ([t_0,T_+(w)),L^{4m}_x)} < \infty,$$
then $T_+(w)=+\infty$ and $w$ scatters forward in time.

\end{enumerate}
\label{Rem:ConseqLWP}
\end{rem}

We next recall a local well-posedness result, in $\HHH^1$ for an equation that is derived from (\ref{Eqn:WaveSup}) (see  \cite{DuyKenMerScattSuper}, Lemma 3.3).
\begin{prop}
\label{Prop:CauchyPbH1}
There exists $\delta_1>0$ with the following property. Let $R >0$. Let $I$ be an interval with $0\in I$, $V\in L^{4m}_t(I,L^{4m}_x)$, and $\overrightarrow{h_0} \in \HHH^1$ such that
\begin{gather}
\label{Eqn:smallV}
\|D_x^{1/2}V\|_{L^{4}_t(I,L^{4}_x)}<\delta_1,\quad \|V\|_{L^{4m}_t(I,L^{4m}_x)}<\delta_1\sqrt{R}^{\frac{p-5}{p-1}}\\
\label{Eqn:Assh0h1}
\| \overrightarrow{h_0} \|_{\HHH^1} <  \delta_1 \,  \sqrt{R}^{\frac{p-5}{p-1}}.
\end{gather}
Then the equation:
\begin{equation}
\left\{
\begin{array}{ll}
\partial_{tt} h - \triangle h  & = | V+\chi_{R} h|^{p-1} (V+\chi_{R} h)-|V|^{p-1}V \\
\vec{h}(0) & := \overrightarrow{h_0},
\end{array}
\right.
\nonumber
\end{equation}
has a unique solution $h$ such that
$$h\in L^{8}_t(I,L^8_x),\quad D_x^{1/2}h\in L^{4}_t(I,L^{4}_x), \quad \vec{h}\in C^{0}(I,\HHH^{1}).$$
Furthermore
\begin{equation*}
\sup_{t \in I}  \left\| \vec{h}(t)  - \overrightarrow{S(t) \overrightarrow{h_0}}(t)  \right\|_{\HHH^1}
\leq \frac{1}{100} \| \overrightarrow{h_0} \|_{\HHH^1}\cdot
\end{equation*}
If $V=0$, one can take $I=\RR$ and the preceding estimate can be upgraded to:
\begin{equation*}
\sup_{t \in \mathbb{R}}  \left\| \vec{h}(t)  - \overrightarrow{S(t) \overrightarrow{h_0}}(t)  \right\|_{\HHH^1}
\lesssim \frac{1}{R^{\frac{p-5}{2}}} \| \overrightarrow{h_0} \|^{p}_{\HHH^1}\cdot
\end{equation*}
\end{prop}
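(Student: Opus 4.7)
The plan is a Banach fixed-point argument at the $\HHH^{1}$ level. Let $X$ be the space of functions $h\colon I\times\mathbb{R}^{3}\to\mathbb{R}$ with norm $\|h\|_{X}:=\|\vec{h}\|_{L^{\infty}_{t}(I,\HHH^{1})}+\|D^{1/2}h\|_{L^{4}_{t}(I,L^{4}_{x})}+\|h\|_{L^{8}_{t}(I,L^{8}_{x})}$, and let $\Phi(h)(t):=S(t)\overrightarrow{h_{0}}+\int_{0}^{t}\frac{\sin((t-s)D)}{D}N(h)(s)\,ds$ with $N(h):=|V+\chi_{R}h|^{p-1}(V+\chi_{R}h)-|V|^{p-1}V$. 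Applying the Strichartz estimate (analogous to (\ref{Eqn:Strich}) at $s=1$, for which $s-1/2=1/2$) gives $\|\Phi(h)\|_{X}\lesssim\|\overrightarrow{h_{0}}\|_{\HHH^{1}}+\|D^{1/2}N(h)\|_{L^{4/3}_{t}L^{4/3}_{x}}$, so the main task is to control the right-hand side by a small multiple of $\|\overrightarrow{h_{0}}\|_{\HHH^{1}}$.

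Writing $N(h)=\chi_{R}h\cdot\int_{0}^{1}F'(V+s\chi_{R}h)\,ds$ with $F(u)=|u|^{p-1}u$ and using the fractional Leibniz and chain rules gives the schematic bound
\begin{equation*}
|D^{1/2}N(h)|\lesssim|D^{1/2}(\chi_{R}h)|(|V|+|\chi_{R}h|)^{p-1}+|\chi_{R}h|(|V|+|\chi_{R}h|)^{p-2}\bigl(|D^{1/2}V|+|D^{1/2}(\chi_{R}h)|\bigr).
\end{equation*}
The H\"older identity $\tfrac{1}{4/3}=\tfrac{1}{4}+\tfrac{p-1}{4m}$ in both space and time bounds $\|D^{1/2}N(h)\|_{L^{4/3}_{t,x}}$ by products of $\|D^{1/2}(\chi_{R}h)\|_{L^{4}_{t,x}}$ or $\|D^{1/2}V\|_{L^{4}_{t,x}}$ with powers of $\|V\|_{L^{4m}_{t,x}}+\|\chi_{R}h\|_{L^{4m}_{t,x}}$. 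The crucial input is the radial Sobolev inequality: since $\supp\chi_{R}\subset\{|x|\geq R/4\}$, one has $\|\chi_{R}h(t)\|_{L^{\infty}_{x}}\lesssim R^{-1/2}\|h(t)\|_{\dot{H}^{1}}$. Interpolating between this pointwise-in-time bound and the Strichartz norm $\|h\|_{L^{8}_{t,x}}\leq\|h\|_{X}$ with parameter $\theta=2/m$ (so $\frac{1}{4m}=\frac{\theta}{8}$) yields
\begin{equation*}
\|\chi_{R}h\|_{L^{4m}_{t}L^{4m}_{x}}\lesssim R^{-(p-5)/(2(p-1))}\|h\|_{X},
\end{equation*}
which under a bootstrap $\|h\|_{X}\leq 2C\|\overrightarrow{h_{0}}\|_{\HHH^{1}}<2C\delta_{1}\sqrt{R}^{(p-5)/(p-1)}$ becomes $\lesssim\delta_{1}$. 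Thus the $\chi_{R}h$ factors absorb exactly the $R^{(p-5)/(2(p-1))}$ growth of $\|V\|_{L^{4m}}$, and after careful bookkeeping every term in $\|D^{1/2}N(h)\|_{L^{4/3}}$ is bounded by $\delta_{1}^{\sigma}\|\overrightarrow{h_{0}}\|_{\HHH^{1}}$ for some $\sigma>0$. For $\delta_{1}$ small this closes the contraction, and reapplying Strichartz to the Duhamel integral of $N(h)$ yields the refinement $\sup_{t}\|\vec{h}-\overrightarrow{S(t)\overrightarrow{h_{0}}}\|_{\HHH^{1}}\leq\tfrac{1}{100}\|\overrightarrow{h_{0}}\|_{\HHH^{1}}$.

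When $V=0$, the nonlinearity simplifies to $N(h)=|\chi_{R}h|^{p-1}(\chi_{R}h)$, and the pointwise bound $|\chi_{R}h|^{p-5}\lesssim R^{-(p-5)/2}\|\vec{h}\|_{L^{\infty}\HHH^{1}}^{p-5}$ extracts an explicit $R^{-(p-5)/2}\|\vec{h}\|^{p-5}$ prefactor, reducing the estimate to the energy-critical case: the standard bound $\|D^{1/2}(|u|^{4}u)\|_{L^{4/3}_{t,x}}\lesssim\|D^{1/2}u\|_{L^{4}_{t,x}}\|u\|_{L^{8}_{t,x}}^{4}$ then gives $\|D^{1/2}N(h)\|_{L^{4/3}}\lesssim R^{-(p-5)/2}\|h\|_{X}^{p}$, from which the sharpened conclusion $\sup_{t}\|\vec{h}-\overrightarrow{S(t)\overrightarrow{h_{0}}}\|_{\HHH^{1}}\lesssim R^{-(p-5)/2}\|\overrightarrow{h_{0}}\|_{\HHH^{1}}^{p}$ follows. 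I expect the main obstacle to be the $V$-dominated contribution $\|D^{1/2}(\chi_{R}h)\|_{L^{4}}\|V\|_{L^{4m}}^{p-1}$ in the general case: taken naively it scales like $R^{(p-5)/2}$ and does not close against the smallness scale of $\|\overrightarrow{h_{0}}\|_{\HHH^{1}}$. Overcoming this requires exploiting the mean-value representation of $N(h)$ — which guarantees at least one factor of $\chi_{R}h$ in every term — and redistributing the H\"older exponents so that this factor enters through an $L^{4m}$-type norm where the radial-Sobolev gain $R^{-(p-5)/(2(p-1))}$ exactly compensates the growth of $\|V\|_{L^{4m}}^{p-1}$.
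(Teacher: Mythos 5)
First, note that the paper does not prove this proposition: it is recalled from \cite{DuyKenMerScattSuper} (Lemma 3.3), so there is no internal proof to compare against. Your framework — a contraction in the $\dot{H}^1$-level Strichartz space, with the radial Sobolev bound $\|\chi_Rh(t)\|_{L^\infty_x}\lesssim R^{-1/2}\|h(t)\|_{\dot H^1}$ interpolated against $\|h\|_{L^8_{t,x}}$ to get $\|\chi_Rh\|_{L^{4m}_{t,x}}\lesssim R^{-\frac{p-5}{2(p-1)}}\|h\|_X$ — is indeed the standard mechanism for this lemma, and your treatment of the case $V=0$ (extracting $R^{-\frac{p-5}{2}}\|h\|_X^{p}$ and reducing to the energy-critical estimate) is correct.

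The genuine gap is exactly the term you flag at the end, and your proposed resolution does not work. Write $g:=\chi_Rh$. The honest output of the fractional chain/Leibniz rules is
\begin{equation*}
\left\|D^{1/2}\big(F(V+g)-F(V)\big)\right\|_{L^{4/3}_{t,x}}\lesssim \|D^{1/2}g\|_{L^{4}_{t,x}}\big(\|V\|_{L^{4m}_{t,x}}+\|g\|_{L^{4m}_{t,x}}\big)^{p-1}+\|D^{1/2}V\|_{L^{4}_{t,x}}\big(\|V\|_{L^{4m}_{t,x}}+\|g\|_{L^{4m}_{t,x}}\big)^{p-2}\|g\|_{L^{4m}_{t,x}}.
\end{equation*}
The mean-value representation guarantees a factor of $g$ in every term of $N(h)$ itself, but in the first term above the half-derivative has landed on that very factor; $D^{1/2}g$ must then be measured in the Strichartz norm $L^4_{t,x}$, where the cutoff gives no gain in $R$, and the H\"older identity $\tfrac{1}{4/3}=\tfrac14+\tfrac{p-1}{4m}$ leaves no freedom to move the companion factor out of $L^{4m}_{t,x}$: you are stuck with $\|D^{1/2}g\|_{L^4}\|V\|_{L^{4m}}^{p-1}$. (This is not an artifact of the estimate: for $g=\eps\phi$ with $\phi$ oscillatory, $\|D^{1/2}(F'(V)\phi)\|_{L^{4/3}}$ really is of size $\|V\|_{L^{4m}}^{p-1}\|D^{1/2}\phi\|_{L^4}$, so no redistribution of exponents can insert a factor of $\|g\|_{L^{4m}}$ there.) Under the hypothesis (\ref{Eqn:smallV}) as written, $\|V\|_{L^{4m}}^{p-1}$ may be as large as $\delta_1^{p-1}R^{\frac{p-5}{2}}$, and even the second term carries the positive power $R^{\frac{(p-3)(p-5)}{2(p-1)}}$ after using $\|g\|_{L^{4m}}\lesssim R^{-\frac{p-5}{2(p-1)}}\|h\|_X$; so the contraction does not close by this bookkeeping. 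It does close, with your argument essentially verbatim, under the stronger (and, in every application made of the proposition in this paper — Remark \ref{R:potentiel} and the proofs of Result \ref{Res:VoLimit} and Proposition \ref{Prop:NonzeroSol2} — actually satisfied) hypothesis $\|V\|_{L^{4m}_t(I,L^{4m}_x)}\leq\delta_1$. You should therefore either run the proof under that effective hypothesis, or consult the cited proof in \cite{DuyKenMerScattSuper} for whatever additional mechanism is used to exploit the more generous bound on $\|V\|_{L^{4m}}$; as it stands, your sketch does not supply one.
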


\subsection{Stationary solutions}
We next state a result regarding some singular stationary solutions of (\ref{Eqn:WaveSup}):
\begin{prop}
Let $\ell \in \mathbb{R}-\{0\}$. There exists $R_{\ell}\geq 0$ and a $C^{2}$ solution of
\begin{equation}
\label{Eqn:Zell1}
\triangle Z_{\ell}  + \iota |Z_{\ell}|^{p-1} Z_{\ell} = 0 \quad\text{on}\quad\mathbb{R}^{3} \cap \{ |x|>R_{\ell}\}
\end{equation}
such that
\begin{align}
\label{Eqn:Zell2}
|r Z_{\ell}(r)  - \ell| &\lesssim \frac{1}{r^{2}}, \quad r \gg 1\\
\label{Eqn:Zell3}
\lim_{r \rightarrow \infty} r^{2} \frac{d Z_{\ell}}{d r} &= - \ell
\end{align}
Furthermore
\begin{itemize}
 \item if $\iota=+1$ (focusing nonlinearity),  $R_{\ell}=0$ and $Z_{\ell} \notin \dot{H}^{s_c}$.
\item if $\iota=-1$ (defocusing nonlinearity), $R_{\ell}>0$ and
\begin{equation}
\label{Eqn:limZl}
\lim_{r\to R_{\ell}}|Z_{\ell}(r)|=+\infty.
\end{equation}
\end{itemize}
\label{Prop:SolStat}
\end{prop}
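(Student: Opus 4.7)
The plan is to reduce \eqref{Eqn:Zell1} to an ODE by setting $u(r) = r Z(r)$, which yields
\[
u''(r) = -\iota\,\frac{|u(r)|^{p-1}u(r)}{r^{p-1}},
\]
while \eqref{Eqn:Zell2}--\eqref{Eqn:Zell3} translate into $u(r) \to \ell$ and $r u'(r) \to 0$, with quantitative rates, as $r \to \infty$. I would first construct $u$ on some half-line $[R_0,\infty)$ by a contraction argument applied to the integral reformulation
\[
u(r) = \ell - \iota\int_r^\infty (\sigma-r)\,\frac{|u(\sigma)|^{p-1}u(\sigma)}{\sigma^{p-1}}\, d\sigma
\]
in a small $L^\infty$-ball around the constant $\ell$. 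Since $p>5$ the double integral converges and the contraction constant is $\lesssim R_0^{3-p}$, so for $R_0$ large enough (depending on $\ell,p$) one obtains a unique fixed point satisfying $|u(r)-\ell| \lesssim r^{-(p-3)}\le r^{-2}$; differentiating the integral equation then gives $u'(r) \sim \iota\ell^p/((p-2) r^{p-2})$, and together with $r^2 Z' = ru' - u$ this produces \eqref{Eqn:Zell2}--\eqref{Eqn:Zell3}.

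Standard ODE theory next extends $u$ to a maximal interval $(R_\ell,\infty)$, $R_\ell \geq 0$, on which the right-hand side of the ODE is smooth in $u$. The energy identity
\[
\frac{d}{dr}\Big(\tfrac{1}{2}(u')^2 + \iota\,\tfrac{|u|^{p+1}}{(p+1)\, r^{p-1}}\Big) = -\iota\,\tfrac{p-1}{p+1}\,\tfrac{|u|^{p+1}}{r^p}
\]
controls $|u'|$ in terms of $|u|$ and $r$ on every compact subinterval of $(0,\infty)$, so the only way the maximal interval fails to reach $0$ is for $|u|$ to blow up as $r\to R_\ell^+$; through $Z = u/r$ this yields \eqref{Eqn:limZl}.

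In the focusing case $\iota = +1$, I would reproduce the ODE analysis of \cite{DuyKenMerScattSuper} to show that the maximal solution extends to $(0,\infty)$, i.e.\ $R_\ell=0$, and that $Z_\ell$ is asymptotic at $r=0$ to a positive multiple of the explicit singular stationary solution $c_p\, r^{-2/(p-1)}$ of $\Delta Z + Z^p = 0$ (with $c_p^{p-1} = 2(p-3)/(p-1)^2$). The non-membership $Z_\ell \notin \dot H^{s_c}$ then follows from the borderline scaling of this singular profile, which fails to lie in $\dot H^{s_c}(\RR^3)$ for any $s_c$ in the supercritical range.

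The main obstacle is the defocusing case $\iota=-1$, where I must prove $R_\ell > 0$. Here $u$ is positive, strictly decreasing and convex on $(R_\ell,\infty)$ and satisfies $u \geq \ell$. Inserting this lower bound into $-u'(r) = \int_r^\infty u^p/\sigma^{p-1}\, d\sigma$ gives $|u'(r)| \gtrsim \ell^p r^{-(p-2)}$ and therefore $u(r) \gtrsim r^{-(p-3)}$ as $r\to 0$, so that $u$ must eventually become arbitrarily large on $(R_\ell,\infty)$. Once $u(r_*) \geq M$ for some $r_*>0$ and $M$ large, the reparametrization $U(s) := u(r_* - s)$ satisfies $U(0)=M$, $U'(0)>0$ and $U''(s) \geq r_*^{-(p-1)} U(s)^p$ as long as $r_*-s \in (R_\ell, r_*]$. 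The classical blow-up argument for $U'' \geq cU^p$ (multiply by $U'$, integrate, then separate variables) then produces a blow-up at some $s^* \lesssim r_*^{(p-1)/2}\,M^{-(p-1)/2}$, which for $M$ sufficiently large is strictly smaller than $r_*$; consequently $R_\ell = r_* - s^* > 0$. The delicate point is to choose $r_*$ and $M$ compatibly, using the lower bound $u(r_*) \gtrsim r_*^{-(p-3)}$ so that $M^{-(p-1)/2} \lesssim r_*^{(p-3)(p-1)/2}$ and $s^* \ll r_*$, which is where the hypothesis $p>5$ enters quantitatively.
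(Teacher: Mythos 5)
Your construction of $Z_\ell$ near infinity (fixed point for $u=rZ_\ell$ around the constant $\ell$) and your treatment of the focusing case both coincide with the paper, which simply invokes Proposition 3.2 of \cite{DuyKenMerScattSuper} for these points. The one step the paper actually proves in detail is $R_\ell>0$ in the defocusing case, and there your route is genuinely different. The paper inverts the radial variable, setting $h(s)=Z_\ell(1/s)$ so that $h''=s^{-4}|h|^{p-1}h$ on $(0,\infty)$ with $h(s)\sim \ell s$ at $s=0$; it then proves by induction that $h(s)\geq c_n s^n$ for every $n$, uses the monotone quantity $F(s)=\tfrac12 (h')^2-\tfrac{1}{(p+1)s^4}h^{p+1}$ to deduce $h'\gtrsim h^2$ for large $s$, and concludes that $1/h$ must vanish in finite time, contradicting global positivity. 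You instead stay in the original variable, extract the single quantitative lower bound $u(r)\gtrsim_{\ell,p} r^{-(p-3)}$ from convexity and $u\geq\ell$, and run the classical blow-up lemma for $U''\geq c\,U^p$ backwards in $r$; the exponent count $s^*\lesssim r_*^{(p-1)(p-2)/2}\ll r_*$ closes the contradiction. Both arguments are correct; yours is more quantitative (it localizes where the blow-up radius must sit and avoids the all-orders induction), while the paper's inversion reduces everything to a clean global nonexistence statement for an ODE on $(0,\infty)$. One small correction: the inequality $(p-1)(p-2)/2>1$ that you flag as the quantitative use of $p>5$ in fact holds for all $p>3$; the hypothesis $p>5$ is really consumed earlier, in the decay rate $|u-\ell|\lesssim r^{3-p}\leq r^{-2}$ required for (\ref{Eqn:Zell2}) (and, in the paper's version, in the induction step $pn-2>n+1$). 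The derivation of (\ref{Eqn:limZl}) from monotonicity plus the blow-up criterion is the same in both proofs.
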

\begin{proof}
The case of a focusing nonlinearity $\iota=1$ is treated in \cite[Proposition 3.2]{DuyKenMerScattSuper}, and we will only consider a defocusing nonlinearity $\iota=-1$.

In this case, the existence of a solution $Z_{\ell}$ defined for large $r$ and satisfying (\ref{Eqn:Zell1}) (for large $r$), (\ref{Eqn:Zell2}) and (\ref{Eqn:Zell3}) follows also from the proof of Proposition 3.2 of \cite{DuyKenMerScattSuper}. Let $(R_{\ell},+\infty)$ be the maximal interval of existence of $Z_{\ell}$, as a solution of the ordinary differential equation (in the $r$ variable):
$$ Z_{\ell}''+\frac{2}{r}Z_{\ell}'-|Z_{\ell}|^{p-1}Z_{\ell}=0.$$
We now prove that $R_{\ell}>0$. We assume $\ell>0$ to fix ideas and argue by contradiction. Assume $R_{\ell}=0$. Let $h(s)=Z_{\ell}(1/s)$, $s\in (0,\infty)$. Then $h$ is a $C^2$ solution of
\begin{equation}
 \label{Eqn:ODE_h}
h''(s)=\frac{1}{s^4}|h(s)|^{p-1}h(s),\quad s>0
\end{equation}
that satisfies (by (\ref{Eqn:Zell2}), (\ref{Eqn:Zell3})):
$$\lim_{s\to 0} \frac{h(s)}{s}=\ell,\quad \lim_{s\to 0} h'(s)=\ell.$$
By (\ref{Eqn:ODE_h}),
$$\lim_{s\to 0} \frac{h''(s)}{s^{p-4}}=\ell^p,$$
and thus $h''(s)>0$ for small positive $s$.
Combining these estimates with equation (\ref{Eqn:ODE_h}) and a simple bootstrap argument we obtain:
\begin{equation}
\label{Eqn:lower_bnd_h}
 \forall s>0,\quad h''(s)>0,\quad h'(s)\geq \frac{\ell}{2},\quad h(s)\geq \frac{\ell}{2} s.
\end{equation}
We next prove by induction:
\begin{equation}
 \label{Eqn:lower_bnd_h_2}
\forall n\geq 1,\quad \exists c_n>0,\quad \forall s>0,\quad h(s)\geq c_ns^n.
\end{equation}
Indeed, (\ref{Eqn:lower_bnd_h_2}) holds for $n=1$. Assuming that it holds for some $n\geq 1$, we obtain by (\ref{Eqn:ODE_h})
$$ h''(s)\geq c_n^p s^{np-4}.$$
Integrating twice between $0$ and $s$, we deduce $h(s)\geq c_n's^{pn-2}$, which yields (\ref{Eqn:lower_bnd_h_2}) at rank $n+1$, since $pn-2>5n-2>n+1$.

Let
$$F(s)=\frac{1}{2}\left(h'(s)\right)^2-\frac{1}{(p+1)s^4} (h(s))^{p+1}.$$
Then $F'(s)=\frac{4}{(p+1)s^5}h^{p+1}$. By (\ref{Eqn:lower_bnd_h_2}), $\lim_{s\to\infty}F(s)=+\infty$. It follows that for large $s$,
$$ h'(s)\geq \sqrt{\frac{2}{p+1}} \frac{h^{\frac{p+1}{2}}(s)}{s^2}\geq \frac{1}{C}h^2(s),$$
by (\ref{Eqn:lower_bnd_h_2}) again. Hence for large $s$,
$$\frac{d}{ds}\frac{1}{h(s)}\leq -1/C,$$
a contradiction with the fact that $h$ is positive and defined on $(0,\infty)$. \\
By the standard blow-up criterion $|Z_{\ell} (r)| + |Z^{'}_{\ell}(r)| \rightarrow_{r \rightarrow R_{\ell}} \infty$.
Notice that since $Z_{\ell}$ decreases, this means that $Z_{\ell}(r) $ has a limit as $r \rightarrow R_{\ell}$. But
this limit cannot be finite. If not this would imply by (\ref{Eqn:ODE_h}) that $Z^{'}_{\ell}$
is bounded, which is not possible. Hence (\ref{Eqn:limZl}) holds.
\end{proof}

\begin{rem}
\label{R:Zell}
 Note that
 $$\lambda^{\frac{2}{p-1}}Z_{\ell}(\lambda r)$$
 is a stationary solution of (\ref{Eqn:WaveSup}), defined for $r>\frac{R_{\ell}}{\lambda}$, and such that
 $$\lambda^{\frac{2}{p-1}}Z_{\ell}(\lambda r)\sim \lambda^{\frac{2}{p-1}-1}\frac{\ell}{r}=\lambda^{\frac{3-p}{p-1}}\frac{\ell}{r}$$
 as $r\to\infty$. By uniqueness in the fixed point defining $Z_{\ell}$ (see the proof of Proposition 3.2 in \cite{DuyKenMerScattSuper}), if $\ell>0$,
 $$Z_{\ell}(r)=\lambda^{\frac{2}{p-1}}Z_{1}(\lambda r),\quad R_{\ell}=\frac{R_1}{\lambda},\quad \lambda:=\ell^{-\frac{p-1}{p-3}},$$
 and if $\ell<0$,
$$Z_{\ell}(r)=-\lambda^{\frac{2}{p-1}}Z_{1}(\lambda r),\quad R_{\ell}=\frac{R_1}{\lambda},\quad \lambda:=|\ell|^{-\frac{p-1}{p-3}}.$$
 \end{rem}

\begin{rem}
 \label{R:potentiel}
We will often linearize equation (\ref{Eqn:WaveSup}) around the singular solution $Z_{1}$, and use Proposition \ref{Prop:SolStat}. To make this possible we will need the following estimates about two potentials obtained from $Z_{1}$ by truncation ($\chi$ and $\TTT_R$ are defined in the notations given at the end of the introduction):
\begin{enumerate}
 \item \label{I:V1}Let $V := \TTT_{R}Z_{1}$, $R>R_{1}$. Then there exists $\theta:=\theta(R)>0$ such that $V$, $I=\left[-\theta,\theta\right]$ satisfy the assumptions of Proposition \ref{Prop:CauchyPbH1}.
\item \label{I:V2}Let $S_1>R_{1}$ be a large parameter, and
$$V(t,x) := \chi\left(\frac{x}{S_1+|t|}\right)Z_{1}(x).$$
Then $V$ satisfies the assumptions of Proposition \ref{Prop:CauchyPbH1} with $I=\RR$.
\end{enumerate}
\end{rem}
If $R>0$, we will denote by $z_R$ the solution with initial data $(\TTT_R Z_1,0)$. Then we have the following:
\begin{lem}
 \label{Lem:zR}
 \begin{enumerate}
  \item \label{I:zRa} There exists $\rho_Z>R_1$ such that, if $R>\rho_Z$, $z_R$ is global and scatters in both time directions.
 \item \label{I:zRb} For $\rho>0$, denote by
 $$\theta_{\rho}:=\inf_{R\geq \rho} T_+(z_R)\in [0,\infty].$$
 Then $\theta_{\rho}$ is a strictly positive, nondecreasing function of $\rho$.
 \end{enumerate}
\end{lem}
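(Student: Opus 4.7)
My plan is to handle the two items in turn. For (\ref{I:zRa}) I reduce to small-data scattering. Setting $\tilde u_R(y):=R\,\TTT_R Z_1(Ry)$, the asymptotics (\ref{Eqn:Zell2})--(\ref{Eqn:Zell3}) for $\ell=1$ imply that, as $R\to\infty$, $\tilde u_R$ converges to the limit profile $u_\infty$ defined by $u_\infty(y)=1$ on $|y|\leq 1$ and $u_\infty(y)=1/|y|$ on $|y|\geq 1$. This profile is Lipschitz with gradient supported in $\{|y|\geq 1\}$ and decaying like $|y|^{-2}$; a routine check then shows that $\|\tilde u_R\|_{\dot H^{s_c}}$ stays bounded uniformly for $R$ large, using $s_c<3/2$. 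Combined with the scaling identity $\|\TTT_R Z_1\|_{\dot H^{s_c}}=R^{1/2-s_c}\|\tilde u_R\|_{\dot H^{s_c}}$ and the fact that $s_c>1$, this yields $\|\TTT_R Z_1\|_{\dot H^{s_c}}\to 0$ as $R\to\infty$. Hence for $R$ larger than some $\rho_Z>R_1$, the initial data $(\TTT_R Z_1,0)$ has $\HHH^{s_c}$-norm at most $\delta_0/C_0$, and the second bullet of Proposition~\ref{Prop:CauchyPbHsc} gives global existence and scattering of $z_R$ forward in time. Scattering backward follows from the time-reversal symmetry of (\ref{Eqn:WaveSup}) for data with vanishing time derivative.

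Monotonicity in (\ref{I:zRb}) is immediate from the definition: the infimum is taken over a smaller set when $\rho$ increases. For strict positivity, I argue by contradiction. Fix $\rho>R_1$ and assume a sequence $R_n\geq\rho$ with $T_+(z_{R_n})\to 0$. By (\ref{I:zRa}) we must have $R_n\leq\rho_Z$ eventually, so up to a subsequence $R_n\to R^\ast\in[\rho,\rho_Z]$. Provided the map $R\mapsto(\TTT_R Z_1,0)$ is continuous from $(R_1,\infty)$ to $\HHH^{s_c}$, continuous dependence of the flow on initial data (together with the scattering criterion in Remark~\ref{Rem:ConseqLWP}) forces $\liminf_n T_+(z_{R_n})\geq T_+(z_{R^\ast})>0$, contradicting our choice of $R_n$.

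The remaining point, and the main technical obstacle, is the continuity of $R\mapsto\TTT_R Z_1$ in $\dot H^{s_c}$. For $R<R'$ in a fixed compact subinterval $K\subset(R_1,\infty)$, the difference $g:=\TTT_{R'}Z_1-\TTT_R Z_1$ is supported in $B_{R'}$, with gradient supported in the annulus $R<|x|<R'$ and equal to $-Z_1'(|x|)\hat x$ there. A direct computation gives
$$\|g\|_{\dot H^1}^{2}\;\lesssim\;\int_{R}^{R'}|Z_1'(r)|^{2}r^{2}\,dr\;\longrightarrow\;0\quad\text{as }R'-R\to 0,$$
uniformly for $R,R'\in K$. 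On the other hand, since $\TTT_R Z_1$ is piecewise $C^2$ with uniformly bounded jump in its gradient across $|x|=R$ for $R\in K$, one checks that $\|g\|_{\dot H^s}$ is bounded uniformly in $R,R'\in K$ for any fixed $s\in(s_c,3/2)$. The Sobolev interpolation
$$\|g\|_{\dot H^{s_c}}\;\lesssim\;\|g\|_{\dot H^1}^{\alpha}\,\|g\|_{\dot H^s}^{1-\alpha},\qquad \alpha:=\frac{s-s_c}{s-1}\in(0,1),$$
then yields $\|g\|_{\dot H^{s_c}}\to 0$ as $R'\to R$, proving the continuity and completing the proof.
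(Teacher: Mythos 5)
Your proof is correct, and its overall architecture coincides with the paper's: point (\ref{I:zRa}) is reduced to the small-data theory of Proposition \ref{Prop:CauchyPbHsc} via the smallness of $\|\TTT_R Z_1\|_{\dot H^{s_c}}$ for large $R$, and point (\ref{I:zRb}) is reduced to the lower semicontinuity of $T_+$ together with the compactness (equivalently, sequential convergence after extraction) of the family of data $\{(\TTT_RZ_1,0)\}$ over a compact range of $R$. The only genuine difference is in how you supply the two analytic inputs: the paper obtains both the decay $\|\TTT_RZ_1\|_{\dot H^{s_c}}\to 0$ and the continuity of $R\mapsto\TTT_RZ_1$ from the general truncation estimates of the appendix (Result \ref{Res:Cont_R}, itself resting on Results \ref{Res:BoundCharac} and \ref{Res:BoundOp} plus density of $C_0^\infty$), whereas you re-derive them by hand for the specific function $Z_1$ — a scaling computation exploiting $s_c>1/2$ and the asymptotics (\ref{Eqn:Zell2})--(\ref{Eqn:Zell3}) for the decay, and a $\dot H^1$--$\dot H^s$ interpolation for the continuity. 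Your route is self-contained and uses the explicit $1/r$ behaviour of $Z_1$, at the cost of the two ``routine checks'' (uniform $\dot H^{s_c}$ bound on the rescaled profiles, uniform $\dot H^s$ bound on the difference $g$), which do hold but deserve a line of justification; the paper's route is shorter here because the appendix results apply to arbitrary $\HHH^s$ data (after first truncating $Z_1$ at some fixed $R_0>R_1$ to place it in $\dot H^{s_c}$, a small point worth making explicit in either approach).
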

\begin{proof}
Point (\ref{I:zRa}) follows from the small data theory and Result \ref{Res:Cont_R}.

 The fact that $\theta_{\rho}$ is nondecreasing follows immediately from the definition.
 The positivity of $\theta_{\rho}$ is obvious if $\rho>\rho_Z$: in this case $\theta_{\rho}=+\infty$. If $\rho\leq \rho_Z$, we have:
 $$\theta_{\rho}:=\inf_{\rho\leq R\leq \rho_{Z}} T_+(z_R).$$
 Since
 $$\left\{(z_R(0),0),\; \rho\leq R\leq \rho_{Z}\right\}$$
 is a compact subset of $\HHH^{s_c}$, the result follows from the Cauchy theory.
\end{proof}

\subsection{Profile decomposition}
\label{Sec:ProfDecomp}

\subsubsection{Definition}
\label{SS:def_profiles}
Throughout the manuscript, we constantly use the profile decomposition of a bounded sequence $ \{ (w_{0,n},w_{1,n}) \}_{n \in \mathbb{N}}$
in $ \dot{\mathcal{H}}^{s_{c}}$. Adapting the proof of \cite{BahGer}, there exists a subsequence
of $ \{ (w_{0,n},w_{1,n}) \}_{ n \in \mathbb{N} } $  (still denoted by $ \{ (w_{0,n},w_{1,n}) \}_{n \in \mathbb{N}}$) such that
for all $ j \geq 1$,  there exist sequences $\{ \lambda_{j,n} \}_{n \in \mathbb{N}}$,
$\{ t_{j,n} \}_{n \in \mathbb{N}}$, $\{ \BA{\epsilon_{0,n}^{J}} \}_{n \in \mathbb{N}}$ with $\lambda_{j,n} > 0$, $t_{j,n} \in \mathbb{R}$, and $\BA{\epsilon_{0,n}^{J}} \in \HHH^{s_c}$ such that:
\begin{align}
j \neq k \Longrightarrow \lim_{n \rightarrow \infty} \frac{\lambda_{j,n}}{\lambda_{k,n}} + \frac{\lambda_{k,n}}{\lambda_{j,n}} +
\frac{|t_{j,n} - t_{k,n}|}{\lambda_{j,n}} = \infty,
\label{Eqn:PseudoOrth}
\end{align}
and for all $J \geq 1$
\begin{multline*}
(w_{0,n},w_{1,n})(x)=\\
\sum_{j=1}^{J}
\left(
\frac{1}{\lambda_{j,n}^{\frac{2}{p-1}}} W^{j}_{\lin} \left( - \frac{t_{j,n}}{\lambda_{j,n}}, \frac{x}{\lambda_{j,n}} \right),
\frac{1}{\lambda_{j,n}^{\frac{2}{p-1} +1 }} \partial_{t} W^{j}_{\lin} \left( - \frac{t_{j,n}}{\lambda_{j,n}}, \frac{x}{\lambda_{j,n}}
\right) \right)
+ \BA{\eps_{0,n}^{J}}(x),
\end{multline*}
with $W^{j}_{\lin} (t):= S(t)(W_{0}^{j}, W_{1}^{j})$ and
\begin{equation}
\lim_{J \rightarrow \infty} \limsup_{n \rightarrow \infty} \| \eps_{n}^{J} \|_{L^{4m}_t (\mathbb{R},L^{4m}_x)} =0
\label{Eqn:RemSmall}
\end{equation}
(Here $\eps_{n}^{J}(t) := S(t)\BA{\eps_{0,n}^{J}}$).

We say that $\{ (w_{0,n},w_{1,n} ) \}_{n \in \mathbb{N}}$ has a profile decomposition
$ \left\{  W^{j}_{\lin},  \left\{ (t_{j,n}, \lambda_{j,n}) \right\}_{n \in \mathbb{N}} \right\}_{j \geq 1}$
with profiles $\{ W_{\lin}^{j} \}_{j\geq 1}$ and
parameters $\left\{ \left\{ (\lambda_{j,n}, t_{j,n}) \right\}_{n \in \mathbb{N}} \right\}_{j\geq 1}$ and that $\BA{\eps_{0,n}^{J}}$ is the remainder.

Let $w_{\lin,n}(t):= S(t) (w_{0,n},w_{1,n})$.  The profiles satisfy, for all $j,J\geq 1$,
\begin{align}
\label{Eqn:WeakProfile1}
\left( \lambda_{j,n}^{\frac{2}{p-1}} w_{\lin,n} (t_{j,n}, \lambda_{j,n} \cdot), \lambda_{j,n}^{\frac{2}{p-1} +1} \partial_{t} w_{\lin,n} (t_{j,n}, \lambda_{j,n}
\cdot)
\right) &\xrightharpoonup[n \rightarrow \infty]{} (W_{0}^{j},W_{1}^{j}), \\
\label{Eqn:WeakProfile2}
j\leq J\Longrightarrow
\left(
\lambda_{j,n}^{\frac{2}{p-1}} \eps_{n}^{J} (t_{j,n}, \lambda_{j,n} \cdot), \lambda_{j,n}^{\frac{2}{p-1} + 1 } \partial_{t} \eps_{n}^{J} (t_{j,n},
\lambda_{j,n}
\cdot)
\right) &\xrightharpoonup[n \rightarrow \infty]{} 0,
\end{align}
and
\begin{equation}
\label{Eqn:Pyth}
\left\| (w_{0,n},w_{1,n}) \right\|^{2}_{ \dot{\mathcal{H}}^{s_{c}} }
= \sum_{j=1}^{J}  \left\| \BA{W_{\lin}^{j}} \left( - \frac{t_{j,n}}{ \lambda_{j,n}}
\right)   \right\|^{2}_{ \dot{\mathcal{H}}^{s_{c}}}
 + \| \BA{\eps_{0,n}^{J}}\|^{2}_{ \dot{\mathcal{H}}^{s_{c}}} + o_{n}(1).
\end{equation}
Translating in time and rescaling $W_{\lin}^{j}$, and extracting a subsequence, we may assume
\begin{equation}
\forall j\geq 1,\quad \lim_{n \rightarrow \infty}  - \frac{t_{j,n}}{\lambda_{j,n}} = \pm \infty \quad \text{or} \quad \forall n,\;t_{j,n}=0.
\label{Eqn:AssParam}
\end{equation}
% Recall that if $ \frac{\lambda{j,n}}{\lambda_{k,n}} $ does not go to $\infty$  as $n \rightarrow \infty$ in (\ref{Eqn:PseudoOrth}), then we may assume
% without loss of generality that
% \begin{align}
% \lambda_{j,n} = \lambda_{k,n} \cdot
% \label{Eqn:EqualLambd}
% \end{align}
Recall that it possible, assuming (\ref{Eqn:AssParam}), to construct a solution $W^{j}$ of (\ref{Eqn:WaveSup}) that
satisfies
\begin{align}
\lim_{n \rightarrow \infty} \left\| \BA{W^{j}} \left( -\frac{ t_{j,n}}{\lambda_{j,n}} \right) - \BA{W^{j}_{\lin}} \left( - \frac{t_{j,n}}{\lambda_{j,n}}
\right)  \right\|_{ \dot{\mathcal{H}}^{s_{c}} } =0.
\label{Eqn:ScattProf}
\end{align}
See for example \cite{KenMerleSuper}. Such a
construction is unique and we say that $W^{j}$ is the nonlinear profile associated to
$W^{j}_{\lin}$ and $ \left\{ (t_{j,n}, \lambda_{j,n}) \right\}_{n \in \mathbb{N}}$. Recall that if
$ \lim_{n \rightarrow \infty}  - \frac{t_{j,n}}{\lambda_{j,n}} = \infty $ (resp. $-\infty$)
and $\{ s_n \}_{n \in \mathbb{N}}$ is a sequence such that  $\lim_{n \rightarrow \infty} s_n = \infty$ ( resp. $- \infty$) then

\begin{align}
\left\| \BA{W^{j}} (s_n) - \BA{W^{j}_{\lin}} (s_n)  \right\|_{ \dot{\mathcal{H}}^{s_{c}} } =0.
\label{Eqn:ScattProf2}
\end{align}
We also denote by $W_{n}^{j}$ (resp. $W_{\lin,n}^{j}$) the normalized nonlinear profile (resp. the normalized linear profile), i.e
\begin{align}
\label{Eqn:defWnj}
W_{n}^{j}(t,x) &:= \frac{1}{\lambda_{j,n}^{\frac{2}{p-1}}} W^{j} \left( \frac{t - t_{j,n}}{\lambda_{j,n}}, \frac{x}{ \lambda_{j,n}} \right) \\
\label{Eqn:defWlnj}
W_{\lin,n}^{j}(t,x) &:= \frac{1}{\lambda_{j,n}^{\frac{2}{p-1}}} W_{\lin}^{j} \left( \frac{t - t_{j,n}}{\lambda_{j,n}}, \frac{x}{ \lambda_{j,n}} \right)
\end{align}
If $\{ (w_{0,n},w_{1,n}) \}_{n \in \mathbb{N}}$ is a bounded sequence in $\HHH^{s_c}$, we will write:
\begin{equation}
\label{Eq:sim_p}
 (w_{0,n},w_{1,n}) \simP \sum_{j\geq 1} \left(\frac{1}{\lambda_{j,n}^{\frac{2}{p-1}}} W^j_{\lin}\left(\frac{-t_{j,n}}{\lambda_{j,n}},\frac{\cdot}{\lambda_{j,n}}\right),\frac{1}{\lambda_{j,n}^{1+\frac{2}{p-1}}} \partial_{t}W^j_{\lin}\left(\frac{-t_{j,n}}{\lambda_{j,n}},\frac{\cdot}{\lambda_{j,n}}\right)\right),
\end{equation}
when $\{ (w_{0,n},w_{1,n}) \}_{n \in \mathbb{N}}$  has a profile decomposition with profiles $\{W^j_{\lin}\}_j$ and parameters
$\left\{ (\lambda_{j,n}, t_{j,n})_{n \in \mathbb{N}} \right\}_{j\geq 1}$. We will often shorten (\ref{Eq:sim_p}) into
\begin{equation*}
(w_{0,n},w_{1,n})  \simP \sum_{j\geq 1} \BA{W^{j}_{\lin,n}}(0).
\end{equation*}
If moreover $\{\rho_n\}_n$ is a sequence of positive numbers, we will write
\begin{equation}
\label{Eq:sim_p_out}
(w_{0,n},w_{1,n})  \simP \sum_{j\geq 1} \BA{W^{j}_{\lin,n}}(0),\quad |x|>\rho_n,
\end{equation}
 when there exists a sequence $ \{ ( \tilde{w}_{0,n}, \tilde{w}_{1,n}) \}_{n \in \mathbb{N}}$, bounded in $\HHH^{s_c}$ such that
\begin{equation*}
( \tilde{w}_{0,n}, \tilde{w}_{1,n} ) \simP \sum_{j\geq 1} \BA{W^{j}_{\lin,n}}(0),
\end{equation*}
 and for all $n$,
 \begin{equation*}
 ( w_{0,n},w_{1,n} )(r)  = ( \tilde{w}_{0,n}, \tilde{w}_{1,n})(r),\quad |x|>\rho_n.
 \end{equation*}

\subsubsection{Approximation by nonlinear profiles}
The next proposition states that under suitable assumptions, the solution $w_{n}$ of (\ref{Eqn:WaveSup}) with initial data
$\vec{w}_{0,n}$ can be well-approximated, for large $n$, by sum of the nonlinear profiles. We omit the proof, which is close to the proof of the main theorem in \cite{BahGer}, using the perturbation theorem in Section 2 of \cite{KenMerleSuper}.
\begin{prop}
Let $\{ \BA{w_{0,n}} \}_{n \in \mathbb{N}}$ be a bounded sequence in $ \dot{\mathcal{H}}^{s_{c}}$
that has a profile decomposition $\{ W_{\lin}^{j},(t_{j,n}, \lambda_{j,n})_{n}  \}_{j\geq 1}$. Let $\{ \theta_{n} \}_{n \in \mathbb{N}}$ be a sequence such
that $\theta_{n} \in [0, \infty)$. Assume that for all $j$, the nonlinear profile $W^j$ scatters forward in time or:
\begin{equation}
\label{Eqn:limsup}
\limsup_{n\to\infty}\frac{\theta_{n} - t_{j,n}}{\lambda_{j,n}} < T_{+}(W^{j}).
\end{equation}
Let $w_{n}$ be the solution of (\ref{Eqn:WaveSup}) with data $\BA{w_{0,n}}$. Then for $n \gg 1$, $w_{n}$ is defined
on $[0, \theta_{n}]$,
\begin{equation*}
\limsup_{n \rightarrow \infty} \| w_{n} \|_{L_{t}^{4m} ((0, \theta_{n}),L^{4m}_x)} < \infty,
\end{equation*}
and
\begin{equation}
\label{Eqn:Conclu_Perturb}
\forall t \in [0,\theta_{n}], \quad w_{n}(t,x) = \sum_{j=1}^{J} W_{n}^{j}(t,x) + \eps_{n}^{J}(t,x) + r_{n}^{J}(t,x),
\end{equation}
with
\begin{equation*}
\lim_{J \rightarrow \infty}
\left[ \limsup_{n \rightarrow \infty}
\left( \| r_{n}^{J} \|_{L_{t}^{4m} ((0,\theta_{n}),L^{4m}_x) }
 +  \| \BA{r_{n}^{J}} \|_{ L_{t}^{\infty} \left((0,\theta_{n}), \dot{\mathcal{H}}^{s_{c}} \right)}\right)
\right]
=0.
\end{equation*}
A similar conclusion holds if $\theta_{n} \in (-\infty,0]$ for all $n$, and, for all $j$, $W^j$ scatters backward in time or
\begin{equation}
\label{Eqn:liminf}
\liminf_{n\to\infty}\frac{\theta_{n} - t_{j,n}}{\lambda_{j,n}}> T_{-}(W^{j}).
\end{equation}
\label{Prop:Perturb}
\end{prop}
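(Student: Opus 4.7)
My plan is to build the natural approximate solution
\[
U_n^J(t,x) := \sum_{j=1}^{J} W_n^j(t,x) + \eps_n^J(t,x), \qquad t\in [0,\theta_n],
\]
verify that it almost solves (\ref{Eqn:WaveSup}) with an error that is small in the dual Strichartz norm, and then invoke the long-time perturbation theorem from Section 2 of \cite{KenMerleSuper} to produce $w_n$ on $[0,\theta_n]$, the uniform $L^{4m}$ bound, and the remainder $r_n^J := w_n - U_n^J$ satisfying the conclusion. Since each $W_n^j$ solves (\ref{Eqn:WaveSup}) and $\eps_n^J$ solves the linear wave equation, $U_n^J$ satisfies
\[
\partial_{tt}U_n^J - \triangle U_n^J = \iota\sum_{j=1}^{J} |W_n^j|^{p-1}W_n^j = \iota\, |U_n^J|^{p-1}U_n^J - e_n^J,
\]
where $e_n^J$ contains only cross-terms between distinct profiles and with $\eps_n^J$. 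At $t=0$, the match $\vec{U}_n^J(0) - \vec{w}_{0,n}\to 0$ in $\HHH^{s_c}$ is the defining property of the profile decomposition together with (\ref{Eqn:ScattProf}).

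Two ingredients must then be established. First, a uniform bound on $\|U_n^J\|_{L^{4m}_t([0,\theta_n],L^{4m}_x)}$ and on the companion $D^{s_c-1/2}$-Strichartz norm. For a fixed $j$, the change of variables $\tau = (t-t_{j,n})/\lambda_{j,n}$ reduces the control of $W_n^j$ on $[0,\theta_n]$ to the control of $W^j$ on the intrinsic interval $[-t_{j,n}/\lambda_{j,n},\,(\theta_n - t_{j,n})/\lambda_{j,n}]$. The upper endpoint either lies strictly below $T_+(W^j)$ by (\ref{Eqn:limsup}) or can be pushed to $+\infty$ when $W^j$ scatters forward, and the lower endpoint is handled by (\ref{Eqn:AssParam}) together with (\ref{Eqn:ScattProf2}); in either case the Strichartz norms of $W^j$ are finite on that interval, and scale-invariance of $L^{4m}_{t,x}$ (recall $s_c = 3/2-1/m$) transfers the bound to $W_n^j$. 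Summability across $j$ is obtained from Pythagoras (\ref{Eqn:Pyth}): all but finitely many profiles have $\HHH^{s_c}$-norm below the small-data threshold of Proposition \ref{Prop:CauchyPbHsc}, so their Strichartz norms are square-summable; pseudo-orthogonality (\ref{Eqn:PseudoOrth}) then gives
\[
\Bigl\|\sum_{j=1}^{J} W_n^j\Bigr\|_{L^{4m}_t([0,\theta_n],L^{4m}_x)}^{4m} = \sum_{j=1}^{J}\|W_n^j\|_{L^{4m}_tL^{4m}_x}^{4m} + o_n(1),
\]
yielding a bound uniform in both $n$ and $J$.

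The second ingredient, the smallness of $e_n^J$, is the main technical point. Using the pointwise inequality $\bigl||a+b|^{p-1}(a+b)-|a|^{p-1}a-|b|^{p-1}b\bigr|\lesssim |a|^{p-1}|b|+|a||b|^{p-1}$ iteratively, one expands $e_n^J$ into mixed monomials $|W_n^{j_1}|^{\alpha_1}\cdots|W_n^{j_k}|^{\alpha_k}|\eps_n^J|^{\alpha_0}$ where either $\alpha_0>0$ or at least two indices $j_\ell$ are distinct. Terms with an $\eps_n^J$ factor are controlled by (\ref{Eqn:RemSmall}), sending $n\to\infty$ and then $J\to\infty$. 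Terms with two distinct profiles $j\neq k$ are handled by (\ref{Eqn:PseudoOrth}): after rescaling into the frame of one profile, the other profile's Strichartz density concentrates on a disjoint region of space-time, so the mixed Lebesgue integrals vanish in the limit. Once $\|D^{s_c-1/2}e_n^J\|_{L^{4/3}_t([0,\theta_n],L^{4/3}_x)}$ is shown to tend to zero (first $n\to\infty$, then $J\to\infty$), the perturbation theorem delivers the existence of $w_n$ on $[0,\theta_n]$, the uniform $L^{4m}_{t,x}$ bound, and the decomposition (\ref{Eqn:Conclu_Perturb}) with the required smallness of $r_n^J$. The main obstacle is precisely the cross-term bookkeeping and its decoupling via (\ref{Eqn:PseudoOrth}), since the nonlinearity $|u|^{p-1}u$ is non-algebraic for general $p>5$ and a literal binomial expansion is unavailable. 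The backward-in-time statement follows by the same argument with (\ref{Eqn:liminf}) and $T_-(W^j)$ replacing (\ref{Eqn:limsup}) and $T_+(W^j)$.
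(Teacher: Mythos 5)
Your proposal is correct and reconstructs exactly the argument the paper omits with a pointer to \cite{BahGer} and the long-time perturbation theorem of Section 2 of \cite{KenMerleSuper}: build the approximate solution $U_n^J$, establish a uniform $L^{4m}_{t,x}$ Strichartz bound, show the error $e_n^J$ vanishes in the dual norm using (\ref{Eqn:RemSmall}) and (\ref{Eqn:PseudoOrth}), and close by perturbation. The technical core you flag --- the fractional chain rule for $D^{s_c-\frac12}$ applied to the cross terms of a non-polynomial nonlinearity, and the $L^{4m}_{t,x}$ decoupling of nonlinear profiles via the escape of concentration regions --- is precisely what a fully written-out proof would need to supply, and is handled as you sketch in the cited references.
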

\begin{rem}
\label{Rem:Perturb}
 Under the assumptions of Proposition \ref{Prop:Perturb}, we can deduce from (\ref{Eqn:Conclu_Perturb}) and after extraction of a subsequence, a profile decomposition of $\vec{w}_n(\theta_n)$.

Let $s_{j,n}:=t_{j,n}-\theta_n$.
By a diagonal extraction argument, we can assume that for all $j\geq 1$, the following limit exists:\begin{equation}
\label{Eqn:lim_exist}
\lim_{n\to\infty} \frac{-s_{j,n}}{\lambda_{j,n}}\in [-\infty,+\infty]
\end{equation}
For $j\geq 1$, we have
$$ \lim_{n\to\infty}\left\|\BA{W_n^j } \left(\theta_n\right)- \BA{V^j_{\lin,n}}(0)\right\|_{\HHH^{s_c}}=0,$$
where
$$V_{\lin,n}^j(t,x)=\frac{1}{\lambda_{j,n}^{\frac{2}{p-1}}}V^j_{\lin}\left( \frac{t-s_{j,n}}{\lambda_{j,n}},\frac{x}{\lambda_{j,n}} \right),$$
and $V_{\lin}^j$ is the only solution of the linear wave equation that satisfies
\begin{equation*}
\lim_{n\to\infty}\left\|\BA{W^j}\left(\frac{-s_{j,n}}{\lambda_{j,n}}\right)-\BA{V^j_{\lin}}\left(\frac{-s_{j,n}}{\lambda_{j,n}}\right)\right\|_{\HHH^{s_c}}=0.
\end{equation*}
(The existence and uniqueness of $V_{\lin}^j$ follows from (\ref{Eqn:lim_exist}).)

One easily checks that if $j,k\geq 1$, $j\neq k$,
$$\lim_{n\to\infty}\left|\log\frac{\lambda_{j,n}}{\lambda_{k,n}}\right|+\left|\frac{s_{j,n}-s_{k,n}}{\lambda_{k,n}}\right|= \lim_{n\to\infty}\left|\log\frac{\lambda_{j,n}}{\lambda_{k,n}}\right|+\left|\frac{t_{j,n}-t_{k,n}}{\lambda_{k,n}}\right|=+\infty.$$

Finally, the conclusion of Proposition \ref{Prop:Perturb} implies
$$\BA{w_n}(\theta_n)\simP \sum_{j\geq 1} \BA{V^j_{\lin,n}}(0).$$
 \end{rem}
Let us conclude this section by giving a version of Proposition \ref{Prop:Perturb} and Remark \ref{Rem:Perturb} outside a ball that is a direct consequence
of the finite speed of propagation:
\begin{prop}
 \label{Prop:Perturb_outside}
 Let $\{\BA{w_{0,n}} \}_n$ be a bounded sequence in $\HHH^{s_c}$ and $\{\rho_n\}_n$ be a sequence of positive numbers. For all $n$, denote by $w_n$ the solution of (\ref{Eqn:WaveSup}) with initial data $\BA{w_{0,n}}$ and let $\theta_n \in [0,T_+(w_n))$. Assume that there exist profiles $\{W^j_{\lin}\}_{j\geq 1}$ and parameters $\left\{ (\lambda_{j,n},t_{j,n})_n \right\}_j$ such that
 $$ \BA{w_{0,n}} \simP \sum_{j\geq 1} \BA{W^j_{\lin,n}}(0),\quad |x|>\rho_n,$$
 and that satisfy the assumptions of Proposition \ref{Prop:Perturb}. Then
 $$ \BA{w_n}(\theta_n)\simP \sum_{j\geq 1} \BA{V^j_{\lin,n}}(0),\quad |x|>\rho_n+\theta_n,$$
 where the modulated linear profiles $V_{\lin,n}^j$ are as in Remark \ref{Rem:Perturb}.

 An analogous statement holds for negative times.
 \end{prop}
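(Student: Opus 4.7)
\medskip

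\noindent\emph{Proof plan for Proposition \ref{Prop:Perturb_outside}.}
The strategy is to reduce to the full-space statements (Proposition \ref{Prop:Perturb} and Remark \ref{Rem:Perturb}) via finite speed of propagation. By the very definition of the notation $\simP$ outside $|x|>\rho_n$ (see (\ref{Eq:sim_p_out})), there exists a bounded sequence $\{(\tilde{w}_{0,n},\tilde{w}_{1,n})\}_n$ in $\HHH^{s_c}$ such that
\[
(\tilde{w}_{0,n},\tilde{w}_{1,n})\simP \sum_{j\geq 1} \BA{W^{j}_{\lin,n}}(0)
\]
in the full (unrestricted) sense, and such that $(w_{0,n},w_{1,n})(r)=(\tilde{w}_{0,n},\tilde{w}_{1,n})(r)$ for $|x|>\rho_n$. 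Let $\tilde w_n$ denote the solution of (\ref{Eqn:WaveSup}) with data $\BA{\tilde w_{0,n}}$. Since the profiles $W^j_{\lin}$, their nonlinear counterparts $W^j$, and the parameters $(\lambda_{j,n},t_{j,n})$ are unchanged, the assumptions of Proposition \ref{Prop:Perturb} (i.e.\ scattering of each $W^j$ forward in time, or (\ref{Eqn:limsup})) pass directly to the sequence $\{\BA{\tilde w_{0,n}}\}_n$.

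Applying Proposition \ref{Prop:Perturb} to the sequence $\{\BA{\tilde w_{0,n}}\}_n$ and the times $\theta_n$, for $n$ large $\tilde w_n$ is defined on $[0,\theta_n]$, $\|\tilde w_n\|_{L^{4m}_t((0,\theta_n),L^{4m}_x)}$ stays bounded, and the nonlinear-profile decomposition (\ref{Eqn:Conclu_Perturb}) holds. Remark \ref{Rem:Perturb}, extracting if needed a subsequence so that (\ref{Eqn:lim_exist}) is satisfied for every $j$, then produces the modulated linear profiles $V^j_{\lin,n}$ (associated to the limits $\lim -s_{j,n}/\lambda_{j,n}$, where $s_{j,n}=t_{j,n}-\theta_n$) and gives the full decomposition
\[
\BA{\tilde w_n}(\theta_n)\simP \sum_{j\geq 1} \BA{V^{j}_{\lin,n}}(0).
\]

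It remains to replace $\tilde w_n$ by $w_n$ in the region $|x|>\rho_n+\theta_n$. This is where finite speed of propagation enters: $w_n$ and $\tilde w_n$ both solve (\ref{Eqn:WaveSup}) and their Cauchy data coincide on $\{|x|>\rho_n\}$. By hypothesis $\theta_n<T_+(w_n)$, and by the previous paragraph $\theta_n<T_+(\tilde w_n)$ for $n$ large; the standard finite speed of propagation at the $\HHH^{s_c}$ level (which follows from the local Cauchy theory recalled in Proposition \ref{Prop:CauchyPbHsc} via truncation and uniqueness) then yields
\[
\vec w_n(t,x)=\vec{\tilde w}_n(t,x),\qquad |x|>\rho_n+t,\quad 0\leq t\leq \theta_n,
\]
for $n$ large. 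Specializing to $t=\theta_n$ shows that $\vec w_n(\theta_n)$ and $\vec{\tilde w}_n(\theta_n)$ agree on $\{|x|>\rho_n+\theta_n\}$, so by the very definition (\ref{Eq:sim_p_out}) the bounded sequence $\{\vec{\tilde w}_n(\theta_n)\}_n$ witnesses the claimed outside-ball profile decomposition
\[
\BA{w_n}(\theta_n)\simP \sum_{j\geq 1} \BA{V^{j}_{\lin,n}}(0),\qquad |x|>\rho_n+\theta_n.
\]

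The main obstacle is genuinely conceptual rather than computational: one must ensure that the auxiliary sequence $\tilde w_n$ exists on the full interval $[0,\theta_n]$ (supplied by Proposition \ref{Prop:Perturb}, which is why its hypotheses are assumed), and then invoke finite speed of propagation at the supercritical regularity $s_c>1$. The negative-time statement follows by the symmetry $t\mapsto -t$.
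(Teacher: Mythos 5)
Your proof is correct and takes exactly the route the paper intends: the paper simply states that Proposition \ref{Prop:Perturb_outside} ``follows readily from Proposition \ref{Prop:Perturb}, Remark \ref{Rem:Perturb} and finite speed of propagation,'' and your argument — introduce the witnessing full-space sequence $\{\vec{\tilde w}_{0,n}\}_n$, apply Proposition \ref{Prop:Perturb} and Remark \ref{Rem:Perturb} to it, then match $w_n$ and $\tilde w_n$ on $\{|x|>\rho_n+t\}$ by finite speed of propagation — is precisely the intended fleshing-out of that one-liner.
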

Proposition \ref{Prop:Perturb_outside} follows readily from Proposition \ref{Prop:Perturb}, Remark \ref{Rem:Perturb} and finite speed of propagation.

\section{Generalized energies for the linear equation}

\label{Sec:energy}

\subsection{Definition}
Recall that $m=\frac{p-1}{2}$, and that all the functions are implicitely assumed to be radial in the space variable.
\begin{definition}
\label{D:Lm_energy}
Let $w_{\lin}$ be a solution of the linear wave equation
\begin{equation}
\label{Eqn:LW}
\left\{
\begin{aligned}
\partial_{tt}w_{\lin}-\triangle w_{\lin}&=0\\
\BA{w_{\lin}}(0)& : =(w_0,w_1)
\end{aligned}
\right.
\end{equation}
 The \emph{$L^m$-generalized energy} (in brief \emph{$L^m$-energy}) of $w_{\lin}$ is the time-dependent quantity:
$$ E_m[w_{\lin}](t)=\int_0^{\infty} |\partial_r(rw_{\lin})|^m+|\partial_t(rw_{\lin})|^m\,dr.$$
\end{definition}
Note that the $L^2$-generalized energy is (up to a multiplicative constant) the standard energy of $w_{\lin}$, which is well-defined, and independent of $t$, if $(w_0,w_1)\in \HHH^1$. In this section, we will prove that the $L^m$-generalized energy is well defined if $(w_0,w_1)\in \HHH^{s_c}$ and is almost conserved with time, in the sense that the $L^m$-energy of a solution at any time is comparable to the energy at $t=0$. We will also prove a lower bound for the exterior generalized energy outside a wave cone, analog to the exterior energy estimate used to classify solutions of the energy-critical wave equation (see \cite[Lemma 4.2]{DuyKenMerSmall}).

\subsection{Preliminary estimates}

\begin{lem}

If $\phi\in \dot{H}^{s_c-1}$, then $r^{1-\frac{2}{m}}\phi\in L^m$, and
\begin{equation}
\| r^{1-\frac{2}{m}}\phi \|_{L^{m}} \lesssim \| \phi \|_{\dot{H}^{s_c-1}}.
\label{Eqn:HardyPosOrigin1}
\end{equation}
If $\phi\in \dot{H}^{s_c}$, then $r^{1-\frac{2}{m}}\partial_r\phi\in L^m$, $r^{-\frac{2}{m}}\phi\in L^m$, $\phi\in L^{3m}$,  and
\begin{equation}
\| r^{1-\frac{2}{m}} \partial_{r} \phi \|_{L^{m}} \lesssim  \| \phi \|_{\dot{H}^{s_c}},
\label{Eqn:HardyDerOrigin}
\end{equation}
and
\begin{equation}
\left\| r^{-\frac{2}{m}}\phi \right\|_{L^{m}} +\left\|\phi\right\|_{L^{3m}}\lesssim \| r^{1-\frac{2}{m}} \partial_{r} \phi \|_{L^{m}}.
\label{Eqn:HardyPosInfty}
\end{equation}
\label{Lem:HardyIneq}
\end{lem}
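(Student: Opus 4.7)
My strategy is to reduce everything to one-dimensional estimates via the substitution $u(r):=r\phi(r)$. A direct computation of the three-dimensional weighted $L^m$-norms gives
\begin{align*}
\|r^{1-\frac{2}{m}}\phi\|_{L^m}^m &\approx \int_0^\infty |u|^m\,dr, \\
\|r^{-\frac{2}{m}}\phi\|_{L^m}^m &\approx \int_0^\infty |\phi|^m\,dr, \\
\|r^{1-\frac{2}{m}}\partial_r\phi\|_{L^m}^m &\approx \int_0^\infty |r\partial_r\phi|^m\,dr, \\
\|\phi\|_{L^{3m}}^{3m} &\approx \int_0^\infty |\phi|^{3m} r^2\,dr.
\end{align*}
Combined with the classical identification of radial homogeneous Sobolev spaces on $\mathbb{R}^3$ with one-dimensional Sobolev spaces of odd functions on $\mathbb{R}$, namely $\|\phi\|_{\dot{H}^s(\mathbb{R}^3)} \approx \|u\|_{\dot{H}^s(\mathbb{R})}$ for the fractional exponents $s\in\{s_c-1,s_c\}\subset(0,3/2)$, this reduces the lemma to inequalities on the half-line.

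With this reduction, (\ref{Eqn:HardyPosOrigin1}) is precisely the 1D Sobolev embedding $\dot{H}^{1/2-1/m}(\mathbb{R})\hookrightarrow L^m(\mathbb{R})$ applied to $u$ (note $1/m=1/2-(1/2-1/m)$). For (\ref{Eqn:HardyDerOrigin}), I would write $r\partial_r\phi=u'-u/r$ and use the triangle inequality: the term $\|u'\|_{L^m(dr)}$ is controlled by $\|u\|_{\dot{H}^{3/2-1/m}(\mathbb{R})}\approx\|\phi\|_{\dot{H}^{s_c}}$ via 1D Sobolev, while $\|u/r\|_{L^m(dr)}$ is controlled by $\|u'\|_{L^m(dr)}$ via the classical 1D Hardy inequality (using $u(0)=0$).

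For (\ref{Eqn:HardyPosInfty}), set $M:=\|r^{1-2/m}\partial_r\phi\|_{L^m}$. Since $\phi\in\dot{H}^{s_c}$ is radial with $s_c<3/2$, $\phi(r)\to 0$ as $r\to\infty$. The fundamental theorem of calculus combined with H\"older's inequality with exponents $(m, m/(m-1))$ in the variable $s$ then yields the pointwise bound $|\phi(r)|\lesssim r^{-1/m}\,M$. A one-dimensional Hardy-type estimate for the operator $f\mapsto\int_r^\infty f(s)/s\,ds$ (proved by duality against $L^{m/(m-1)}(dr)$ together with the standard 1D Hardy inequality), applied with $f(s)=s|\partial_s\phi(s)|$, then yields $\|\phi\|_{L^m(dr)}\lesssim M$, which is the first half of (\ref{Eqn:HardyPosInfty}). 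For the $L^{3m}$ piece, I would interpolate via the pointwise bound:
\[
  |\phi(r)|^{3m}\,r^2 \leq |\phi(r)|^m\,(r^{-1/m}M)^{2m}\,r^2 = M^{2m}\,|\phi(r)|^m,
\]
and integrate in $r$, using the $L^m(dr)$ bound just established. The only real subtlety is a clean invocation of the radial Sobolev identification at the two fractional exponents $s_c-1$ and $s_c$; this is standard (via odd extension of $u$ and interpolation from the $s=0,1$ cases), but needs to be spelled out carefully before the 1D Sobolev and Hardy inequalities can be used off the shelf.
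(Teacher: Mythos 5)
Your proof is correct, but it takes a genuinely different route from the paper. You reduce everything to the half-line by the substitution $u = r\phi$ and the Hankel-transform isometry $\|\phi\|_{\dot H^s(\mathbb{R}^3)}\approx \|u\|_{\dot H^s(\mathbb{R})}$ (for $u$ extended oddly), after which (\ref{Eqn:HardyPosOrigin1}) and (\ref{Eqn:HardyDerOrigin}) become the $1$D fractional Sobolev embedding $\dot H^{1/2-1/m}(\mathbb{R})\hookrightarrow L^m(\mathbb{R})$ and the classical $1$D Hardy inequality, and (\ref{Eqn:HardyPosInfty}) follows from the pointwise decay $|\phi(r)|\lesssim r^{-1/m}M$ together with the dual Hardy inequality $\|\int_r^\infty f(s)\,ds/s\|_{L^m}\lesssim\|f\|_{L^m}$. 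The paper instead cites \cite{KenMerleSuper} (Lemma~3.2) for (\ref{Eqn:HardyPosOrigin1})--(\ref{Eqn:HardyDerOrigin}), and for (\ref{Eqn:HardyPosInfty}) it integrates $\partial_r(|\phi|^m)$ against $r$ on $(R,\infty)$, uses H\"older and Young to absorb, and gets the $L^{3m}$ bound from the endpoint Sobolev inequality applied to $|\phi|^{m/2}$. Your approach is more self-contained and arguably more transparent, while the paper's integration-by-parts computation has the advantage of producing, as a by-product, the localized inequality (\ref{Eqn:useful}) with the boundary term $R|\phi(R)|^m$, which is exactly what Lemma~\ref{Lem:UtoV} needs; your route does not hand this auxiliary estimate over for free. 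Two small points: the identification $\|\phi\|_{\dot H^s(\mathbb{R}^3)}\approx\|u\|_{\dot H^s(\mathbb{R})}$ should not be argued by interpolation from $s=0,1$ since you also need $s=s_c\in(1,3/2)$; it is cleaner (and covers all $s$ at once) to use the explicit radial Fourier formula $\widehat\phi(\xi)=|\xi|^{-1}\int_0^\infty \sin(r|\xi|)\,r\phi(r)\,dr$, as the paper itself does in Result~\ref{Res:BoundOp}. Also, for the $1$D Hardy step in (\ref{Eqn:HardyDerOrigin}) you should briefly justify $u(0)=0$: since $u$ is odd and $u\in\dot H^{s_c}(\mathbb{R})$ with $s_c>1/2$, it is continuous, so this is immediate, but it is worth saying.
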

\begin{proof}
Estimates (\ref{Eqn:HardyPosOrigin1}) and (\ref{Eqn:HardyDerOrigin}) are given by Lemma 3.2 of \cite{KenMerleSuper}.

It remains to prove (\ref{Eqn:HardyPosInfty}). We start with the bound of the norm of $r^{-\frac{2}{m}}\phi$ in $L^m$. For further use we will prove a slightly more general estimate. By density, we can assume $\phi\in C_0^{\infty}$. Let $R\geq 0$. An integration by parts gives:
$$-\int_R^{\infty} |\phi(r)|^m\,dr-|\phi(R)|^mR=\int_R^{\infty} r\partial_r(|\phi|^m)\,dr= m \int_R^{\infty} r \partial_r\phi \phi|\phi|^{m-2} \,dr.$$
By H\"older's inequality,
$$R|\phi(R)|^m+\int_R^{\infty} |\phi|^m\,dr\leq m\left(\int_R^{\infty} r^m|\partial_r\phi|^m\,dr\right)^{\frac 1m}\left(\int_R^{\infty}|\phi|^m\,dr\right)^{\frac{m-1}{m}}.$$
Combining with Young's inequality
\begin{equation}
\label{Eqn:Young}
ab\leq \frac{1}{m}(ca)^m+\frac{m-1}{m}(b/c)^{\frac{m}{m-1}},\quad a,b,c>0,
\end{equation}
we obtain
\begin{equation}
 \label{Eqn:useful}
 \int_R^{\infty} |\phi|^m\,dr+R |\phi(R)|^m \lesssim \int_R^{\infty} r^m|\partial_r\phi|^m\,dr.
\end{equation}
Letting $R=0$, we obtain the bound of the first term of the left-hand side in (\ref{Eqn:HardyPosInfty}).

We next prove the other bound in (\ref{Eqn:HardyPosInfty}). By the critical Sobolev inequality,
\begin{equation*}
 \|\phi\|_{L^{3m}}=\left\| |\phi|^{\frac{m}{2}}\right\|^{\frac{2}{m}}_{L^{6}} \lesssim \left\| \partial_r\left( |\phi|^{\frac m2}\right)\right\|_{L^2}^{\frac{2}{m}}
\end{equation*}
By H\"older inequality, and using that $\big|\partial_r |\phi| \big |=|\partial_r \phi|$ on the set $\{\phi \neq 0\}$, we deduce
\begin{multline*}
 \|\phi\|_{L^{3m}}\lesssim \left\|\partial_r \phi\,|\phi|^{\frac{m-2}{2}}\right\|_{L^2}^{\frac{2}{m}} \lesssim \left\|r^{1-\frac{2}{m}} \partial_r \phi\right\|^{\frac{2}{m}}_{L^m}\left\|r^{\frac{2}{m}-1}|\phi|^{\frac{m-2}{2}}\right\|^{\frac{2}{m}}_{L^{\frac{2m}{m-2}}}\\ = \left\|r^{1-\frac{2}{m}} \partial_r \phi\right\|^{\frac{2}{m}}_{L^m}\left\|r^{-\frac{2}{m}}\phi\right\|^{\frac{m-2}{m}}_{L^{m}},
\end{multline*}
and the result follows, using the bound of the first term of the left-hand side in (\ref{Eqn:HardyPosInfty}).
\end{proof}

\begin{lem}
Let $R\in [0,\infty)$. Then for all $\phi \in \dot{H}^1$,
\begin{equation}
\int_{R}^{\infty}  r^{2} |\partial_{r} \phi|^{2} \, dr = \int_{R}^{\infty} |\partial_{r} (r\phi)|^{2} \, dr  + R |\phi(R)|^{2}.
\label{Eqn:UtoVprec}
\end{equation}
and for all $\phi\in \dot{H}^{s_c}$,
\begin{equation}
\int_{R}^{+\infty}  r^{m} |\partial_{r} \phi|^{m} \, dr \approx  \int_{R}^{\infty} |\partial_{r} (r\phi)|^{m} \, dr  + R |\phi(R)|^{m}.
\label{Eqn:UtoV}
\end{equation}
\label{Lem:UtoV}
\end{lem}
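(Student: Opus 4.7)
The two estimates are essentially decoupled, so I would treat them in turn: (\ref{Eqn:UtoVprec}) is a direct integration by parts, while (\ref{Eqn:UtoV}) reduces to a pair of Hardy-type controls, the upper bound being the only real computation. Throughout I would establish everything first for $C_c^\infty$ radial functions and pass to the limit in $\dot H^1$ (resp.\ $\dot H^{s_c}$) at the end.

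For (\ref{Eqn:UtoVprec}), I would expand
\[
|\partial_r(r\phi)|^2 = \phi^2 + 2r\phi\,\partial_r\phi + r^2|\partial_r\phi|^2
\]
and handle the cross-term via $2r\phi\,\partial_r\phi = r\,\partial_r(\phi^2)$ and integration by parts:
\[
\int_R^\infty 2r\phi\,\partial_r\phi\,dr = \bigl[r\phi^2\bigr]_R^{\infty} - \int_R^\infty \phi^2\,dr = -R|\phi(R)|^2 - \int_R^\infty \phi^2\,dr,
\]
where the boundary contribution at $+\infty$ vanishes for $C_c^\infty$ and persists by density in $\dot H^1$. Substituting gives (\ref{Eqn:UtoVprec}) as an exact identity.

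For (\ref{Eqn:UtoV}) I would use the two pointwise identities $r\partial_r\phi = \partial_r(r\phi) - \phi$ and $\partial_r(r\phi) = \phi + r\partial_r\phi$, which by the triangle inequality reduce the claim to comparing $\int_R^\infty |\phi|^m\,dr$ with both sides. The lower bound is immediate: from $|\partial_r(r\phi)|^m \lesssim |\phi|^m + r^m|\partial_r\phi|^m$, combined with (\ref{Eqn:useful}) from the proof of Lemma \ref{Lem:HardyIneq} (which supplies both $\int_R^\infty|\phi|^m\,dr \lesssim \int_R^\infty r^m|\partial_r\phi|^m\,dr$ and $R|\phi(R)|^m \lesssim \int_R^\infty r^m|\partial_r\phi|^m\,dr$), one gets
\[
\int_R^\infty |\partial_r(r\phi)|^m\,dr + R|\phi(R)|^m \lesssim \int_R^\infty r^m|\partial_r\phi|^m\,dr.
\]

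The main obstacle is the opposite direction, which is equivalent to the Hardy-type bound
\[
\int_R^\infty |\phi|^m\,dr = \int_R^\infty \frac{|r\phi|^m}{r^m}\,dr \lesssim \int_R^\infty |\partial_r(r\phi)|^m\,dr + R|\phi(R)|^m.
\]
Setting $\psi := r\phi$ and exploiting $r^{-m} = \frac{1}{1-m}\frac{d}{dr}(r^{1-m})$ (valid since $m = \frac{p-1}{2} > 2$), integration by parts gives
\[
\int_R^\infty r^{-m}|\psi|^m\,dr = \frac{R|\phi(R)|^m}{m-1} + \frac{m}{m-1}\int_R^\infty r^{1-m}|\psi|^{m-2}\psi\,\partial_r\psi\,dr,
\]
the boundary term at infinity being zero by truncation. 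Hölder's inequality bounds the last integral by $\bigl(\int_R^\infty r^{-m}|\psi|^m\,dr\bigr)^{(m-1)/m}\bigl(\int_R^\infty |\partial_r\psi|^m\,dr\bigr)^{1/m}$, and a use of Young's inequality (\ref{Eqn:Young}) with a small parameter absorbs the first factor back into the left-hand side. Combining with $r^m|\partial_r\phi|^m \lesssim |\partial_r(r\phi)|^m + |\phi|^m$ closes the upper bound. Density of $C_c^\infty$ radial functions in $\dot H^{s_c}$ together with the controls from Lemma \ref{Lem:HardyIneq} justifies passing from the smooth case to a general $\phi \in \dot H^{s_c}$; the only real subtlety is this weighted integration-by-parts step, everything else being bookkeeping.
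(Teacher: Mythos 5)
Your proof is correct and follows essentially the same route as the paper: the $\gtrsim$ direction via $\partial_r(r\phi)=\phi+r\partial_r\phi$ plus the Hardy estimate (\ref{Eqn:useful}), and the $\lesssim$ direction via the weighted integration by parts (your identity in $\psi=r\phi$ is the same computation the paper performs on $\partial_r(|r\phi|^m)r^{1-m}$) followed by H\"older and Young. The only difference is that you spell out the integration by parts for (\ref{Eqn:UtoVprec}), which the paper omits as straightforward.
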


\begin{rem}
\begin{enumerate}
\item Notation: if $R:=0$ then $R |\phi(R)|^{2}:=0$ and
$R |\phi(R)|^{m}:=0$. \\
\item Note that $|\phi(R)|^{m}$ is well-defined for $R>0$ since $\phi\in \dot{H}^{s_c}$ is a continuous function of the radial variable outside the origin.
\end{enumerate}
\end{rem}

\begin{proof}
The proof of (\ref{Eqn:UtoVprec}) is straightforward and therefore omitted.

We prove (\ref{Eqn:UtoV}). By density we can assume $\phi\in C^{\infty}$.

\medskip

\noindent\emph{Proof of the estimate $\gtrsim$}.

\begin{equation*}
 \int_R^\infty |\partial_r(r\phi)|^m\,dr=\int_R^{\infty} |\phi+r\partial_r\phi|^m\,dr
 \lesssim \int_R^{\infty} |\phi|^m\,dr+\int_R^{\infty} |\partial_r\phi|^mr^m\,dr,
\end{equation*}
Combining with the estimate (\ref{Eqn:useful}) in the proof of Lemma \ref{Lem:HardyIneq}, we obtain
\begin{equation*}
\int_{R}^{+\infty}  r^{m} |\partial_{r} \phi|^{m} \, dr \gtrsim \int_{R}^{\infty} |\partial_{r} (r\phi)|^{m} \, dr  + R |\phi(R)|^{m}.
\end{equation*}

\medskip

\noindent\emph{Proof of the estimate $\lesssim$}.
In view of
$$\int_{R}^{\infty}|\partial_r\phi|^mr^m\,dr=\int_R^{\infty} |\partial_r(r\phi)-\phi|^m\,dr\lesssim \int_R^{\infty} |\partial_r(r\phi)|^m\,dr+\int_R^{\infty}|\phi|^m\,dr,$$
we are reduced to prove
\begin{equation}
 \label{boundLm}
 \int_R^{\infty} |\phi|^m\,dr\lesssim R|\phi(R)|^m+\int_R^{\infty}|\partial_r(r\phi)|^m\,dr.
\end{equation}
For this, we write
\begin{multline*}
 \int_R^{\infty}  \partial_r(r\phi) \phi |\phi|^{m-2}\,dr=\frac 1m\int_R^{\infty} \partial_r(|r\phi|^m)\frac{1}{r^{m-1}}\,dr\\
=\frac{m-1}{m} \int_R^{\infty} |\phi|^m\,dr-\frac{R}{m}|\phi(R)|^m.
\end{multline*}
By H\"older's inequality, we obtain
$$\frac{m-1}{m} \int_R^{\infty} |\phi|^m\,dr\leq \frac{R}{m}|\phi(R)|^m +\left(\int_R^{\infty} |\partial_r(r\phi)|^m\,dr\right)^{\frac{1}{m}}\left(\int_R^{\infty} |\phi|^m\,dr\right)^{\frac{m-1}{m}},$$
which yields (\ref{boundLm}) using Young's inequality (\ref{Eqn:Young}).
\end{proof}
\begin{rem}
 By Lemma \ref{Lem:HardyIneq}, we see that $E_m[w_{\lin}](t)$ is well defined if the initial data $(w_0,w_1)$ is in $\HHH^{s_c}$. By Lemma \ref{Lem:UtoV},
 $$E_m[w_{\lin}](t)\approx \int_0^{\infty}r^m|\partial_rw_{\lin}(t,r)|^m\,dr+\int_0^{\infty}r^m|\partial_tw_{\lin}(t,r)|^m\,dr.$$
\label{Rem:EquivGenNrj}
\end{rem}

\subsection{Almost conservation of the generalized energy}
\begin{lem}
Let $w_{\lin}$ be a solution of the linear wave equation, and $E_m(t)=E_m[w_{\lin}](t)$ be the $L^m$-energy given by Definition \ref{D:Lm_energy}. Then
\begin{equation}
\label{Eqn:Conserv}
\forall t\in \RR,\quad E_m(t) \approx E_m(0).
\end{equation}
\label{Lem:ConservQuantLin}
\end{lem}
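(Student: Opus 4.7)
The plan is to reduce to the one-dimensional wave equation on the half-line, exploit the d'Alembert representation, and then observe that the generalized energy is essentially the $L^m$-norm of the traveling profile, which is manifestly time-independent.

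By density (using Lemmas \ref{Lem:HardyIneq} and \ref{Lem:UtoV}, which tell us that $E_m$ is a well-defined and continuous functional on $\HHH^{s_c}$), it suffices to prove \eqref{Eqn:Conserv} for smooth compactly supported radial data. For such data, set $u(t,r):=rw_{\lin}(t,r)$. The radial wave equation on $\RR^3$ becomes the one-dimensional wave equation
\begin{equation*}
\partial_{tt}u-\partial_{rr}u=0,\qquad r>0,\quad u(t,0)=0.
\end{equation*}
By d'Alembert's formula, $u(t,r)=f(r-t)+g(r+t)$ for some $C^1$ functions $f,g$, and the Dirichlet condition $u(t,0)=0$ forces $g(s)=-f(-s)$ for all $s\in\RR$. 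Setting $F:=f'$, this gives $g'(s)=F(-s)$, and
\begin{equation*}
\partial_r u(t,r)=F(r-t)+F(-(r+t)),\qquad \partial_t u(t,r)=-F(r-t)+F(-(r+t)).
\end{equation*}

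The second key step is the elementary equivalence, valid for all $m>1$ and all real numbers $a,b$:
\begin{equation*}
|a|^m+|b|^m\approx_m |a+b|^m+|a-b|^m,
\end{equation*}
which follows from $|a|,|b|\le \tfrac12(|a+b|+|a-b|)$ and the triangle inequality. Applying this with $a=F(r-t)$ and $b=F(-(r+t))$, and recalling from Remark \ref{Rem:EquivGenNrj} that $E_m(t)=\int_0^\infty |\partial_r u(t,r)|^m+|\partial_t u(t,r)|^m\,dr$, we obtain
\begin{equation*}
E_m(t)\approx \int_0^\infty |F(r-t)|^m\,dr+\int_0^\infty |F(-(r+t))|^m\,dr.
\end{equation*}
A change of variables $s=r-t$ in the first integral and $s=-(r+t)$ in the second yields
\begin{equation*}
E_m(t)\approx \int_{-t}^\infty|F(s)|^m\,ds+\int_{-\infty}^{-t}|F(s)|^m\,ds=\int_{-\infty}^\infty|F(s)|^m\,ds,
\end{equation*}
with implicit constants depending only on $m$. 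The right-hand side does not depend on $t$, so specializing to $t=0$ gives $E_m(t)\approx E_m(0)$, proving \eqref{Eqn:Conserv}.

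The main point to verify is the density/approximation argument at the start: one needs to check that smooth compactly supported radial data are dense in $\HHH^{s_c}$ for the quantity $E_m$, which follows from the norm equivalences in Lemmas \ref{Lem:HardyIneq} and \ref{Lem:UtoV} together with the continuity of the linear flow on $\HHH^{s_c}$. Everything else is the clean d'Alembert computation above, so I do not anticipate any real obstacle.
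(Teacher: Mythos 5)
Your argument is correct and matches the paper's proof essentially exactly: both use the d'Alembert representation $rw_{\lin}=f(t+r)-f(t-r)$ (your $F=f'$ is the paper's $\dot f$), the same elementary equivalence $|a|^m+|b|^m\approx|a+b|^m+|a-b|^m$, and the same change of variables to identify $E_m(t)$ up to constants with the time-independent quantity $\int_{\RR}|\dot f|^m$. The density reduction at the start is a slight embellishment not spelled out in the paper, but it is harmless and sound.
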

\begin{proof}
Notice that if $w_{\lin}$ is the solution of the linear wave equation with data
$(w_{0},w_{1}) \in \HHH^{s_c} $, then
there exists $f \in L^{m}_{loc} (\mathbb{R})$ such that $\dot{f} \in L^{m} (\mathbb{R})$ and
\begin{equation}
r w_{\lin}(t,r) = f(t+r) - f(t-r)
\label{Eqn:wf}
\end{equation}
% Indeed this follows from
%
% \begin{align*}
% \partial_{r} (r w_{0} ) \pm r w_{1} \in L^{m} (\mathbb{R}^{+}),
% \end{align*}
More precisely, if $r>0$,
\begin{align*}
\dot{f}(r) = \frac{1}{2} \big( \partial_{r} (r w_{0}) (r) + r w_{1}(r) \big), \\
\dot{f}(-r) = \frac{1}{2}  \big( \partial_{r} (r w_{0}) (r) - r w_{1}(r) \big),
\end{align*}
and
\begin{equation}
f(s) =
\begin{cases}
s >0\,: & \frac{1}{2} s w_{0} (s) + \frac{1}{2} \int_{0}^{s} (\sigma w_{1})(\sigma) \, d \sigma \\
s< 0\,:  & \frac{1}{2}  s w_{0} (-s) + \frac{1}{2} \int_{0}^{-s} (\sigma w_{1}) (\sigma) \, d \sigma.
\end{cases}
\nonumber
\end{equation}
These statements are derived from the equation $\partial_{tt}(rw_{\lin})-\partial_{rr}(rw_{\lin})=0$ and Lemma \ref{Lem:HardyIneq}.

Hence, for $t\in \RR$, using $|a+b|^m+|a-b|^m\approx |a|^m+|b|^m,\quad (a,b)\in \RR^2$,
\begin{align*}
E_m(t)&=\int_0^{\infty}|\dot{f}(t+r)+\dot{f}(t-r)|^m+|\dot{f}(t+r)-\dot{f}(t-r)|^m\,dr\\
&\approx \int_0^{\infty}|\dot{f}(t+r)|^m+|\dot{f}(t-r)|^m\,dr\approx \int_{-\infty}^{+\infty}|\dot{f}(r)|^m\,dr,
\end{align*}
which is independent of $t$. This concludes the proof of (\ref{Eqn:Conserv}).
\end{proof}
\subsection{Bound from below of the exterior generalized energy}
\begin{lem}
Let $w_{\lin}$ be a solution of the linear wave equation, with initial data in $\HHH^{s_c}$ and $R>0$. Let
\begin{equation*}
E_{m,R}(t) := \int_{R + |t|}^{\infty} (\partial_{t}(r w_{\lin}))^{m} + ( \partial_{r}(r w_{\lin}) )^{m} \, dr.
\end{equation*}
Then
\begin{equation}
E_{m,R}(t) \gtrsim E_{m,R}(0), \; \text{ for all } t \geq 0 \, \text{ or for all } t \leq 0.
\label{Eqn:EstIR}
\end{equation}
\label{Lem:Nrjchannel}
\end{lem}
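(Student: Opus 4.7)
The plan is to reuse the d'Alembert-type decomposition from the proof of Lemma~\ref{Lem:ConservQuantLin}. Writing $rw_{\lin}(t,r) = f(t+r) - f(t-r)$ with $\dot f\in L^m(\RR)$, we have
\[
\partial_r(rw_{\lin}) = \dot f(t+r) + \dot f(t-r),\qquad \partial_t(rw_{\lin}) = \dot f(t+r) - \dot f(t-r),
\]
so the elementary equivalence $|a+b|^m+|a-b|^m\approx |a|^m+|b|^m$ (valid for $m\geq 1$) gives
\[
E_{m,R}(t) \approx \int_{R+|t|}^{\infty}\bigl(|\dot f(t+r)|^m + |\dot f(t-r)|^m\bigr)\,dr.
\]

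Next, I would perform the substitutions $s=t+r$ and $s=t-r$ in the two pieces. For $t\geq 0$ this yields
\[
E_{m,R}(t)\approx \int_{R+2t}^{\infty}|\dot f(s)|^m\,ds + \int_{-\infty}^{-R}|\dot f(s)|^m\,ds,
\]
while for $t\leq 0$ one obtains symmetrically
\[
E_{m,R}(t)\approx \int_{R}^{\infty}|\dot f(s)|^m\,ds + \int_{-\infty}^{2t-R}|\dot f(s)|^m\,ds.
\]
Denote $A:=\int_R^{\infty}|\dot f|^m$ and $B:=\int_{-\infty}^{-R}|\dot f|^m$, so that $E_{m,R}(0)\approx A+B$. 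Dropping the first (nonnegative) piece in the first identity gives $E_{m,R}(t)\gtrsim B$ for every $t\geq 0$, and dropping the second piece in the second identity gives $E_{m,R}(t)\gtrsim A$ for every $t\leq 0$.

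To conclude, observe that one of $A,B$ is at least $(A+B)/2$. If $B\geq A$, then $E_{m,R}(t)\gtrsim B\gtrsim E_{m,R}(0)$ for all $t\geq 0$; otherwise $A\geq B$ and $E_{m,R}(t)\gtrsim A\gtrsim E_{m,R}(0)$ for all $t\leq 0$. Either way, (\ref{Eqn:EstIR}) holds in the appropriate half-line.

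There is no real obstacle beyond bookkeeping: the d'Alembert representation automatically encodes the dichotomy between outgoing and incoming radiation, and the favorable time direction is precisely the one in which the stronger of these two pieces propagates away from the wave cone $\{|x|\leq R+|t|\}$. The only mildly delicate point is the $|a\pm b|^m$ equivalence, which follows from $\max(|a+b|,|a-b|)\geq \max(|a|,|b|)$ together with the triangle inequality $|a\pm b|^m\leq 2^{m-1}(|a|^m+|b|^m)$.
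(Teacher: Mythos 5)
Your proof is correct and follows essentially the same route as the paper: the d'Alembert representation $rw_{\lin}=f(t+r)-f(t-r)$, the equivalence $|a+b|^m+|a-b|^m\approx|a|^m+|b|^m$, the change of variables giving $\int_{-\infty}^{-R}|\dot f|^m+\int_{R+2|t|}^{\infty}|\dot f|^m$ for $t\geq 0$ (and symmetrically for $t\leq 0$), and the final dichotomy on which of the two tails of $\dot f$ carries at least half of $E_{m,R}(0)$. Nothing is missing.
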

\begin{rem}
Lemma \ref{Lem:Nrjchannel} is already known in the energy-critical case $s_c=1$, $m=2$. More precisely, for finite energy solutions of the linear wave equation (see \cite{DuyKenMerSmall}):
\begin{equation*}
E_{2,R}(t) \geq \frac 12 E_{2,R}(0), \; \text{ for all } t \geq 0 \, \text{ or for all } t \leq 0.
\end{equation*}
\label{Rem:NrjchannelH1}
\end{rem}

\begin{proof}
With the notations of the previous proof, we have
\begin{align*}
E_{m,R}(t) & = \int_{R + |t|}^{\infty} |\dot{f}(t+r) + \dot{f}(t-r)|^{m} \, dr + \int_{R + |t|}^{\infty} |\dot{f}(t+r) - \dot{f}(t-r)|^{m} \, dr \\
&\approx \int_{R + |t|}^{\infty} |\dot{f}(t+r)|^{m} + |\dot{f}(t-r)|^{m} \, dr.
\end{align*}
Denoting by $\tE_{m,R}(r)$ the quantity  appearing at the last line of the previous inequality, we will prove
\begin{align}
\tE_{m,R}(t) \geq \frac 12 \tE_{m,R}(0), \; \text{ for all } t \geq 0 \, \text{ or for all } t \leq 0,
\label{Eqn:EstIR_bis}
\end{align}
which will conclude the proof of (\ref{Eqn:EstIR}).

If $t\geq 0$, we have
$$\tE_{m,R}(t)=\int_{-\infty}^{-R} |\dot{f}(r)|^m\,dr+\int_{R+2t}^{+\infty} |\dot{f}(r)|^m\,dr,$$
and if $t\leq 0$,
$$\tE_{m,R}(t)=\int_{R}^{+\infty} |\dot{f}(r)|^m\,dr+\int_{-\infty}^{2t-R} |\dot{f}(r)|^m\,dr.$$
Using that
$$\tE_{m,R}(0)=\int_{R}^{+\infty} |\dot{f}(r)|^m\,dr+\int_{-\infty}^{-R} |\dot{f}(r)|^m\,dr,$$
we deduce that if $\int_{-\infty}^{-R}|\dot{f}(r)|^m\,dr\geq \int_{R}^{+\infty}|\dot{f}(r)|^m\,dr$, then
$$\forall t\geq 0, \quad \tE_{m,R}(t)\geq \frac 12 \tE_{m,R}(0),$$
and if $\int_{-\infty}^{-R}|\dot{f}(r)|^m\,dr\leq \int_{R}^{+\infty}|\dot{f}(r)|^m\,dr$
then
$$\forall t\leq 0, \quad \tE_{m,R}(t)\geq \frac 12 \tE_{m,R}(0),$$
which proves (\ref{Eqn:EstIR_bis}) as announced.
\end{proof}
\subsection{Linear approximation for data with small generalized energy}

\begin{prop}
\label{Prop:CauchyPb}
Let $A>0$. There exists $\delta_2=\delta_2(A)$ with the following properties.
Let $w$ be a solution of (\ref{Eqn:WaveSup}) with data $\BA{w_0} \in \HHH^{s_c} $ such that $\|\BA{w_0} \|_{\HHH^{s_c}}\leq A$. Let
\begin{align*}
\delta:=  \| r^{1-\frac{2}{m}} \partial_{r} w_{0} \|_{L^{m}} + \| r^{1-\frac{2}{m}} w_{1} \|_{L^{m}} \cdot
\end{align*}
Assume that $\delta \leq \delta_2$. Then $w$ is global,
\begin{equation*}
\| w \|_{L^{4m}_t(I,L^{4m}_x)} \lesssim  \delta^{\frac{3}{8}} \| \BA{w_0} \|^{\frac{5}{8}}_{\HHH^{s_c} }
\end{equation*}
and
\begin{multline*}
\| r^{1-\frac{2}{m}} ( \partial_{r} w - \partial_{r} w_{\lin}) \|_{ L_{t}^{\infty}(\RR,L_{x}^{m})}+\| r^{1-\frac{2}{m}} ( \partial_{t} w - \partial_{t} w_{\lin}) \|_{ L_{t}^{\infty}(\RR, L_{x}^{m})}\\
 \lesssim \delta^{\frac{3m}{4}} \|
\BA{w_0} \|^{\frac{5m}{4}+1}_{\HHH^{s_c} }.
\end{multline*}
\end{prop}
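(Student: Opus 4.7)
The plan is to combine three ingredients: the almost-conservation of the generalized $L^{m}$-energy (Lemma~\ref{Lem:ConservQuantLin}), the Hardy--Sobolev inequalities of Lemma~\ref{Lem:HardyIneq}, and the Strichartz theory \eqref{Eqn:Strich} underlying Proposition~\ref{Prop:CauchyPbHsc}.

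First, Lemmas~\ref{Lem:UtoV} and~\ref{Lem:HardyIneq} yield $E_{m}[w_{\lin}](0)^{1/m}\approx \delta$, which Lemma~\ref{Lem:ConservQuantLin} propagates to $E_{m}[w_{\lin}](t)^{1/m}\approx \delta$ for all $t$; the Sobolev-type bound $\|\phi\|_{L^{3m}}\lesssim\|r^{1-2/m}\partial_{r}\phi\|_{L^{m}}$ from Lemma~\ref{Lem:HardyIneq} then gives $\|w_{\lin}\|_{L^{\infty}_{t}L^{3m}_{x}}\lesssim \delta$. Combined with the Strichartz bound $\|w_{\lin}\|_{L^{5m/2}_{t}L^{5m}_{x}}\lesssim \|\BA{w_{0}}\|_{\HHH^{s_{c}}}$ and interpolation with $\theta=3/8$---consistent both in time, $\frac{1}{4m}=\frac{2(1-\theta)}{5m}$, and in space, $\frac{1}{4m}=\frac{\theta}{3m}+\frac{1-\theta}{5m}$---this gives
\[
\|w_{\lin}\|_{L^{4m}_{t,x}}\lesssim \delta^{3/8}\|\BA{w_{0}}\|^{5/8}_{\HHH^{s_{c}}}=:\eta.
\]
For $\delta_{2}=\delta_{2}(A)$ small enough so that $\eta\leq\delta_{0}$, Proposition~\ref{Prop:CauchyPbHsc} provides a global $w$ with $\|w\|_{L^{4m}_{t,x}}\leq 2\delta_{0}$ and $\|D^{s_{c}-1/2}w\|_{L^{4}_{t,x}}\lesssim \|\BA{w_{0}}\|_{\HHH^{s_{c}}}$.

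The quantitative bounds on $w$ and on $w-w_{\lin}$ would then be obtained by a simultaneous continuity argument under the bootstrap assumption $\|w\|_{L^{4m}_{t,x}}\leq 2\eta$. Setting $v:=w-w_{\lin}$, which solves the linear wave equation with zero data and forcing $F:=\iota |w|^{p-1}w$, Duhamel represents $v$ as a superposition of linear evolutions with data $(0,F(s))$, each having generalized $L^{m}$-energy $\approx \|r^{1-2/m}F(s)\|^{m}_{L^{m}_{x}}$ by Lemma~\ref{Lem:UtoV}. Applying Lemma~\ref{Lem:ConservQuantLin} to each and Minkowski's integral inequality gives
\[
\|r^{1-2/m}(\partial_{r}v,\partial_{t}v)\|_{L^{\infty}_{t}L^{m}_{x}}\lesssim \int_{\RR}\|r^{1-2/m}|w(s)|^{p}\|_{L^{m}_{x}}\,ds.
\]
A multilinear Hölder splitting the integrand into $p-1=2m$ factors of $w$ estimated in $L^{4m}_{t,x}$ (contributing $\|w\|^{p-1}_{L^{4m}_{t,x}}\lesssim(2\eta)^{p-1}=\delta^{3m/4}\|\BA{w_{0}}\|^{5m/4}$ under the bootstrap), one pointwise factor $r^{1/m}w$ controlled by the radial Sobolev inequality $\|r^{1/m}w\|_{L^{\infty}_{x}}\lesssim \|\BA{w_{0}}\|_{\HHH^{s_{c}}}$ (supplying the extra $\|\BA{w_{0}}\|_{\HHH^{s_{c}}}$), and a residual weighted term controlled via Lemma~\ref{Lem:HardyIneq}, produces the announced bound $\delta^{3m/4}\|\BA{w_{0}}\|^{5m/4+1}_{\HHH^{s_{c}}}$ on the difference. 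By Lemma~\ref{Lem:HardyIneq} the same quantity controls $\|v\|_{L^{\infty}_{t}L^{3m}_{x}}$, which for $\delta_{2}(A)$ small (using $3m/4-1>0$ since $m>2$) is $\ll \delta$; the triangle inequality with $\|w_{\lin}\|_{L^{\infty}_{t}L^{3m}_{x}}\lesssim \delta$ gives $\|w\|_{L^{\infty}_{t}L^{3m}_{x}}\lesssim \delta$, and the same interpolation as for $w_{\lin}$ (using $\|w\|_{L^{5m/2}_{t}L^{5m}_{x}}\lesssim \|\BA{w_{0}}\|_{\HHH^{s_{c}}}$ from \eqref{Eqn:Strich}) closes the bootstrap to $\|w\|_{L^{4m}_{t,x}}\lesssim \eta$.

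The main difficulty is precisely the multilinear Hölder decomposition above. The weighted space $r^{1-2/m}L^{m}_{x}$ is not admissible in the Strichartz sense, and a naive Hölder using only the Strichartz spatial exponents $L^{4m}_{x}$ and $L^{5m}_{x}$ cannot carry the weight for $m>2$ (one is forced into negative spatial exponents since $\tfrac{p-1}{4m}=\tfrac{1}{2}>\tfrac{1}{m}$). The radial Sobolev inequality is therefore essential: it strips off one pointwise factor of $w$ together with the weight $r^{1/m}$, which is exactly what accounts for the extra $+1$ in the exponent $5m/4+1$ of $\|\BA{w_{0}}\|_{\HHH^{s_{c}}}$.
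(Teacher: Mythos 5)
The first half of your argument (almost conservation of $E_m$ plus Lemma \ref{Lem:HardyIneq} to get $\|w_{\lin}\|_{L^\infty_tL^{3m}_x}\lesssim\delta$, interpolation with the Strichartz norm $L^{5m/2}_tL^{5m}_x$ with $\theta=3/8$, and small-data theory) is exactly the paper's route to the $L^{4m}_{t,x}$ bound, and your Duhamel--Minkowski reduction of the difference estimate to $\int_{\RR}\|r^{1-2/m}|w(s)|^{p}\|_{L^m_x}\,ds$ is a legitimate generalized-energy estimate. The gap is in the multilinear H\"older step that you yourself flag as the main difficulty: it does not close, for two reasons. First, the weight bookkeeping fails: after extracting one factor $r^{1/m}w$ in $L^\infty_x$ and $2m$ unweighted factors of $w$, you have used all $p=2m+1$ factors of $w$ and are left with a bare power $r^{1-2/m-1/m}=r^{1-3/m}$, which is a pure power of $r$ belonging to no Lebesgue space on $(0,\infty)$; there is no "residual weighted term" containing a copy of $w$ to absorb it. Second, and more fundamentally, the time exponents fail: you need the product in $L^1_t$, but $2m$ factors in $L^{4m}_t$ contribute only $L^{4m/(2m)}_t=L^2_t$, and the factor $\|r^{1/m}w(s)\|_{L^\infty_x}\lesssim\|w(s)\|_{\dot H^{s_c}}$ is merely $L^\infty_t$, so $\int_{\RR}(\cdots)\,ds$ is not controlled. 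This is precisely why an $L^1_t$-type energy estimate cannot be closed at this regularity with only the norms $L^{4m}_{t,x}$ and $L^\infty_t\dot H^{s_c}$ in hand.

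The paper avoids this entirely: it never estimates the nonlinearity in the weighted $L^1_tL^m_x$ dual of the generalized energy. Instead it bounds $\|\vec w-\vec w_{\lin}\|_{L^\infty_t\HHH^{s_c}}$ by the Strichartz inequality \eqref{Eqn:Strich} with the dual norm $\|D^{s_c-\frac12}(|w|^{p-1}w)\|_{L^{4/3}_{t,x}}\lesssim\|D^{s_c-\frac12}w\|_{L^4_{t,x}}\|w\|^{p-1}_{L^{4m}_{t,x}}$, where the exponents do close ($\tfrac34=\tfrac14+\tfrac{2m}{4m}$), running a bootstrap on the pair $\|D^{s_c-\frac12}w\|_{L^4_{t,x}}$, $\|w\|_{L^{4m}_{t,x}}$; the weighted $L^m$ difference bound then follows from \eqref{Eqn:HardyPosOrigin1}--\eqref{Eqn:HardyDerOrigin}, which give $\|r^{1-\frac2m}\partial_{r,t}\phi\|_{L^m}\lesssim\|\vec\phi\|_{\HHH^{s_c}}$. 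To repair your proof you should replace your multilinear H\"older step by this Strichartz/chain-rule estimate (which also makes your $L^{3m}$ bootstrap closure unnecessary, since the paper's bootstrap already yields $\|w\|_{L^{4m}_{t,x}}\lesssim\delta^{3/8}\|\BA{w_0}\|_{\HHH^{s_c}}^{5/8}$ directly).
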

\begin{proof}
Using H\"older's inequality, Lemma \ref{Lem:HardyIneq}, the Strichartz estimates (\ref{Eqn:Strich}), Remark \ref{Rem:EquivGenNrj}, and Lemma \ref{Lem:ConservQuantLin}, we have
\begin{multline*}
 \|w_{\lin}\|_{L^{4m}_t(\RR,L^{4m}_x)} \lesssim \left\| r^{1-\frac{2}{m}} \partial_r w_{\lin}\right\|^{\frac{3}{8}}_{L^{\infty}_t(\RR,L_x^{m})}\|w_{\lin}\|^{\frac{5}{8}}_{L^{\frac{5}{2}m}_t(\RR,L^{5 m}_x)}\\ \lesssim \delta^{\frac{3}{8}}\left\|\BA{w_0} \right\|^{\frac{5}{8}}_{\HHH^{s_c}} \lesssim \delta^{\frac{3}{8}}A^{\frac{5}{8}}.
\end{multline*}
By the Cauchy problem theory for equation (\ref{Eqn:WaveSup}) (see Proposition \ref{Prop:CauchyPbHsc}), the solution $w$ is global if $\delta^{\frac{3}{8}}A^{\frac{5}{8}}$ is small. (It could also be deduced from the argument below).

Next from Strichartz inequality (\ref{Eqn:Strich})  we see that for all interval $I\subset \RR$,
\begin{equation}
\begin{array}{ll}
\| D^{s_{c} - \frac{1}{2} } w \|_{L^{4}_t(I,L^{4}_x)}& \lesssim
\| \BA{w_{0}} \|_{\HHH^{s_c} } +
\| D^{s_{c} - \frac{1}{2}} w \|_{L_{t}^{4} (I,L^{4}_x)}
\| w \|^{p-1}_{L^{4m}_t(I,L^{4m}_x)} \\
\| w \|_{L^{4m}_t(I,L^{4m}_x)}  & \lesssim  \| w_{\lin} \|_{L^{4m}_t(I,L^{4m}_x)}
+ \| D^{s_{c} - \frac{1}{2}} w \|_{L^{4}_t(I,L^{4}_x)}
\| w \|^{p-1}_{L^{4m}_t(I,L^{4m}_x)},
\end{array}
\nonumber
\end{equation}
where we have used the chain rule for fractional derivatives
$$\left\|D^{s_c-\frac{1}{2}}(|w|^{p-1} w) \right\|_{L^{4/3}_tL^{4/3}_x}\lesssim \left\|D^{s_c-\frac{1}{2}}w\right\|_{L^4_tL^4_x}\|w\|^{p-1}_{L^{2(p-1)}_tL^{2(p-1)}_x}$$
(see \cite{KenPonVeg93}).  Combining these three estimates and using a bootstrap argument we see that
\begin{equation}
\begin{array}{ll}
\| D^{s_{c} - \frac{1}{2} } w \|_{L^4_t(\RR,L^4_x)} & \lesssim \| \BA{w_{0}} \|_{\HHH^{s_c}} \\
\| w \|_{L^{4m}_t(\RR,L^{4m}_x)} & \lesssim  \delta^{\frac{3}{8}} \| \BA{w_{0}} \|^{\frac{5}{8}}_{\HHH^{s_{c}}} \cdot
\end{array}
\nonumber
\end{equation}
Hence, using (\ref{Eqn:HardyPosOrigin1}), (\ref{Eqn:HardyDerOrigin}), Strichartz estimates (\ref{Eqn:Strich}) and the chain rule for fractional derivatives, we have
\begin{align*}
\left\| r^{1-\frac{2}{m}} (\partial_{r} w - \partial_{r} w_{\lin}) \right\|_{ L_{t}^{\infty} (\RR,L_{x}^{m})} &+ \left\| r^{1-\frac{2}{m}} (\partial_{t} w - \partial_{t} w_{\lin}) \right\|_{ L_{t}^{\infty} (\RR,L_{x}^{m} )} \\ &\lesssim  \| \vec{w} - \vec{w}_{\lin} \|_{L_{t}^{\infty}
(\RR,
\HHH^{s_c}) }  \\
& \lesssim \left\| D^{s_{c} - \frac{1}{2} } w \right\|_{L^{4}_t (\RR,L^4_x) } \left\| w \right\|^{p-1}_{L^{4m}_t (\RR,L^{4m}_x)}
\\
 & \lesssim  \delta^{\frac{3(p-1)}{8}}  \| \BA{w_{0}} \|^{\frac{5}{8}(p-1)+ 1}_{\HHH^{s_{c}}}  \cdot
\end{align*}
\end{proof}
\subsection{Localization of solutions}
%\subsection{A consequence of the strong Huygens principle}
We gather two localization properties of solutions of wave equations in the generalized energy norm.
The first one (on the linear equation) results from the strong Huygens principle.
\begin{prop}
Let $w_{\lin}$ be a solution of the linear wave equation with initial data $(w_{0},w_{1})\in \HHH^{s_c}$. Let
$ \{ \lambda_{n} \}_{n \in \mathbb{N}} $ and $ \{ t_{n} \}_{n \in \mathbb{N}}$ be two sequences of real numbers with $\lambda_n>0$ for all $n$.
Let
\begin{equation}
w_{\lin,n}(t,x) := \frac{1}{\lambda_{n}^{\frac{2}{p-1}}} w_{\lin} \left( \frac{t}{\lambda_{n}}, \frac{x}{ \lambda_{n}} \right)
\nonumber
\end{equation}
Assume that $\lim_{n \rightarrow \infty} \frac{t_{n}}{\lambda_{n}} = l \in [-\infty, \infty]$.\\
If $l = \pm \infty$ then
\begin{align}
\lim_{R \rightarrow \infty} \limsup_{n \rightarrow \infty} \int_{ | |x| - |t_{n}| | \geq R \lambda_{n}}
\left| r^{1-\frac{2}{m}} \partial_{r,t} w_{\lin,n} (t_{n})\right|^{m} + \left| r^{-\frac{2}{m}}w_{\lin,n}(t_{n})\right|^{m} \, dx =0
\label{Eqn:Huyg1}
\end{align}
and if $l \in \mathbb{R}$ then
\begin{align}
\lim_{R \rightarrow \infty} \limsup_{n \rightarrow \infty} \int_{ \left\{ |x|  \geq R \lambda_{n} \right\} \cup  \left\{ |x| \leq \frac{1}{R} \lambda_{n}
\right\} }
\left| r^{1-\frac{2}{m}} \partial_{r,t} w_{\lin,n} (t_{n})\right|^{m} + \left| r^{-\frac{2}{m}}w_{\lin,n}(t_{n})\right|^{m} \, dx =0  \cdot
\label{Eqn:Huyg2}
\end{align}
\label{Prop:Huyg}
\end{prop}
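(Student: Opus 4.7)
The plan is to reduce to smooth, compactly supported initial data via a density argument, and then invoke the strong Huygens principle in $\RR^3$, which in the radial setting is encoded in the d'Alembert representation already used in the proof of Lemma \ref{Lem:ConservQuantLin}.

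First I would observe that, using $dx = 4\pi r^2\,dr$, both (\ref{Eqn:Huyg1}) and (\ref{Eqn:Huyg2}) reduce to estimating
\begin{equation*}
J_n(A) := \int_A \bigl(r^m|\partial_{r,t}w_{\lin,n}(t_n,r)|^m + |w_{\lin,n}(t_n,r)|^m\bigr)\,dr
\end{equation*}
over an appropriate exterior radial set $A$. Combining Remark \ref{Rem:EquivGenNrj}, the Hardy-type bound (\ref{Eqn:HardyPosInfty}) from Lemma \ref{Lem:HardyIneq}, and the almost-conservation in Lemma \ref{Lem:ConservQuantLin}, I would obtain, for any radial linear solution $v_{\lin}$ with data in $\HHH^{s_c}$,
\begin{equation*}
\int_0^\infty \bigl(r^m|\partial_{r,t}v_{\lin}(t,r)|^m + |v_{\lin}(t,r)|^m\bigr)\,dr \lesssim E_m[v_{\lin}](0) \lesssim \|(v_0,v_1)\|_{\HHH^{s_c}}^m,
\end{equation*}
and a direct scaling computation (using $2m/(p-1)=1$) shows this quantity is invariant under $v_{\lin} \mapsto v_{\lin,n}$. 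Given $(w_0,w_1) \in \HHH^{s_c}$, I would approximate by radial $(w_0^\eps,w_1^\eps) \in C_0^\infty \times C_0^\infty$, decompose $w_{\lin,n} = w_{\lin,n}^\eps + \tilde w_{\lin,n}^\eps$, and observe that the contribution of $\tilde w_{\lin,n}^\eps$ to $J_n(A)$ is bounded by $C\|(w_0,w_1)-(w_0^\eps,w_1^\eps)\|_{\HHH^{s_c}}^m$, uniformly in $n$ and $A$. Hence it suffices to prove the proposition for smooth compactly supported data.

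For such data, the d'Alembert representation $rw_{\lin}(t,r) = f(t+r)-f(t-r)$ from the proof of Lemma \ref{Lem:ConservQuantLin} yields $\dot f \in C_0^\infty(\RR)$ with $\supp \dot f \subset [-C,C]$, where $C$ bounds the support of $(w_0,w_1)$. The strong Huygens principle in $\RR^3$ then gives
\begin{equation*}
\supp w_{\lin}(t,\cdot) \cup \supp \partial_{r,t} w_{\lin}(t,\cdot) \subset \bigl\{r : \bigl||r|-|t|\bigr| \leq C\bigr\},
\end{equation*}
so after rescaling, $w_{\lin,n}(t_n,\cdot)$ and its partial derivatives are supported in $\{r : ||r|-|t_n|| \leq C\lambda_n\}$.

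In the case $l = \pm\infty$, for $R > C$ this support is disjoint from $\{||x|-|t_n|| \geq R\lambda_n\}$, so $J_n = 0$, giving (\ref{Eqn:Huyg1}). In the case $l \in \RR$, the change of variables $\tilde r = r/\lambda_n$, $s_n = t_n/\lambda_n \to l$, together with the scale invariance noted above, reduces (\ref{Eqn:Huyg2}) to
\begin{equation*}
\int_{\{\tilde r \geq R\} \cup \{\tilde r \leq 1/R\}} \bigl(\tilde r^m|\partial_{r,t}w_{\lin}(s_n,\tilde r)|^m + |w_{\lin}(s_n,\tilde r)|^m\bigr)\,d\tilde r \longrightarrow 0.
\end{equation*}
Since $\supp w_{\lin}(s_n,\cdot) \subset [\max(0,|s_n|-C),\,|s_n|+C] \subset [0,|l|+C+1]$ for $n$ large, the $\{\tilde r \geq R\}$ piece vanishes for $R > |l|+C+1$; and on $\{\tilde r \leq 1/R\}$ the smooth integrand is uniformly bounded, so the integral is $O(1/R) \to 0$. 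The main technical point is establishing the scale-invariant bound on $J_n$ that drives the density reduction; once this is secured, strong Huygens handles both cases directly, and the $l \in \RR$ case requires only a short dominated-convergence argument at the origin.
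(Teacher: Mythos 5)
Your proposal is correct and follows essentially the same route as the paper: approximate the data by (compactly supported) truncations, control the remainder uniformly via the Hardy-type bounds of Lemma \ref{Lem:HardyIneq} together with the almost-conservation and scale invariance of the generalized energy, and then apply the strong Huygens principle to the approximating solution. The only cosmetic difference is in the case $l\in\RR$, where the paper invokes strong continuity of the linear flow in $\HHH^{s_c}$ instead of the support/boundedness argument you use, but both are standard and equivalent in effect.
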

The proof of Proposition \ref{Prop:Huyg} is given in Appendix \ref{App:ProofStrongHuyg}.
The following proposition follows from the small data theory and finite speed of propagation:
\begin{prop}
\label{Prop:small_exterior}
Let $w$ be a solution of (\ref{Eqn:WaveSup}), global for positive times. Then
\begin{equation}
\lim_{R \rightarrow \infty} \limsup_{t \rightarrow \infty} \int_{t + R}^{\infty}  \left|r\partial_{r} w(t,r)\right|^{m}
+ \left|r\partial_{t} w(t,r)\right|^{m} \, dr= 0
\label{Eqn:DecayWeight}
\end{equation}
\end{prop}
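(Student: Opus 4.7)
My plan is to combine finite speed of propagation with the small-data generalized-energy theory (Proposition~\ref{Prop:CauchyPb}). The quantity to control lives outside a wave cone, so after cutting off the initial data to vanish on a ball of radius $R_0$ the resulting modified solution will coincide with $w$ on the region of interest, while the cutoff data will have arbitrarily small generalized energy as $R_0\to\infty$. This reduces the problem to controlling the exterior generalized energy of a linear solution with small generalized energy, handled by Lemma~\ref{Lem:ConservQuantLin} and Lemma~\ref{Lem:UtoV}.

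Concretely, let $\psi$ be a smooth radial function with $\psi\equiv 0$ on $|x|\le 1$ and $\psi\equiv 1$ on $|x|\ge 2$, set $\psi_{R_0}(x):=\psi(x/R_0)$, $(\tilde w_0,\tilde w_1):=(\psi_{R_0} w_0, \psi_{R_0} w_1)$, and let $\tilde w$ be the solution of \eqref{Eqn:WaveSup} with this data and $\tilde w_{\lin}$ its linear evolution. Two preparatory facts: first, $\|(\tilde w_0,\tilde w_1)\|_{\HHH^{s_c}}\le A$ uniformly in $R_0\ge 1$ with $A$ depending only on $(w_0,w_1)$, by the scale invariance and standard estimates for multiplication by $\psi_{R_0}$; second, the seminorm
$$\delta(R_0) := \|r^{1-\frac{2}{m}}\partial_r \tilde w_0\|_{L^m} + \|r^{1-\frac{2}{m}} \tilde w_1\|_{L^m}$$
satisfies $\delta(R_0)\to 0$ as $R_0\to\infty$. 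For the latter, the piece of $\partial_r\tilde w_0$ where $\psi_{R_0}=1$ is just the tail on $\{r\ge R_0\}$ of $r^{1-\frac{2}{m}}\partial_r w_0\in L^m$ (Lemma~\ref{Lem:HardyIneq}) and vanishes by dominated convergence, and similarly for the $\tilde w_1$ piece, while the commutator contribution is bounded by $2^m\int_{R_0}^{2R_0}|w_0|^m\,dr$ and vanishes thanks to $\int_0^\infty|w_0|^m\,dr \lesssim \|w_0\|_{\dot H^{s_c}}^m$ (again Lemma~\ref{Lem:HardyIneq}).

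Now choose $R_0$ so large that $\delta(R_0)<\delta_2(A)$. Proposition~\ref{Prop:CauchyPb} gives a global $\tilde w$ with
$$\|r^{1-\frac{2}{m}}(\partial_{r,t}\tilde w - \partial_{r,t}\tilde w_{\lin})\|_{L^\infty_t L^m_x} \lesssim \delta(R_0)^{3m/4} A^{5m/4+1},$$
and Lemma~\ref{Lem:ConservQuantLin} yields $E_m[\tilde w_{\lin}](t)\lesssim \delta(R_0)^m$ uniformly in $t$. Finite speed of propagation forces $\tilde w(t,r)=w(t,r)$ for $r\ge 2R_0+t$, $t\ge 0$. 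For $R\ge 2R_0$ and $t\ge 0$, the triangle inequality and Lemma~\ref{Lem:UtoV} applied with lower bound $t+R$ (note the $\partial_t$-piece has no boundary term since $\partial_t(r\tilde w_{\lin})=r\partial_t\tilde w_{\lin}$) then give
$$\int_{t+R}^{\infty} r^m(|\partial_r w|^m+|\partial_t w|^m)\,dr \lesssim \int_{0}^{\infty} r^m\,|\partial_{r,t}(\tilde w-\tilde w_{\lin})|^m\,dr + E_m[\tilde w_{\lin}](t) + (t+R)|\tilde w_{\lin}(t,t+R)|^m.$$
The first two terms on the right are $\lesssim \delta(R_0)^{3m^2/4} + \delta(R_0)^m$ by what precedes. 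The only nontrivial point is the boundary term, which I bound via
$$(t+R)|\tilde w_{\lin}(t,t+R)| \le \int_0^{t+R}|\partial_r(r\tilde w_{\lin}(t,r))|\,dr \le (t+R)^{1-1/m}\bigl(E_m[\tilde w_{\lin}](t)\bigr)^{1/m},$$
so $(t+R)|\tilde w_{\lin}(t,t+R)|^m \lesssim E_m[\tilde w_{\lin}](t) \lesssim \delta(R_0)^m$. Taking $\limsup_{t\to\infty}$ and then $R\to\infty$ with $R_0:=R/2$ forces $\delta(R_0)\to 0$, proving the claim. The main obstacle is essentially this boundary term in Lemma~\ref{Lem:UtoV}, resolved by the H\"older/Morrey-type estimate just given.
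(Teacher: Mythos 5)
Your proof is correct, but it takes a genuinely different route from the paper's. The paper truncates the data with $\TTT_R$ and $\mathbf{1}_{\RR^3\setminus B_R}$ and invokes Result \ref{Res:Cont_R} to make the \emph{full critical norm} $\|(\TTT_R w_0,\mathbf{1}_{\RR^3\setminus B_R}w_1)\|_{\HHH^{s_c}}$ smaller than $\min(\delta_0,\epsilon)$; the small-data theory (Proposition \ref{Prop:CauchyPbHsc}) then bounds the solution in $L^\infty_t\HHH^{s_c}$ by $\epsilon$, and Lemma \ref{Lem:HardyIneq} converts this directly into smallness of the weighted $L^m$ norm on all of $(0,\infty)$, so finite speed of propagation finishes the proof in a few lines. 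You instead keep the critical norm of the truncated data only \emph{bounded} and make the generalized energy $\delta(R_0)$ small (which indeed follows from dominated convergence together with Lemma \ref{Lem:HardyIneq}); you then need the heavier perturbative input of Proposition \ref{Prop:CauchyPb}, the almost-conservation law of Lemma \ref{Lem:ConservQuantLin}, and Lemma \ref{Lem:UtoV} including its boundary term $R'|\phi(R')|^m$, which you correctly absorb via the H\"older estimate $(t+R)\,|\tilde w_{\lin}(t,t+R)|^m\le E_m[\tilde w_{\lin}](t)$. Both arguments are sound; the paper's is shorter because smallness of the critical norm subsumes smallness of the generalized energy, while yours isolates exactly which smallness is actually used (only the weighted $L^m$ tail of the data), at the cost of more machinery. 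One point you should make explicit is the uniform-in-$R_0$ multiplier bound $\|(\psi_{R_0}w_0,\psi_{R_0}w_1)\|_{\HHH^{s_c}}\lesssim\|(w_0,w_1)\|_{\HHH^{s_c}}$: it plays the role of Results \ref{Res:BoundCharac} and \ref{Res:BoundOp} in the appendix and is needed both to fix the constant $A$ in Proposition \ref{Prop:CauchyPb} and to justify the dominated-convergence step for the tail of $r^{1-\frac{2}{m}}\partial_r w_0$.
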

\begin{proof}
Let $\epsilon > 0$. By Results \ref{Res:Cont_R} in the appendix, one can choose a $R \gg 1$ such that
\begin{equation*}
 \left\| ( \TTT_R w_{0},  \textbf{1}_{\RR^3\setminus B_R} w_{1}  ) \right\|_{\HHH^{s_c}} \leq \alpha,
\end{equation*}
with $\alpha \ll \min{(\delta_0, \epsilon)} $ and $\delta_0$ defined in Proposition \ref{Prop:CauchyPbHsc}. Let $\tilde{w}$ be the solution
of (\ref{Eqn:WaveSup}) with initial data  $( \TTT_R w_{0},  \textbf{1}_{\RR^3\setminus B_R} w_{1}  )$. By Proposition
\ref{Prop:CauchyPbHsc}  and Lemma \ref{Lem:HardyIneq}
\begin{equation*}
\left\| (r^{1-\frac{2}{m}} \partial_{r} \tilde{w}, r^{1-\frac{2}{m}} \partial_{t} \tilde{w}) \right\|_{
L_{t}^{\infty} \left(\mathbb{R}, L_{x}^{m}\times L_{x}^{m}\right)} \ll \epsilon.
\end{equation*}
From finite speed of propagation we see that $\tilde{w}(t,r)=w(t,r)$ if $r\geq R+|t|$ and thus
\begin{align*}
\int_{R + |t|}^{\infty}  \left|r\partial_{r} w(t,r)\right|^{m} + \left|r\partial_{t} w(t,r)\right|^{m} \, dr \leq \epsilon \cdot
\end{align*}
Hence (\ref{Eqn:DecayWeight}) holds.
\end{proof}

\subsection{Bound from below of the generalized energy for a sum of profile}
Proposition \ref{Prop:Orth} shows that the generalized energy of a sum of linear profiles (resp. nonlinear profiles) in an annulus is bounded asymptotically from below by the generalized energy of one of the linear profiles (resp. nonlinear profiles) in the same annulus.

\begin{prop}
\label{Prop:Orth}
Let $\{ (w_{0,n},w_{1,n}) \}_{n \in \mathbb{N}}$ be a bounded sequence in $ \HHH^{s_c}$
that has a profile decomposition with profiles $\{ W_{\lin}^{j} \}_{j\geq 1}$ and parameters
$ \left\{ \left\{ (t_{j,n}, \lambda_{j,n}) \right\}_{j\geq 1} \right\}_{n\in \NN} $. Let
$ \left\{ (\theta_{n},\rho_{n},\sigma_n)\right\}_{n \in \mathbb{N}}$ be a sequence such that $ 0 \leq \rho_{n}<\sigma_n\leq\infty$, $\theta_n\in \RR$. Let $k\geq 1$. Then, extracting a subsequence if necessary
\begin{multline}
\label{Eqn:Orth}
o_n(1)+\left\|r^{1-\frac{2}{m}} \partial_r w_{\lin,n}(\theta_n)\right\|_{L^m(\{\rho_n<|x|<\sigma_n\})}^m +\left\|r^{1-\frac{2}{m}} \partial_t w_{\lin,n}(\theta_n)\right\|_{L^m(\{\rho_n < |x|<\sigma_n\})}^m
\\
\geq \left\|r^{1-\frac{2}{m}} \partial_r W_{\lin,n}^k(\theta_n)\right\|_{L^m(\{\rho_n<|x|< \sigma_n\})}^m +\left\|r^{1-\frac{2}{m}} \partial_t W_{\lin,n}^k(\theta_n)\right\|_{L^m(\{\rho_n<|x|< \sigma_n\})}^m
\end{multline}
where $\lim_n o_n(1)=0$, $w_{\lin,n}$ is the solution of the linear wave equation with initial data $(w_{0,n},w_{1,n})$ and $W_{\lin,n}^k$ is defined in (\ref{Eqn:defWlnj}).

Moreover, denoting by $W^j$ the corresponding nonlinear profiles and assuming that all these profiles scatter, we have
\begin{multline}
\label{Eqn:Orth:NL}
o_n(1)+\left\|r^{1-\frac{2}{m}} \partial_r w_{n}(\theta_n)\right\|_{L^m(\{\rho_n<|x|<\sigma_n\})}^m +\left\|r^{1-\frac{2}{m}} \partial_t w_{n}(\theta_n)\right\|_{L^m(\{\rho_n < |x|<\sigma_n\})}^m
\\
\geq \left\|r^{1-\frac{2}{m}} \partial_r W_{n}^k(\theta_n)\right\|_{L^m(\{\rho_n<|x|< \sigma_n\})}^m +\left\|r^{1-\frac{2}{m}} \partial_t W_{n}^k(\theta_n)\right\|_{L^m(\{\rho_n<|x|< \sigma_n\})}^m,
\end{multline}
where $w_n$ is the solution of (\ref{Eqn:WaveSup}) with data $(w_{0,n},w_{1,n})$, and $W_n^k$ is defined in (\ref{Eqn:defWnj}).
\end{prop}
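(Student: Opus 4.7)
The plan is to prove (\ref{Eqn:Orth}) by passing to a weak limit in a suitably rescaled sequence and applying lower semicontinuity of the $L^m$-norm. After extracting a subsequence, assume $(\theta_n-t_{k,n})/\lambda_{k,n}\to\ell\in[-\infty,+\infty]$ and that $\rho_n/\lambda_{k,n}$, $\sigma_n/\lambda_{k,n}$ converge in $[0,+\infty]$ to $\tilde\rho$, $\tilde\sigma$.

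When $\ell\in\RR$, set $\tilde w_n(y):=\lambda_{k,n}^{2/(p-1)}w_{\lin,n}(\theta_n,\lambda_{k,n}y)$ and $\tilde v_n(y):=\lambda_{k,n}^{2/(p-1)+1}\partial_tw_{\lin,n}(\theta_n,\lambda_{k,n}y)$. From (\ref{Eqn:WeakProfile1}) with $j=k$ and continuity of the linear flow in the weak topology of $\HHH^{s_c}$, $(\tilde w_n,\tilde v_n)\rightharpoonup\BA{W_{\lin}^k}(\ell)$ in $\HHH^{s_c}$. Since the maps $\phi\mapsto r^{1-2/m}\partial_r\phi$ and $\psi\mapsto r^{1-2/m}\psi$ are bounded linear from $\dot H^{s_c}$, $\dot H^{s_c-1}$ to $L^m(\RR^3)$ by Lemma \ref{Lem:HardyIneq}, weak convergence transfers. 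Scale invariance of the weighted $L^m$-norm recasts (\ref{Eqn:Orth}) as an inequality on the annulus $\tilde A_n=\{\rho_n/\lambda_{k,n}<|y|<\sigma_n/\lambda_{k,n}\}$, whose characteristic function converges a.e.\ to that of $\tilde A=\{\tilde\rho<|y|<\tilde\sigma\}$; lower semicontinuity of $\|\cdot\|^m_{L^m(\tilde A_n)}$ then yields the bound (the case $\tilde\rho=\tilde\sigma$ being trivial).

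In the case $|\ell|=+\infty$, no direct weak limit in $\HHH^{s_c}$ is available because of the dispersion of the profile, so I would instead use the one-dimensional representation $rw_{\lin}(t,r)=f(t+r)-f(t-r)$ from the proof of Lemma \ref{Lem:ConservQuantLin}, which allows a radial translation. Each profile yields $g^j_n:=(f^j_n)'\in L^m(\RR)$ with $g^k_n(s)=\lambda_{k,n}^{-1/m}g^k((s-t_{k,n})/\lambda_{k,n})$. The weighted $L^m$-norm on the annulus is, by Lemma \ref{Lem:UtoV} and the radial Sobolev estimate $|w(r)|\lesssim\|\BA{w}\|_{\HHH^{s_c}}r^{-1/m}$ (which shows that the boundary contributions are bounded and match asymptotically between $w_{\lin,n}$ and $W^k_{\lin,n}$), comparable to $\|g^w_n\|^m_{L^m(I_n)}$ with $I_n=[\theta_n-\sigma_n,\theta_n-\rho_n]\cup[\theta_n+\rho_n,\theta_n+\sigma_n]$. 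Rescaling and translating by $\tilde g^\bullet_n(r):=\lambda_{k,n}^{1/m}g^\bullet_n(t_{k,n}+\lambda_{k,n}r)$, which is an $L^m(\RR)$-isometry, the pseudo-orthogonality (\ref{Eqn:PseudoOrth}) gives $\tilde g^j_n\rightharpoonup 0$ in $L^m(\RR)$ for $j\neq k$ (via a case split on whether $\lambda_{k,n}/\lambda_{j,n}$ is small, large, or comparable with divergent centers), and (\ref{Eqn:WeakProfile2}) combined with the bounded linearity of $\HHH^{s_c}\ni\BA{\phi}\mapsto g\in L^m(\RR)$ yields $\tilde g^{\eps^J}_n\rightharpoonup 0$. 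Hence $\tilde g^w_n\rightharpoonup g^k$, and lower semicontinuity on the translated-rescaled interval $\tilde I_n$ concludes.

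For (\ref{Eqn:Orth:NL}), Proposition \ref{Prop:Perturb} and Remark \ref{Rem:Perturb} yield a linear profile decomposition of $\vec w_n(\theta_n)$ whose $k$-th linear profile $V^k_{\lin}$ satisfies $\BA{V^k_{\lin,n}}(0)-\BA{W^k_n}(\theta_n)\to 0$ in $\HHH^{s_c}$ by the scattering hypothesis on $W^k$; (\ref{Eqn:Orth:NL}) follows by applying (\ref{Eqn:Orth}) to this decomposition. The main obstacle is handling the remainder $\eps^J_n$, which is only small in a space-time Strichartz norm, never in $\HHH^{s_c}$: for $m=2$ this would be bypassed by the Pythagorean identity (\ref{Eqn:Pyth}), but for $m\neq 2$ no such orthogonality is available, and the weak-convergence / lower-semicontinuity scheme above is the natural substitute.
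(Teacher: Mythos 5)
Your reduction of (\ref{Eqn:Orth:NL}) to (\ref{Eqn:Orth}) via Proposition \ref{Prop:Perturb_outside} is exactly the paper's, and your treatment of the case $\ell:=\lim(\theta_n-t_{k,n})/\lambda_{k,n}$ finite is correct and is a genuinely different route: the paper does not pass to weak limits at all, but tests $\partial_{r,t}w_{\lin,n}$ against the $L^{m'}$ dual element $|\partial_{r,t}W^k_{\lin,n}|^{m-2}\partial_{r,t}W^k_{\lin,n}$ with weight $r^m$, kills the cross terms through two pseudo-orthogonality lemmas (Lemmas \ref{Lem:pseudo_orth1} and \ref{Lem:pseudo_orth2}), and concludes by H\"older and Young's inequality --- this duality pairing is its substitute for the Pythagorean expansion that you rightly observe is unavailable when $m\neq 2$.

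The gap is in the case $|\ell|=\infty$, precisely at the step where you declare the weighted norm of $w_{\lin,n}(\theta_n)$ on the annulus ``comparable to $\|g^w_n\|^m_{L^m(I_n)}$''. Since $r\partial_r u=\partial_r(ru)-u$, passing from $\int_{\rho_n}^{\sigma_n}r^m|\partial_r u|^m\,dr$ to $\int_{\rho_n}^{\sigma_n}|\partial_r(ru)|^m\,dr$ costs $\int_{\rho_n}^{\sigma_n}|u|^m\,dr$ (and, if one invokes Lemma \ref{Lem:UtoV}, boundary terms at \emph{both} endpoints of a bounded annulus, the outer one entering with unfavorable sign). For $u=w_{\lin,n}(\theta_n)$ these corrections are only $O(1)$, not $o_n(1)$: they are governed by whichever profiles live at the scale of the annulus, not by the escaping profile $k$, so they do not ``match'' those of $W^k_{\lin,n}$. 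Indeed the inequality you need, $\int_{\rho_n}^{\sigma_n}r^m|\partial_r\phi|^m\,dr\gtrsim\int_{\rho_n}^{\sigma_n}|\partial_r(r\phi)|^m\,dr$, already fails for $\phi$ locally constant on the annulus (a low-frequency profile produces exactly this, with both sides of order one after normalization in $\dot H^{s_c}$). Hence the chain ``LHS $\gtrsim\|\tilde g^w_n\|^m_{L^m(\tilde I_n)}\geq\|\dot f^k\|^m_{L^m(\tilde I)}-o_n(1)$'' breaks at its first link. The paper avoids this because its 1D reduction (\ref{Eqn:reduc_r}) is only ever applied to expressions built from the escaping profile $W^k_{\lin}(-t_{k,n})$, for which $\int_0^{\infty}|W^k_{\lin}(-t_{k,n},r)|^m\,dr\to 0$. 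Two further remarks: even where your comparisons hold they are two-sided only up to constants, so your scheme would at best yield (\ref{Eqn:Orth}) with a multiplicative constant on the right-hand side (sufficient for every application in the paper, but not the stated inequality); and repairing the argument in your spirit --- recentering in the radial variable near $|\theta_n-t_{k,n}|$ and identifying the weak $L^m(dr)$ limit of the pair $(r\partial_r w_{\lin,n},r\partial_t w_{\lin,n})$ itself --- requires proving that the zeroth-order term and the reflected characteristic $\dot f_n(\theta_n+r)$ vanish weakly, which amounts to reproducing the paper's case analysis.
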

The proof of Proposition \ref{Prop:Orth} relies on the following two pseudo-orthogonality lemmas:
\begin{lem}
\label{Lem:pseudo_orth1}
Under the assumptions of Proposition \ref{Prop:Orth}, if $j\neq k$,
\begin{multline}
\label{Eqn:pseudo_orth1}
  \lim_{n\to\infty} \bigg(\int_{\rho_n}^{\sigma_n} |\partial_rW_{\lin,n}^k(0,r)|^{m-2} \partial_rW_{\lin,n}^k(0,r)
  \partial_rW_{\lin,n}^j(0,r)r^m\,dr\\
  +\int_{\rho_n}^{\sigma_n} |\partial_tW_{\lin,n}^k(0,r)|^{m-2} \partial_tW_{\lin,n}^k(0,r) \partial_tW_{\lin,n}^j(0,r)r^m\,dr\bigg)=0.
 \end{multline}
\end{lem}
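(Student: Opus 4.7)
The plan is to combine density and Huygens localization of the profiles with a case analysis driven by the pseudo-orthogonality relation \eqref{Eqn:PseudoOrth}, using a pointwise cancellation between the $\partial_r$ and $\partial_t$ contributions to close the one delicate case. Passing to a subsequence and invoking \eqref{Eqn:AssParam}, I assume that $\mu_n:=\lambda_{j,n}/\lambda_{k,n}$ has a limit in $[0,+\infty]$ and that each $s_{\ell,n}:=-t_{\ell,n}/\lambda_{\ell,n}$ is either identically $0$ or tends to $\pm\infty$. The change of variables $r\mapsto\lambda_{k,n}r$ leaves the integrand in \eqref{Eqn:pseudo_orth1} invariant (using $2/(p-1)=1/m$), so I normalize $\lambda_{k,n}=1$. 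By H\"older's inequality and Lemma \ref{Lem:HardyIneq}, each term in \eqref{Eqn:pseudo_orth1} is bounded, uniformly in $n$, by $\|(W_0^k,W_1^k)\|_{\HHH^{s_c}}^{m-1}\|(W_0^j,W_1^j)\|_{\HHH^{s_c}}$, so exchanging $j$ and $k$ if necessary I also assume $\mu_n\le 1$. Using density of $C^\infty_0(\RR^3\setminus\{0\})$ in $\dot H^{s_c}$ together with Proposition \ref{Prop:Huyg}, I replace each of $\partial_rW^\ell_{\lin,n}(0,\cdot)$ and $\partial_tW^\ell_{\lin,n}(0,\cdot)$ by the corresponding derivatives of a smooth approximant $\widetilde W^\ell_n$ supported in a fixed annulus when $s_{\ell,n}\equiv 0$, or in the thin annulus $\{|r-|s_{\ell,n}||<R\}$ when $s_{\ell,n}\to\pm\infty$; the resulting error in $L^m(r^m\,dr)$ is arbitrarily small uniformly in $n$.

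After this reduction, the lemma follows from pointwise vanishing of the integrand in all but one subcase. If $\mu_n\to 0$, the rescaled support of $\widetilde W^j_n$ collapses to the origin while $\widetilde W^k_n$ lies in a region bounded away from $0$. If $\mu_n$ stays bounded from below and both $s_{\ell,n}\equiv 0$, the relation \eqref{Eqn:PseudoOrth} is violated. If $\mu_n$ is bounded from below and only one $s_{\ell,n}$ diverges, the corresponding Huygens annulus escapes to infinity while the other stays bounded. If $\mu_n$ is bounded from below and both $s_{\ell,n}$ diverge with the \emph{same} sign, \eqref{Eqn:PseudoOrth} specializes to $||s_{k,n}|-\mu_n|s_{j,n}||=|s_{k,n}-\mu_n s_{j,n}|\to\infty$, so the two Huygens annuli are disjoint for large $n$.

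The one remaining subcase is $\mu_n$ bounded from below with $s_{k,n}\to+\infty$ and $s_{j,n}\to-\infty$ (or vice versa), where \eqref{Eqn:PseudoOrth} only provides $|s_{k,n}-\mu_n s_{j,n}|=s_{k,n}+\mu_n|s_{j,n}|\to\infty$, so the two Huygens annuli may asymptotically coincide. Here I invoke the traveling-wave decomposition $rW_{\lin}(s,r)=f(s+r)-f(s-r)$ from the proof of Lemma \ref{Lem:ConservQuantLin}: in the regime $s\to+\infty$ only $\dot f(s-r)$ contributes on the Huygens support, forcing $\partial_r(rW_{\lin})=-\partial_t(rW_{\lin})$; in the regime $s\to-\infty$ only $\dot f(s+r)$ contributes, forcing $\partial_r(rW_{\lin})=\partial_t(rW_{\lin})$. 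Substituting into the pointwise combination
\begin{equation*}
|\partial_r(rW^k_{\lin,n})|^{m-2}\partial_r(rW^k_{\lin,n})\,\partial_r(rW^j_{\lin,n})+|\partial_t(rW^k_{\lin,n})|^{m-2}\partial_t(rW^k_{\lin,n})\,\partial_t(rW^j_{\lin,n})
\end{equation*}
and using the oddness of $x\mapsto|x|^{m-2}x$ yields exact pointwise cancellation on the common support. The gap between \eqref{Eqn:pseudo_orth1} and this $(\partial_r,\partial_t)(rW)$-level quantity consists of $W/r$ correction terms, which on the Huygens annulus satisfy $W/r=O(|s_{k,n}|^{-1})$ and integrate to $0$ in the limit. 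The main obstacle is precisely this final reconciliation: transferring the exact cancellation at the $(\partial_r,\partial_t)(rW)$-level to the $\partial_rW,\partial_tW$ version in the lemma, uniformly in $n$, which requires combining the Huygens bound $r\approx|s_{k,n}|\to\infty$ with the weighted Hardy inequalities of Lemma \ref{Lem:HardyIneq} to absorb the $W/r$ corrections.
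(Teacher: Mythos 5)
Your overall strategy coincides with the paper's: reduce by density to compactly supported profiles, use the strong Huygens localization and the representation $rW_{\lin}(s,r)=f(s+r)-f(s-r)$, run a case analysis on the scales and times, and in the one non-degenerate overlapping case exploit the exact cancellation between the $\partial_r$ and $\partial_t$ contributions coming from $\partial_r(rW)=\pm\partial_t(rW)$ on the outgoing/incoming Huygens annuli. Your treatment of that hard case, including the reconciliation between the $\partial_{r,t}(rW)$-level integrand and the $r\partial_{r,t}W$-level integrand via the smallness of $\int|W(-t_{k,n},r)|^m\,dr$, is correct and is exactly the paper's Step 3.

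There is, however, a genuine gap in the case $\mu_n=\lambda_{j,n}/\lambda_{k,n}\to 0$ (equivalently $\to\infty$, which your ``exchange $j$ and $k$'' reduction cannot legitimately dismiss, since the integrand is not symmetric in $j$ and $k$). You claim the lemma follows there from pointwise vanishing because the support of $\widetilde W^j_n$ ``collapses to the origin.'' This is false in general: the Huygens support of $\BA{W^j_{\lin,n}}(0)$ is the annulus $\{|r-|t_{j,n}||\lesssim\lambda_{j,n}\}$, and nothing prevents $|t_{j,n}|$ from being comparable to $|t_{k,n}|$ while $\lambda_{j,n}\to 0$ (take $\lambda_{k,n}=1$, $t_{k,n}=t_{j,n}=n$, $\lambda_{j,n}=1/n$: the orthogonality (\ref{Eqn:PseudoOrth}) holds via the ratio of scales, yet the thin annulus for $j$ sits inside the annulus for $k$). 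So the supports need not separate and the integrand need not vanish pointwise. The correct argument here is quantitative, as in the paper: apply H\"older on the intersection $S_n$ of the two annuli, whose length is $O(\lambda_{j,n})$ when $\lambda_{j,n}\to 0$, together with the uniform bound $|\partial_{r,t}W^k_{\lin}(t,r)|\lesssim (1+|t|+r)^{-1}$, to get a bound $O(\lambda_{j,n}^{(m-1)/m})$ on the $k$-factor; in the reverse regime $\lambda_{j,n}\to\infty$ one instead bounds the $j$-factor by $O(\lambda_{j,n}^{-1/m})$ using $|\partial_{r,t}W^j_{\lin,n}|\lesssim r^{-1}\lambda_{j,n}^{-1/m}$ and the fact that $|S_n|$ is bounded. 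These are two distinct estimates, which is why the asymmetric case split cannot be collapsed by a symmetry argument. The remaining cases of your analysis (equal scales with the orthogonality forcing either disjoint annuli or the opposite-sign time divergence) are correct.
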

\begin{lem}
\label{Lem:pseudo_orth2}
Under the assumptions of Proposition \ref{Prop:Orth}, if $J\geq k$,
\begin{multline}
\label{Eqn:pseudo_orth2}
  \lim_{n\to\infty} \bigg(\int_{\rho_n}^{\sigma_n} |\partial_rW_{\lin,n}^k(0,r)|^{m-2} \partial_rW_{\lin,n}^k(0,r)\partial_r\eps_{0,n}^J(r)r^m\,dr\\
  +\int_{\rho_n}^{\sigma_n} |\partial_tW_{\lin,n}^k(0,r)|^{m-2} \partial_tW_{\lin,n}^k(0,r)\eps_{1,n}^J(r)r^m\,dr\bigg)=0,
 \end{multline}
 where $(\eps_{0,n}^J,\eps_{1,n}^J)$ is as in \S \ref{SS:def_profiles}.
\end{lem}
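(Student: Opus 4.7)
The lemma is a pseudo-orthogonality statement between the rescaled $k$-th profile and the remainder $\BA{\eps_{0,n}^J}$ in the profile decomposition, which we derive from the weak-convergence property (\ref{Eqn:WeakProfile2}) combined with the strong Huygens principle (Proposition \ref{Prop:Huyg}). The first step is to normalize the $k$-th profile at unit scale. Performing the change of variables $r=\lambda_{k,n}y$ in (\ref{Eqn:pseudo_orth2}), and using that $\frac{2m}{p-1}=1$ (so the integrand is scale-invariant), we set $\tau_n:=-t_{k,n}/\lambda_{k,n}$, $a_n:=\rho_n/\lambda_{k,n}$, $b_n:=\sigma_n/\lambda_{k,n}$, $\tilde\eps_{0,n}^J(y):=\lambda_{k,n}^{2/(p-1)}\eps_{0,n}^J(\lambda_{k,n}y)$, $\tilde\eps_{1,n}^J(y):=\lambda_{k,n}^{2/(p-1)+1}\eps_{1,n}^J(\lambda_{k,n}y)$, and $\tilde\eps_n^J(t,y):=\lambda_{k,n}^{2/(p-1)}\eps_n^J(\lambda_{k,n}t,\lambda_{k,n}y)$, to rewrite (\ref{Eqn:pseudo_orth2}) as
\begin{multline*}
I_n := \int_{a_n}^{b_n}|\partial_y W_{\lin}^k(\tau_n,y)|^{m-2}\partial_y W_{\lin}^k(\tau_n,y)\,\partial_y\tilde\eps_{0,n}^J(y)\,y^m\,dy\\
+\int_{a_n}^{b_n}|\partial_t W_{\lin}^k(\tau_n,y)|^{m-2}\partial_t W_{\lin}^k(\tau_n,y)\,\tilde\eps_{1,n}^J(y)\,y^m\,dy\longrightarrow 0,
\end{multline*}
where $\tilde\eps_n^J$ is a linear-wave solution and (\ref{Eqn:WeakProfile2}) reads $\BA{\tilde\eps_n^J}(-\tau_n)\rightharpoonup 0$ in $\HHH^{s_c}$. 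Passing to a subsequence, assume $\tau_n\to\tau_\infty\in[-\infty,+\infty]$, $a_n\to a_\infty$, $b_n\to b_\infty$.

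If $\tau_\infty\in\RR$, the identity $\BA{\tilde\eps_n^J}(0)=S(\tau_n)\BA{\tilde\eps_n^J}(-\tau_n)$ together with the strong continuity $S(-\tau_n)\to S(-\tau_\infty)$ transfers the weak convergence from time $-\tau_n$ to time $0$: for any fixed $\phi$ in the dual of $\HHH^{s_c}$, $\langle\phi,\BA{\tilde\eps_n^J}(0)\rangle=\langle S(-\tau_n)\phi,\BA{\tilde\eps_n^J}(-\tau_n)\rangle$, which tends to $0$ by splitting $S(-\tau_n)\phi = S(-\tau_\infty)\phi + (S(-\tau_n)-S(-\tau_\infty))\phi$ and using weak convergence against the fixed element $S(-\tau_\infty)\phi$ together with the uniform boundedness of $\BA{\tilde\eps_n^J}(-\tau_n)$. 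At the same time $W_{\lin}^k(\tau_n)\to W_{\lin}^k(\tau_\infty)$ strongly in $\dot H^{s_c}$, so via Lemma \ref{Lem:UtoV} and continuity of the map $u\mapsto|u|^{m-2}u:L^m(y^m\,dy)\to L^{m/(m-1)}(y^m\,dy)$ (which uses $m>1$, granted by $p>5$), the linear functional $u\mapsto\int_{a_n}^{b_n}|\partial_y W_{\lin}^k(\tau_n)|^{m-2}\partial_y W_{\lin}^k(\tau_n)\,\partial_y u\,y^m\,dy$ converges strongly in the dual of $\dot H^{s_c}$. Pairing strong convergence of the test factor with weak convergence of $\tilde\eps_{0,n}^J$ gives $I_n\to 0$; the $\partial_t$-integral is handled identically.

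The main obstacle is the case $|\tau_\infty|=\infty$, where the weak convergence at time $-\tau_n$ no longer descends to time $0$ via the wave flow. Here we localize: by the strong Huygens estimate (\ref{Eqn:Huyg1}), for any $\delta>0$ we pick $R>0$ so that
$$\int_{||y|-|\tau_n||\geq R}y^m\bigl(|\partial_yW_{\lin}^k(\tau_n)|^m+|\partial_tW_{\lin}^k(\tau_n)|^m\bigr)\,dy<\delta$$
for $n$ large, and H\"older's inequality (together with the uniform bound on the generalized energy of $\tilde\eps_n^J$) bounds the contribution to $I_n$ from $\{||y|-|\tau_n||\geq R\}$ by $C\delta^{(m-1)/m}$. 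On the thin annulus $\{||y|-|\tau_n||\leq R\}$ we use the d'Alembert representations $rW_{\lin}^k(t,r)=f(t+r)-f(t-r)$ and $r\tilde\eps_n^J(t,r)=g_n(t+r)-g_n(t-r)$ from the proof of Lemma \ref{Lem:ConservQuantLin}, with $\dot f\in L^m(\RR)$ and $\dot g_n$ uniformly bounded in $L^m(\RR)$. Substituting $y=|\tau_n|+z$, $z\in[-R,R]$ (take $\tau_\infty=+\infty$ for concreteness), the ``outgoing'' pieces $\dot f(\pm 2\tau_n+z)$ vanish in $L^m([-R,R])$ by the $L^m$-integrability of $\dot f$, and cancellation between the $\partial_y$- and $\partial_t$-integrands reduces the annular contribution to
$$2\int_{-R}^{R}|\dot f(u)|^{m-2}\dot f(u)\,\dot g_n(u-\tau_n)\,du + o_n(1).$$
Finally, the d'Alembert identities $\dot g_n(-\tau_n\pm r)=\tfrac12\bigl[\partial_r(r\tilde\eps_n^J)(-\tau_n,r)\pm r\partial_t\tilde\eps_n^J(-\tau_n,r)\bigr]$ translate the hypothesis $\BA{\tilde\eps_n^J}(-\tau_n)\rightharpoonup 0$ (via Lemma \ref{Lem:UtoV} and Lemma \ref{Lem:HardyIneq}, equating the relevant weak topologies on $\dot H^{s_c}$ and $\dot H^{s_c-1}$ with those of their $L^m$-realizations) into the weak convergence $\dot g_n(\cdot-\tau_n)\rightharpoonup 0$ in $L^m([-R,R])$, and testing against the fixed $L^{m/(m-1)}$ function $|\dot f|^{m-2}\dot f\mathbf{1}_{[-R,R]}$ shows this integral tends to $0$. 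Letting $\delta\to 0$ yields $I_n\to 0$, as required; the principal obstacle being precisely this transfer of weak convergence from the shifted time $-\tau_n$ to an effective spatial translation via the 1-d reduction on the light cone.
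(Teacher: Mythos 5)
Your argument is correct and follows essentially the same route as the paper's proof: normalize $\lambda_{k,n}=1$, split according to whether $-t_{k,n}/\lambda_{k,n}$ stays bounded, and in the unbounded case pass to the d'Alembert representation, exploit the cancellation between the $\partial_r$ and $\partial_t$ contributions, and test the weak limit $\dot g_n(\cdot+t_{k,n})\rightharpoonup 0$ in $L^m$ against a fixed $L^{m/(m-1)}$ function built from $\dot f^k$. The only presentational difference is that you localize via the quantitative Huygens estimate whereas the paper first reduces to compactly supported data; just be sure to also record the routine replacement of $r\partial_{r,t}W^k_{\lin}$ and $r\partial_{r,t}\eps_n^J$ by $\partial_{r,t}(rW^k_{\lin})$ and $\partial_{r,t}(r\eps_n^J)$, whose error terms the paper controls by the decay of $W^k_{\lin}$ and the radial Sobolev embedding applied to $\eps_n^J$.
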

We first prove Proposition \ref{Prop:Orth}, assuming the two lemmas.
\begin{proof}
\noindent\emph{Proof of (\ref{Eqn:Orth})}.
By the definition of the profile decomposition, we have
\begin{equation}
 \label{Eqn:exp_wlin}
 w_{\lin,n}(t,r)=\sum_{j=1}^k W_{\lin,n}^j(t,r)+\eps_n^k(t,r).
\end{equation}
Translating $w_{\lin,n}$ in time if necessary, we may assume $\theta_n=0$ for all $n$. By (\ref{Eqn:exp_wlin}) (and its time derivative), at $t=0$, we have
\begin{multline*}
\int_{\rho_n}^{\sigma_n}\partial_{r,t} w_{\lin,n}(0,r)\left|\partial_{r,t}W_{\lin,n}^k(0,r)\right|^{m-2} \partial_{r,t} W_{\lin,n}^k(0,r)r^m\,dr%+\int_{\rho_n}^{\sigma_n}w_{1,n}\left|\partial_tW_{\lin,n}^k(0,r)\right|^{m-2}\partial_t W_{\lin,n}^k(0,r)r^m\,dr
\\
= \int_{\rho_n}^{\sigma_n} \left|\partial_{r,t}W_{\lin,n}^k(0,r)\right|^{m}r^m\,dr\\%+\int_{\rho_n}^{\sigma_n} \left|\partial_tW_{\lin,n}^k(0,r)\right|^{m}r^m\,dr\\
+\sum_{j=1}^{k-1} \int_{\rho_n}^{\sigma_n} \left|\partial_{r,t}W_{\lin,n}^k(0,r)\right|^{m-2} \partial_{r,t}W_{\lin,n}^k(0,r) \partial_{r,t}W_{\lin,n}^j(0,r)r^m\,dr\\ %+\int_{\rho_n}^{\sigma_n} \left|\partial_tW_{\lin,n}^k(0,r)\right|^{m-2}\partial_tW_{\lin,n}^k(0,r)\partial_tW_{\lin,n}^j(0,r)r^m\,dr\\
+\int_{\rho_n}^{\sigma_n} \left|\partial_{r,t}W_{\lin,n}^k(0,r)\right|^{m-2} \partial_{r,t}W_{\lin,n}^k(0,r)\partial_{r,t}\eps_n^k(0,r)r^m\,dr,%+\int_{\rho_n}^{\sigma_n} \left|\partial_tW_{\lin,n}^k(0,r)\right|^{m-2}\partial_tW_{\lin,n}^k(0,r)\partial_t\eps_n^k(0,r).
\end{multline*}
Using Lemmas \ref{Lem:pseudo_orth1}, \ref{Lem:pseudo_orth2} and H\"older's inequality, we obtain
\begin{multline*}
o_n(1)+\left\|r^{1-\frac{2}{m}} \partial_{r,t} W_{\lin,n}^k(0)\right\|_{L^m(\{\rho_n<|x|< \sigma_n\})}^{m-1} \left\|r^{1-\frac{2}{m}} \partial_{r,t} w_{\lin,n}(0)\right\|_{L^m(\{\rho_n<|x|< \sigma_n\})}\\
\geq \left\|r^{1-\frac{2}{m}} \partial_{r,t} W_{\lin,n}^k(0)\right\|_{L^m(\{\rho_n<|x|< \sigma_n\})}^m,
\end{multline*}
and (\ref{Eqn:Orth}) follows from Young's inequality (\ref{Eqn:Young}).

\medskip

\noindent\emph{Proof of (\ref{Eqn:Orth:NL})}.
By Proposition \ref{Prop:Perturb_outside},
\begin{equation}
 \label{Eqn:E1}
 \BA{w_n}(\theta_n)\simP \sum_{j\geq 1}\BA{V_{\lin,n}^j}(0),
\end{equation}
 where
 $$V^j_{\lin,n}(t,x):=\frac{1}{\lambda_{j,n}^{\frac{2}{p-1}}}V^j_{\lin}\left(\frac{t-s_{j,n}}{\lambda_{j,n}},\frac{x}{\lambda_{j,n}}\right),\quad s_{j,n}:=t_{j,n}-\theta_n,$$
 and $V^j_{\lin}$ is as in Remark \ref{Rem:Perturb}.
 %where
 %$$\lim_{J\to\infty}\limsup_{n\to\infty} \left\|r_n^J(\theta_n)\right\|_{\HHH^{s_c}}=0.$$
%  \begin{equation*}
%   \left(\tilde{\eps}_{0,n}^J,\tilde{\eps}_{1,n}^J\right) := \vec{\eps}_n^J(\theta_n)+\vec{r}^J_n(\theta_n)+\sum_{j=1}^J \left(\vecc{W}_{n}^j(\theta_n)-\vecc{\tW}_{\lin,n}^j(0)\right).
%  \end{equation*}

By (\ref{Eqn:Orth}),
\begin{multline*}
o_n(1)+\left\|r^{1-\frac{2}{m}} \partial_r w_{n}(\theta_n)\right\|_{L^m(\{\rho_n<|x|<\sigma_n\})}^m +\left\|r^{1-\frac{2}{m}} \partial_t w_{n}(\theta_n)\right\|_{L^m(\{\rho_n < |x|<\sigma_n\})}^m
\\
\geq \left\|r^{1-\frac{2}{m}} \partial_r V_{\lin,n}^k(0)\right\|_{L^m(\{\rho_n<|x|< \sigma_n\})}^m +\left\|r^{1-\frac{2}{m}} \partial_t V_{\lin,n}^k(0)\right\|_{L^m(\{\rho_n<|x|< \sigma_n\})}^m.
\end{multline*}
Combining with the definition of $V^j_{\lin,n}$, we get the desired conclusion.
 \end{proof}
We are left with proving Lemmas \ref{Lem:pseudo_orth1} and \ref{Lem:pseudo_orth2}.
\begin{proof}[Proof of Lemma \ref{Lem:pseudo_orth1}]
\noindent\emph{Step 1. Preliminaries.}
Using Lemma \ref{Lem:HardyIneq}, Lemma \ref{Lem:ConservQuantLin}, and the density of  $C_0^{\infty}$ functions in $\dot{H}^{s_c}$ and $\dot{H}^{s_c-1}$, we see that it is sufficient to prove the lemma assuming
\begin{equation}
 \label{Eqn:compact_supp}
 W_{0}^j,\;W_1^j,\;W_{0}^k,\;W_1^k \in C_0^{\infty}.
\end{equation}
The explicit solutions of the linear wave equation in dimension $3$ are given by
\begin{equation}
\label{Eqn:Wj_Wk}
rW^{j}_{\lin}(t,r)=f^{j}(t+r)-f^{j}(t-r),\quad rW^{k}_{\lin}(t,r)=f^{k}(t+r)-f^{k}(t-r),
\end{equation}
where $f^j$ (respectively $f^k$) is defined as $f$ after (\ref{Eqn:wf}), with $w=W^j_{\lin}$ (respectively $w=W^k_{\lin}$). It follows from the definition of $f^k$ and $f^j$ that $\dot{f}^k$ and $\dot{f}^j$ are compactly supported.
% , and that there exists $C>0$ such that
% \begin{equation}
%  \label{fjkn_infty}
% |s|\geq C\Longrightarrow f^k(s)=f^k_{\infty}\text{ and } f^j(s)=f^j_{\infty},
% \end{equation}
% where
% $$f^l_{\infty}=\frac{1}{2}\int_0^{\infty} \sigma w_l(\sigma)\,d\sigma,\quad l=j,k.$$
By the strong Huygens' principle and (\ref{Eqn:compact_supp}), there exists a constant $K>0$ (depending on $W^j_{\lin}$, $W^k_{\lin}$) such that
\begin{equation}
 \label{Eqn:supp_Wjln}
 \forall l\in \{j,k\},\quad
 r\in \supp \BA{W_{\lin,n}^{l}}(0)\Longrightarrow \Big| r-|t_{l,n}|\Big|\leq K\lambda_{l,n}.
\end{equation}
Using (\ref{Eqn:compact_supp}) and (\ref{Eqn:Wj_Wk}), it is easy to see that the following bound holds:
\begin{equation}
\label{Eqn:bound_Wj_Wk}
 \forall t\in \RR,\;\forall r>0,\;\forall l\in \{j,k\}\quad \left|W^{l}_{\lin}(t,r)\right|+\left|\partial_{r,t}W^{l}_{\lin}(t,r)\right| \lesssim \frac{1}{1+|t|+|r|}.
\end{equation}
We conclude this step by noting that we can assume $\lambda_{k,n}=1$.
Indeed, the change of variable $s=r/\lambda_{k,n}$ gives
\begin{multline*}
\int_{\rho_n}^{\sigma_n} \left|\partial_{r,t}W_{\lin,n}^k(0,r)\right|^{m-2} \partial_{r,t}W_{\lin,n}^k(0,r) \partial_{r,t}W_{\lin,n}^j(0,r)r^m\,dr\\
=\int_{\rho_n'}^{\sigma_n'} \left|\partial_{r,t}W^k_{\lin}(-t_{k,n}',s)\right|^{m-2} \partial_{r,t}W^k_{\lin}(-t_{k,n}',s)
\partial_{r,t} W^j_{\lin}\left(-\frac{t_{j,n}'}{\lambda_{j,n}'},\frac{s}{\lambda_{j,n}'}\right)\frac{s^m}{{\lambda_{j,n}'}^{1+\frac{1}{m}}}\,ds,
\end{multline*}
where
$$\lambda_{j,n}'=\frac{\lambda_{j,n}}{\lambda_{k,n}},\; \rho_n'=\frac{\rho_n}{\lambda_{k,n}},\;\sigma_n'=\frac{\sigma_n}{\lambda_{k,n}},\;t_{j,n}'=\frac{t_{j,n}}{\lambda_{k,n}},\; t_{k,n}'=\frac{t_{k,n}}{\lambda_{k,n}},$$
and these new parameters satisfy the usual orthogonality condition (with $\lambda_{k,n}'=1$).

We next prove the conclusion of Lemma \ref{Lem:pseudo_orth1}, assuming (\ref{Eqn:compact_supp}) and $\lambda_{k,n}=1$. Extracting subsequences in $n$ if necessary, we can assume that $\lambda_{j,n}$ has a limit in $[0,+\infty]$ as $n\to\infty$. We distinguish two cases: when the limit is $0$ or $+\infty$ (treated in Step 2), and when the limit is in $(0,+\infty)$ (treated in Step 3).

\medskip

\noindent \emph{Step 2.} In this step we prove the desired result when
\begin{equation}
\label{Eqn:lim_lambda1}
 \forall n,\; \lambda_{k,n}=1\text{ and }\lim_{n\to\infty} \lambda_{j,n}\in \{0,+\infty\}
\end{equation}
We denote by $S_n$ the set
\begin{equation}
 \label{def_Eqn}
 S_n :=\left\{r>0\; \text{s.t.}\; \big|r-|t_{k,n}|\big|\leq K\text{ and }\big|r-|t_{j,n}|\big|\leq K\lambda_{j,n}\right\},
\end{equation}
By H\"older's inequality and (\ref{Eqn:supp_Wjln}),
\begin{multline*}
\left|\int_{\rho_n}^{\sigma_n} |\partial_{r,t}W_{\lin,n}^k(0,r)|^{m-2}  \partial_{r,t}W_{\lin,n}^k (0,r)\partial_{r,t}W_{\lin,n}^j(0,r)r^m\,dr\right|
\\
\leq \underbrace{\left(\int_{S_n} |\partial_{r,t}W_{\lin,n}^k(0,r)|^mr^m\,dr\right)^{\frac{m-1}{m}}}_{I_n}\underbrace{\left(\int_{S_n} |\partial_{r,t}W_{\lin,n}^j(0,r)|^m r^m\,dr\right)^{\frac{1}{m}}}_{II_n}.
\end{multline*}
We note that by Lemma \ref{Lem:HardyIneq} and Lemma \ref{Lem:ConservQuantLin}, both terms $I_n$ and $II_n$ are bounded. We distinguish two subcases.

If $\lambda_{j,n}\to 0$, using that the length of $S_n$ is smaller that $2K\lambda_{j,n}$, and the bound $|\partial_{r,t}W^k_{\lin}|\lesssim 1/r$ in (\ref{Eqn:bound_Wj_Wk}), we obtain
$$\big|I_n\big|\lesssim_K \lambda_{j,n}^{\frac{m-1}{m}}\underset{n\to\infty}{\longrightarrow} 0.$$

If $\lambda_{j,n}\to+\infty$, using that the length of $S_n$ is smaller that $K$ and the bound of $|\partial_{r,t}W^j|$ in (\ref{Eqn:bound_Wj_Wk}), which implies $|\partial_{r,t}W^j_{\lin,n}| \lesssim r^{-1}\lambda_{j,n}^{-1/m}$, we obtain
$$ \big|II_n\big| \lesssim \left(\int_{S_n} \frac{1}{r^m\lambda_{j,n}}r^m\,dr\right)^{1/m}\lesssim \frac{1}{\lambda_{j,n}^{1/m}}\underset{n\to\infty}{\longrightarrow} 0.$$

In both cases, the conclusion of the lemma follows.

\medskip

\noindent \emph{Step 3.} In this step we prove the desired result when
\begin{equation}
\label{Eqn:lim_lambda2}
 \forall n,\; \lambda_{k,n}=1\text{ and }\lim_{n\to\infty} \lambda_{j,n}\in (0,\infty).
\end{equation}
Rescaling if necessary, we can assume without loss of generality that the limit of $\lambda_{j,n}$ is $1$. In this case
$$ \left(\frac{1}{\lambda_{j,n}^{\frac{2}{p-1}}}W^j_{\lin,n}\left(0,\frac{\cdot}{\lambda_{j,n}}\right),
\frac{1}{\lambda_{j,n}^{1+\frac{2}{p-1}}}\partial_tW^j_{\lin,n}\left(0,\frac{\cdot}{\lambda_{j,n}}\right)\right)\underset{n \to\infty}{\longrightarrow} (W^j_{0},W^j_1),$$
in $\HHH^{s_c}$, so that we can assume furthermore:
$$ \forall n,\quad \lambda_{j,n}=1.$$

Finally, we must prove that the following sum of $2$ integrals goes to $0$ as $n$ goes to infinity:
$$ J_n:=\int_{\rho_n}^{\sigma_n}\left|\partial_{r,t}W^k_{\lin}(-t_{k,n},r)\right|^{m-2} \partial_{r,t} W^k_{\lin}(-t_{k,n},r)\partial_{r,t}W^j_{\lin}(-t_{j,n},r)r^m\,dr.$$
%By (\ref{Eqn:supp_Wjln}), the integrands in the preceding line are supported in the set
%$$ S_n=\left\{r>0\; \text{s.t.}\; \big|r-|t_{k,n}|\big|\leq C_0\text{ and }\big|r-|t_{j,n}|\big|\leq C_0\right\}.$$
If
$$ \lim_{n\to\infty} \Big| |t_{k,n}|-|t_{j,n}|\Big|=+\infty,$$
then $S_n$ is empty for large $n$ and the conclusion of the lemma follows. Extracting subsequences, we are reduced to the case where
$|t_{k,n}|-|t_{j,n}|$ has a finite limit as $n\to\infty$. In view of the orthogonality condition (\ref{Eqn:PseudoOrth}), and since $\lambda_{j,n}=\lambda_{k,n}=1$, we deduce that
\begin{equation*}
\lim_{n\to\infty}t_{k,n}+t_{j,n}=\theta_0\in\RR, \quad \lim_{n\to\infty}\left|t_{k,n}-t_{j,n}\right|=\infty.
\end{equation*}
To fix ideas, we will assume
\begin{equation}
\label{lim_tjn_tkn}
 \lim_{n\to\infty} t_{k,n}=+\infty,\quad \lim_{n\to\infty} t_{j,n}=-\infty.
\end{equation}
(The proof is the same if these limits are exchanged). We next prove
\begin{multline}
\label{Eqn:reduc_r}
 \int_{\rho_n}^{\sigma_n}|\partial_{r,t}W^k_{\lin}(-t_{k,n},r)|^{m-2} \partial_{r,t}W^k_{\lin}(-t_{k,n},r)\partial_{r,t}W^j_{\lin}(-t_{j,n},r)r^m\,dr=\\
 \int_{\rho_n}^{\sigma_n}\left|\partial_{r,t}\left(rW^k_{\lin} \right)(-t_{k,n},r) \right|^{m-2}
 \partial_{r,t}\left(rW^k_{\lin} \right)(-t_{k,n},r) \partial_{r,t} (rW^j_{\lin} )(-t_{j,n},r) \,dr+o_n(1).
\end{multline}
Since $\supp W^k_{\lin}(-t_{k,n})\subset \left[|t_{k,n}|-K,|t_{k,n}|+K \right]$, and, by (\ref{Eqn:bound_Wj_Wk}),  $|W^k_{\lin}(-t_{k,n},r)|\lesssim 1/|t_{k,n}|$, we have
\begin{equation*}
 \lim_{n\to\infty}\int_0^{+\infty}\left|W^k_{\lin}(-t_{k,n},r)\right|^m\,dr+\int_0^{+\infty} \left|W^j_{\lin}(-t_{j,n},r)\right|^m\,dr=0,
\end{equation*}
and (\ref{Eqn:reduc_r}) follows, using that $\partial_{r,t}(rW^l(-t_{k,n},r))=r\partial_{r,t}W^l(-t_{k,n},r)+W^l(-t_{k,n},r)$, $l=j,k$.

Using (\ref{lim_tjn_tkn}), the formula (\ref{Eqn:Wj_Wk}) and the fact that $\dot{f}^j$ and $\dot{f}^k$ are compactly supported, we see that for large $n$, for any $r>0$,
$$\partial_r(rW^{k}_{\lin})(-t_{k,n},r)=\dot{f}^k(-t_{k,n}+r),\quad \partial_r(rW^{j}_{\lin})(-t_{j,n},r)=\dot{f}^{j}(-t_{j,n}-r)$$
and
$$\partial_t(rW^{k}_{\lin})(-t_{k,n},r)=\dot{f}^k(-t_{k,n}+r),\quad \partial_t(rW^{j}_{\lin})(-t_{j,n},r)=-\dot{f}^{j}(-t_{j,n}-r).$$
As a consequence, the term with $\partial_r$ and the term with $\partial_t$ in the second line of (\ref{Eqn:reduc_r}) cancel each other, and we obtain
\begin{equation*}
\lim_{n\to\infty}
\int_{\rho_n}^{\sigma_n}|\partial_{r,t}W^k_{\lin}(-t_{k,n},r)|^{m-2} \partial_{r,t}W^k_{\lin}(-t_{k,n},r)\partial_{r,t}W^j_{\lin}(-t_{j,n},r)r^m\,dr=0,
\end{equation*}
which concludes the proof of Lemma \ref{Lem:pseudo_orth1}.
\end{proof}
\begin{proof}[Proof of Lemma \ref{Lem:pseudo_orth2}]

\noindent\emph{Step 1. Preliminaries.}
As in the proof of Lemma \ref{Lem:pseudo_orth1}, we first note that it is sufficient to prove the lemma assuming
\begin{equation*}
 W_{0}^k,\;W_1^k \in C_0^{\infty}.
\end{equation*}
We will use the notations of the preceding proof: we recall from this proof that $rW^k_{\lin}(t,r)=f^k(t+r)-f^k(t-r)$, the condition (\ref{Eqn:supp_Wjln}) on the support of $\vW_{\lin,n}^k(0)$ and the estimate (\ref{Eqn:bound_Wj_Wk}) (with $l=k$). As in this proof, we see (using the change of variable $r=s/\lambda_{k,n}$) that we can assume $\lambda_{k,n}=1$ for all $n$. In this case, $\eps_n^J$ satisfies, in view of (\ref{Eqn:WeakProfile2}),
\begin{equation}
 \label{Eqn:weak_CV_eps}
 \BA{\eps_n^J}(t_{k,n}) \xrightharpoonup[n\to\infty]{} 0\text{ in }\HHH^{s_c}.
\end{equation}
Assuming, after extraction, that $\{t_{k,n}\}_n$ has a limit  $\theta_0 \in[-\infty,+\infty]$ as $n\to\infty$, we will distinguish the case where $\theta_0\in \RR$ (treated in step 2) and the case where $\theta_0\in \{-\infty,+\infty\}$ (treated in step 3).

\medskip

\noindent\emph{Step 2.} In this step we prove (\ref{Eqn:pseudo_orth2}) assuming:
$$\lim_{n\to\infty} t_{k,n}=\theta_0\in \RR.$$
Translating in time, we can assume $\theta_0=0$. Using the continuity of the linear flow in $\HHH^{s_{c}}$ and Lemma \ref{Lem:HardyIneq}, we can
even assume $t_{k,n}=0$ for all $n$. Thus we are reduced to prove that
\begin{equation}
 \label{Eqn:goto0_step2}
 \lim_{n\to\infty} \int_{\rho_n}^{\sigma_n} |\partial_{r,t}W^k_{\lin}(0,r)|^{m-2} \partial_{r,t}W^k_{\lin}(0,r)\partial_{r,t}\eps_n^J(0,r)r^m\,dr=0.
\end{equation}
If $\rho_n\to+\infty$ or $\sigma_n\to 0$ as $n\to\infty$, (\ref{Eqn:goto0_step2}) is obvious. Thus we can assume (extracting subsequences) $\rho_n\to \rho_{\infty}\in [0,+\infty)$, $\sigma_n\to\sigma_{\infty}\in (0,+\infty]$ as $n\to\infty$, where $\rho_{\infty}\leq \sigma_{\infty}$. In this case
\begin{multline*}
\int_{\rho_n}^{\sigma_n} |\partial_{r,t}W^k_{\lin}(0,r)|^{m-2} \partial_{r,t}W^k_{\lin}(0,r)\partial_{r,t}\eps_n^J(0,r)r^m\,dr\\
=\int_{\rho_{\infty}}^{\sigma_{\infty}} |\partial_{r,t}W^k_{\lin}(0,r)|^{m-2} \partial_{r,t}W^k_{\lin}(0,r)\partial_{r,t}\eps_n^J(0,r)r^m\,dr+o_n(1),
\end{multline*}
and (\ref{Eqn:goto0_step2}) follows from (\ref{Eqn:weak_CV_eps}) (with $t_{k,n}=0$), since by Lemma \ref{Lem:HardyIneq},
$$f\mapsto \int_{\rho_{\infty}}^{\sigma_{\infty}} |\partial_{r,t}W^k_{\lin}(0,r)|^{m-2} \partial_{r,t}W^k_{\lin}(0,r)f(r)r^m\,dr$$
is a continuous linear form on $\dot{H}^{s_c-1}$.

\medskip

\noindent\emph{Step 3.} In this step we prove (\ref{Eqn:pseudo_orth2}) assuming:
$$\lim_{n\to\infty} t_{k,n}\in  \{-\infty,+\infty\}.$$
To fix ideas, we assume
\begin{equation}
\label{Eqn:lim_tkn_infty}
 \lim_{n\to\infty} t_{k,n}=+\infty.
\end{equation}
Using, as in the proof of Lemma \ref{Lem:pseudo_orth1}, that (\ref{Eqn:lim_tkn_infty}) implies
$$\lim_{n\to\infty} \int_0^{+\infty}\left|W^k_{\lin}(-t_{k,n},r)\right|^m\,dr=0,$$
we obtain
\begin{multline}
\label{Eqn:step3a}
\int_{\rho_n}^{\sigma_n} |\partial_{r,t}W^k_{\lin}(0,r)|^{m-2} \partial_{r,t}W^k_{\lin}(0,r)\partial_{r,t}\eps_n^J(0,r)r^m\,dr\\=\int_{\rho_n}^{\sigma_n} \left|\partial_{r,t} (rW^k_{\lin} )(0,r)\right|^{m-2} \partial_{r,t}\left(rW^k_{\lin} \right) (0,r) \partial_{r,t}\eps_n^J(0,r)r\,dr+o_n(1)
\end{multline}
Furthermore, by the radial Sobolev embedding,
$$|\eps_n^J(0,r)| \lesssim \frac{1}{r^{1/m}}\left\|\eps_n^J(0)\right\|_{\dot{H}^{s_c}}.$$
Combining with the condition (\ref{Eqn:supp_Wjln}) on the support of $\BA{W^k_{\lin,n}}(0)$, we obtain
$$\int_0^{+\infty} \left|\partial_{t,r}\left(rW^k_{\lin}(-t_{k,n},r)\right)\right|^{m-1}\left|\eps_n^J(0,r)\right|\,dr\lesssim \frac{\|\eps_n^J(0)\|_{\dot{H}^{s_c}}}{(|t_{k,n}|-K)^{\frac{1}{m}}}\underset{n\to\infty}{\longrightarrow} 0,
$$
in view of (\ref{Eqn:lim_tkn_infty}). Combining with (\ref{Eqn:step3a}), we deduce
\begin{multline}
\label{Eqn:step3b}
\int_{\rho_n}^{\sigma_n} |\partial_{r,t}W^k_{\lin}(0,r)|^{m-2} \partial_{r,t}W^k_{\lin}(0,r)\partial_{r,t}\eps_n^J(0,r)r^m\,dr\\=\int_{\rho_n}^{\sigma_n} \left|\partial_{r,t}\left(rW^k_{\lin} \right)(0,r)\right|^{m-2} \partial_{r,t}\left(rW^k_{\lin} \right)(0,r)
\partial_{r,t}\left(r\eps_n^J \right)(0,r)\,dr+o_n(1)
\end{multline}
We write, as in (\ref{Eqn:wf})
$$r\eps_n^J(t,r)=g_{n}^J(t+r)-g_{n}^J(t-r),$$
where $g_{n}^J$ is defined in a similar way as $f$ after (\ref{Eqn:wf}). Hence $g_{n}^{J} \in L_{loc}^{m}(\mathbb{R})$ and $\dot{g}_{n}^{J} \in
L^{m}(\mathbb{R})$.

Since $\dot{f}^k$ has compact support, we deduce from (\ref{Eqn:lim_tkn_infty}) that $\dot{f}^k(-t_{k,n}-r)=0$ for large $n$ and all $r>0$, and we can rewrite (\ref{Eqn:step3b}) as
\begin{multline*}
\int_{\rho_n}^{\sigma_n} |\partial_{r,t}W^k_{\lin}(0,r)|^{m-2} \partial_{r,t}W^k_{\lin}(0,r)\partial_{r,t}\eps_n^J(0,r)r^m\,dr\\=\int_{\rho_n}^{\sigma_n} \left|\dot{f}^k(-t_{k,n}+r)\right|^{m-2}\dot{f}^k(-t_{k,n}+r)\left(\dot{g}^J_n(r)+\dot{g}^J_n(-r)\right)\,dr
\\+
\int_{\rho_n}^{\sigma_n} \left|\dot{f}^k(-t_{k,n}+r)\right|^{m-2} \dot{f}^k(-t_{k,n}+r)\left(\dot{g}^J_n(r)-\dot{g}^J_n(-r)\right)\,dr+
o_n(1)\\
=2\int_{\rho_n}^{\sigma_n} \left|\dot{f}^k(-t_{k,n}+r)\right|^{m-2} \dot{f}^k(-t_{k,n}+r)\dot{g}^J_n(r)\,dr+o_n(1)\\
=2\int_{\rho_n-t_{k,n}}^{\sigma_n-t_{k,n}} \left|\dot{f}^k(r)\right|^{m-2} \dot{f}^k(r)\dot{g}^J_n(r+t_{k,n})\,dr.
\end{multline*}
If $\rho_n-t_{k,n}\to +\infty$ or $\sigma_n-t_{k,n}\to-\infty$ as $n\to+\infty$, then we are done. Assume
$$\lim_{n\to\infty}\rho_n-t_{k,n}=R_{\infty}\in [-\infty,+\infty),\quad \lim_{n\to\infty}\sigma_n-t_{k,n}=S_{\infty}\in (-\infty,+\infty],$$
with $R_{\infty}\leq S_{\infty}$.
Then
\begin{multline*}
 \int_{\rho_n}^{\sigma_n} |\partial_{r,t}W^k_{\lin}(0,r)|^{m-2}  \partial_{r,t}W^k_{\lin}(0,r)\partial_{r,t}\eps_n^J(0,r)r^m\,dr\\
=2\int_{R_{\infty}}^{S_{\infty}} \left|\dot{f}^k(r)\right|^{m-2} \dot{f}^k(r)\dot{g}^J_n(r+t_{k,n})\,dr+o_n(1).
\end{multline*}
Since, for $r\geq 0$,
\begin{align*}
\dot{g}^J_n(r+t_{k,n})&=\frac 12 \left(\partial_r(r\eps_n^J(t_{k,n},r))+\partial_t(r\eps_n^J(t_{k,n},r))\right), \\
\dot{g}^J_n(-r+t_{k,n})&=\frac 12 \left(\partial_r(r\eps_n^J(t_{k,n},r))-\partial_t(r\eps_n^J(t_{k,n},r))\right),
\end{align*}
we deduce from (\ref{Eqn:HardyPosOrigin1}), (\ref{Eqn:HardyDerOrigin}), and (\ref{Eqn:weak_CV_eps}) that
$$\dot{g}^J_n(t_{k,n}+\cdot)\xrightharpoonup[n\to\infty]{}0\text{ in } L^{m}(\mathbb{R}),$$
which concludes the proof, since
$$ h\mapsto 2 \int_{R_{\infty}}^{S_{\infty}} \left|\dot{f}^k(r)\right|^{m-2} \dot{f}^k(r)
h (r)\, dr$$   is a continuous linear form on $L^{m}(\mathbb{R})$.

\end{proof}

\section{Channels of energy for nonzero solutions}
\label{Sec:channels}
\subsection{Results}
In this section we consider all nonzero solutions of (\ref{Eqn:WaveSup}). We will prove below Propositions \ref{Prop:NonzeroSol} and \ref{Prop:NonzeroSol2} that state roughly that these solutions have a dispersive behavior
in the sense of the exterior energy estimate (\ref{Eqn:ConcentrationEta}). Recall from Proposition \ref{Prop:SolStat} the definitions of $Z_{\ell}$ and $R_{\ell}$. If $(f,g)\in \HHH^{s_c}$, and $\ell\neq 0$, we let
$$\sigma_{\ell}(f,g)=\min\Big\{R>R_{\ell}\;:\; \int_{R}^{\infty}|f(r)-Z_{\ell}(r)|+|g(r)|\,dr=0 \Big\}\in (R_{\ell},\infty].$$
If $\int_{R}^{\infty}|f(r)-Z_{\ell}(r)|+|g(r)|\,dr$ is not $0$ for large $R$, we just let $\sigma_{\ell}(f,g)=\infty$.
The fact that $\sigma_{\ell}(f,g)$ is greater than $R_{\ell}$ if $(f,g)\in \HHH^{s_c}$ follows from Proposition \ref{Prop:SolStat}.

For technical reasons, we will distinguish between the case where $\sigma_{\ell}(w_0,w_1)$ is infinite for all $\ell \neq 0$ (Proposition \ref{Prop:NonzeroSol}) and where it is finite for some $\ell$ (Proposition \ref{Prop:NonzeroSol2}).
\begin{prop}
\label{Prop:NonzeroSol}
Let $w$ be a solution of (\ref{Eqn:WaveSup}) with initial data $\BA{w_0} \in \HHH^{s_c}$ such that for all $\ell\neq 0$, $\sigma_{\ell}(\BA{w_0})=\infty$. Then there exist $R > 0$, $\eta > 0$, and a global, scattering solution $\cw$ of (\ref{Eqn:WaveSup}), with initial data $\BA{\cw_0} \in \HHH^{s_c}$, such that
\begin{equation}
\BA{\cw_{0}}(r)= \BA{w_{0}}(r), \, r \geq R
\label{Eqn:egal_init_data}
\end{equation}
and the following holds for all $t \geq 0$ or for all $t \leq 0$
\begin{align}
\int_{|x| \geq R + |t|} | r\partial_{r} \cw(t,r)|^{m} + | r \partial_{t} \cw(t,r)|^{m}\, dr \geq \eta.
\label{Eqn:ConcentrationEta}
\end{align}
\end{prop}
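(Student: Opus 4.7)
The plan is to mimic the channel-of-energy strategy of Duyckaerts-Kenig-Merle, adapted to the generalized $L^{m}$-energy framework of Section \ref{Sec:energy}. I would construct $\cw$ by truncating $\BA{w_{0}}$ outside a large ball so as to obtain small data, solve the resulting Cauchy problem, and then compare the nonlinear solution to its linear evolution. The desired exterior bound would then follow from Lemma \ref{Lem:Nrjchannel} applied to the linear evolution and transferred back to the nonlinear solution via Proposition \ref{Prop:CauchyPb}.

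Concretely, the first step is to pick $R>R_{1}$ large enough that $(\TTT_{R}w_{0},\mathbf{1}_{\{|x|>R\}}w_{1})$ has $\HHH^{s_{c}}$-norm below both the small-data threshold $\delta_{0}/C_{0}$ of Proposition \ref{Prop:CauchyPbHsc} and the threshold $\delta_{2}$ of Proposition \ref{Prop:CauchyPb}; the existence of such $R$ follows from Lemma \ref{Lem:HardyIneq} and dominated convergence. The corresponding solution $\cw$ is then global and scatters, and finite speed of propagation yields $\cw(t,r)=w(t,r)$ for $r\geq R+|t|$, which is (\ref{Eqn:egal_init_data}). Proposition \ref{Prop:CauchyPb} applied to $\cw$ next gives
\[
\bigl\|r^{1-2/m}(\partial_{r,t}\cw-\partial_{r,t}\cw_{\lin})\bigr\|_{L_{t}^{\infty}L_{x}^{m}}\lesssim \delta^{3m/4}A^{5m/4+1},
\]
where $\cw_{\lin}=S(t)\BA{\cw_{0}}$ and $\delta$ (resp.\ $A$) is the weighted $L^{m}$-norm (resp.\ $\HHH^{s_{c}}$-norm) of $\BA{\cw_{0}}$. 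Since $3m/4>1$ when $p>5$, this correction is of higher order in $\delta$. Combining this with the almost-conservation of the $L^{m}$-energy (Lemma \ref{Lem:ConservQuantLin}) and the lower bound of Lemma \ref{Lem:Nrjchannel} yields a strictly positive lower bound on $\int_{R+|t|}^{\infty}|r\partial_{r}\cw|^{m}+|r\partial_{t}\cw|^{m}\,dr$ for all $t\geq 0$ or all $t\leq 0$, proportional to $E_{m,R}[\cw_{\lin}](0)=\int_{R}^{\infty}|\partial_{r}(rw_{0})|^{m}+|rw_{1}|^{m}\,dr$, provided this last quantity is nonzero.

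The main obstacle will be the degenerate case in which $E_{m,R}[\cw_{\lin}](0)$ vanishes for every admissible $R$. This forces $rw_{0}(r)\equiv \ell$ and $w_{1}\equiv 0$ on some half-line $[R^{*},\infty)$. When $\ell=0$ the data is compactly supported, and one can simply re-run the argument with $R$ chosen slightly below $R^{*}$: the truncation then has small $\HHH^{s_{c}}$-norm while the exterior linear energy becomes strictly positive by nontriviality of $\BA{w_{0}}$. The genuinely hard sub-case is $\ell\neq 0$, where truncation cannot be made small and the natural comparison object is no longer the free wave but the singular stationary solution $Z_{\ell}$ of Proposition \ref{Prop:SolStat}. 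Here I would use Proposition \ref{Prop:CauchyPbH1} together with Remark \ref{R:potentiel} to linearize (\ref{Eqn:WaveSup}) around $Z_{\ell}$ in the exterior region, obtaining a perturbed linear equation with short-range potential $\iota p|Z_{\ell}|^{p-1}=O(r^{-(p-1)})$ that is comparable to the free wave at infinity, and then prove an analog of Lemma \ref{Lem:Nrjchannel} for this linearized flow applied to the correction $(w_{0}-Z_{\ell},w_{1})$. It is precisely at this point that the hypothesis $\sigma_{\ell}(\BA{w_{0}})=\infty$ enters: it guarantees that $(w_{0}-Z_{\ell},w_{1})$ is nontrivial on every exterior half-line, making the linearized initial exterior energy strictly positive and yielding a uniform $\eta>0$.
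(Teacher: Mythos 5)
There is a genuine gap, and it sits exactly at the point you describe as ``the degenerate case.'' Your non-degenerate step is indeed how the paper starts: truncate outside $R$, get small data, and compare the nonlinear flow to the linear one via Proposition \ref{Prop:CauchyPb}, then invoke Lemma \ref{Lem:Nrjchannel}. But the resulting lower bound is of the form $e-C_A\delta^{3m/4}$, where $e^m=\int_R^\infty|\partial_r(rw_0)|^m+|rw_1|^m\,dr$ is the quantity controlled from below by Lemma \ref{Lem:Nrjchannel}, while $\delta^m\approx e^m+R\,|w_0(R)|^m R^{m}\cdot R^{-m}$, i.e.\ by Lemma \ref{Lem:UtoV} the full weighted norm $\delta$ contains the boundary term $R^{\frac{1-m}{m}}|rw_0(r)|_{r=R}$ in addition to $e$. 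Hence the correction is \emph{not} automatically of higher order in $e$: if the data is close to $\ell/r$ in the exterior region, the boundary term dominates and $C_A\delta^{3m/4}$ can swamp $e$ even when $e>0$. The correct dichotomy is therefore not ``either $e>0$ for some $R$ (done) or $rw_0\equiv\ell$, $w_1\equiv0$ on a half-line (degenerate),'' but rather ``either the channel holds, or the self-improving inequality (\ref{Eqn:DecayKinet}) holds for every admissible $r_0$.'' This is why the paper argues by contradiction (assuming (\ref{channel_zero}) for every exterior modification) and spends most of the proof bootstrapping (\ref{Eqn:DecayKinet})--(\ref{Eqn:HoldCont}) into the rigidity statements of Result \ref{Res:VoLimit}: $rw_0(r)\to\ell$ with a quantitative rate, $\BA{w_0}\in\HHH^1$, and vanishing of the $\HHH^1$ exterior energy (\ref{Eqn:ChannelH1}).

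The same quantitative issue undermines both of your sub-cases. For $\ell=0$ you cannot conclude just from ``the exterior linear energy becomes strictly positive by nontriviality'': positivity must beat the correction term, and near the boundary of the support the two are a priori comparable; the paper instead uses the H\"older-type estimate (\ref{Eqn:HoldCont}) to force $v_0(r_0)=0$ at the edge of the support and then (\ref{Eqn:DecayKinet}) to conclude $w_0\equiv0$ there, a contradiction. For $\ell\neq0$ your plan (linearize around $Z_\ell$ via Proposition \ref{Prop:CauchyPbH1} and Remark \ref{R:potentiel}) is the right toolbox, but the logic is inverted: rather than showing that $\sigma_\ell=\infty$ yields a positive linearized exterior energy (again insufficient without a quantitative bound), the paper shows that the contradiction hypothesis together with the $\HHH^1$ regularity gained in Result \ref{Res:VoLimit} and the classical $H^1$ channel estimate (Remark \ref{Rem:NrjchannelH1}) forces $\sigma_\ell(\BA{w_0})<\infty$, directly contradicting the hypothesis of the proposition. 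Without the $\HHH^1$ upgrade and the contradiction architecture, your sketch does not close.
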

\begin{prop}
\label{Prop:NonzeroSol2}
Let $w$ be a solution of (\ref{Eqn:WaveSup}) with initial data $\BA{w_0}$ such that $\sigma_{1}(\BA{w_0})$ is finite. Then
\begin{enumerate}
 \item \label{I:rhol}There exists $\theta= \theta(\sigma_{1}(\BA{w_{0}}))>0$, a solution $\tilde{w}$ of (\ref{Eqn:WaveSup}), defined on $[-\theta,\theta]$, with initial data $\BA{\tilde{w}_0} \in \HHH^{s_c}$, and $R \in (0,\sigma_{1}(\BA{w_0}))$ close enough to $\sigma_{1}(\BA{w_{0}})$
     such that
\begin{equation}
\BA{\tilde{w}}_0(r)=\BA{w_0}(r)\text{ if } r>R,
\label{Eqn:brwEquality}
\end{equation}
the following holds for all $t\in [0,\theta]$ or for all $t\in [-\theta,0]$
\begin{equation}
 \sigma_{1}(\BA{\tilde{w}}(t))=\sigma_{1}(\BA{w_0})+|t|.
\label{Eqn:PropSupp}
\end{equation}
Moreover
\begin{equation}
\label{Eqn:bound_below}
\forall R'>R_1,\quad \inf_{\sigma_{1} \geq R'} \theta(\sigma_{1})>0.
\end{equation}
\item \label{I:channel} There exists $S_1>R_1$ such that if $\sigma_{1}(\BA{w_0}) > S_1$, and $R\in (0, \sigma_{1}(\BA{w_0}))$ is close enough to $\sigma_1(\BA{w_0})$, there exists a global, scattering solution $\cw$ of (\ref{Eqn:WaveSup}) such that (\ref{Eqn:egal_init_data}) and (\ref{Eqn:ConcentrationEta}) hold.
\end{enumerate}
\end{prop}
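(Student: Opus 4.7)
Both parts follow a common template: fix $R \in (R_1, \sigma_1(\vec{w}_0))$ close to $\sigma_1(\vec{w}_0)$, use Proposition~\ref{Prop:CauchyPbH1} to solve a perturbation equation about a (possibly time-dependent) truncation of the stationary solution $Z_1$, and then exploit finite speed of propagation together with the stationarity of $Z_1$ on $\{r > R_1\}$.

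For part~(\ref{I:rhol}), set $V := \TTT_R Z_1$; by Remark~\ref{R:potentiel}(\ref{I:V1}), $V$ satisfies the hypotheses of Proposition~\ref{Prop:CauchyPbH1} on some interval $[-\theta(R),\theta(R)]$. I construct initial data $(\tilde{w}_0, \tilde{w}_1) \in \HHH^{s_c}$ that equal $(w_0, w_1)$ for $r > R$ and are glued smoothly to $V$ inside $B_R$, so that $(h_0, h_1) := (\tilde{w}_0 - V, \tilde{w}_1) \in \HHH^1$ has norm as small as needed once $R$ is close enough to $\sigma_1(\vec{w}_0)$. The smallness comes from the fact that $(w_0 - Z_1, w_1)$ is supported in the shrinking annulus $(R, \sigma_1(\vec{w}_0))$, which is bounded away from the origin where radial $\dot{H}^{s_c} \hookrightarrow H^1_{\text{loc}}$ (since $s_c > 1$), combined with dominated convergence. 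Proposition~\ref{Prop:CauchyPbH1} then yields $h$ on $[-\theta,\theta]$; using the identity $-\triangle Z_1 = \iota|Z_1|^{p-1}Z_1$, one checks that $V + h$ solves~\eqref{Eqn:WaveSup} in the exterior region $r > R$, so by uniqueness the NLW solution $\tilde{w}$ with data $(\tilde{w}_0, \tilde{w}_1)$ coincides with $V+h$ there. Its existence on the full $[-\theta,\theta]$ follows by combining this exterior control with the standard local well-posedness in the bounded interior region.

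The upper bound $\sigma_1(\vec{\tilde{w}}(t)) \le \sigma_1(\vec{w}_0) + |t|$ is immediate from finite speed of propagation and stationarity of $Z_1$. For the matching lower bound, I argue by contradiction: if $\sigma_1(\vec{\tilde{w}}(t_0)) < \sigma_1(\vec{w}_0) + t_0$ for some $t_0 \in [0,\theta]$, then finite speed of propagation between $\tilde{w}$ and $w$ (which share data for $r > R$) forces $w(t_0) = Z_1$ on $\{r > \max(R + t_0, \sigma_1(\vec{\tilde{w}}(t_0)))\}$, and propagating this backwards along $w$ to time $0$ contradicts the minimality of $\sigma_1(\vec{w}_0)$. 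The backward step requires $w$ to be defined on $[0, t_0]$; if that fails one instead obtains the equality for negative times, explaining the ``or'' in the statement. Finally,~\eqref{Eqn:bound_below} follows by choosing $R$ uniformly bounded below away from $R_1$ as $\sigma_1$ ranges over $[R', \infty)$, so that $\theta(R)$ inherits a uniform positive lower bound from Remark~\ref{R:potentiel}(\ref{I:V1}).

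For part~(\ref{I:channel}), replace $V$ by the time-dependent potential $V(t,x) = \chi\!\left((S_1 + |t|)^{-1}|x|\right) Z_1(x)$ from Remark~\ref{R:potentiel}(\ref{I:V2}), where $S_1 > R_1$ is taken large and $R \in (S_1, \sigma_1(\vec{w}_0))$ is close to $\sigma_1(\vec{w}_0)$. The same construction now yields a global $h$, hence a global NLW solution $\check{w}$ satisfying~\eqref{Eqn:egal_init_data}; scattering follows from the global Strichartz bounds on $h$ and the corresponding bounds on $V$. For the channel of energy~\eqref{Eqn:ConcentrationEta}: since $\sigma_1(\vec{\check{w}}_0) = \sigma_1(\vec{w}_0)$, the data $(\check{w}_0 - Z_1, \check{w}_1)$ does not vanish identically near $r = \sigma_1(\vec{w}_0)$, so the associated linear wave has strictly positive $L^m$-generalized exterior energy on the cone $\{|x| \ge R + |t|\}$ in at least one time direction, by Lemma~\ref{Lem:Nrjchannel}. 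The main obstacle, and the heart of the channel of energy method here, is transferring this lower bound to $\check{w}$ itself: writing $\check{w} - Z_1$ as the linear wave plus a nonlinear error controlled by the smallness of $h$ in $\HHH^1$ from Proposition~\ref{Prop:CauchyPbH1}, one absorbs the error by triangle inequality so that the exterior $L^m$-energy of $\check{w}$ (and not merely of $\check{w} - Z_1$) remains bounded below by some $\eta > 0$. Taking $S_1$ sufficiently large is precisely what keeps the perturbation around the singular solution $Z_1$ small enough globally in time for this to work.
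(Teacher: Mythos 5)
Your overall template for both parts (linearize around a truncation of $Z_1$, invoke Proposition \ref{Prop:CauchyPbH1}, and use finite speed of propagation) is the same as the paper's, but the central step of part (\ref{I:rhol}) — the lower bound $\sigma_{1}(\BA{\tilde{w}}(t))\geq \sigma_{1}(\BA{w_0})+|t|$ — does not follow from the contradiction argument you give. If $\sigma':=\sigma_1(\BA{\tilde w}(t_0))<\sigma_1(\BA{w_0})+t_0$, then $\BA{\tilde w}(t_0)=(Z_1,0)$ on $\{r>\sigma'\}$, and backward finite speed of propagation (using that $Z_1$ is stationary there) only yields $\BA{\tilde w}(0)=(Z_1,0)$ on the \emph{smaller} region $\{r>\sigma'+t_0\}$; since $\sigma'+t_0<\sigma_1(\BA{w_0})+2t_0$, this is compatible with $\sigma_1(\BA{w_0})=\sigma$ and gives no contradiction. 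Finite speed of propagation is a domain-of-dependence statement and cannot show that the support of $\tilde w-Z_1$ actually \emph{expands} at speed one; that is a dispersive statement. The paper's proof supplies the missing ingredient: it applies the $\HHH^1$ exterior energy lower bound for the free wave (Remark \ref{Rem:NrjchannelH1}) to the \emph{linear} evolution $h_{\lin,R}$ of the data $\BA{\cw_{0,R}}-\vec{z}_\rho(0)$, controls the boundary term $R'|\ch_R(0,R')|^2$ by Cauchy--Schwarz, and transfers the bound to the nonlinear perturbation via the closeness estimate of Proposition \ref{Prop:CauchyPbH1}. This is also where the dichotomy ``for all $t\in[0,\theta]$ or for all $t\in[-\theta,0]$'' really comes from — the exterior energy estimate only holds in one time direction — not from whether $w$ extends backwards, as you suggest.

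In part (\ref{I:channel}) your outline is closer to the paper's, but two points are glossed over. First, there is a norm mismatch: Proposition \ref{Prop:CauchyPbH1} controls the perturbation only in $\HHH^1$, so the channel must first be established as an $L^2$ exterior energy bound (again via Remark \ref{Rem:NrjchannelH1}, not Lemma \ref{Lem:Nrjchannel}); one cannot absorb an $\HHH^1$-small error into an $L^m$-lower bound for $m>2$ by the triangle inequality. The paper converts $L^2$ into $L^m$ afterwards by H\"older, using that $\vecc{\tilde h}(t)$ is supported in $\{r\leq \sigma_1(\BA{w_0})+|t|\}$. Second, passing from $\cw-Z_1$ to $\cw$ is not a matter of absorbing the nonlinear error: one must estimate the contribution of the static background $Z_1$ itself in the exterior region, which is done by noting that $\int_{R+|t|}^{\infty}|r\partial_r Z_1|^m\,dr\to 0$ as $|t|\to\infty$, obtaining first a liminf bound and then upgrading it to a uniform one by re-truncating at a large time.
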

\begin{rem}
Proposition \ref{Prop:NonzeroSol2} is stated for solutions such that $\sigma_{1}(\BA{w_0})$ is finite. However, in view of Remark \ref{R:Zell},   one obtains from Proposition \ref{Prop:NonzeroSol2} a similar conclusion for solutions such that $\sigma_{\ell}(\BA{w_0})$ is finite, for any $\ell\neq 0$.
\end{rem}
The proof of Propositions \ref{Prop:NonzeroSol} and \ref{Prop:NonzeroSol2} is given in Subsections \ref{Subsec:ExtNrjFirstCase}
and \ref{Subsec:ExtNrjSecondCase}.\\
We next state a profile version of Proposition \ref{Prop:NonzeroSol} and Proposition \ref{Prop:NonzeroSol2} (\ref{I:channel}), i.e
\begin{prop}
Consider a profile decomposition $\left\{W^j_{\lin},\left\{ (\lambda_{j,n},t_{j,n}) \right\}_{n\in \mathbb{N}}\right\}_{j\geq 1}$ as in Section \ref{Sec:ProfDecomp}. Assume that there exists $j \geq 1$ such that $W_{\lin}^{j} \neq 0$. Assume furthermore
$$\lim_{n\to\infty}\frac{-t_{j,n}}{\lambda_{j,n}}\in \{-\infty,+\infty\},$$
or $\sigma_1\left(\vW^j_{\lin}(0)\right)>S_1$, where $S_1$ is given by Proposition \ref{Prop:NonzeroSol2}.
Then there
exists a solution $\tW_{\lin}^{j}$ of the linear wave equation, and a sequence $\{ \rho_{j,n} \}_{n \in \mathbb{N}}$ of
positive numbers such that the nonlinear profile $\tW^{j}$ associated to $\tW_{\lin}^{j}$ and $\left\{ (\lambda_{j,n},t_{j,n}) \right\}_{n \in \mathbb{N}}$ scatters as $t \rightarrow \pm \infty$,
\begin{equation}
|x| > \rho_{j,n} \Longrightarrow \BA{\tW_{\lin,n}^{j}} (0,x) = \BA{W_{\lin,n}^{j}}(0,x),
\label{Eqn:DispPropScaling1}
\end{equation}
and there exists $\eta_{j} > 0$ such that the following holds for all $t \geq 0$ or for all $t \leq 0$
\begin{align}
\int_{\rho_{j,n} + |t| }^{\infty} \left| r \partial_{r} \tW_{n}^{j}(t,x) \right|^{m}
+ \left| r \partial_{t} \tW_{n}^{j}(t,x) \right|^{m} \, dr \geq \eta_{j}
\label{Eqn:DispPropScaling2}
\end{align}
\label{Prop:DispPropScaling}
\end{prop}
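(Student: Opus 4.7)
The plan is to derive the proposition from Propositions \ref{Prop:NonzeroSol} and \ref{Prop:NonzeroSol2}(\ref{I:channel}) applied to the nonlinear profile $W^{j}$, together with the exact scaling invariance of the quantity $|r\partial_{r,t}w|^{m}\,dr$ (which is exact because $m=(p-1)/2$ yields $2m=p-1$). By the normalization (\ref{Eqn:AssParam}) combined with the hypothesis, I am in one of two situations: (A) $t_{j,n}=0$ for all $n$ and $\sigma_{1}(\vec{W}^{j}_{\lin}(0))>S_{1}$, or (B) $s_{n}:=-t_{j,n}/\lambda_{j,n}\to\pm\infty$. In Case B, by time reversal I may assume $s_{n}\to+\infty$, so that the nonlinear profile $W^{j}$ exists for large positive times and scatters forward to $W^{j}_{\lin}$.

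Case A is immediate. Since $t_{j,n}=0$, the scattering relation (\ref{Eqn:ScattProf}) gives $\vec{W}^{j}(0)=\vec{W}^{j}_{\lin}(0)$, so Proposition \ref{Prop:NonzeroSol2}(\ref{I:channel}) applied directly to $W^{j}$ yields a global scattering solution $\cw$ of (\ref{Eqn:WaveSup}), a radius $R>R_{1}$ with $\vec{\cw}(0,r)=\vec{W}^{j}_{\lin}(0,r)$ for $r>R$, and $\eta>0$ such that $\int_{R+|t|}^{\infty}|r\partial_{r}\cw|^{m}+|r\partial_{t}\cw|^{m}\,dr\geq\eta$ for all $t\geq0$ or all $t\leq0$. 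Define $\tilde{W}^{j}_{\lin}$ as the linear solution with initial data $\vec{\cw}(0)$; since $t_{j,n}=0$, the associated nonlinear profile is exactly $\cw$, which scatters both ways. Setting $\rho_{j,n}:=R\lambda_{j,n}$ and $\eta_{j}:=\eta$, both (\ref{Eqn:DispPropScaling1}) and (\ref{Eqn:DispPropScaling2}) follow from the change of variables $\rho=r/\lambda_{j,n}$, $s=t/\lambda_{j,n}$ using the scale-invariance above.

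In Case B, the same strategy is carried out at a large positive time $T_{0}$ in place of $t=0$. Since $W^{j}$ scatters forward, $\vec{W}^{j}(T_{0})$ is arbitrarily close in $\HHH^{s_{c}}$ to $\vec{W}^{j}_{\lin}(T_{0})$, and by strong Huygens (Proposition \ref{Prop:Huyg}) the mass of $\vec{W}^{j}_{\lin}(T_{0})$ asymptotically concentrates in a thin annulus around $|x|=T_{0}$. Combined with the nontrivial $Z_{\ell}(r)\sim\ell/r$ tail of the singular stationary solutions, this forces, for $T_{0}$ large enough, either $\sigma_{\ell}(\vec{W}^{j}(T_{0}))=+\infty$ for every $\ell\neq0$ or $\sigma_{\ell}(\vec{W}^{j}(T_{0}))>S_{\ell}$ for some $\ell$ (the latter after the rescaling of Remark \ref{R:Zell}). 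Applying Proposition \ref{Prop:NonzeroSol} or \ref{Prop:NonzeroSol2}(\ref{I:channel}) respectively at time $T_{0}$ produces a global scattering solution $\cw$ with $\vec{\cw}(T_{0},r)=\vec{W}^{j}(T_{0},r)$ for $r>R$ and an analogous exterior lower bound. Setting $\tilde{W}^{j}:=\cw$ and letting $\tilde{W}^{j}_{\lin}$ be the unique linear asymptote of $\cw$ at $+\infty$, finite speed of propagation for the nonlinear difference $\cw-W^{j}$ yields $\vec{\cw}(t,x)=\vec{W}^{j}(t,x)$ whenever $|x|>R+|t-T_{0}|$, and a strong Huygens argument in $\RR^{3}$ promotes this to the exact equality $\vec{\tilde{W}}^{j}_{\lin}(s_{n},\xi)=\vec{W}^{j}_{\lin}(s_{n},\xi)$ for $|\xi|>s_{n}+R'$ with a suitable fixed $R'$. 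Choosing $\rho_{j,n}:=|t_{j,n}|+(R'-T_{0})\lambda_{j,n}$ and combining with the scale-invariance then produces (\ref{Eqn:DispPropScaling1}) and (\ref{Eqn:DispPropScaling2}).

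The hard part is Case B, for two reasons. The first is the selection of $T_{0}$: one must verify, via Proposition \ref{Prop:Huyg} and the explicit asymptotic $Z_{\ell}(r)\sim\ell/r$, that at some large positive time in the existence interval of $W^{j}$ the data $\vec{W}^{j}(T_{0})$ satisfies the hypothesis of one of Propositions \ref{Prop:NonzeroSol} or \ref{Prop:NonzeroSol2}(\ref{I:channel}). The second, and more delicate, is the passage from exact equality of the two nonlinear solutions $\cw$ and $W^{j}$ outside a truncated cone to \emph{exact} equality of their linear scattering asymptotes $\tilde{W}^{j}_{\lin}$ and $W^{j}_{\lin}$ outside a ball: this is where the use of strong Huygens in odd dimension three, or equivalently the radiation-field formula $rw(t,r)=f(t+r)-f(t-r)$ underlying Lemma \ref{Lem:ConservQuantLin}, is essential.
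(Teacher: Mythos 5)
Your Case A (profiles with $t_{j,n}=0$) is essentially the paper's argument: apply the nonlinear rigidity propositions to $W^j$ at $t=0$, rescale by $\lambda_{j,n}$, and use the scale invariance of $\int |r\partial_{r,t}w|^m\,dr$. (You should invoke Proposition \ref{Prop:NonzeroSol} as well as Proposition \ref{Prop:NonzeroSol2}~(\ref{I:channel}), since the hypothesis $\sigma_1(\vW^j_{\lin}(0))>S_1$ allows $\sigma_1=\infty$, but that is minor.)

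Your Case B is a genuinely different route, and it has a gap that I do not think can be repaired as stated. You propose to apply Propositions \ref{Prop:NonzeroSol} or \ref{Prop:NonzeroSol2}~(\ref{I:channel}) to $\BA{W^j}(T_0)$ at a large time $T_0$. But those propositions only yield the exterior lower bound \emph{for all $t\geq 0$ or for all $t\leq 0$}, with no control over which alternative occurs. When $s_n:=-t_{j,n}/\lambda_{j,n}\to+\infty$, the conclusion (\ref{Eqn:DispPropScaling2}) already at $t=0$ requires a lower bound for $\tW^j$ at the times $s_n\to+\infty$; and the alternative ``for all $t\leq 0$'' in (\ref{Eqn:DispPropScaling2}) requires it at all times $\leq s_n$ for every $n$, hence at arbitrarily large positive times as well. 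So what is needed is an exterior bound valid for \emph{all sufficiently large positive times}, i.e.\ a one-sided estimate in a prescribed direction. If the dichotomy from Proposition \ref{Prop:NonzeroSol} applied at $T_0$ lands on the backward alternative, your argument collapses, and nothing in your proposal forces the forward one. This is precisely why the paper introduces Lemma \ref{Lem:PropDispLin}: a purely linear statement, proved via the radiation field $rw_{\lin}=f(t+r)-f(t-r)$, which produces a truncated linear solution $\tW^j_{\lin}$ agreeing with $W^j_{\lin}$ outside the cone for $t\geq t_0$, whose data has \emph{small} generalized energy (\ref{Eqn:ProprRes1'}), and which satisfies the exterior lower bound (\ref{Eqn:ProprRes3}) for \emph{all} $t\geq t_0$ (the left-moving part $\int_{-\infty}^{-\rho_0}|\dot f|^m$ persists for all positive times). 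The nonlinear profile is then a small perturbation of this linear solution by Proposition \ref{Prop:CauchyPb}, so it scatters both ways and inherits the forward-in-time bound. Your proposal contains no substitute for this one-sided mechanism.

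A second, lesser gap in Case B: you assert that strong Huygens plus the asymptotics $Z_\ell(r)\sim\ell/r$ ``force'' $\BA{W^j}(T_0)$ to satisfy the hypotheses of Proposition \ref{Prop:NonzeroSol} or of Proposition \ref{Prop:NonzeroSol2}~(\ref{I:channel}) for $T_0$ large. This is not proved. One would have to exclude, for every large $T_0$, the scenario $\sigma_\ell(\BA{W^j}(T_0))<\infty$ with rescaled value $\leq S_1$; concentration of the generalized energy near $|x|=T_0$ does not obviously rule this out, since the tail contribution $\int_R^\infty |r\partial_r Z_\ell|^m\,dr\approx |\ell|^m R^{1-m}$ is itself small for large $R$. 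The paper's linear route avoids this verification entirely.
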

In the statement of Proposition \ref{Prop:DispPropScaling}, the modulated linear profile $W_{\lin,n}^j$ is defined as usual by (\ref{Eqn:defWlnj}). The modulated linear and nonlinear profiles $\tW_{\lin,n}^j$ and $\tW_{n}^j$ are defined the same way, replacing $W^j_{\lin}$ by $\tW^j_{\lin}$ and $\tW^j$ respectively.

The proof of Proposition \ref{Prop:DispPropScaling} is given in Subsection \ref{Subsec:ProofProfVers}.

\subsection{Proof of the exterior energy property in the first case} \label{Subsec:ExtNrjFirstCase}
We give here the proof of Proposition \ref{Prop:NonzeroSol}. The proof is close to the proof of \cite[Proposition 2.1]{DuyKenMerClass}, with the generalized energy replacing the energy.

 Let $w$ be a nonzero solution of (\ref{Eqn:WaveSup}) with data $\BA{w_0}:= (w_0,w_1) \in \HHH^{s_c}$ such that for all $\ell\neq 0$, $\sigma_{\ell}(\BA{w_0})=\infty$.  We prove Proposition \ref{Prop:NonzeroSol} by contradiction, assuming that for all scattering solution $\tilde{w}$ of (\ref{Eqn:WaveSup}) and all $R>0$ if $\BA{\tilde{w}}(0,r)=\BA{w_0}(r)$ for $r>R$, then
\begin{equation}
 \label{channel_zero}
\liminf_{t\to\pm \infty}\int_{R+|t|}^{\infty}|r\partial_r\tilde{w}(t,r)|^m+|r\partial_t\tilde{w}(t,r)|^m\,dr=0.
\end{equation}
We first prove:
\begin{res}
Let $A:=\|\BA{w_{0}}\|_{\HHH^{s_c}}$. There exists $\delta_3=\delta_3(A)>0$, such that, for any $r_0>0$, if
\begin{equation}
 \label{Eqn:AssSecru0}
\int_{r_0}^{\infty} |r\partial_r w_0|^m+|rw_1|^m\,dr=\delta\leq \delta_3,
\end{equation}
then, letting $(v_0,v_1):=r \BA{w_0}$, we have
\begin{align}
\int_{r_{0}}^{\infty} |\partial_{r} v_{0}|^{m} + |v_{1}|^{m} \, dr \lesssim_A \frac{|v_{0}(r_{0})|^{\frac{3}{4}m^2}}{r_{0}^{\frac{3}{4}m(m-1)}}\cdot
\label{Eqn:DecayKinet}
\end{align}
Furthermore, for all $ r_{0} \leq r \leq r' \leq 2 r_{0}$
\begin{align}
|v_{0}(r) - v_{0}(r') | \lesssim_A   \frac{|v_{0}(r)|^{\frac{3}{4}m}}{r_0^{(m-1)\left(\frac{3}{4}-\frac{1}{m}\right)}} \lesssim_A \delta^{\frac{3}{4}-\frac{1}{m}} |v_{0}(r)| \cdot
\label{Eqn:HoldCont}
\end{align}
\label{Lem:w0v0}
\end{res}

\begin{proof}
%In all the proof, we denote by $C>0$ a large constant that may change from line to line and depend on $\|(w_0,w_1)\|_{\HHH^{s_c}}$.
We first see that (\ref{Eqn:HoldCont}) can be easily derived from (\ref{Eqn:DecayKinet}) and (\ref{Eqn:AssSecru0}). Indeed,
\begin{align*}
|v_{0}(r) - v_{0}(r')| & \lesssim (r' -r)^{\frac{m-1}{m}} \left( \int_{r}^{r'} \left|\partial_{\bar{r}} v_{0}\right|^{m} \, d \bar{r} \right)^{\frac{1}{m}} \\
& \lesssim_A r_0^{\frac{m-1}{m}-\frac{3}{4}(m-1)} |v_{0}(r)|^{\frac{3}{4}m},
\end{align*}
which yields the first inequality in (\ref{Eqn:HoldCont}). Furthermore, using Lemma \ref{Lem:UtoV} and (\ref{Eqn:AssSecru0}), we obtain $r_0^{1-m}|v_0(r)|^m\leq \delta$, which yields the second inequality in (\ref{Eqn:HoldCont}).

We next prove (\ref{Eqn:DecayKinet}). We let
$$\BA{\brw_0}:=(\brw_0,\brw_1) :=(\TTT_{r_0}w_0,\textbf{1}_{\RR^3\setminus B_{r_0}} w_1),$$
so that
\begin{equation}
\label{Eqn:tildew_delta}
\int_0^{+\infty} |r\partial_r \brw_0|^m+|r \brw_1|^m\,dr=\int_{r_0}^{\infty} |r\partial_r w_0|^m+|rw_1|^m\,dr=\delta,
\end{equation}
and, by Results \ref{Res:BoundCharac} and \ref{Res:BoundOp} in the appendix.
$\left\| \BA{\brw_0} \right\|_{\HHH^{s_c}} \lesssim_{A} 1$.
%\|(w_0,w_1)\|_{\HHH^{s_c}},$$ By Proposition \ref{Prop:CauchyPb} and (\ref{Eqn:AssSecru0}), if $\delta_3$ is chosen small enough, the solution $\tw$ of (\ref{Eqn:WaveSup}) with initial data $(\tw_0,\tw_1)$ is globally defined and scatters in both time directions.

We denote by $\brw_{\lin}$ the solution of the linear wave equation with initial data $(\brw_0,\brw_1)$, and let $(\brv_0,\brv_1)=r(\brw_0,\brw_1)$ and $\brv_{\lin} :=r\brw_{\lin}$. We note that $w(t,r)=\brw(t,r)$ if $r\geq r_0+|t|$ by finite speed of propagation.

By Lemma \ref{Lem:Nrjchannel}, the following holds for all $t\geq 0$ or for all $t\leq 0$:
\begin{multline*}
\int_{r_0}^{\infty} |\partial_r v_0|^m+|v_1|^m\,dr=\int_{r_0}^{\infty} |\partial_r \brv_0|^m+ |\brv_1|^m\,dr\\
\lesssim \int_{r_0+|t|}^{\infty} |\partial_r \brv_{\lin}(t,r)|^m + |\partial_t \brv_{\lin}(t,r)|^m\,dr.
\end{multline*}
But then, using Lemma \ref{Lem:UtoV}, Proposition \ref{Prop:CauchyPb}, and (\ref{Eqn:tildew_delta}), we deduce
\begin{align*}
\int_{r_0}^{\infty} |\partial_rv_0|^m+|v_1|^m\,dr &\lesssim \int_{r_0+|t|}^{\infty}|r\partial_r \brw_{\lin}|^m+|r\partial_t \brw_{\lin}|^m\,dr\\
& \lesssim_A \int_{r_0+|t|}^{\infty}|r \partial_r \brw|^m+|r\partial_t \brw|^m\,dr\\&\qquad + \left(\int_{r_0}^{+\infty} |r\partial_r w_0|^m+|rw_1|^m\,dr\right)^{\frac{3}{4}m}.
\end{align*}
Letting $t\to +\infty$ or $t\to-\infty$ and using (\ref{channel_zero}), we deduce
$$\int_{r_0}^{\infty} |\partial_rv_0|^m+|v_1|^m\,dr \lesssim_A \left(\int_{r_0}^{\infty} |r\partial_r w_0|^m+|rw_1|^m\,dr\right)^{\frac{3}{4}m}.$$
By Lemma \ref{Lem:UtoV},
\begin{equation}
\label{Eqn:last}
\int_{r_0}^{\infty} |\partial_rv_0|^m+|v_1|^m\,dr \lesssim_A \left(r_0^{1-m}|v_0 (r_{0})|^m+\int_{r_0}^{\infty} |\partial_rv_0|^m+|v_1|^m\,dr\right)^{\frac{3}{4}m}.
\end{equation}
By (\ref{Eqn:AssSecru0}) and Lemma \ref{Lem:UtoV} again, we have
$$r_0^{1-m}|v_0(r_0)|^m+\int_{r_0}^{\infty} |\partial_r v_0|^m+|v_1|^m\,dr \lesssim \delta_3.$$
Using that $\delta_3$ is small, we see that (\ref{Eqn:last}) implies (\ref{Eqn:DecayKinet}).
\end{proof}

\begin{res}
We have
\begin{align}
\label{Eqn:finite_energy}
\BA{w_0} \in \HHH^1
\end{align}
and
\begin{align}
\label{Eqn:bound_1/r}
\forall r>0, \; |v_0(r)|\lesssim_A 1\quad\text{and} \quad\exists \ell>0,\quad \forall r\geq 1\quad \left|v_0(r)-\ell\right|\lesssim_A \frac{1}{r^{\alpha}},
\end{align}
where $\alpha := (m-1)\left(\frac{3}{4}-\frac 1m\right)>\frac 14$.
Moreover, if $\barR>0$ and $\barw$ is a global solution with initial data $\BA{\barw_0}(r):=(\barw_0,\barw_1) \in \HHH^1\cap\HHH^{s_c}$ such that $\BA{\barw_0}(r)=\BA{w_0}(r)$ for $r\geq \barR$, then
\begin{equation}
\label{Eqn:ChannelH1}
\lim_{t \rightarrow \pm \infty} \int_{|x| \geq \barR + |t|}  | \partial_{r} \barw(t,x) |^{2} + | \partial_{t} \barw (t,x) |^{2} \, dx=0.
\end{equation}
\label{Res:VoLimit}
\end{res}
\begin{proof}
\noindent\emph{Step 1.} We prove that for all $\epsilon>0$,
\begin{equation}
 \label{Eqn:Growthvor}
 \forall r\geq 1, \quad
 |v_0(r)|\lesssim_{A,\epsilon}r^{\epsilon}.
\end{equation}
We may assume without loss of generality that $r \gg 1$.
Choose $r_{1} \gg 1$ such that (\ref{Eqn:AssSecru0}) holds with $r_0 :=r_1 $ and a small $ 0 < \delta:= \delta(A) \ll 1$.  Let $n \in \mathbb{N}$. From (\ref{Eqn:HoldCont}) with $r_0=2^nr_1$, we see that there exists $ 0 < \delta':= \delta'(A) \ll 1$ such that $ | v_{0}(2^{n+1}r_1) | \leq (1 + \delta')  | v_{0}(2^{n} r_1) | $.
% where $\delta'=C\delta^{\frac{3}{4}-\frac 1m}$ is small (recall that $m>2$).
By a straightforward induction,
\begin{align*}
| v_{0}(2^{n} r_{1}) | \leq (1 + \delta')^{n} |v_{0}(r_{1})| \cdot
\end{align*}
Let $r \gg 1$. Choosing $n$ such that $2^{n} r_{1} \leq r \leq 2^{n+1} r_{1}$ and $\delta$ small enough, and using (\ref{Eqn:HoldCont}) with $r_0:=2^nr_1$, we obtain (\ref{Eqn:Growthvor}).

\medskip
\noindent\emph{Step 2. Proof of (\ref{Eqn:bound_1/r})}. Again we may assume without loss of generality that  $r \gg 1$.
We fix  $r_1 \gg 1$. Combining (\ref{Eqn:HoldCont}) and Step 1, we obtain that for all $n$ in $\mathbb{N}$,
$$ |v_0(2^nr_1)-v_0(2^{n+1}r_1)|\lesssim_{A,\epsilon}(2^nr_1)^{\frac{3}{4}m\epsilon-(m-1)(\frac{3}{4}-\frac{1}{m})} \lesssim_{A,\epsilon} \frac{1}{(2^nr_1)^{\epsilon'}},$$
where $\epsilon'$ is a positive constant if $\epsilon$ is small enough. As a consequence,
$$\sum_{n}|v_0(2^nr_1)-v_0(2^{n+1}r_1)|\lesssim_A 1$$
which shows that $v_0(2^nr_1)$ has a finite limit $\ell$ as $n\to\infty$. Using again (\ref{Eqn:HoldCont}) and Step 1, we deduce that $v_0(r)\to\ell$ as $r\to\infty$. The bound $|v_0(r)|\lesssim_A 1$ follows immediately. Going back to  (\ref{Eqn:HoldCont}), we obtain for large $r_1$
$$|v_0(2^nr_1)-v_0(2^{n+1}r_1)| \lesssim_A (2^nr_1)^{-\alpha}.$$
Summing up over $n\in \NN$, we deduce
$$|v_0(r_1)-\ell| \lesssim_A \frac{1}{r_1^{\alpha}},$$
concluding the proof of (\ref{Eqn:bound_1/r}).

\medskip

\noindent\emph{Step 3. Proof of (\ref{Eqn:finite_energy}).}
Next we prove that $ \BA{w_{0}} \in \HHH^1$. We see from H\"older's inequality,
(\ref{Eqn:DecayKinet}) and (\ref{Eqn:bound_1/r}) that
\begin{align*}
\int_{2^{n} r_{1}}^{ 2^{n+1} r_{1}}  |\partial_{r} v_{0}|^{2} + |v_{1}|^{2} \, dr &  \lesssim  (2^{n} r_{1})^{\frac{m-2}{m}}
\left( \int_{2^{n} r_{1}}^{ 2^{n+1} r_{1}}  |\partial_{r} v_{0}|^{m} + |v_{1}|^{m} \, dr \right)^{\frac{2}{m}} \\
& \lesssim_A (2^nr_1)^{\frac{m-2}{m}-\frac 32(m-1)} \cdot
\end{align*}
Hence, noting that the exponent $\frac{m-2}{m}-\frac 32(m-1)$ is negative (since $m>2$) and summing over $n$, we see that
\begin{equation*}
\int_{r_{1}}^{\infty}  |\partial_{r} v_{0}|^{2} + |v_{1}|^{2} \, dr < \infty.
\end{equation*}
We also have
\begin{equation*}
\int_{0}^{r_{1}} |\partial_{r} v_{0}|^{2} + |v_{1}|^{2} \, dr \lesssim r_1^{\frac{m-2}{m}} \left(\int_{0}^{r_{1}}  |\partial_{r} v_{0}|^{m} + |v_{1}|^{m} \, dr\right)^{\frac{2}{m}}  <\infty.
\end{equation*}
Hence $( v_{0}, v_{1}) \in \HHH^1(\RR)$, and we see from Lemma \ref{Lem:UtoV} that
$\BA{w_{0}} \in \HHH^1$.

\medskip

\noindent\emph{Step 4. Proof of (\ref{Eqn:ChannelH1}).}

Let  $0 < \epsilon \ll 1$. Choose $r_{1} \gg \bar{R}$ such that
\begin{equation*}
\int_{|x| \geq r_{1}} |\partial_{r} w_{0}|^{2} + |w_{1}|^{2} \, dx  \leq \epsilon.
\end{equation*}
Then, let $(\brw_0,\brw_1) :=(\TTT_{r_{1}} w_{0}, \mathbf{1}_{\mathbb{R}^{3} \setminus B_{r_{1}}} w_{1})$, and $\check{w}$ be the solution of
\begin{equation*}
\left\{
\begin{aligned}
\partial_{tt} \brw - \triangle \brw = |\chi_{r_{1}} \brw|^{p-1} \chi_{r_{1}} \brw \\
\BA{\brw}(0) := (\brw_0,\brw_1)
\end{aligned}
\right.
\end{equation*}
given by Proposition \ref{Prop:CauchyPbH1}. By the conclusion of this proposition,
\begin{align}
\sup_{t \in \mathbb{R}} \left\| \overrightarrow{ \brw}(t)  - \overrightarrow{S(t) \BA{\brw} (0)}(t)
\right\|_{\HHH^1} \lesssim \frac{1}{r_1^{\frac{p-5}{2}}} \| \BA{\brw}(0) \|^{p}_{\HHH^1}\cdot
\label{Eqn:Approxtildew}
\end{align}
Hence (using also finite speed of propagation)
\begin{equation}
\label{Eqn:bound_extH1_w}
\sup_{T_-(w)<t<T_+(w)} \int_{|x| \geq r_{1} + |t|} |\partial_{r} w(t,x)|^{2} + | \partial_{t} w(t,x) |^{2} \, dx  \lesssim \epsilon.
\end{equation}
From (\ref{Eqn:HardyPosOrigin1}) we see that
\begin{equation}
\begin{array}{l}
\int_{ |x| \leq r_{1} + |t| } | \partial_{r} w(t,x) |^{2} +  | \partial_{t} w(t,x) |^{2} \, dx \\
\lesssim (r_{1} + |t|)^{\frac{m-2}{m}} \left( \int_{|x| \leq r_{1} + |t|} \left|r\partial_{r} w (t,x)\right|^{m} + \left|r\partial_{t} w
(t,x)\right|^{m} \, dx \right)^{\frac{2}{m}} \\
< \infty \cdot
\end{array}
\nonumber
\end{equation}
Hence $ \vec{w}(t) \in \HHH^1$ for all $t$ in the domain of existence of $w$.

Next we prove (\ref{Eqn:ChannelH1}). The case $ t \rightarrow -\infty $ is a straightforward modification of the
case $ t \rightarrow \infty $ and is left to the reader.

Let $\barw$ that satisfies the assumptions of Result \ref{Res:VoLimit}. We first note that the proof of (\ref{Eqn:bound_extH1_w}) yields (fixing a small $\epsilon>0$ and choosing $r_1 \gg \max{(\barR,1)}$),
\begin{equation}
\label{Eqn:bound_extH1_w_bis}
\sup_{t\in \RR} \int_{|x| \geq r_{1} + |t|}  | \partial_{r} \barw(t,x) |^{2} + | \partial_{t} \barw(t,x) |^{2} \, dx  \lesssim \epsilon.
\end{equation}
We have, for $t\geq 0$
\begin{multline}
\int_{ \barR + |t|}^{ r_{1} +|t|} \left| r \partial_{r} \barw(t,r) \right|^{2} +  \left| r \partial_{t} \barw(t,r) \right|^{2} \, dr \\
\lesssim (r_{1} - \barR)^{\frac{m-2}{m}} \left( \int_{\barR + |t|}^{\infty} \left| r \partial_{r} \barw (t,r)\right|^{m} +
\left|r \partial_{t} \barw (t,r)\right|^{m} \, dr \right)^{\frac{2}{m}}.
\nonumber
\end{multline}
Hence, combining the estimates above with (\ref{channel_zero}), we see that
\begin{align*}
\liminf_{t\to+\infty} \int_{|x| \geq \barR + |t|} | \partial_{r}  \barw (t,x) | ^{2} + | \partial_{t} \barw(t,x) |^{2} \, dx  \lesssim \epsilon.
\end{align*}
Hence
\begin{equation*}
\liminf_{t\rightarrow \infty} \int_{|x| \geq \barR + |t|}  | \partial_{r} \barw(t,x)|^{2} + | \partial_{t} \barw (t,x)|^{2} \, dx
=0.
\end{equation*}
It remains to replace the lower limit by a limit to obtain (\ref{Eqn:ChannelH1}).
Defining $\brw$ as the solution of
\begin{equation*}
\left\{
\begin{array}{l}
\partial_{tt} \brw - \triangle \brw = |\chi_{\barR +\brt} \brw|^{p-1} \chi_{\barR + \brt} \brw \\
\BA{\brw}(0): = (\TTT_{\barR+ \brt} \barw(\brt), \mathbf{1}_{\mathbb{R}^{3} \setminus
B_{\barR + \brt}} \barw(\brt)),
\end{array}
\right.
\end{equation*}
where $\brt$ is such that
$$\int_{|x| \geq \barR + |\brt|}  | \partial_{r} \barw(\brt,x) |^{2} +  | \partial_{t} \barw (\brt,x) |^{2} \, dx\leq \epsilon$$
we see again from Proposition \ref{Prop:CauchyPbH1} that (\ref{Eqn:Approxtildew}) holds for $|t| \geq |\brt| \gg 1  $
and, by finite speed of propagation,
\begin{equation}
\forall t \geq \brt,\quad \int_{|x| \geq \barR + |t|} | \partial_{r}  \barw (t,r) | ^{2} + | \partial_{t} \barw(t,r) |^{2} \, dx  \lesssim \epsilon.
\end{equation}
\label{end_of_result}
Hence (\ref{Eqn:ChannelH1}) holds.
\end{proof}
We are now in position to conclude the proof of Proposition \ref{Prop:NonzeroSol}. We distinguish between two cases.

\medskip

\noindent\emph{Case 1: $\ell=0$.}
We first prove that $\BA{w_0}$ is compactly supported. By (\ref{Eqn:bound_1/r}), since $\ell=0$,
\begin{equation}
\label{bound_above_v}
|v_0(r)|\lesssim_A \frac{1}{r^{\alpha}},\quad r \geq 1.
\end{equation}
Fix $\epsilon>0$ so small that $1-\epsilon>\frac{1}{2^{\alpha}}$. By (\ref{Eqn:HoldCont}), for large $r_0$,
\begin{equation*}
\forall n\geq 0,\quad  |v_0(2^{n+1}r_0)|\geq (1-\epsilon)|v_0(2^nr_0)|.
\end{equation*}
An easy induction gives $|v_0(2^nr_0)|\geq (1-\epsilon)^{n}|v_0(r_0)|$, contradicting (\ref{bound_above_v}) unless $v_0(r_0)=0$. This proves that $v_0$ (thus $w_0$) has compact support. By (\ref{Eqn:DecayKinet}) in  Result \ref{Lem:w0v0} we deduce that we can assume (changing $w_1$ on a set of measure $0$ if necessary), that $w_1$ is  a compactly supported function.

Since by our assumption $w\neq 0$, we can choose $r_0>0$ close to the boundary of the support of $(w_0,w_1)$, so that
\begin{equation}
\label{Eqn:bound_int}
\int_{2r_0}^{\infty} |r\partial_rw_0|^m+|rw_1|^m\,dr=0,\quad 0<\int_{r_0}^{\infty} |r\partial_rw_0|^m+|rw_1|^m\,dr\leq \delta_3
\end{equation}
i.e. (\ref{Eqn:AssSecru0}) is satisfied. If $r_0\leq r\leq 2r_0$, we have, by (\ref{Eqn:HoldCont}),
$$|v_0(r)|=|v_0(r)-v_0(2r_0)| \lesssim_A \frac{1}{r_0^{\alpha}}|v_0(r)|^{\frac{3}{4}m}.$$
Hence, for $r\in [r_0,2r_0]$,
$$v_0(r)=0\quad \text{or}\quad |v_0(r)|^{\frac{3}{4}m-1} \gtrsim_A r_0^{\alpha}.$$
%Recalling that $\frac{3}{4}m-1>0$ (since $m>2$) and $v_0(2r_0)=0$, this shows by the intermediate value theorem for the continuous function $v_0$ that $v_0(r_0)=0$. $
By continuity of $v_0$,  $v_{0}(r_{0})=0$. By (\ref{Eqn:DecayKinet}),
$v_0=0$ for $r>r_0$, and thus $w_0=0$ for $r>r_0$, contradicting (\ref{Eqn:bound_int}) and concluding the proof in the case $\ell=0$.

\medskip

\noindent\emph{Case 2: $\ell\neq 0$.}
In this case one can prove that $\sigma_{\ell}(\BA{w_0})$ is finite, contradicting our assumptions. We omit the proof: in view of (\ref{Eqn:ChannelH1}), it is exactly as Step 1 of the proof of Lemma 3.11 in \cite{DuyKenMerScattSuper}, using Proposition \ref{Prop:CauchyPbH1} with the potential given by Remark \ref{R:potentiel} (\ref{I:V2}), and the $\HHH^1$ exterior energy estimate given by Remark \ref{Rem:NrjchannelH1}.

% \begin{lem}
% We must have $w =0$.
% \end{lem}
%
% \begin{proof}
% In view of Result \ref{Res:VoLimit},  this proof is contained in \cite{DuyKenMerScattSuper}. Therefore we only sketch the proof and we refer
% to \cite{DuyKenMerScattSuper} for more details.
%
% \medskip
%
% \noindent\textit{Step 1.} Using (\ref{Eqn:ChannelH1}), we prove that we can upgrade the estimate (\ref{Eqn:bound_1/r}) to the bound
% \begin{align*}
% |v_{0}(r) - l| \lesssim \frac{1}{r^{p-3}}, \, r \geq 1
% \end{align*}
% We refer to Lemma 3.8 and Lemma 3.9 of \cite{DuyKenMerScattSuper} for more details.
%
% \medskip
%
% \noindent\textit{Step 2.} If $l=0$ then we prove first that $(v_{0},v_{1})$ is compactly supported and then that this support is of measure zero. We
% refer to Lemma 3.10 and Lemma 3.11 of \cite{DuyKenMerScattSuper} for more details. \\
%
% \medskip
%
% \noindent\textit{Step 3.} If $l \neq 0$ then we first prove that $w - Z_{l}$ is compactly supported and then that this support is of measure zero. But, since
% $w \in \dot{H}^{s_{c}}(\mathbb{R}^{3}) \subset L^{\frac{3(p-1)}{2}} (\mathbb{R}^{3}) $ but $Z_{l} \notin L^{\frac{3(p-1)}{2}} (\mathbb{R}^{3})$, there is
% a
% contradiction. We refer again to  Lemma 3.11 of \cite{DuyKenMerScattSuper} for more details.
%

\qed

\subsection{Proof of the exterior energy property in the second case} \label{Subsec:ExtNrjSecondCase}
We prove here Proposition \ref{Prop:NonzeroSol2}.
\begin{proof}[Proof of (\ref{I:rhol})]
Let $\BA{w_0} \in\HHH^{s_c}$ such that $\sigma:=\sigma_1(\BA{w_0})$ is finite. If $0<R<\sigma$, we denote by $\cw_R$ the solution of (\ref{Eqn:WaveSup}) with initial data
$$\cw_{0,R}=\TTT_Rw_0,\quad \cw_{1,R}=\mathbf{1}_{\RR^3\setminus B_R}w_1.$$
In this step, we prove that there exists $\rho$ such that $\frac{R_1+\sigma}{2}<\rho<\sigma$ and
\begin{gather}
 \label{Eqn:C2}
 \forall R\in [\rho,\sigma),\quad \left\|(\cw_{0,R}-z_{\rho}(0),\cw_{1,R})\right\|_{\HHH^1}\leq \delta_1\rho^{\frac{p-5}{2(p-1)}}\\
 \label{Eqn:C3}
 \forall R\in [\rho,\sigma),\quad \left[\frac{-\theta_{\sigma}}{2},\frac{\theta_{\sigma}}{2}\right]\subset \left(T_-(\cw_R),T_+(\cw_R)\right)\cap \left(T_-(z_R),T_+(z_R)\right),
 \end{gather}
where $\delta_1$ is given by Proposition \ref{Prop:CauchyPbH1}, $R_1$ by Proposition \ref{Prop:SolStat} and $z_{\rho}$ and $\theta_{\sigma}$ by Lemma \ref{Lem:zR}. Note that
\begin{multline*}
 \left\|(\cw_{0,R}-z_{\rho}(0),\cw_{1,R})\right\|_{\HHH^1}^2\\
 =\int_{\rho\leq |x|\leq R} \left|\nabla Z_1(0)\right|^2\,dx+\int_{R\leq |x|\leq \sigma} \left|\nabla w_0-\nabla Z_1\right|^2\,dx+\int_{R\leq |x|\leq \sigma} |w_1|^2\,dx,
\end{multline*}
and (\ref{Eqn:C2}) follows if $\rho$ is close enough to $\sigma$.

Since $(-\theta_{\sigma},\theta_{\sigma})$ is in the domain of existence of $z_{\sigma}$, the following limits will imply (\ref{Eqn:C3}) with $\rho$ close to $\sigma$ by standard perturbation theory:
\begin{align}
 \label{Eqn:C5}
 \lim_{R\overset{<}{\to}\sigma}\left\|(\cw_{0,R}-z_{\sigma}(0),\cw_{1,R}-\partial_t z_{\sigma}(0)\right\|_{\HHH^{s_c}}
 &=0\\
\label{Eqn:C5'}
\lim_{R\overset{<}{\to}\sigma}\left\|\vz_R(0)-\vz_{\sigma}(0)\right\|_{\HHH^{s_c}}&=0.
\end{align}

The limit (\ref{Eqn:C5'}) follows from Result \ref{Res:Cont_R} in the appendix.

To obtain (\ref{Eqn:C5}), notice that $\partial_tz_{\sigma}(0)=0$ and
\begin{equation*}
 \cw_{0,R}-z_{\sigma}(0)=\TTT_R(w_0)-\TTT_{\sigma}Z_1=(\TTT_R-\TTT_{\sigma})w_0+\TTT_{\sigma}(w_0-Z_1)=(\TTT_R-\TTT_{\sigma})w_0,
\end{equation*}
since by the definition of $\sigma$, $\TTT_{\sigma}(w_0-Z_1)=0$. Thus we are reduced to prove
\begin{equation*}
 \lim_{R\overset{<}{\to}\sigma}\left\|(\TTT_R-\TTT_{\sigma})w_0\right\|_{\dot{H}^{s_c}}+\left\|\mathbf{1}_{B_{\sigma}\setminus B_R}w_1\right\|_{\dot{H}^{s_c-1}}=0,
\end{equation*}
which follows again from Result \ref{Res:Cont_R} in the appendix. Step 1 is complete.

We will prove in the two remaining steps that $\cw=\cw_{\rho}$ satisfies the conclusion of Proposition \ref{Prop:NonzeroSol2} (\ref{I:rhol}). The sequel of the proof is close to Step 1 and Step 2 of the proof of the corresponding result for $p=5$ (Proposition 2.2 (a) in \cite{DuyKenMerClass}) with additional technicalities due to the fact that the nonlinearity is supercritical.

\medskip

\noindent\emph{Step 2. Linearization around $z_{\rho}$.}

Let $R \in [ \rho, \sigma)$.

By (\ref{Eqn:C2}) and Remark \ref{R:potentiel}, there exists $\theta'>0$ depending only on $\rho$\footnote{Since $\rho$ depends only on $\sigma$, $\theta'$ depends only on $\sigma$} such that the solution
$h_{R}$ of
\begin{equation}
\left\{
\begin{aligned}
\label{Eqn:C7}
\partial_{tt} h_{R} - \triangle h_{R}  & = | \TTT_{\rho}Z_1+\chi_{R} h_{R}|^{p-1} (\TTT_{\rho}Z_1+\chi_{R} h_{R})-|\TTT_{\rho}Z_1|^{p-1}\TTT_{\rho}Z_1 \\
\BA{h_{R}}(0)& := (\cw_{0,R}-z_{\rho}(0),\cw_{1,R})
\end{aligned}\right.
\end{equation}
satisfies the assumptions of Proposition \ref{Prop:CauchyPbH1} with $I=[-\theta',\theta']$.\\
By Proposition \ref{Prop:CauchyPbH1}, denoting by $h_{\lin,R}$ the solution of the linear wave equation with initial data $(\cw_{0,R}-z_{\rho}(0),\cw_{1,R})$, we have
\begin{equation}
 \label{Eqn:C8}
 \sup_{-\theta' \leq t \leq \theta'} \left\|\BA{h_{R}}(t)- \BA{h_{\lin,R}}(t)\right\|_{\HHH^1}\leq \frac{1}{100}\left\|\BA{h_{R}}(0)\right\|_{\HHH^1}.
\end{equation}
We let $$\theta :=\min\left(\frac{\theta_{\sigma}}{2},\theta'\right).$$
Letting $\tilde{w}:=\cw_{R}$, we see that (\ref{Eqn:brwEquality}) and (\ref{Eqn:bound_below}) are satisfied.\\
We also define $\ch_{R}:= \cw_{R} -z_{\rho}$. Then $\ch_R$ is solution of the equation
\begin{equation}
\label{Eqn:C6}
\left\{
\begin{aligned}
 \partial_{tt}\ch_{R}-\triangle \ch_{R} &=\left| z_{\rho}+\ch_{R} \right|^{p-1}\left( z_{\rho}+\ch_{R} \right)-|z_{\rho}|^{p-1}z_{\rho},\\
 \vec{\ch}_{R}(0) & =(\cw_{0,R}-z_{\rho}(0),\cw_{1,R}).
\end{aligned}
\right.
\end{equation}
By finite speed of propagation %(see the comments after (2.27) in \cite{DuyKenMerClass} for a detailed proof),
\begin{equation}
 \label{Eqn:C9}
\left\{
\begin{aligned}
\ch_{R}(t,r)&= 0 \quad &&\text{if} \quad r\geq \sigma+|t|, \quad \text{and} \\
\ch_{R}(t,r) &= h_{R}(t,r) \quad &&\text{if} \quad r\geq R+ |t|, \quad -\theta\leq t\leq \theta.
\end{aligned}\right.
\end{equation}

\medskip

\noindent\emph{Step 3. Propagation of the support.}
Let $R' \in [R, \sigma)$. By (\ref{Eqn:UtoVprec}) and Remark \ref{Rem:NrjchannelH1}, the following holds for all $t\in [0,\theta]$ or for all $t\in [-\theta,0]$:
\begin{multline}
 \label{Eqn:C10}
 \int_{|x|\geq R'+|t|}|\nabla h_{\lin,R}(t,x)|^2+(\partial_t h_{\lin,R}(t,x))^2\,dx \\
 \geq \int_{R'+|t|}^{+\infty} \big(\partial_r (rh_{\lin,R}(t,r))\big)^2+\big(\partial_t(r h_{\lin,R}(t,r))\big)^2\,dr\\
 \geq \frac{1}{2}\int_{R'}^{+\infty} \left(\partial_r (r\ch_{R}(0,r))\right)^2+\left(\partial_t (r\ch_{R}(0,r))\right)^2\,dr\\
 =\frac{1}{2}\int_{|x|>R'} \left|\nabla \ch_{R}(0,x)\right|^2+(\partial_t \ch_{R}(0,x))^2\,dx-\frac{R'}{2}\left(\ch_{R}(0,R')\right)^2.
\end{multline}
Since
\begin{multline}
\label{Eqn:C10'}
\left|\ch_{R}(0,R')\right|=\left|\int_{R'}^{\sigma} \partial_r \ch_{R}(0,r)\,dr\right|\leq \sqrt{(\sigma-R')\int_{R'}^{\sigma} \left( \partial_r \ch_{R}(0,r) \right)^2\,dr}\\
\leq \frac{\sqrt{\sigma-R'}}{R'}\sqrt{\int_{R'}^{\sigma} \left(\partial_r \ch_{R}(0,r)\right)^2r^2\,dr},
\end{multline}
we see that (\ref{Eqn:C10}) implies, if $R'$ is close enough to $\sigma$, that the following holds for all $t \in [0,\theta]$ or for all $t\in [-\theta,0]$:
\begin{multline*}
 \int_{|x| \geq R' +|t|}|\nabla h_{\lin,R}(t,x)|^2+(\partial_t h_{\lin,R}(t,x))^2\,dx\\
 \geq \frac 14 \int_{|x|\geq R'}|\nabla \ch_{R}(0,x)|^2+(\partial_t \ch_{R}(0,x))^2\,dx.
\end{multline*}
Thus, by (\ref{Eqn:C8}), the following holds for all $t\in [0,\theta]$ or for all $t\in [-\theta,0]$:
\begin{multline}
 \label{Eqn:C11}
 \int_{|x|\geq R'+|t|}|\nabla h_{R}(t,x)|^2+(\partial_t h_{R}(t,x))^2\,dx\\
 \geq \frac 18 \int_{|x|\geq R'}|\nabla \ch_{R}(0,x)|^2+(\partial_t \ch_{R}(0,x))^2\,dx.
\end{multline}
Note that
$$|x|>R' \Longrightarrow \BA{\ch_R}(0,x)=\left( w_0(x)-Z_1(x),w_1(x) \right).$$
Since $R'<\sigma_1(\BA{w_0})$, the right-hand side of (\ref{Eqn:C11}) is positive.

By (\ref{Eqn:C9}) we can replace, in the left-hand side of (\ref{Eqn:C11}), $h_{R}$ by $\ch_{R}$.  This implies
$$\sigma_1\big( \BA{\cw_{R}}(t) \big)\geq R' +|t|\text{ for all }t\in [0,\theta]\text{ or for all }t\in [-\theta,0].$$
%Since $\cw_{\rho}(t,x)=\cw_R(t,x)$ for $|x|\geq R+|t|$, we deduce
%$$\sigma_1\big( \cw_R(t),\partial_t\cw_R(t) \big)\geq R+|t|\text{ for all }t\in [0,\theta]\text{ or for all }t\in [-\theta,0].$$
Finally, since $R'$ can be taken arbitrarily close to $\sigma$, we obtain (\ref{Eqn:PropSupp}).
% as announced,
%$$\sigma_1\big( \cw_{R'}(t),\partial_t\cw_{R'}(t) \big)\geq \sigma+|t|\text{ for all }t\in [0,\theta]\text{ or for all }t\in [-\theta,0].$$
\end{proof}

\begin{proof}[Proof of (\ref{I:channel})]
We next prove the second point of Proposition \ref{Prop:NonzeroSol2}. The proof is inspired by the proof of the analogous energy-critical result, (b) of Proposition 2.2 in \cite{DuyKenMerClass}.

By Result \ref{Res:Cont_R} in the appendix, we can choose a large constant $S_1>0$ such that $S_1$ satisfies (\ref{I:V2})
in Remark \ref{R:potentiel} and
\begin{equation}
 \label{Eqn:D1}
 \forall R> S_1,\;
 \left\|\TTT_{R}Z_1\right\|_{\dot{H}^{s_c}} < \frac{\delta_0}{2C_0}, \text{ and }\left\|\TTT_{R}Z_1\right\|_{\dot{H}^{1}} <
 \frac{\delta_1}{2} \sqrt{R}^{\frac{p-5}{p-1}},
\end{equation}
where $\delta_0$, $C_0$ are given by the small data theory in $\HHH^{s_c}$ (Proposition \ref{Prop:CauchyPbHsc}), and $\delta_1$ is given by Proposition \ref{Prop:CauchyPbH1}.

\medskip

\noindent\emph{Step 1.}
Let $\BA{w_0}:=(w_0,w_1)\in \HHH^{s_c}$ such that $S_1<\sigma_1(\BA{w_0})<\infty$. In particular, $\BA{w_0} \in \HHH^1$. Let $R$ be such that $S_1<R<\sigma_1(\BA{w_0})$,
\begin{gather}
 \label{Eqn:D2}
 \left\|\left(\TTT_Rw_0,\mathbf{1}_{\RR^3\setminus B_R}w_1\right)\right\|_{\HHH^{s_c}} < \frac{\delta_{0}}{C_{0}}, \, and
 \\
 \label{Eqn:D3}
 \left\|\left( \TTT_R(w_0-Z_1),\mathbf{1}_{\RR^3\setminus B_R}w_1 \right)\right\|_{\HHH^1}  <
 \delta_1\sqrt{R}^{\frac{p-5}{p-1}}.
\end{gather}
%where $\delta_0$, $C_0$ are again given by Proposition \ref{Prop:CauchyPbHsc}, and $\delta_1$ by Proposition \ref{Prop:CauchyPbH1}.
Note that (\ref{Eqn:D2}) and (\ref{Eqn:D3}) are always possible for $R$ close to $\sigma_1(\BA{w_0})$ in view of the first inequality in (\ref{Eqn:D1}) and Result \ref{Res:Cont_R} in the appendix.

Let $V$ be given by (\ref{I:V2}) in Remark \ref{R:potentiel}.
Let $(h_0,h_1)=\BA{w_0}-(\TTT_{S_1}Z_1,0)$, $\BA{g_0}:=(g_0,g_1):=\left(\TTT_Rh_0,\mathbf{1}_{\RR^3\setminus B_R}h_1\right)$ and $g$ the solution of
$$\partial_{tt}g-\triangle g=|V+g|^{p-1}(V+g)-|g|^{p-1}g,\quad \vec{g}_{\restriction t=0}=\BA{g_0}$$
given by Proposition \ref{Prop:CauchyPbH1}. Recall that by (\ref{Eqn:D3}), $g$ is globally defined and
\begin{equation}
 \label{Eqn:D4}
 \sup_{t\in \RR} \left\|\vec{g}_{\lin}(t)-\vec{g}(t)\right\|_{\HHH^1}\leq \frac{1}{100}\|\BA{g_0}\|_{\HHH^1},
\end{equation}
where $g_{\lin}(t) := S(t)\BA{g_0}$. By Lemma \ref{Lem:UtoV} and Remark \ref{Rem:NrjchannelH1}, the following holds for all $t\geq 0$ or for all $t\leq 0$:
\begin{multline*}
 \int_{|x|\geq R+|t|}\left| \nabla g_{\lin}(t,x)\right|^2 + \left| \partial_tg_{\lin}(t,x) \right|^2\,dx\geq \int_{R+|t|}^{+\infty} \left| \partial_r(rg_{\lin})(t,r)\right|^2 + \left| \partial_t(rg_{\lin})(t,r) \right|^2\,dr\\
 \geq \frac{1}{2}\int_R^{+\infty} \left| \partial_r(rg_0) \right|^2 + |g_1|^2\,dr=\frac{1}{2}\left( \|\nabla g_0\|_{L^2}^2+\|g_1\|^2_{L^2} \right)-R|g_0(R)|^{2}.
\end{multline*}
Notice that $g_0$ is supported in $B_{\sigma}$. Bounding $g_0(R)$ similarly to $\ch_R(0,R')$ in (\ref{Eqn:C10'}), we obtain
\begin{equation*}
 \int_{|x|\geq R+|t|}\left|\nabla g_{\lin}(t,x)\right|^2 + \left| \partial_tg_{\lin}(t,x) \right|^2\,dx \geq \frac{1}{4} \left( \|\nabla g_0\|_{L^2}^2+\|g_1\|^2_{L^2} \right)
\end{equation*}
if $R$ is close enough to $\sigma$. Combining with (\ref{Eqn:D4}), we deduce that the following holds for all $t\geq 0$ or for all $t\leq 0$:
\begin{equation}
\label{Eqn:D5}
 \int_{|x|\geq R+|t|}\left|\nabla g(t,x)\right|^2 + \left| \partial_tg(t,x) \right|^2\,dx\geq\frac{1}{8}\left( \|\nabla g_0\|_{L^2}^2+\|g_1\|^2_{L^2} \right)=: \eta'>0.
\end{equation}
Indeed, $\eta$ is positive by the definition of $\BA{g_0}$ since $S_1<R<\sigma_1(\BA{w_0})$.

\medskip

\noindent\emph{Step 2.} Let $\tw$ be the solution of (\ref{Eqn:WaveSup}) with initial data $\BA{\tw_0}=\left(\TTT_Rw_0,\mathbf{1}_{\RR^3\setminus B_R}w_1\right)$. By (\ref{Eqn:D2}), $\tw$ is globally defined and scatters. Furthermore $\BA{\tw_0}=\BA{w_0}$ for $|x|>R$. Let $\tilde{h}=\tw-z_{S_1}$. Then
$$\partial_{tt}\tilde{h}-\triangle\tilde{h}=\left|z_{S_1}+\tilde{h}\right|^{p-1}(z_{S_1}+\tilde{h})-|\tilde{h}|^{p-1}\tilde{h}$$
(in the usual Duhamel sense), and, by the definition of $V$,
$$\left|z_{S_1}+\tilde{h}\right|^{p-1}(z_{S_1}+\tilde{h})-|\tilde{h}|^{p-1}\tilde{h}=\left|V+\tilde{h}\right|^{p-1}(V+\tilde{h})-|\tilde{h}|^{p-1}\tilde{h},$$
for all $(t,x)$ such that $|x|\geq S_1+|t|$. In other words, $\tilde{h}$ and $g$ satisfy the same equation for $|x|\geq S_1+|t|$. We also have $\vecc{\tilde{h}}(0,r)=\vec{g}(0,r)$ for $r\geq R>S_1$.  By finite speed of propagation and a standard argument (see again the comments after (2.27) in \cite{DuyKenMerClass}),
$$|x|\geq R+|t|\Longrightarrow g(t,x)=\tilde{h}(t,x).$$
By (\ref{Eqn:D5}), the following holds for all $t\geq 0$ or for all $t\leq 0$:
\begin{equation*}
 \int_{|x|\geq R+|t|}\left|\nabla \tilde{h}(t,x)\right|^2 + \left| \partial_t\tilde{h}(t,x) \right|^2\,dx\geq\eta'.
\end{equation*}
Since $\sigma_1(\overrightarrow{w}_0)$ is finite, we know by finite speed of propagation that
$$ \supp \overrightarrow{\tilde{h}}(t)\subset \big\{ r\leq \sigma_1(\overrightarrow{w}_0)+|t|\big\}.$$
Using H\"older's inequality, we deduce, for all $t\geq 0$ or for all $t\leq 0$,
$$ \int_{R+|t|}^{+\infty} |r\partial_r \tilde{h}(t,r)|^m+|r\partial_t \tilde{h}(t,r)|^m\,dr\geq \eta,$$
where $\eta=\frac{1}{C}{\eta'}^{\frac m2}$ for a large constant $C$ (depending on $\sigma_1(\overrightarrow{w}_0)$).
Finally, since $\tilde{w}(t,x)=\tilde{h}(t,x)+z_{S_1}(t,x)=\tilde{h}(t,x)+Z_1(x)$ if $|x|\geq R+|t|$, we obtain
$$\liminf_{t\to+\infty}\text{ or }\liminf_{t\to-\infty}\int_{R+|t|}^{+\infty} |r\partial_r \tilde{w}(t,r)|^m+|r\partial_t \tilde{w}(t,r)|^m\,dr
\geq\eta.$$
Using a similar argument to that below ``It remains to replace the lower limit...'' in the proof of Result \ref{Res:VoLimit}%, p. \pageref{end_of_result}
we can replace the lower limit by an uniform lower bound and get (\ref{Eqn:ConcentrationEta}), concluding the proof of (\ref{I:channel}).
\end{proof}
\subsection{Proof of the profile version}
\label{Subsec:ProofProfVers}

The proof of Proposition \ref{Prop:DispPropScaling} relies upon the following lemma.

\subsubsection{Exterior energy in positive times for linear solutions}
The following lemma states roughly that the exterior generalized energy of any solution of the linear wave equation satisfies an asymptotic lower bound \emph{in both time directions}. This can be compared with Lemma \ref{Lem:Nrjchannel}, where one cannot choose the time direction, but the lower bound is more precise.
\begin{lem}
Let $w_{\lin}$ be a nonzero solution of the linear wave equation on $\RR \times \RR^3$, with initial data $\BA{w_0}:=(w_0,w_1)\in \HHH^{s_c}$, and $\delta>0$. Then there exists a solution $\tilde{w}_{\lin}$ of the linear wave equation, with initial data $\BA{\tilde{w}_0}:=(\tw_0,\tw_1) \in \HHH^{s_c}$,
and $t_{0} > 0$, $\eta > 0$, and $\rho_0 \in \mathbb{R}$ such that
\begin{gather}
\label{Eqn:ProprRes1}
\left\|\BA{\tilde{w}_{0}}\right\|_{\HHH^{s_c}}\lesssim\left\|\BA{w_{0}}\right\|_{\HHH^{s_c}},\\
\label{Eqn:ProprRes1'}
\left\|r^{1-\frac{2}{m}}(\partial_r \tilde{w}_{0},\tilde{w}_{1})\right\|_{L^m\times L^m}\leq \delta,\\
\label{Eqn:ProprRes2}
\BA{\tilde{w}_{\lin}}(t,x) = \BA{w_{\lin}}(t,x), \text{ if }t \geq t_{0}, \, |x| > \rho_0 + t
\end{gather}
and
\begin{equation}
\forall t \geq t_{0},\quad \int_{\rho_0 + t}^{+\infty} \left|r\partial_{r} \tilde{w}_{\lin} (t,x)\right|^{m} +
\left|r \partial_{t} \tilde{w}_{\lin}(t,x)\right|^{m} \, dr \geq \eta.
\label{Eqn:ProprRes3}
\end{equation}
\label{Lem:PropDispLin}
\end{lem}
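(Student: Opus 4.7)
The plan is to construct $\tilde{w}_{\lin}$ by truncating the initial data of $w_{\lin}$ at a suitable radius $\rho_0>0$. I would set
$$\tilde{w}_0 := \TTT_{\rho_0}w_0, \qquad \tilde{w}_1 := \mathbf{1}_{\RR^3 \setminus B_{\rho_0}} w_1,$$
so that $(\tilde{w}_0,\tilde{w}_1)$ agrees with $(w_0,w_1)$ on $\{|x|>\rho_0\}$ and is flattened inside. Using the d'Alembert-type representation $rw_{\lin}(t,r)=f(t+r)-f(t-r)$ with $\dot{f}\in L^m(\RR)$ (established in the proof of Lemma \ref{Lem:ConservQuantLin}), the assumption $w_{\lin}\neq 0$ is equivalent to $\dot f\not\equiv 0$ in $L^m$. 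By absolute continuity of integration, I can choose $\rho_0$ large enough that
$$\int_{\rho_0}^\infty r^m\bigl(|\partial_r w_0|^m + |w_1|^m\bigr)\, dr \leq \delta^m,$$
(ensuring (\ref{Eqn:ProprRes1'}) via Lemma \ref{Lem:UtoV} and Remark \ref{Rem:EquivGenNrj}) while keeping this integral strictly positive.

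With this definition, (\ref{Eqn:ProprRes1}) follows from the $\HHH^{s_c}$-boundedness of $\TTT_{\rho_0}$ and of multiplication by $\mathbf{1}_{\RR^3\setminus B_{\rho_0}}$ (appendix Results \ref{Res:BoundCharac} and \ref{Res:BoundOp}), and (\ref{Eqn:ProprRes2}) is immediate from finite speed of propagation, since the two initial data coincide outside $B_{\rho_0}$.

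The substantive point is (\ref{Eqn:ProprRes3}). I would apply Lemma \ref{Lem:Nrjchannel} to $\tilde{w}_{\lin}$ at $R=\rho_0$: the exterior generalized energy $E_{m,\rho_0}[\tilde{w}_{\lin}](t)$ is bounded below by a positive constant times $E_{m,\rho_0}[\tilde{w}_{\lin}](0)>0$, either for all $t\geq 0$ or for all $t\leq 0$. In the first case, I simply set $\eta$ equal to a small positive multiple of this lower bound and take any $t_0>0$.

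The hard part is the second case, where Lemma \ref{Lem:Nrjchannel} yields the bound only in negative times; in terms of d'Alembert data this corresponds to the ``outgoing mass'' $\int_{\rho_0}^\infty|\dot f|^m$ exceeding the ``incoming mass'' $\int_{-\infty}^{-\rho_0}|\dot f|^m$. To handle this, I unpack the proof of Lemma \ref{Lem:Nrjchannel}: since $\dot{\tilde f}=\dot f$ outside $[-\rho_0,\rho_0]$, one computes for $t\geq 0$
$$E_{m,\rho_0}[\tilde{w}_{\lin}](t) \asymp \int_{-\infty}^{-\rho_0}|\dot f|^m \, ds + \int_{\rho_0+2t}^\infty|\dot f|^m\, ds,$$
the second term vanishing as $t\to\infty$. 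I would then adjust $\rho_0$ within its admissible open range (if necessary decreasing it so that the incoming part $\int_{-\infty}^{-\rho_0}|\dot f|^m$ is strictly positive, which is possible since $\dot f\neq 0$ in $L^m(\RR)$ and the radius conditions on $(w_0,w_1)$ still permit (\ref{Eqn:ProprRes1'})), and choose $t_0$ so large that the tail term $\int_{\rho_0+2t_0}^\infty|\dot f|^m$ is dominated by half of the incoming mass. Setting $\eta := \tfrac12\int_{-\infty}^{-\rho_0}|\dot f|^m$ then yields the desired uniform lower bound for all $t\geq t_0$. The entire argument rests on the freedom to adjust $\rho_0$ within a range compatible with (\ref{Eqn:ProprRes1'}) and on the non-triviality of $\dot f$.
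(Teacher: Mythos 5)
Your construction of $\tilde{w}_{\lin}$ by truncating the \emph{initial} data at a radius $\rho_0>0$ has a genuine gap in the ``hard case''. The two requirements you impose on $\rho_0$ — that $\int_{\rho_0}^{\infty}r^m(|\partial_r w_0|^m+|w_1|^m)\,dr\leq\delta^m$ (for (\ref{Eqn:ProprRes1'})) and that the incoming mass $\int_{-\infty}^{-\rho_0}|\dot f|^m\,ds$ be strictly positive — are in general incompatible, and your assertion that ``the radius conditions on $(w_0,w_1)$ still permit (\ref{Eqn:ProprRes1'})'' after decreasing $\rho_0$ is not justified. Indeed, since $\int_{\rho_0}^{\infty}|\partial_r(rw_0)|^m+|rw_1|^m\,dr\approx\int_{\rho_0}^{\infty}|\dot f|^m+\int_{-\infty}^{-\rho_0}|\dot f|^m$, the smallness condition forces $\rho_0$ into an interval $[\rho_\delta,\infty)$, whereas positivity of the incoming mass requires $\rho_0$ to lie below $\sup\{\rho:\int_{-\infty}^{-\rho}|\dot f|^m>0\}$, which can be smaller than $\rho_\delta$ (it is even $\leq 0$ for a purely outgoing wave, i.e.\ when $\partial_r(rw_0)=rw_1$ so that $\dot f$ vanishes on $(-\infty,0)$). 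In that situation your truncated solution satisfies $E_{m,\rho_0}[\tilde w_{\lin}](t)\asymp\int_{\rho_0+2t}^{\infty}|\dot f|^m\to 0$ as $t\to+\infty$ (the plateau created by $\TTT_{\rho_0}$ on $[-\rho_0,\rho_0]$ never re-enters the region $\{|x|>\rho_0+t\}$ for $t\geq 0$), so no uniform $\eta>0$ exists and (\ref{Eqn:ProprRes3}) fails. Note also that for the same reason your appeal to Lemma \ref{Lem:Nrjchannel} in the ``easy case'' needs $\int_{\rho_0}^{\infty}|\partial_r(rw_0)|^m+|rw_1|^m\,dr>0$, which is \emph{not} guaranteed by keeping $\int_{\rho_0}^{\infty}r^m(|\partial_r w_0|^m+|w_1|^m)\,dr>0$ (take $w_0=c/r$, $w_1=0$ outside $B_{\rho_0}$).

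The paper's proof resolves exactly this difficulty by decoupling the two constraints. First, $\rho_0$ is chosen as an arbitrary \emph{real} number (possibly negative — this is why the statement allows $\rho_0\in\RR$) so that $\int_{-\infty}^{-\rho_0}|\dot f|^m=2\eta>0$ with $\eta\ll\delta$; this is always possible since $\dot f\not\equiv 0$ in $L^m(\RR)$. Second, the truncation is performed not at $t=0$ but at a large time $t_0$, at the radius $\rho_0+t_0>0$: by the identity (\ref{Eqn:estimate_channel}) and the decay (\ref{Eqn:bndterm_zero}) of the boundary term, $t_0$ can be taken so large that the outgoing tail $\int_{\rho_0+2t_0}^{\infty}|\dot f|^m$ and the boundary contribution are both $\leq\eta$, leaving the persistent incoming mass $2\eta$ as the uniform lower bound in (\ref{Eqn:ProprRes3}) for all $t\geq t_0$. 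The smallness (\ref{Eqn:ProprRes1'}) of the data at $t=0$ is then \emph{deduced a posteriori} from the almost conservation of the generalized energy (Lemma \ref{Lem:ConservQuantLin}), rather than arranged directly on the time-zero data as you attempt. Your Case 1 and the verifications of (\ref{Eqn:ProprRes1}) and (\ref{Eqn:ProprRes2}) are fine, but the argument for the outgoing-dominant case needs to be replaced by this late-time truncation.
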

We first assume the lemma and prove Proposition \ref{Prop:DispPropScaling}.
The proof of the lemma is given in Subsubsection \ref{Subsub:LemPropDispLin}.

\subsubsection{Main proof}

Let us first assume that $t_{j,n} = 0$ for all $n$. Then, by Proposition \ref{Prop:NonzeroSol} or Proposition \ref{Prop:NonzeroSol2}, there exists a solution
$\tW^{j}$ of (\ref{Eqn:WaveSup}) such that $\tW^{j}$ scatters as $t \rightarrow \pm \infty$ and positive numbers
$R_{j} > 0$, $\eta_{j} > 0$ such that
\begin{equation*}
|x| \geq R_{j}\Longrightarrow \BA{\tW^{j}}(0,x)= \BA{W^{j}}(0,x),
\end{equation*}
and the following holds for $t \geq 0$ or $t \leq 0$
\begin{equation*}
\int_{R_{j} + |t|}^{\infty}  \left|r \partial_{r} \tW^{j}(t,x)\right|^{m} + \left|r \partial_{t} \tW^{j}(t,x)\right|^{m} \, dr
\geq \eta_{j}
\end{equation*}
Hence, choosing $\rho_{j,n}:= \lambda_{j,n} R_{j}$ and letting $\tW_{\lin}^{j}$ to be the linear solution with data
$\vecc{\tW}^{j}(0)$, we see that (\ref{Eqn:DispPropScaling1}) holds and (\ref{Eqn:DispPropScaling2}) holds for all $t\leq 0$ or for all $t\geq 0$.

Assume now that $\lim_{n \rightarrow \infty} \frac{-t_{j,n}}{ \lambda_{j,n} } = +\infty$ (the case where this limit is $-\infty$ is the same). Let $\tW^{j}$ be the solution of
(\ref{Eqn:WaveSup}) such that
\begin{equation*}
\lim_{t \rightarrow +\infty } \left\| \BA{\tW^{j}}(t) - \BA{\tW_{\lin}^{j}}(t) \right\|_{\HHH^{s_c}}=0
\end{equation*}
with $\tW^{j}_{\lin}$ that is derived from $W_{\lin}^{j}$ in Lemma \ref{Lem:PropDispLin} for some small $\delta>0$. By Lemma \ref{Lem:HardyIneq},
\begin{equation}
\label{Eqn:limit_good}
\lim_{t \rightarrow \infty } \left\| r^{1-\frac{2}{m}} \partial_{r} ( \tW^{j} - \tW_{\lin}^{j} )(t) \right\|_{L^{m}}
+ \left\| r^{1-\frac{2}{m}} \partial_{t} (\tW^{j} - \tW_{\lin}^{j} )(t) \right\|_{L^{m}} =0,
\end{equation}
By Proposition \ref{Prop:CauchyPb}, if $\delta$ is small enough, $\tW^j$ scatters in both time directions.
By (\ref{Eqn:ProprRes3}) and (\ref{Eqn:limit_good}), there exists $t_{j}$ and $\eta_{j}>0$ such that
\begin{equation*}
t \geq t_{j}\Longrightarrow  \int_{\rho_{j} + t}^{+\infty} \left|r\partial_{r} \tW^{j}(t,r)\right|^{m} +
\left|r \partial_{t} \tW^{j}(t,r)\right|^{m} \, dr \geq \eta_{j} \cdot
\end{equation*}
Hence, choosing $n \gg 1$ so that $-\frac{t_{j,n}}{\lambda_{j,n}} \geq t_{j}$, and letting
$\rho_{j,n}:= \rho_{j} \lambda_{j,n} - t_{j,n}$, we see that
(\ref{Eqn:DispPropScaling1}) holds, and (\ref{Eqn:DispPropScaling2}) holds for all $t\geq 0$.
\begin{rem}
 \label{Rem:alphajn}
 We see from the proof that
 \begin{align*}
 \left( \forall n,\; t_{j,n}=0 \right) &\Longrightarrow \exists R_j>0,\;\forall n,\;\rho_{j,n}=\lambda_{j,n}R_j\\
 \lim_{n\to\infty} -\frac{t_{j,n}}{\lambda_{j,n}}\in \{\pm\infty\}&\Longrightarrow \exists \rho_j,\;\rho_{j,n}=\left|\rho_j\lambda_{j,n}-t_{j,n}\right|\sim|t_{j,n}|\text{ as }n\to\infty.
 \end{align*}
\end{rem}

\subsubsection{Proof of the asymptotic lower bound for linear solutions}
\label{Subsub:LemPropDispLin}
We next prove Lemma \ref{Lem:PropDispLin}.
Recall that if $w_{\lin}$ is a solution of the linear wave equation, with initial data in $\HHH^{s_c}$, then
$r w_{\lin}(t,r) = f(t+r) - f(t-r)$, with $f \in L_{loc}^{m}(\mathbb{R})$ and $\dot{f} \in L^{m}(\mathbb{R})$.

By Lemma \ref{Lem:UtoV}, we see that for $\rho_{0} \in \mathbb{R}$ and $t \gg |\rho_{0}|$
\begin{multline}
\label{Eqn:estimate_channel}
\int_{\rho_{0} + t}^{\infty}  \left| r \partial_{r} w_{\lin}(t,x) \right|^{m}
+ \left| r \partial_{t} w_{\lin}(t,x) \right|^{m} \, dr   \\
\approx  \int_{\rho_{0}+ t}^{\infty} \big| \dot{f}(t+r) + \dot{f}(t-r) \big|^{m} + \big|\dot{f}(t+r)- \dot{f}(t-r)\big|^{m} dr + (\rho_{0} + t) \big|w_{\lin}(t, \rho_{0} + t)\big|^{m} \\
\approx \int_{\rho_{0} +t}^{\infty} \big|\dot{f}(t+r) \big|^{m} + \big|\dot{f}(t-r) \big|^{m}  \, dr
+ (\rho_{0} + t) \big|w_{\lin}(t, \rho_{0} + t)\big|^{m}\\
\approx \int_{\rho_{0} + 2 t}^{\infty} \big|\dot{f} (r) \big|^{m} \, dr + \int_{-\infty}^{-\rho_{0}} \big|\dot{f}(r)\big|^{m} \, dr + \frac{1}{(\rho_{0} +t)^{m-1}}
\big|f(\rho_{0} + 2t) - f(- \rho_{0}) \big|^{m}\cdot
\end{multline}
Let $\rho_0 \in \mathbb{R}$ and $\delta \gg \eta > 0$  be such that
\begin{equation}
\label{Eqn:bound_dotf}
\int_{-\infty}^{-\rho_0} |\dot{f}(r)|^{m} \, dr = 2 \eta.
\end{equation}
Let $0<\epsilon\ll 1$ and $R\gg 1$ such that $\int_{R}^{\infty}\big|\dot{f}\big|^m\leq \epsilon$. Then, for $r\geq R$,
\begin{equation*}
|f(r)-f(R)|^m=\left|\int_R^r \dot{f}(s)\,ds\right|^m\leq (r-R)^{m-1}\int_{R}^r |\dot{f}(s)|^m\,ds\leq (r-R)^{m-1}\epsilon,
\end{equation*}
which proves that $\limsup_{r\to\infty} r^{1-m}|f(r)|^m\leq \epsilon$, and finally (with the same proof for $r\to-\infty$),
\begin{equation}
 \label{Eqn:bndterm_zero}
\lim_{r\to\pm \infty} r^{1-m}|f(r)|^m=0.
\end{equation}
Let $t_0\gg 1$ such that
\begin{equation}
\label{Eqn:small_remainder}
\forall t\geq t_0,\quad \int_{\rho_{0} + 2 t}^{\infty} \big|\dot{f}(r) \big|^{m} \, dr
+ \frac{1}{(\rho_{0} +t)^{m-1}}
\big|f(\rho_{0} + 2t) - f(- \rho_{0}) \big|^{m}\leq \eta.
\end{equation}
If $\tilde{w}_{\lin}$ is the solution of the linear wave equation with initial data
$$(\tw_0,\tw_1)=\left(\TTT_{\rho_{0} + t_{0}} w_{\lin}(t_{0}), \mathbf{1}_{\mathbb{R}^{3}/ B_{\rho_{0} + t_{0}}} \partial_{t} w_{\lin}(t_{0}) \right)$$ at $t=t_0$, then we see that (\ref{Eqn:ProprRes1}), (\ref{Eqn:ProprRes2}) and (\ref{Eqn:ProprRes3}) hold. Furthermore, using the pseudo-conservation of the generalized energy (Lemma \ref{Lem:ConservQuantLin}), we also get (\ref{Eqn:ProprRes1'}).

\section{ Proof of Theorem in the global case}
\label{Sec:global}
\subsection{Sketch of proof}

In this section we prove Theorem \ref{Thm:Main} in the global case (i.e $T_{+}(w) = \infty$ or $T_{-}(w) = - \infty$). We will assume $T_+(w)=+\infty$ and that (\ref{Eqn:Blowuptype1}) does not hold, and prove that $w$ scatters for positive times. The case of negative times follows by looking at the solution $w(-t,x)$ of (\ref{Eqn:WaveSup}).

We first show  (Proposition \ref{Prop:FreeWave}) that if $w$ is a solution of (\ref{Eqn:WaveSup}) that exists globally in the forward direction, and
such that there exists a sequence $t_{n} \rightarrow \infty$  along which its critical $\HHH^{s_c}$ norm is bounded,  then there exists a solution of the linear wave equation that approaches well $w$ in the weighted $L^m$-norm as $t \rightarrow \infty$, in the region $|x|\geq t-A$ ($A$ arbitrary large).

We next prove (Proposition \ref{Prop:ProfileDecompSpec}) that if $\vec{w}(t_n)$ is bounded in $\HHH^{s_c}$ and has a
profile decomposition, the only (nonlinear) profile that can have exterior generalized energy is the one corresponding to the linear component $w_{\lin}$
constructed in Proposition \ref{Prop:FreeWave}.

We then conclude the proof using the channels of generalized energy property for nonzero profiles given in the preceding section, proving that the only admissible profiles in the preceding profile decomposition is the one corresponding to $w_{\lin}$ (see Subsection \ref{Subsec:scat}). This is the core of the proof, and also its most technical part, mainly because we must take a particular care at the profiles with initial data equal to $Z_{\ell}$ (for some $\ell\neq 0$) for large $r$: see Cases 2 and 3 of Lemma \ref{Lem:Cases}.

We first state Propositions \ref{Prop:FreeWave} and \ref{Prop:ProfileDecompSpec}. Subsection  \ref{Subsec:scat} is devoted to the proof of
Theorem \ref{Thm:Main} assuming these two propositions, which are proved in Subsections \ref{Subsec:ProofFreeWave} and \ref{Subsec:GlobalProfileDecompSpec} respectively.

\begin{prop}
\label{Prop:FreeWave}
Let $w$ be a solution of (\ref{Eqn:WaveSup}) such that $T_{+}(w) = \infty$. Assume that there exists a sequence
$t_{n} \rightarrow \infty$ such that
\begin{equation}
\sup_{n \in \mathbb{N}} \left\|  \vw(t_{n})\right\|_{ \HHH^{s_c}}
<
\infty .
\label{Eqn:BoundednessHsc}
\end{equation}
Then there exists a linear solution $w_{\lin}$ such that for all $A$ in $\mathbb{R}$
\begin{equation}
\lim_{t \rightarrow +\infty} \int_{t -A}^{\infty} \left| r \partial_{r} (w - w_{\lin})(t,r) \right|^{m} +
\left|r\partial_{t} (w - w_{\lin})(t,r) \right|^{m} \, dr =0.
\label{Eqn:FreeWave}
\end{equation}
\end{prop}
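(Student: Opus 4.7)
The plan is to define $w_{\lin}$ by weak compactness and then control the difference $\vec{w}-\vec{w}_{\lin}$ in the exterior region via a combination of profile decomposition, the strong Huygens principle (Proposition \ref{Prop:Huyg}), finite speed of propagation, and the exterior decay from Proposition \ref{Prop:small_exterior}.

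First I would extract a subsequence (still denoted $\{t_n\}$) so that $S(-t_n)\vec{w}(t_n)$ converges weakly in $\HHH^{s_c}$ to some $(w_0,w_1)$, and set $w_{\lin}(t):=S(t)(w_0,w_1)$. Then $S(-t_n)\bigl(\vec{w}(t_n)-\vec{w}_{\lin}(t_n)\bigr)\rightharpoonup 0$ in $\HHH^{s_c}$, and by Lemma \ref{Lem:ConservQuantLin} the generalized $L^m$-energy of $\vec{w}_{\lin}$ stays uniformly bounded in time.

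To establish (\ref{Eqn:FreeWave}) first along the subsequence $t_n$, I would fix $A$ and choose $R$ large so that, by Proposition \ref{Prop:small_exterior} applied to $w$ and by Proposition \ref{Prop:Huyg} combined with Lemma \ref{Lem:ConservQuantLin} applied to $w_{\lin}$, the integral over $\{r > t_n+R\}$ is negligible. On the remaining annulus $\{t_n-A < r < t_n+R\}$, I would extract a further profile decomposition $\vec{w}(t_n)-\vec{w}_{\lin}(t_n)\simP \sum_j \vec{V}^j_{\lin,n}(0)$. The weak limit $S(-t_n)\bigl(\vec{w}(t_n)-\vec{w}_{\lin}(t_n)\bigr)\rightharpoonup 0$ combined with the extraction condition (\ref{Eqn:AssParam}) forces each surviving profile to have parameters in a degenerate configuration ($\mu_{j,n}\to 0$ or $\infty$, or $|s_{j,n}+t_n|/\mu_{j,n}\to\infty$); Proposition \ref{Prop:Huyg} then kills the contribution of each such profile to the annular exterior integral, and Propositions \ref{Prop:Perturb_outside} and \ref{Prop:Orth} upgrade this to the nonlinear evolution and conclude along $t_n$.

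The hard part is the passage from the subsequence $t_n$ to the full limit $t\to +\infty$. I would argue that, by finite speed of propagation and the small-data theory of Proposition \ref{Prop:CauchyPbHsc}, the exterior weighted $L^m$-norm of $\vec{w}(t)$ restricted to $\{r > t-A\}$ remains uniformly bounded and depends continuously on $t$ for $t$ large. Combining this with the Duhamel identity
$$\vec{w}(t) - \vec{w}_{\lin}(t) = S(t-t_n)\bigl(\vec{w}(t_n)-\vec{w}_{\lin}(t_n)\bigr) + \iota\int_{t_n}^t S(t-\tau)\bigl(0, |w|^{p-1}w\bigr)\,d\tau,$$
the Hardy-type embedding of Lemma \ref{Lem:HardyIneq} (turning weighted $L^m$ control of $\partial_r w$ into $L^{3m}$ control of $w$), and finite speed of propagation applied to the nonlinear term in the exterior, this should yield the desired vanishing for all $t\to +\infty$, not only along the subsequence $t_n$.
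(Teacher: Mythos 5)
Your definition of $w_{\lin}$ as the weak limit of $S(-t_n)\vec{w}(t_n)$ matches the paper's, and the treatment of the far region $\{r>t_n+R\}$ via Proposition \ref{Prop:small_exterior} is fine. The gap is in the annulus $\{t_n-A<r<t_n+R\}$: it is not true that the weak convergence together with Proposition \ref{Prop:Huyg} kills every profile's contribution there. Take a profile of $\vec{w}(t_n)-\vec{w}_{\lin}(t_n)$ with scales $\mu_{j,n}\to 0$ and time parameters $s_{j,n}=-t_n$ (so $-s_{j,n}/\mu_{j,n}\to+\infty$): Proposition \ref{Prop:Huyg} localizes it in $\{\,\bigl||x|-t_n\bigr|\leq R\mu_{j,n}\}$, which lies \emph{entirely inside} your annulus, so the strong Huygens principle places essentially all of its generalized energy exactly where you need it to vanish, and Proposition \ref{Prop:Orth} then gives a lower bound contradicting the claimed conclusion. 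Such a profile is not excluded by $S(-t_n)\bigl(\vec{w}(t_n)-\vec{w}_{\lin}(t_n)\bigr)\rightharpoonup 0$, which only removes the profile at scale $1$ with time parameter $-t_n$; profiles concentrating on the light cone at small scales are precisely the hard case, and ruling them out (or absorbing them correctly into the linear wave) is the content of the proposition, not something available as an input.

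The paper's proof supplies the missing nonlinear ingredient through Lemma \ref{Lem:HuygConseq}: one truncates the profiles of $\vec{w}(t_n)$ with $\TTT_R$ so that most of them become small-data scattering solutions, observes that the profiles which still fail to scatter forward must satisfy $-t_{j,n}/\lambda_{j,n}\to-\infty$ with $|t_{j,n}|\approx t_n$ (so they sit near the light cone and are ``incoming''), and then \emph{evolves to time $3t_n/2$}, where these bad profiles have migrated to $|x|\lesssim \tfrac34 t_n$ and can be removed by a spatial cutoff. The result is a solution $w_n$ that scatters forward in time and coincides with $w$ on the large region $\{|x|\geq(1-\epsilon)s_n+t\}$; its forward scattering gives (\ref{Eqn:FreeWaveStep1}) as a genuine limit $t\to\infty$ (not merely along $t_n$), and a gluing argument via Proposition \ref{Prop:Orth} identifies the resulting $w_{\lin}^A$ with the weak limit $w_{\lin}$. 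Your Duhamel step cannot substitute for this: the small-data theory controls only $\{r>t+R\}$, not the strip $\{t-A<r<t+R\}$ inside the cone, and bounding $\int_{t_n}^{t}S(t-\tau)(0,|w|^{p-1}w)\,d\tau$ in the exterior norm over time intervals of unbounded length requires exactly the exterior space-time (Strichartz) control that the scattering construction provides; continuity in $t$ cannot bridge gaps $t_{n+1}-t_n$ that may be arbitrarily large.
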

%The proof of Proposition \ref{Prop:FreeWave} is given in Subsection \ref{Subsec:ProofFreeWave}.
\begin{prop}
\label{Prop:ProfileDecompSpec}
Let $w$ and $w_{L}$ be as in Proposition \ref{Prop:FreeWave}.
Let  $\{ \rho_{n} \}_{n \in \mathbb{N}}$ be a sequence of nonnegative numbers. There does not exist a
sequence $\{ t_{n} \}_{n \in \mathbb{N}} \rightarrow \infty $ such that
\begin{equation}
\label{Eqn:profile_Spec}
\vw(t_n)\simP \BA{w_{\lin}}(t_{n}) + \sum_{j\geq 1} \BA{W_{\lin,n}^{j}}(0),\quad |x|>\rho_n
\end{equation}
where the corresponding nonlinear profiles $W^{j}$ scatter as $t \rightarrow \pm \infty$, and there exists
$j_0 \geq 1$  such that
\begin{equation}
\int_{\rho_{n} + |t|}^{\infty} \left| r  \partial_{r} W_{n}^{j_0}(t,r) \right|^{m}
+ \left| r \partial_{t} W_{n}^{j_0}(t,r) \right|^{m} \, dr \geq \eta,
\label{Eqn:PropProf1}
\end{equation}
for some $\epsilon >0$ and for all $ t \geq 0 $ or for all $ t \leq 0 $.
\end{prop}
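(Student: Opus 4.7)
I argue by contradiction. Suppose the decomposition (\ref{Eqn:profile_Spec}) holds, all nonlinear profiles $W^j$, $j\geq 1$, scatter in both time directions, and some $W^{j_0}$ obeys (\ref{Eqn:PropProf1}). After possibly reversing time, I may assume (\ref{Eqn:PropProf1}) holds for every $t\geq 0$. The strategy is to transfer the channel of $W^{j_0}$ into a lower bound on the weighted $L^m$-norm of $w-w_{\lin}$ in an exterior region at time $t_n+s$, and then confront it with the vanishing statement of Proposition \ref{Prop:FreeWave}.

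For every $s\geq 0$, regarding $w_{\lin}$ as a $0$-th profile with parameters $\lambda_{0,n}=1$, $t_{0,n}=-t_n$ in (\ref{Eqn:profile_Spec}), Proposition \ref{Prop:Perturb_outside} applied at $\theta_n=s$ yields
\begin{equation*}
\vec w(t_n+s)-\vec w_{\lin}(t_n+s)\simP \sum_{j\geq 1}\vec V^{\,j}_{\lin,n}(0),\quad |x|>\rho_n+s,
\end{equation*}
with $V^{\,j}_{\lin,n}$ as in Remark \ref{Rem:Perturb}; since $W^{j_0}$ scatters, $V^{j_0}_{\lin,n}(0)$ is $o_n(1)$-close to $W^{j_0}_n(s)$ in the weighted norm $\|r^{1-2/m}\partial_{r,t}\cdot\|_{L^m}$ (Lemma \ref{Lem:HardyIneq}). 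Applying the linear orthogonality (\ref{Eqn:Orth}) of Proposition \ref{Prop:Orth} to this decomposition (with $\theta_n=0$, $\sigma_n=\infty$, outer radius $\rho_n+s$, $k=j_0$), and using (\ref{Eqn:PropProf1}) at time $s$, I obtain
\begin{equation*}
\int_{\rho_n+s}^{\infty}|r\partial_{r,t}(w-w_{\lin})(t_n+s,r)|^m\,dr \gtrsim \eta - o_n(1).
\end{equation*}
On the other hand, Proposition \ref{Prop:FreeWave} yields, for every fixed $A>0$,
\begin{equation*}
\lim_{n\to\infty}\int_{t_n+s-A}^{\infty}|r\partial_{r,t}(w-w_{\lin})(t_n+s,r)|^m\,dr=0.
\end{equation*}
If, along a subsequence, $\rho_n\geq t_n-A$ for some fixed $A>0$, then $\rho_n+s\geq t_n+s-A$, the last display dominates the previous one, and the lower bound $\gtrsim \eta$ is contradicted.

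The remaining regime $t_n-\rho_n\to+\infty$ is the main obstacle: the inclusion of exterior cones is now reversed. My plan is to take $s=s_n\to\infty$ diagonally and to use the scattering of $W^{j_0}$ together with the strong Huygens principle (Proposition \ref{Prop:Huyg}) to show that the weighted $L^m$-mass of $W^{j_0}_n(s_n)$ on $\{|x|>\rho_n+s_n\}$ is concentrated, up to $o_n(1)$, in a thin shell around the corresponding linear wave front, located near $|x|\sim s_n\lambda_{j_0,n}+|t_{j_0,n}|$. The pseudo-orthogonality (\ref{Eqn:PseudoOrth}) between the parameters of $W^{j_0}$ and those of $w_{\lin}$ (the $0$-th profile) places this shell inside $\{|x|>t_n+s_n-A_n\}$ for some $A_n\to\infty$ chosen slowly enough that the limit from Proposition \ref{Prop:FreeWave} still holds (a standard diagonal extraction); the previous argument then delivers the contradiction. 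Verifying that these diagonal choices are compatible is the most delicate step.
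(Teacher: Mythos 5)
Your opening reduction --- ``after possibly reversing time, I may assume (\ref{Eqn:PropProf1}) holds for every $t\geq 0$'' --- is not legitimate, and this is the first genuine gap. The hypotheses are not time-symmetric: $t_n\to+\infty$, the linear wave $w_{\lin}$ only approximates $w$ as $t\to+\infty$ (Proposition \ref{Prop:FreeWave}), and $w$ need not even exist for all negative times. The case where the channel of $W_n^{j_0}$ lives in $\{t\leq 0\}$ must be treated by a separate argument. The paper does this by letting $\bar w$ be the solution of (\ref{Eqn:WaveSup}) scattering forward to $w_{\lin}$, applying Proposition \ref{Prop:Perturb} on the whole interval $[-t_n,+\infty)$, and evaluating at $t=-t_n$: together with Proposition \ref{Prop:Orth} and finite speed of propagation this gives $\int_{\rho_n+t_n}^{\infty}\left|r\partial_{r,t}\big(w(0,r)-\bar w(0,r)\big)\right|^m dr\gtrsim\eta$, which is absurd as $n\to\infty$ since $\rho_n+t_n\to\infty$ and the integrand is a fixed integrable function. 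Nothing in your proposal produces this backward-in-time branch.

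The second gap is in your forward case. The regime $t_n-\rho_n\to+\infty$ that you single out as ``the main obstacle'' is not an obstacle once the quantifiers are ordered correctly, and your proposed diagonal/Huygens scheme does not close. The expansion furnished by Proposition \ref{Prop:Perturb} is valid on all of $[-t_n,+\infty)$ with a remainder small \emph{uniformly in time}, so the lower bound $\int_{\rho_n+t}^{\infty}\left|r\partial_{r,t}\big(w(t_n+t,r)-w_{\lin}(t_n+t,r)\big)\right|^m dr\gtrsim\eta$ holds for one fixed large $n$ and \emph{all} $t\geq 0$ simultaneously. One then fixes that $n$, sets $A:=t_n-\rho_n$ (a finite constant once $n$ is fixed, however large), and lets $t\to\infty$; since Proposition \ref{Prop:FreeWave} holds for every fixed $A$, this is an immediate contradiction, with no case distinction on $\rho_n$ versus $t_n$. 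Your scheme instead fixes $s$ and sends $n\to\infty$, which forces the diagonal choice $s=s_n\to\infty$; but there is no reason the wave front of $W_n^{j_0}$ at time $s_n$ should lie inside $\{|x|>t_n+s_n-A_n\}$ for slowly growing $A_n$. For instance, if $\lambda_{j_0,n}\equiv 1$ and $t_{j_0,n}\equiv 0$, the front sits at $|x|\approx s_n+O(1)$, so you would need $A_n\gtrsim t_n$, far beyond what any diagonal extraction from Proposition \ref{Prop:FreeWave} can deliver. The step you yourself flag as ``the most delicate'' is in fact the one that fails.
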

The statement (\ref{Eqn:profile_Spec}) means:
$$ \vw(t_n)\simP \sum_{j\geq 0} \BA{W_{\lin,n}^{j}}(0),\quad |x|>\rho_n,$$
where  $W^0_{\lin}=w_{\lin}$, $t_{0,n}=t_n$ and $\lambda_{0,n}=1$ for all $n$.

\subsection{Scattering}
\label{Subsec:scat}

We first assume Propositions \ref{Prop:FreeWave} and \ref{Prop:ProfileDecompSpec} and prove Theorem \ref{Thm:Main} for positive times, when $T_+(u)=+\infty$. The proof relies on the following lemma:
%We only deal with positive times., i.e. when $T_{+}(u)= \infty$ and (\ref{Eqn:Blowuptype1} does not hold: the result for negative times follows by considering the solution $(t,x)\mapsto u(-t,x)$ of (\ref{Eqn:WaveSup}. We start with a lemma.
\begin{lem}
\label{Lem:Cases}
Let $w$ be a solution of (\ref{Eqn:WaveSup}) such that $T_+(w)=+\infty$, that does not scatter forward in time, and such that (\ref{Eqn:Blowuptype1}) does not hold. Let $w_{\lin}$ be given by Proposition \ref{Prop:FreeWave}.  Then, replacing $w$ by $-w$ if necessary, there exists a sequence of times $\{t_n\}_n\to\infty$, a sequence $\{\rho_n\}_n$ of positive numbers,
such that
\begin{equation}
\label{Eqn:profile_cases}
\vw(t_n)\simP \BA{w_{\lin}}(t_{n}) + \sum_{j\geq 1} \BA{W_{\lin,n}^{j}}(0),\quad |x|>\rho_n
\end{equation}
and one of the following holds
\begin{itemize}
 \item\emph{Case 1.} For all $j\geq 1$, $W^j$ scatters in both time directions and there exists $\eta>0$, $j_0\geq 1$ such that the following holds for all $t\geq 0$ or for all $t\leq 0$:
\begin{equation}
 \label{Eqn:A2}
 \int_{\rho_n+|t|}^{\infty} \left|r\partial_rW_n^{j_0}(t,r)\right|^m+\left|r\partial_tW_n^{j_0}(t,r)\right|^m\,dr\geq \eta.
\end{equation}
\item  \emph{Case 2.} For all $j\geq 2$, $W^j$ scatters in both time directions and there exists $\eta>0$, ${j_0}\geq 2$ such that (\ref{Eqn:A2}) holds for all $t\geq 0$ or for all $t\leq 0$.
Furthermore,
\begin{gather*}
\lim_{n\to\infty}-\frac{t_{j_0,n}}{\lambda_{j_0,n}}\in \{\pm\infty\}\\
\forall n,\; t_{1,n}=0; \quad \text{and}\quad (W^1_0,W^1_1)=\left(\TTT_{r_1}Z,0\right)
\end{gather*}
for some $r_1>0$ such that
\begin{equation}
 \label{Eqn:A3'}
 \forall n,\quad \rho_n\geq r_1\lambda_{1,n}.
\end{equation}
\item\emph{Case 3.} For all $j\geq 2$, $W^j$ scatters in both time directions,
$$ \forall n, \quad t_{1,n}=0; \quad \sigma_1(W_0^1,W^1_1)<\infty; \text{ and }\limsup_{n\to \infty}\frac{\rho_n}{\lambda_{1,n}}<\sigma_1(W^1_0,W^1_1).$$
 \end{itemize}
\end{lem}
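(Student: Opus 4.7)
The plan is to extract a profile decomposition of $\vec{w}(t_n)$ along a sequence $t_n\to\infty$ and, after reorganizing and modifying the profiles via Proposition \ref{Prop:DispPropScaling}, to verify that one of the three structural cases must occur. First choose $t_n\to\infty$ with $\|\vec{w}(t_n)\|_{\HHH^{s_c}}$ bounded (available since (\ref{Eqn:Blowuptype1}) fails), and extract a profile decomposition of $\vec{w}(t_n)-\BA{w_{\lin}}(t_n)$ with profiles $\{W^j_{\lin}\}_{j\geq 1}$ and parameters $(\lambda_{j,n},t_{j,n})$. Combined with Proposition \ref{Prop:FreeWave}, this yields the exterior form (\ref{Eqn:profile_cases}) once $w_{\lin}$ itself is viewed as the ``zeroth profile'' (with $(\lambda_{0,n},t_{0,n})=(1,t_n)$). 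If every $W^j_{\lin}$ were zero, then $\vec{w}(t_n)-\BA{w_{\lin}}(t_n)\to 0$ in $\HHH^{s_c}$, and Proposition \ref{Prop:Perturb} (with $w_{\lin}$ as the only scattering profile) would force $w$ to scatter, contradicting the hypothesis; hence at least one $W^j_{\lin}$ is nontrivial.

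Next I would classify the nontrivial profiles. Proposition \ref{Prop:DispPropScaling} applies whenever $\lim_n(-t_{j,n}/\lambda_{j,n})\in\{\pm\infty\}$, or $\sigma_1(\BA{W^j_{\lin}}(0))>S_1$; by Remark \ref{R:Zell} and the remark after Proposition \ref{Prop:NonzeroSol2}, an analogous threshold exists for any $\ell\neq 0$. When it applies, replace $W^j_{\lin}$ by the modified $\tilde W^j_{\lin}$ whose nonlinear profile scatters in both time directions and satisfies an exterior generalized-energy lower bound on $|x|>\rho_{j,n}+|t|$. Call \emph{exceptional} a profile for which no such modification is possible: by (\ref{Eqn:AssParam}), it must have $t_{j,n}=0$ for all $n$, and $\sigma_\ell(\BA{W^j_{\lin}}(0))$ finite and bounded by the corresponding threshold for some $\ell\neq 0$. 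After reindexing, replacing $w$ by $-w$ (to handle $\ell<0$), and rescaling via Remark \ref{R:Zell} (to absorb $|\ell|$ into $\lambda_{1,n}$), we may assume $\ell=1$. Pseudo-orthogonality (Proposition \ref{Prop:Orth}) together with the boundedness of $\|\vec{w}(t_n)\|_{\HHH^{s_c}}$ implies that only finitely many profiles can carry non-negligible exterior generalized energy, so a finite reindexing places the essential ones first.

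Three mutually exclusive scenarios then arise. If no essential profile is exceptional, take $\rho_n$ larger than all the $\rho_{j,n}$ from the modified profiles and pick $j_0$ carrying nonzero exterior generalized energy (furnished by Proposition \ref{Prop:DispPropScaling} applied to any nontrivial good profile): this yields Case 1. If some essential profile is exceptional, put it at index $j=1$, so $t_{1,n}=0$ and $\BA{W^1_{\lin}}(0)$ coincides with $(Z_1,0)$ on $|x|>\sigma_1$ where $\sigma_1:=\sigma_1(\BA{W^1_{\lin}}(0))<\infty$. If in fact $\BA{W^1_{\lin}}(0)=(\TTT_{r_1}Z_1,0)$ for some $r_1\in(R_1,\sigma_1]$, choose $\rho_n\geq r_1\lambda_{1,n}$ (outside this radius the profile is a rescaled truncated stationary solution) and handle $j\geq 2$ as in Case 1, producing a good index $j_0\geq 2$: this is Case 2. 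Otherwise take $\rho_n$ slightly below $\sigma_1\lambda_{1,n}$: this is Case 3.

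The main obstacle is verifying that (\ref{Eqn:profile_cases}) still holds outside $\{|x|>\rho_n\}$ once each applicable $W^j_{\lin}$ has been replaced by $\tilde W^j_{\lin}$. Since the modification in Proposition \ref{Prop:DispPropScaling} agrees with $W^j_{\lin}$ precisely on $|x|>\rho_{j,n}$ (see Remark \ref{Rem:alphajn}), the single choice $\rho_n\gtrsim\max_j\rho_{j,n}$ must be compatible with the pseudo-orthogonality conditions and with the remainder going to $0$ in $L^{4m}_tL^{4m}_x$ outside $B_{\rho_n}$. Achieving this rests on Proposition \ref{Prop:Orth} (together with Lemmas \ref{Lem:pseudo_orth1} and \ref{Lem:pseudo_orth2}) to transfer orthogonality to the modified profiles, and on the strong Huygens principle (Proposition \ref{Prop:Huyg}) to discard profiles whose support would be inconsistent with the chosen $\rho_n$.
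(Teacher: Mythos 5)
Your overall skeleton is right (extract a profile decomposition along a bounded sequence $t_n\to\infty$, note at least one profile is nonzero, replace the ``good'' profiles by scattering modifications via Proposition \ref{Prop:DispPropScaling}, and treat separately the profiles with $t_{j,n}\equiv 0$ and $\sigma_\ell$ finite), but the case analysis -- which is the actual content of the lemma -- is organized by the wrong criteria, and as written it does not produce the three cases. The paper's proof hinges on ordering the large profiles by their concentration radius $\alpha_{j,n}$ (equal to $\lambda_{j,n}$ if $t_{j,n}\equiv0$, to $|t_{j,n}|$ otherwise), singling out the set $\III$ of \emph{outermost} profiles, and splitting according to whether $\III$ meets the set $\JJJ$ of profiles with $t_{j,n}\equiv0$ and $\sigma_\ell$ finite. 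This is forced on you by two constraints you gloss over. First, (\ref{Eqn:A2}) must hold at the \emph{common} radius $\rho_n$, and since enlarging the lower limit of integration destroys the lower bound, $j_0$ must be (one of) the outermost profiles and $\rho_n$ must equal $\rho_{j_0,n}=\max_j\rho_{j,n}$; ``pick any $j_0$ carrying exterior energy and take $\rho_n$ larger than all the $\rho_{j,n}$'' does not give (\ref{Eqn:A2}). Second, a profile in $\JJJ$ that is \emph{not} outermost is harmless: outside $\rho_n$ it coincides with the rescaled $(\TTT_{1+\rho_Z}Z_{\pm1},0)$, whose nonlinear evolution scatters by Lemma \ref{Lem:zR}, so one still lands in Case 1. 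Your dichotomy ``an exceptional profile exists $\Rightarrow$ Case 2 or 3'' is therefore incorrect, and conversely, when the exceptional profile \emph{is} outermost, your routing to Case 2 or 3 can fail (e.g.\ Case 2 requires some $j_0\ge2$ with exterior energy at radius $\rho_n\ge\sigma_1\lambda_{1,n}$, which need not exist if all other profiles concentrate at radii $\ll\lambda_{1,n}$).

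The sub-dichotomy you propose between Cases 2 and 3 (whether $\BA{W^1_{\lin}}(0)$ equals $(\TTT_{r_1}Z_1,0)$ exactly) is also not the right one. The correct criterion is whether $\limsup_n\rho_{2,n}/\lambda_{1,n}\ge\sigma_1(W^1_0,W^1_1)$, where $\rho_{2,n}=\max_{j\ge2}\rho_{j,n}$: if yes, one sets $\rho_n=\rho_{2,n}$, replaces $W^1_{\lin}$ outside $\rho_n$ by $(\TTT_{\sigma_1}Z_1,0)$ (after the rescaling $\tlambda_{1,n}=\rho_{2,n}/\sigma_1$ to get (\ref{Eqn:A3'}) for \emph{all} $n$, not just asymptotically), and checks via the pseudo-orthogonality condition (\ref{Eqn:PseudoOrth}) that the outermost index $j_0=2$ must satisfy $-t_{j_0,n}/\lambda_{j_0,n}\to\pm\infty$ (two outermost profiles cannot both have $t_{j,n}\equiv0$); if no, one is in Case 3 with $\rho_n=\rho_{2,n}$. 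With your choice ``$\rho_n$ slightly below $\sigma_1\lambda_{1,n}$'' in Case 3, the modified profiles $\tW^j_{\lin}$, $j\ge2$, only agree with $W^j_{\lin}$ for $|x|>\rho_{j,n}$, which may be much larger than your $\rho_n$, so (\ref{Eqn:profile_cases}) itself can fail. In short, the missing idea is the geometric ordering of profiles ($\III$, the radii $\alpha_{j,n}$, and the comparison of $\rho_{2,n}$ with $\sigma_1\lambda_{1,n}$), without which none of the three cases can be verified.
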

\label{after_cases}

(The notation $\sigma_1$ is defined at the beginning of Section \ref{Sec:channels}).
Let us postpone the proof of Lemma \ref{Lem:Cases} to the end of this subsection. Assuming Lemma \ref{Lem:Cases}, we obtain a contradiction in each of the three cases as follows.

\medskip

\emph{Case 1.} In this case, the assumptions of Proposition \ref{Prop:ProfileDecompSpec} are satisfied which immediately gives a contradiction.

\medskip

It remains to deal with  Cases 2 and 3. We will prove that in both cases, we can reduce to Case 1 along the sequence $\{t_n\}_n$ or another sequence of times.

\medskip

\emph{Case 2.} We first notice that if $r_1>\rho_Z$, where $\rho_Z$ is defined in Lemma \ref{Lem:zR}, then we are still in the setting of Proposition \ref{Prop:ProfileDecompSpec}. Indeed, by the definition of $\rho_Z$, $W^1=z_{r_1}$ scatters in both time directions. We will reduce to this setting by proving the following:
\begin{res}
 \label{Res:A2}
 \begin{itemize}
 \item Assume:
 \begin{equation}
 \label{Eqn:A4}
 \lim_{n\to\infty} \frac{-t_{j_0,n}}{\lambda_{j_0,n}}=+\infty.
\end{equation}
 Let $\theta_{r_1}$ be given by Lemma \ref{Lem:zR}, $\tilde{t}_n :=t_n+\lambda_{1,n}\frac{\theta_{r_1}}{2}$, $\tilde{r}_1:=r_1+\frac{\theta_{r_1}}{2}$, $\trho_n:=\rho_n+\lambda_{1,n}\frac{\theta_{r_1}}{2}$. Then %$\tilde{t}_n>T_-(w)$ for large $n$, and
 the sequence $\left\{\vw(\tilde{t}_n)\right\}_n$ is bounded in $\HHH^{s_c}$ and satisfies (after extraction) the conclusion of Case 2 of Lemma \ref{Lem:Cases}, with $r_1$ and $\rho_n$ replaced by $\tilde{r}_1$ and $\tilde{\rho}_n$.
\item Assume:
 \begin{equation}
 \label{Eqn:A4'}
 \lim_{n\to\infty} \frac{-t_{j_0,n}}{\lambda_{j_0,n}}=-\infty.
\end{equation}
If $\tilde{t}_n := t_n-\lambda_{1,n}\frac{\theta_{r_1}}{2}$, then $\tilde{t}_{n} > T_{-}(w)$ for $n \gg 1$ and the same conclusion holds with the same  $\tilde{r}_1$ and  $\trho_n$ defined above.
\end{itemize}
 \end{res}
It is easy to see that we can assume $r_1>\rho_Z$ after a finite number of iterations of Result \ref{Res:A2}. To conclude this case, it thus suffices to prove Result \ref{Res:A2}.
\begin{proof}[Proof of Result \ref{Res:A2}]
 We note that since $\sigma_{1}(W_0^1,W_1^1)$ is finite, $(W_0^1,W_1^1)$ is not compactly supported. Since for all $n$, $t_{1,n}=0$, we can use Proposition \ref{Prop:Orth} and finite speed of propagation to prove that
 \begin{equation}
 \label{Eqn:lambda1n_tn}
 \lim_{n\to\infty} \frac{\lambda_{1,n}}{t_{n}}=0.
 \end{equation}
This shows that $\tilde{t}_n\sim t_n$ as $n\to\infty$, and thus that $\tilde{t}_n$ is in both cases in the domain of existence of $w$ for large $n$.

We prove Result \ref{Res:A2} in the case where (\ref{Eqn:A4}) holds. The proof is the same when (\ref{Eqn:A4'}) holds. Note that by (\ref{Eqn:A4}), we can assume that (\ref{Eqn:A2}) must hold for all $t\geq 0$. Indeed, assume on the contrary that (\ref{Eqn:A2}) holds for all $t\leq 0$. Letting $t=t_{j_0,n}$ in (\ref{Eqn:A2}), we obtain
$$ \eta\leq \int_{\rho_n+|t_{j_0,n}|}^{+\infty} |r\partial_{r,t} W_n^{j_0}(t_{j_0,n},r)|^m\,dr=\int_{\frac{\rho_n+|t_{j_0,n}|}{\lambda_{j_0,n}}}^{+\infty} |r\partial_{r,t} W^{j_0}(0,r)|^m\,dr,$$
which goes to $0$ as $n$ goes to infinity by (\ref{Eqn:A4'}). This is a contradiction.

By the definition of $\theta_{r_1}$, $W^1=z_{r_1}$ is defined on $\left[0,\frac{\theta_{r_1}}{2}\right]$. Since all the nonlinear profiles $W^j$, $j\geq 2$ scatter in both time directions, we can apply  Proposition \ref{Prop:Perturb_outside} to the sequence $\left\{\vw(t_n)\right\}_n$ at the time $\theta_n=\frac{\theta_{r_1}}{2}\lambda_{1,n}$. After extraction of subsequences,
\begin{equation}
 \label{Eqn:A5}
\vec{w}\left(t_n+\frac{\lambda_{1,n}\theta_{r_1}}{2}\right)
\simP \BA{w_{\lin}}\left( t_n+\frac{\lambda_{1,n}\theta_{r_1}}{2} \right)+ \sum_{j \geq 1} \BA{V^j_{\lin,n}}(0),\quad |x|>\trho_n,
\end{equation}
where, for $j\geq 1$,
$$V_{\lin,n}^j(t,x):=\frac{1}{\lambda_{j,n}^{\frac{2}{p-1}}}V^j_{\lin}\left( \frac{t-s_{j,n}}{\lambda_{j,n}},\frac{x}{\lambda_{j,n}} \right),\quad s_{j,n}:=t_{j,n}-\frac{\lambda_{1,n}\theta_{r_1}}{2}$$
and $V_{\lin}^j$ is the only solution of the linear wave equation that satisfies
\begin{equation*}
\lim_{n\to\infty}\left\|\BA{W^j}\left(\frac{-s_{j,n}}{\lambda_{j,n}}\right)-\BA{V^j_{\lin}}\left(\frac{-s_{j,n}}{\lambda_{j,n}}\right)\right\|_{\HHH^{s_c}}=0.
\end{equation*}
By (\ref{Eqn:A4}), the nonlinear profile associated to $V^{j_0}_{\lin}$ and $ \left\{ (s_{{j_0},n},\lambda_{{j_0},n}) \right\}_{n \in \mathbb{N}}$
is exactly $W^{j_0}$. Denoting by
$$V^{j_0}_n(t,x)=\frac{1}{\lambda_{{j_0},n}^{\frac{2}{p-1}}}W^{j_0}\left(\frac{t-s_{{j_0},n}}{\lambda_{{j_0},n}},\frac{x}{\lambda_{{j_0},n}}\right),$$
we have, for all $t\geq 0$,
\begin{multline*}
 \int_{\trho_n+t}^{+\infty} \left|r\partial_rV_n^{j_0}(t,r)\right|^m+\left|r\partial_tV_n^{j_0}(t,r)\right|^m\,dr\\
 =\int_{\trho_n+t}^{+\infty} \left|r\partial_rW_n^{j_0}\left(t+\frac{\theta_{r_1}\lambda_{1,n}}{2},r\right)\right|^m+\left|r\partial_tW_n^{j_0}\left(t+\frac{\theta_{r_1}\lambda_{1,n}}{2},r\right)\right|^m\,dr\geq \eta>0
\end{multline*}
by (\ref{Eqn:A2}), which proves that the analog of (\ref{Eqn:A2}) is satisfied. Using finite speed of propagation, and since $\trho_n\geq \lambda_{1,n}\tilde{r}_1$ for large $n$, and $(W_0^1,W_1^1)=(\TTT_{r_1}Z_1,0)$, we can replace in (\ref{Eqn:A5}) the profile of the decomposition corresponding to $j=1$ by the profile with initial data $\left(\TTT_{\tilde{r}_1}Z_1,0\right)$, which concludes the proof of Result \ref{Res:A2} and thus of Case 2.
\end{proof}

\noindent\emph{Case 3.} First assume that
\begin{equation}
\label{Eqn:Case_3a}
\sigma_1(W^1_0,W^1_1)>S_1,
\end{equation}
where $S_1$ is given by Proposition \ref{Prop:NonzeroSol2} (\ref{I:channel}).
We fix $R$ close to $\sigma_1\left( W^1_0,W^1_1 \right)$ and such that
$$\max\left(\limsup_{n}\frac{\rho_n}{\lambda_{1,n}},S_1\right)<R<\sigma_1\left( W^1_0,W^1_1 \right).$$
By Proposition \ref{Prop:NonzeroSol2} (\ref{I:channel}), if $R$ is close enough to $\sigma_1\left( W^1_0,W^1_1  \right)$, there exists $\eta>0$ and a scattering solution
$\tW^1$ of (\ref{Eqn:WaveSup}), with initial data $\BA{\tW_0^1}:=(\tW^1_0,\tW^1_1)$ such that
\begin{equation}
\label{Eqn:A10}
\forall r\geq R,\quad \BA{\tW_0^1}(r)=\left(W^1_0,W^1_1\right)(r),
\end{equation}
and the following holds for all $t\geq 0$ or for all $t\leq 0$.
\begin{equation}
 \label{Eqn:A9}
 \int_{R+|t|}^{+\infty} \left|r\partial_r\tW^1(t,r)\right|^m+\left|r\partial_t\tW^1(t,r)\right|^m\,dr\geq \eta.
\end{equation}
By (\ref{Eqn:profile_cases}), (\ref{Eqn:A10}) and since, by our choice of $R$, $R\lambda_{1,n}>\rho_n$ for large $n$, we see that
\begin{multline*}
 \vw(t_n)
 \simP\vw_{\lin}(t_n)+\left(\frac{1}{\lambda_{1,n}^{\frac{2}{p-1}}}\tW_0^1,\frac{1}{\lambda_{1,n}^{\frac{2}{p-1}+1}}\tW^1_1\right)\left(\frac{\cdot}{\lambda_{1,n}}\right)+\sum_{j\geq 2}\BA{W_{\lin,n}^j}(0),\quad |x|>R\lambda_{1,n}.
\end{multline*}
Combining with (\ref{Eqn:A9}), we see that the assumptions of Proposition \ref{Prop:ProfileDecompSpec} are satisfied, concluding the proof when (\ref{Eqn:Case_3a}) holds.

We next prove that we can always reduce to the case where (\ref{Eqn:Case_3a}) holds, by changing the sequence of times $\{t_n\}_n$. By Proposition \ref{Prop:NonzeroSol2} (\ref{I:rhol}), there exists $\theta>0$, $R>0$ close to $\sigma_{1}(W_{0}^{1},W_{1}^{1})$ such that
\begin{equation}
 \label{Eqn:choice_R}
\limsup_n\frac{\rho_n}{\lambda_{1,n}}<R<\sigma_1(W^1_0,W^1_1),
\end{equation}
and a solution $\check{W}^1$, defined for $t\in [-\theta,\theta]$, with initial data $\BA{\check{W}^1_0}:=(\check{W}^1_0,\check{W}^1_1)$, and such that
$$\BA{\check{W}^1_0}(r)=(W^1_0(r),W_1^1(r)),\quad r>R$$
and the following holds for all $t\in [0,\theta]$ or for all $t\in [-\theta,0]$:
\begin{equation}
\label{Eqn:checkW}
\sigma_1\left(\BA{\check{W}^1}(t)\right)=\sigma_1\left(W^1_0,W^1_1\right)+|t|.
\end{equation}
We will prove:
\begin{res}
 \label{Res:A3}
 Assume that (\ref{Eqn:checkW}) holds for all $t\in [0,\theta]$ (respectively for all $t\in [-\theta,0]$). Let $\tilde{t}_n=t_n+\lambda_{1,n}\frac{\theta}{2}$ (respectively $\tilde{t}_n=t_n-\lambda_{1,n}\frac{\theta}{2}$). Then for large $n$, $\tilde{t}_n$ is in the maximal interval of existence of $w$, the sequence $\left\{\vec{w}(\tilde{t}_n)\right\}_n$ is bounded in $\HHH^{s_c}$ and has a profile decomposition $\left\{ \tW^j_{\lin},\left\{ (\tilde{t}_{j,n}, \lambda_{j,n} ) \right\}_{n \in \mathbb{N}} \right\}_{j\geq 1}$ that satisfies the conclusions of Case 3 of Lemma \ref{Lem:Cases}, with
 $\sigma_1\left(W_0^1,W_1^1\right)$ and $\rho_n$ replaced by $\sigma_1\left(\tW_{0}^1,\tW_1^1\right):=
 \sigma_1\left(W_0^1,W_1^1\right)+\frac{\theta}{2}$ and
 $\tilde{\rho}_n := R \lambda_{1,n} + \frac{\lambda_{1,n} \theta}{2}$ respectively.
\end{res}
Iterating Result \ref{Res:A3}, and using the bound from below (\ref{Eqn:bound_below}) of $\theta$ in Proposition \ref{Prop:NonzeroSol2}, we see that we can reduce to the case where (\ref{Eqn:Case_3a}) is satisfied. It remains to prove Result \ref{Res:A3}.
\begin{proof}[Proof of Result \ref{Res:A3}]
The proof is quite similar to the proof of Result \ref{Res:A2}. The fact that $\tilde{t}_n$ is in the domain of existence of $w$ for large $n$ follows from (\ref{Eqn:lambda1n_tn}).

Since $R\lambda_{1,n}>\rho_n$ for $n\gg 1$, we deduce from (\ref{Eqn:profile_cases}):
\begin{equation}
\label{Eqn:profiles_result}
\vw(t_n)\simP \BA{w_{\lin}}(t_{n}) + \BA{\check{W}_n^1}(0)+ \sum_{j\geq 2} \BA{W_{\lin,n}^{j}}(0),\quad |x|>R\lambda_{1,n}
\end{equation}
where
$$\check{W}_n^1(t,x)=\frac{1}{\lambda_{1,n}^{\frac{2}{p-1}}}\check{W}^1\left( \frac{t}{\lambda_{1,n}},\frac{x}{\lambda_{1,n}} \right).$$
Assume that (\ref{Eqn:checkW}) holds for $t\in [0,\theta]$ to fix ideas. Using (\ref{Eqn:profiles_result}), Proposition \ref{Prop:Perturb_outside}, and recalling that $\tilde{t}_n=t_n+\frac{\lambda_{1,n}\theta}{2}$, we get
\begin{equation}
\label{Eqn:Prof_cw}
\vec{w}\left(\tilde{t}_n\right)\simP
\BA{w_{\lin}} \left(\tilde{t}_n\right) + \BA{\check{V}^1_{\lin,n}} \left(0\right)+\sum_{j\geq 2} \BA{V_{\lin,n}^j } \left(0\right),\quad |x|> \trho_n,
\end{equation}
where the modulated linear profiles $\check{V}^1_{\lin,n}$ and $V^j_{\lin,n}$ are as in Remark \ref{Rem:Perturb}.

Note that $\check{V}^1_{\lin}$ is the solution of the linear wave equation with initial data $\BA{\check{W}^1}(\frac{\theta}{2})$ that satisfies, by (\ref{Eqn:checkW}),
\begin{equation}
\label{Eqn:sigma1}
\sigma_1\Big(\BA{\check{W}^1}(\theta/2)\Big)=\sigma_1(W_0^1,W_1^1)+\frac{\theta}{2}.
\end{equation}
Note also that
$$\limsup_{n\to\infty} \frac{\tilde{\rho}_n}{\lambda_{1,n}}=R+\frac{\theta}{2}<\sigma_1 \Big(\BA{\check{W}^1}(\theta/2)\Big)$$
by (\ref{Eqn:sigma1}) and the choice (\ref{Eqn:choice_R}) of $R$.
Finally, we see that the assumptions of Case 3 are satisfied, which concludes the proof of Result \ref{Res:A3}.
\end{proof}

We have proved Theorem \ref{Thm:Main} in the global case assuming Lemma \ref{Lem:Cases}.  It remains to prove the lemma.
\begin{proof}[Proof of Lemma \ref{Lem:Cases}]
 Let $t_n\to+\infty$ such that $\vw(t_n)$ is bounded in $\HHH^{s_c}$. Extracting subsequences, we can assume that
 $\vw(t_n)$ has a profile decomposition
 \begin{equation}
 \label{Eqn:profiles_wtn}
 \vw(t_n)\simP \sum_{j\geq 1} \BA{W_{\lin,n}^j}(0).
 \end{equation}
 If all the profiles are zero, then the solution $w$ scatters, contradicting our assumptions. Thus at least one profile, says $W^1_{\lin}$ is nonzero.

Let $\epsilon>0$ be a small number such that $\epsilon<\left\|\BA{W_{\lin}^1}(0)\right\|_{\HHH^{s_c}}$, and such that any solution of (\ref{Eqn:WaveSup}) with initial data $<\epsilon$ in $\HHH^{s_c}$ is globally defined and scatters in both time directions.

We reorder the profiles so that there exists $J_0\geq 1$ such that
\begin{equation}
\left\{
\begin{aligned}
 \forall j\in \{1,\ldots,J_0\},&\quad \left\|\BA{W_{\lin}^j}(0)\right\|_{\HHH^{s_c}}\geq \epsilon\\
 \forall j\geq J_0+1,&\quad \left\|\BA{W_{\lin}^j}(0) \right\|_{\HHH^{s_c}}<\epsilon.
\end{aligned}\right.
\label{Eqn:DefJ0}
\end{equation}
We next define two subsets $\III$ and $\JJJ$ of $\{1,\ldots,J_0\}$.

\medskip

\noindent\emph{Definition of $\III$.} Recall that we can assume that the parameters $t_{j,n}$ and $\lambda_{j,n})$ satisfy (\ref{Eqn:AssParam}). If $j\in \{1,\ldots,J_0\}$, we let
\begin{equation*}
 \begin{cases}
\alpha_{j,n}=\lambda_{j,n}&\text{ if }\forall n,\;t_{j,n}=0\\
\alpha_{j,n}=|t_{j,n}|&\text{ if }\lim_n \frac{-t_{j,n}}{\lambda_{j,n}}=\pm\infty,
 \end{cases}
\end{equation*}
Then $W_{\lin,n}^j$ is essentially localized close to $\{r=\alpha_{j,n}\}$ (see Proposition \ref{Prop:Huyg}).

We define $\III\subset\{1,\ldots,J_0\}$ as the set of indices corresponding to the most exterior profiles: if $j\in \{1,\ldots,J_0\}$
\begin{equation}
 \label{Eqn:defIII}
 j\in \III\iff \forall k\in \{1,\ldots,J_0\},\quad \alpha_{k,n}\lesssim \alpha_{j,n}.
\end{equation}
Extracting subsequences, we can always assume that $\III$ is not empty, and that the following holds:
\begin{equation}
 \label{Eqn:PropIII}
 \forall j\in \III,\;\forall k\in\{1,\ldots,J_0\}\setminus \III,\quad \lim_{n\to\infty} \frac{\alpha_{k,n}}{\alpha_{j,n}}=0.
\end{equation}

\medskip

\noindent\emph{Definition of $\JJJ$.} Recall from the beginning of Section \ref{Sec:channels} the definition of $\sigma_{\ell}$.   We define $\JJJ$ as the set of indices $j\in \{1,\ldots,J_0\}$ such that $t_{j,n}=0$ for all $n$ and there exists $\ell\in \RR\setminus\{0\}$ such that
$\sigma_{\ell}(\BA{W_{\lin}^j}(0))
< \infty$. If $j\in \JJJ$ we can assume, rescaling $W^j_{\lin}$ if necessary
\begin{equation}
 \label{Eqn:defJJJ}
 \sigma_{1}\left(\BA{W^j_{\lin}}(0)\right)<\infty \quad \text{or} \quad \sigma_{-1}\left( \BA{W_{\lin}^j}(0) \right)<\infty.
\end{equation}
We distinguish three cases.

\medskip

\noindent\emph{Case 1. $\III\cap \JJJ=\emptyset$.} If $j\in\{1,\ldots,J_0\}\setminus \JJJ$, we let $\tW^j_{\lin}$ and $\{\rho_{j,n}\}_n$ be given by Proposition \ref{Prop:DispPropScaling}. Note that by Remark \ref{Rem:alphajn},
\begin{equation}
\label{Eqn:equiv_alphajn}
\rho_{j,n}\approx \alpha_{j,n}\text{ as }n\to\infty.
\end{equation}
If $j\in \JJJ$, we let $\tW^j_{\lin}$ be the solution of the linear wave equation with initial data
$$(\tW^j_0,\tW^j_1)=(\TTT_{1+\rho_Z}Z_{\pm 1},0),$$
where the sign $+$ or $-$ is the same as in (\ref{Eqn:defJJJ}), and $\rho_Z$ is defined in Lemma \ref{Lem:zR}.

Extracting subsequences, rescaling the profiles, and reordering them if necessary, we can assume, in view of (\ref{Eqn:defIII}), (\ref{Eqn:PropIII}) and (\ref{Eqn:equiv_alphajn}),
\begin{equation}
\label{Eqn:rho1_grd}
1\in \III,\quad \forall j\in \{1,\ldots, J_0\}\setminus \JJJ,\; \forall n,\; \rho_{1,n}\geq \rho_{j,n}.
\end{equation}
Let $\rho_n=\rho_{1,n}$. By (\ref{Eqn:rho1_grd}) and the definition of $\tW^j$, we have
$$\big(j\in \{1,\ldots,J_0\}\setminus\JJJ\text{ and }r\geq \rho_n\big)\Longrightarrow \BA{W_{\lin,n}^j}(0,r)=\BA{\tW_{\lin,n}^j}(0,r).$$
By (\ref{Eqn:PropIII}) and the assumption $\III\cap \JJJ=\emptyset$,
$$\forall j\in \JJJ,\quad \lim_{n\to\infty}\frac{\lambda_{j,n}}{\rho_n}=\lim_{n\to\infty}\frac{\alpha_{j,n}}{\rho_n}=0.$$
Thus for large $n$, if $j\in \JJJ$,
$$\rho_n\geq \lambda_{j,n}\max\left( 1+\rho_{Z},\sigma_{\pm 1}\left( W^j_0,W^j_1 \right) \right),$$
where the sign in $\sigma_{\pm 1}$ is again given by (\ref{Eqn:defJJJ}).
We thus obtain
$$\big(j\in \JJJ\text{ and }r\geq \rho_n\big)\Longrightarrow \BA{W_{\lin,n}^j}(0,r)=\BA{\tW_{\lin,n}^j}(0,r).$$
We can thus rewrite (\ref{Eqn:profiles_wtn}) as
$$\vec{w}(t_n)\simP \BA{w_{\lin}}(t_n)+\sum_{j=1}^{J_0} \BA{\tW^j_{\lin,n}}(0)+\sum_{j\geq J_0+1} \BA{W^{j}_{\lin,n}}(0),\quad |x|>\rho_n,$$
and the assumptions of Case 1 of Lemma \ref{Lem:Cases} are satisfied with $j_0=1$.
\\
\\
It remains to treat the other cases, i.e. when $\III\cap \JJJ$ is not empty. Since for $j\in \JJJ$, $t_{j,n}=0$ for all $n$, and thus $\alpha_{j,n}=\lambda_{j,n}$, (\ref{Eqn:defIII}) and the pseudo-orthogonality property (\ref{Eqn:PseudoOrth}) imply that $\III\cap\JJJ$ has only one element. Reordering and replacing $w$ with $-w$ if necessary, we can assume $\III\cap\JJJ=\{1\}$ and $\sigma_1(W_0^1,W_1^1) < \infty$. Using again (\ref{Eqn:PseudoOrth}), (\ref{Eqn:AssParam}) and (\ref{Eqn:defIII}), we also have:
\begin{equation}
 \label{Eqn:Case23a}
j\in \III\setminus\{1\}\Longrightarrow \lim_{n\to\infty} \frac{-t_{j,n}}{\lambda_{j,n}}\in \{\pm \infty\}.
 \end{equation}
 For $j=2\ldots J_0$, we let $\tW^j$ and $\rho_{j,n}$ be given by Proposition \ref{Prop:DispPropScaling}.
Reordering and extracting subsequences, we can assume that % $2\in \III$ and
\begin{equation}
\label{Eqn:Case23b}
\forall j\in \{2,\ldots J_0\},\; \forall n,\; \rho_{2,n}\geq \rho_{j,n}.
\end{equation}
%\vspace{1mm}
\medskip

\noindent\emph{Case 2. } In this case we also assume that

$$ \limsup_{n\to\infty} \frac{\rho_{2,n}}{\lambda_{1,n}}  \geq \sigma_1\left( W^1_0,W_1^1\right).$$
Hence, after extraction of a subsequence,
\begin{equation}
\label{Eqn:ass_Case2}
\lim_{n\to\infty} \frac{\rho_{2,n}}{\lambda_{1,n}}\geq \sigma_1\left( W_0^1,W_1^1\right).
\end{equation}

We first make a slightly stronger assumption than (\ref{Eqn:ass_Case2}):
\begin{equation}
 \label{Eqn:good}
 \forall n,\quad \rho_{2,n}\geq \lambda_{1,n}\sigma_1\left( W_0^1,W_1^1\right).
\end{equation}
Let $r_1=\sigma_1\left( W_0^1,W_1^1\right)$. By (\ref{Eqn:good}), (\ref{Eqn:A3'}) is satisfied with $\rho_n=\rho_{2,n}$. We let
$$\left( \tW_0^1,\tW_1^1\right)=\left( T_{r_1}Z_1,0 \right).$$
By the definition of $r_1$, we have
$$r>r_1\Longrightarrow \left( \tW_0^1,\tW_1^1 \right)(r)=(W_0^1,W_1^1)(r).$$
For $j=1\ldots J_0$, we define
\begin{equation*}
 \tW^j_{\lin,n}(t,x)=\frac{1}{\lambda_{j,n}^{\frac{2}{p-1}}} \tW^j_{\lin}\left( \frac{t-t_{j,n}}{\lambda_{j,n}},\frac{x}{\lambda_{j,n}} \right)
\end{equation*}
(recall that $\tW^{1}_L$ is the solution of the linear wave equation with initial data $(\tW_0^1, \tW_1^1)$ and that for $j=2 \ldots J_0$,
$\tW_{L}^j$ is given by Proposition \ref{Prop:DispPropScaling}).
By (\ref{Eqn:Case23b}) and (\ref{Eqn:A3'}),
$$\vec{w}(t_n)\simP \BA{w_{\lin}}(t_n)+\sum_{j=1}^{J_0} \BA{\tW_{\lin,n}^j}(0)
+ \sum_{j\geq J_0+1} \BA{W_{\lin,n}^j}(0),\quad |x|>\rho_n.$$
By the definition of $\rho_n=\rho_{2,n}$, $\tW^2$ satisfies (\ref{Eqn:A2}). Thus all the assumptions of Case 2 of Lemma \ref{Lem:Cases} are satisfied.

It remains to treat the general case (i.e (\ref{Eqn:good}) is not satisfied). %, that is, after extraction of a subsequence,
%\begin{equation}
% \label{Eqn:bad}
% \forall n,\quad \rho_{2,n}<\lambda_{1,n}\sigma_1\left(W^1_0,W_1^1\right).
%\end{equation}
By (\ref{Eqn:ass_Case2}), after extraction of a subsequence
\begin{equation}
 \label{Eqn:bad2}
 \lim_{n\to\infty}\frac{\rho_{2,n}}{\lambda_{1,n}}=\sigma:=\sigma_1\left( W^1_0,W_1^1\right).
\end{equation}
Let $ \tlambda_{1,n}=\frac{\rho_{2,n}}{\sigma}.$ Then by (\ref{Eqn:bad2}), $\lim_n\tlambda_{1,n}/\lambda_{1,n}=1$. This implies that
\begin{align*}
 \lim_{n\to\infty} \left\| \frac{1}{\lambda_{1,n}^{\frac{2}{p-1}}}W_0^1\left( \frac{\cdot}{\lambda_{1,n}} \right)-\frac{1}{\tlambda_{1,n}^{\frac{2}{p-1}}}W_0^1\left( \frac{\cdot}{\tlambda_{1,n}} \right)\right\|_{\dot{H}^{s_c}}&=0, \quad \text{and} \\
 \lim_{n\to\infty} \left\| \frac{1}{\lambda_{1,n}^{1+\frac{2}{p-1}}}W_1^1\left( \frac{\cdot}{\lambda_{1,n}} \right)-\frac{1}{\tlambda_{1,n}^{1+\frac{2}{p-1}}}W_1^1\left( \frac{\cdot}{\tlambda_{1,n}} \right)\right\|_{\dot{H}^{s_c-1}}&=0.
 \end{align*}
This proves that $\vw(t_n)$ has a profile decomposition for $|x|>\rho_n$, with profiles $\{w_{\lin}\}\cup \left\{W_{\lin}^j\right\}_{j\geq 1}$ and parameters
$$ \left\{ (1,t_n) \right\}_{n \in \mathbb{N}} \cup \left\{ (\tlambda_{1,n},t_{1,n}) \right\}_{n \in \mathbb{N}} \cup\left\{ \left\{(\lambda_{j,n},t_{j,n})\right\}_{n \in \mathbb{N}} \right\}_{j\geq 2}.$$
Since $\rho_{2,n}=\tlambda_{1,n}\sigma_1\left( W^1_0,W_1^1\right)$, we see that we are reduced to the case where (\ref{Eqn:good}) holds, concluding the proof.

\medskip

\noindent\emph{Case 3.}
In this case we also assume that
$$ \limsup_{n\to\infty} \frac{\rho_{2,n}}{\lambda_{1,n}}<\sigma_1\left( W^1_0,W_1^1\right).$$
Let $\rho_n:=\rho_{2,n}$. Then by (\ref{Eqn:Case23b}),
$$\vec{w}(t_n)\simP \BA{w_{\lin}}(t_n) + \BA{W_{L,n}^{1}}(0) + \sum_{j=2}^{J_0} \BA{\tW_{\lin,n}^j}(0)
+\sum_{j\geq J_0+1} \BA{W_{\lin,n}^j}(0),\quad |x|>\rho_n,$$
and it is easy to check that all the assumptions of Case 3 of Lemma \ref{Lem:Cases} are satisfied.
\end{proof}

\subsection{Existence of the free wave}
\label{Subsec:ProofFreeWave}
We next prove Proposition \ref{Prop:FreeWave}. We start with a preliminary lemma.
\subsubsection{Scattering outside wave cones}
\begin{lem}
\label{Lem:HuygConseq}
Let $w$ be a solution of (\ref{Eqn:WaveSup}) such that $T_+(w)=+\infty$ and (\ref{Eqn:Blowuptype1}) does not hold. Then there exists a sequence $\{s_n\}_n\to+\infty$ such that
$$\limsup_{n\to\infty}\left\|\vec{w}(s_n)\right\|_{\HHH^{s_c}}<\infty,$$
a sequence $\{ (w_{0,n},w_{1,n}) \}_{n \in \mathbb{N}}$, bounded in $\HHH^{s_c}$, and a small $\epsilon>0$ such that for large $n$, the solution $w_n$ of (\ref{Eqn:WaveSup}) with initial data $(w_{0,n},w_{1,n})$ scatters forward in time and
\begin{equation}
 \label{Eqn:G2}
|x| > (1- \epsilon) s_n \Longrightarrow \vec{w}(s_n,x) = (w_{0,n}(x),w_{1,n}(x)).
\end{equation}
\end{lem}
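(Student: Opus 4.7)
The strategy is to truncate $\vec w(s_n)$ to the exterior $\{|x|>(1-\epsilon)s_n\}$ for some fixed small $\epsilon>0$ and appropriate $s_n\to+\infty$, and to prove that the truncated data yields a globally scattering NLW solution via Proposition~\ref{Prop:CauchyPb}. The failure of \eqref{Eqn:Blowuptype1} provides a sequence $\tilde t_n\to+\infty$ with $\|\vec w(\tilde t_n)\|_{\HHH^{s_c}}$ uniformly bounded. Set $s_n:=\tilde t_n$ and define
\[
(w_{0,n},w_{1,n}):=\bigl(\TTT_{(1-\epsilon)s_n}\,w(s_n),\ \mathbf 1_{\{|x|>(1-\epsilon)s_n\}}\,\partial_t w(s_n)\bigr),
\]
which agrees with $\vec w(s_n)$ on $\{|x|>(1-\epsilon)s_n\}$ and is uniformly bounded in $\HHH^{s_c}$ by Results \ref{Res:BoundCharac}--\ref{Res:BoundOp} of the appendix. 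To apply Proposition~\ref{Prop:CauchyPb} it then suffices to show that
\[
\delta_n:=\bigl\|r^{1-\frac{2}{m}}\partial_r w_{0,n}\bigr\|_{L^m}+\bigl\|r^{1-\frac{2}{m}}w_{1,n}\bigr\|_{L^m}\xrightarrow[n\to\infty]{}0
\]
after a further subsequence extraction and, if needed, a shrinking of $\epsilon$.

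To estimate $\delta_n$ I would pass, after extraction, to a profile decomposition $\vec w(s_n)\simP\sum_{j\geq 1}\BA{W^j_{\lin,n}}(0)$ with small Strichartz remainder $\BA{\eps_{0,n}^J}$, and for each large profile $j\leq J_0$ introduce the localization scale $\alpha_{j,n}:=|t_{j,n}|$ when $-t_{j,n}/\lambda_{j,n}\to\pm\infty$ and $\alpha_{j,n}:=\lambda_{j,n}$ otherwise. By Proposition~\ref{Prop:Huyg}, the modulated profile $\BA{W^j_{\lin,n}}(0)$ is concentrated, in the weighted $L^m$ sense appearing in $\delta_n$, on an annulus $\{|x|\asymp\alpha_{j,n}\}$. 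A change of variables $r=\lambda_{j,n}\rho$ gives, using $2m/(p-1)=1$,
\[
\int_{R}^{\infty}r^{m-2}\bigl|\partial_r W^j_{\lin,n}(0,r)\bigr|^m\,dr=\lambda_{j,n}^{-2}\int_{R/\lambda_{j,n}}^{\infty}\rho^{m-2}\bigl|\partial_\rho W^j_{\lin}(-t_{j,n}/\lambda_{j,n},\rho)\bigr|^m\,d\rho,
\]
and similarly for the time derivative. Taking $R=(1-\epsilon)s_n$, each inner integral is bounded by the (conserved) $\HHH^{s_c}$ norm of the profile via Lemma~\ref{Lem:HardyIneq}, so the prefactor $\lambda_{j,n}^{-2}$ produces $o(1)$ whenever $\lambda_{j,n}/s_n\not\to 0$; and when $\lambda_{j,n}/s_n\to 0$, either $\alpha_{j,n}/s_n\to\ell_j<1-\epsilon$, in which case the Huygens localization \eqref{Eqn:Huyg1}--\eqref{Eqn:Huyg2} forces the inner integral itself to vanish, or $\alpha_{j,n}/s_n$ approaches $1$, a borderline case handled by an appropriate choice of $\epsilon$ avoiding the finitely many critical limits. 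Combining these profile estimates with the weighted $L^m$ pseudo-orthogonality of Proposition~\ref{Prop:Orth} and absorbing the small-Strichartz remainder then yields $\delta_n\to 0$.

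Once $\delta_n\leq \delta_2$ for $n\gg 1$, Proposition~\ref{Prop:CauchyPb} gives that $w_n$ is globally defined with finite $L^{4m}_t L^{4m}_x$ norm, hence scatters forward in time by the criterion in Remark~\ref{Rem:ConseqLWP}. The main obstacle in carrying this out is the profile-level decoupling of $\delta_n$: because the relevant norm is $L^m$ with $m>2$, the Pythagorean identity \eqref{Eqn:Pyth} cannot be invoked directly, and one must rely on the sharper weighted $L^m$ orthogonality underlying Proposition~\ref{Prop:Orth}; additional care is needed for profiles whose concentration scale $\alpha_{j,n}$ is comparable to $s_n$, which is precisely why the conclusion of the lemma is stated with a fixed (but possibly very small) $\epsilon>0$ rather than for every $\epsilon>0$.
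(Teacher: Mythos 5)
Your reduction to Proposition \ref{Prop:CauchyPb} hinges on the claim that $\delta_n\to 0$, and this is the step that fails. The exterior region $\{|x|>(1-\epsilon)s_n\}$ contains, for \emph{every} fixed $\epsilon>0$, the outgoing radiation of $w$, whose generalized energy does not vanish. Concretely, in the profile decomposition of $\vec{w}(s_n)$ there are in general profiles with $\lim_n(-t_{j,n}/\lambda_{j,n})=\pm\infty$, $|t_{j,n}|/s_n\to 1$ and $\lambda_{j,n}/s_n\to 0$; by Proposition \ref{Prop:Huyg} such a profile is concentrated in an annulus of width $O(R\lambda_{j,n})=o(s_n)$ centered at $|x|\approx s_n$, which is eventually entirely contained in $\{|x|>(1-\epsilon)s_n\}$ no matter how small $\epsilon$ is. Its full (almost conserved, nonzero) $L^m$-generalized energy therefore survives the truncation, so $\delta_n\not\to 0$; the "appropriate choice of $\epsilon$ avoiding the finitely many critical limits" cannot help, because the offending limit is exactly $1$. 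The simplest illustration: if $w$ itself already scatters, then $\vec{w}(s_n)$ is asymptotically a nonzero free wave concentrated near $|x|=s_n$, the truncated data carries a fixed positive generalized energy, and Proposition \ref{Prop:CauchyPb} is not applicable — even though the conclusion of the lemma is trivially true in that case. Smallness is simply the wrong mechanism for producing forward scattering here.

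The paper's proof works instead profile by profile. Profiles concentrated strictly inside the cone (i.e.\ with $\alpha_{j,n}/t_n\to\ell<1$, or $t_{j,n}=0$) are replaced by truncations $(\TTT_{R_j}W_0^j,\mathbf{1}_{\RR^3\setminus B_{R_j}}W_1^j)$ whose $\HHH^{s_c}$ norm is below the small-data threshold $\delta_0/C_0$ of Proposition \ref{Prop:CauchyPbHsc}, so \emph{each modified profile} scatters while agreeing with the original one for $|x|\geq(1-\epsilon')t_n$; the light-cone profiles ($|t_{j,n}|>(1-2\epsilon')t_n$, $-t_{j,n}/\lambda_{j,n}\to\pm\infty$) are kept untouched. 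If all resulting nonlinear profiles scatter forward, Proposition \ref{Prop:Perturb} (not a smallness argument) gives the scattering of $w_n$ with $s_n=t_n$. If some light-cone profile with $-t_{j,n}/\lambda_{j,n}\to-\infty$ does not scatter forward, the paper evolves to time $t_n/2$ and takes $s_n=\tfrac32 t_n$: since then $(\tfrac{t_n}{2}-t_{j,n})/\lambda_{j,n}\to-\infty$ and $|\tfrac{t_n}{2}-t_{j,n}|\leq(\tfrac12+\epsilon')t_n$, these profiles have moved inside $\{|x|\leq\tfrac34 t_n\}$ and are removed by the cutoff $\psi(\cdot/t_n)$, leaving only forward-scattering profiles in $(w_{0,n},w_{1,n})$. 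If you want to salvage your outline, you would have to replace the goal $\delta_n\to 0$ by this weaker but correct statement — that the exterior data admits a profile decomposition all of whose nonlinear profiles scatter forward — and invoke Proposition \ref{Prop:Perturb} rather than Proposition \ref{Prop:CauchyPb}.
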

\begin{proof}
\noindent\emph{Step 1.} Let $\{t_n\}\to\infty$ such that $\{\vw(t_n)\}_n$ is bounded in $\HHH^{s_c}$. In this step we prove that there exists (after extraction and for large $n$) an $\epsilon' >0$ such that
\begin{equation}
 \label{Eqn:equals_tn}
\vec{w}(t_n) \simP  \sum_{j \geq 1} \BA{\tW_{L,n}^{j}}(0), \quad |x| > (1-\epsilon')t_n,
\end{equation}
with nonlinear profiles $\tW^j$, such that for all $j$, $\tW^j$ scatters in both time directions or
\begin{equation}
 \label{Eqn:G2'} \lim_{n\to\infty} \frac{-t_{j,n}}{\lambda_{j,n}}\in \{\pm\infty\}\text{ and }\forall n,\; |t_{j,n}|> (1-2\epsilon')t_n.
\end{equation}
Extracting subsequences, we can assume that
\begin{equation}
\vec{w}(t_n) \simP  \sum_{j \geq 1} \BA{W_{L,n}^{j}}(0).
\nonumber
\end{equation}
If all the corresponding nonlinear profiles $W^j$ scatters forward in time then $w$ scatters by Proposition \ref{Prop:Perturb} and
and Remark \ref{Rem:ConseqLWP}, and one can choose $\epsilon':=1$. If not, we reorder the profiles so that for $j\geq J_0+1$, $W^j$ scatters in both time directions, and for $1\leq j\leq J_0$, $W^j$ does not scatter, at least in one time direction. Let $j\geq 1$. If $j\geq 1+J_0$, we let $\tW_{\lin}^j=W_{\lin}^j$. If $1\leq j\leq J_0$, we will obtain $\tW^j_{\lin}$ from $W^j_{\lin}$ by truncation as follows.

\medskip

\noindent\emph{Case 1.} Assume
\begin{equation}
 \label{Eqn:G3'}
 \forall n,\quad t_{j,n}=0.
\end{equation}
By Proposition \ref{Prop:small_exterior} and Proposition \ref{Prop:Orth} %finite speed of propagation,
the sequence $\left\{\lambda_{j,n}/t_n\right\}_n$ is bounded, and we distinguish between two subscases.

\medskip

\noindent{\emph{Case 1a.}}
Assume (\ref{Eqn:G3'}) and
\begin{equation}
\label{Eqn:G6}
 \lim_{n\to\infty} \frac{\lambda_{j,n}}{t_n}\in (0,\infty).
\end{equation}
By the pseudo-orthogonality conditions (\ref{Eqn:PseudoOrth}), there is no other $j$ satisfying (\ref{Eqn:G3'}) and (\ref{Eqn:G6}).
Rescaling $(W_0^j,W_1^j)$, we can assume that the preceding limit is $1$, and that $\lambda_{j,n}=t_n$ for all $n$. Then, changing $W^j_0$ and
$W^j_1$ on a negligible set if necessary:
\begin{equation}
 \label{Eqn:G7}
 \supp(W^j_0,W^j_1)\subset B_1
\end{equation}
%(where $\supp$ denotes here the essential support of $(W^j_0,W^j_1)$: in other words, $(W^j_0,W^j_1)(x)=0$ for almost every $x\notin B_1$).
Indeed, Proposition \ref{Prop:Orth} yields for $R \gg 1$ and $n \gg 1$
\begin{multline*}
\| r^{1-\frac{2}{m}}\partial_{r,t} w(t_{n}) \|^{m}_{L^{m}\left(\left\{|x| \geq t_{n} + R\right\}\right)}
\gtrsim \left\| r^{1-\frac{2}{m}} \partial_{r,t}W^{j}_{\lin,n} \left(0\right)
\right\|^{m}_{L^{m} \left(\left\{|x| \geq t_{n} +R\right\}\right)} + o_{n}(1) \\
= \left\| r^{1-\frac{2}{m}} \partial_{r} W^{j}_0 \right\|^{m}_{L^{m} \left( \left\{|x| \geq 1 + \frac{R}{t_{n}} \right\}\right)} +
 \left\| r^{1-\frac{2}{m}} W^{j}_1 \right\|^{m}_{L^{m} \left( \left\{|x| \geq 1 + \frac{R}{t_{n}} \right\}\right)}+ o_{n}(1).
\end{multline*}
Letting  $n \rightarrow \infty$ and $R \rightarrow \infty$, and using Proposition \ref{Prop:small_exterior}, we obtain (\ref{Eqn:G7}).

By Result \ref{Res:Cont_R}, we can choose $0<\epsilon'\ll 1$ such that
$$\left(\tW_0^j,\tW_1^j\right):=\left( \TTT_{1-\epsilon'} W_0^j,\textbf{1}_{\RR^3\setminus B_{1-\epsilon'}} W_1^j \right)$$
satisfies
$$\left\|\left(\tW_0^j,\tW_1^j\right)\right\|_{\HHH^{s_c}} \leq\frac{\delta_0}{C_0},$$
where $\delta_0$ and $C_0$ are given by Proposition \ref{Prop:CauchyPbHsc} (the small data theory for the equation (\ref{Eqn:WaveSup})). Letting $\tW^j$ be the the nonlinear profile associated to $ \tW^j_{\lin} $ and  $ \left\{ (t_{j,n}, \lambda_{j,n}) \right\}_{n \in \mathbb{N}} $, we see that $\tW^j$ scatters in both time directions by Proposition \ref{Prop:CauchyPbHsc}. Furthermore,
\begin{equation}
 \label{Eqn:G8}
 |x|\geq (1-\epsilon')t_n\\
\Longrightarrow \BA{\tW^j_{\lin,n}}(0,x)= \BA{W^j_{\lin,n}}(0,x).
\end{equation}
%where as usual
%\begin{equation}
% \label{Eqn:def_tWjLn}
% \tW^j_{\lin,n}(t,x):=\frac{1}{\lambda_{j,n}^{\frac{2}{p-1}}}\tW_{\lin}^j\left( \frac{t-t_{j,n}}{\lambda_{j,n}},\frac{x}{\lambda_{j,n}} \right).
%\end{equation}
\medskip

\noindent{\emph{Case 1b.}}
Assume (\ref{Eqn:G3'}) and
\begin{equation}
 \label{Eqn:G4}
 \lim_{n\to\infty}\frac{\lambda_{j,n}}{t_n}=0.
\end{equation}
As before, by Result \ref{Res:Cont_R} in the appendix and Proposition \ref{Prop:CauchyPbHsc}, we can choose $R_j\gg 1$ such that the solution $\tW^j$ with initial data
\begin{equation}
\label{Eqn:deftW0j}
\left(\tW_0^j,\tW_1^j\right):=\left( \TTT_{R_j} W_0^j,\textbf{1}_{\RR^3\setminus B_{R_j}} W_1^j \right)
\end{equation}
scatters in both time directions. Note by the definition of $(\tW^j_0,\tW^j_1)$ that
$$|x|\geq R_j\lambda_{j,n}\Longrightarrow \BA{\tW^j_{\lin,n}}(0,x)= \BA{W^j_{\lin,n}}(0,x).$$
In view of (\ref{Eqn:G4}), we see that (\ref{Eqn:G8}) is satisfied for large $n$.

\medskip

\noindent{\emph{Case 2.}} We next assume
\begin{equation}
 \label{Eqn:G9}
 \lim_{n\to\infty} \frac{-t_{j,n}}{\lambda_{j,n}}\in \{\pm\infty\}.
\end{equation}
Again, we distinguish between two subcases.

\medskip

\noindent{\emph{Case 2a.}} We assume (\ref{Eqn:G9}) and, after extraction of a subsequence
\begin{equation}
 \label{Eqn:G9'}
 \forall n,\quad |t_{j,n}|\leq (1-2\epsilon')t_n.
\end{equation}
Using Result \ref{Res:Cont_R}, we define again $(\tW^j_0,\tW^j_1)$ by (\ref{Eqn:deftW0j}), where $R_j\gg 1$ is such that
$$ \left\|(\tW_0^j,\tW^j_1)\right\|_{\HHH^{s_c}}\leq \frac{\delta_0} {2 C_0}.$$
As a consequence, for all $t\in \RR$,
$$\left\|\BA{\tW^j_{\lin}}(t)\right\|_{\HHH^{s_c}}=\left\|(\tW_0^j,\tW^j_1)\right\|_{\HHH^{s_c}}\leq \frac{\delta_0}{2C_0}.$$
Thus by Proposition \ref{Prop:CauchyPbHsc} and (\ref{Eqn:ScattProf}), the nonlinear profile $\tW^j$ associated to $ \tW^j_{\lin} $ and
$\left\{ (\lambda_{j,n},t_{j,n}) \right\}_{n \in \mathbb{N}}$ scatters in both time directions. Furthermore we obtain by finite speed of propagation:
\begin{equation*}
 \frac{|x|}{\lambda_{j,n}}\geq \frac{|t_{j,n}|}{\lambda_{j,n}}+R_j\Longrightarrow \BA{\tW^j_{\lin,n}}(0,x)=\BA{W^j_{\lin,n}}(0,x).
\end{equation*}
In view of (\ref{Eqn:G9}) and (\ref{Eqn:G9'}), we see that (\ref{Eqn:G8}) is satisfied for large $n$.

\medskip

\noindent{\emph{Case 2b.}} We assume (\ref{Eqn:G9}) and after extraction of a subsequence
\begin{equation}
 \label{Eqn:G9''}
 \forall n,\quad |t_{j,n}|>(1-2\epsilon')t_n.
\end{equation}
In this case, we simply let $\tW^j_{\lin}:=W^j_{\lin}$.\\
\\
Recalling that (\ref{Eqn:G8}) is satisfied for all $j\geq 1$, we obtain as announced (\ref{Eqn:equals_tn}),
where for all $j\geq 1$, the corresponding nonlinear profile $\tW^j$ scatters in both time directions or satisfies (\ref{Eqn:G2'}).

\medskip

\noindent\emph{Step 2. Conclusion of the proof.} If all the nonlinear profiles $\tW^j$ corresponding to the preceding profile decomposition scatter forward in time, then by Proposition \ref{Prop:Perturb}, the solution $\tw_n$ with initial data $(\tw_{0,n},\tw_{1,n})
:= \sum_{j \geq 1} \BA{\tilde{W}_{L,n}^{j}}(0)$ scatters for large $n$, and the conclusion of the lemma is satisfied with $s_n:=t_n$, $\epsilon=\epsilon'$, $(w_{0,n},w_{1,n}):=(\tw_{0,n},\tw_{1,n})$.

We thus assume that there is at least one nonlinear profile $\tW^j$ that does not scatter forward in time. We reorder the profiles, so that (for some $J_1\geq 1$)
if $1\leq j\leq J_1$ then $\tW^j$ does not scatter forward in time, and if $j\geq J_1+1$, $\tW^j$ scatters forward in time. We will prove the conclusion of the lemma with $s_n:=\frac{3t_n}{2}$.

We first note that the assumptions of Proposition \ref{Prop:Perturb} are satisfied with $\theta_n=\frac{t_n}{2}$. Indeed, by Step 1, if $1  \leq j \leq J_1$, then
$$\lim_{n\to\infty}\frac{-t_{j,n}}{\lambda_{j,n}}=-\infty\text{ and }|t_{j,n}|>(1-2\epsilon')t_n\text{ for large }n.$$
As a consequence, for such a profile, we have, using that $\epsilon'<1/4$,
$$ \limsup_{n\to\infty} \frac{\frac{t_n}{2}-t_{j,n}}{\lambda_{j,n}}=-\infty,$$
and thus the assumption (\ref{Eqn:limsup}) of Proposition \ref{Prop:Perturb} is satisfied. By the conclusion of the proposition, we have
for all $J \geq 1$
\begin{equation}
 \label{Eqn:G11}
\BA{\tw_n}\left( \frac{t_n}{2} \right)=\sum_{j=1}^{J} \BA{\tW^j_n} \left( \frac{t_n}{2} \right)+ \BA{\eps_n^J} \left( \frac{t_n}{2} \right)
+ \BA{r^J_n} \left( \frac{t_n}{2} \right).
\end{equation}
Let $\psi$ be a smooth function such that
$$ |x|\geq 1\Longrightarrow \psi(x)=1,\quad |x|\leq \frac{3}{4}\Longrightarrow \psi(x)=0.$$
Let
\begin{equation}
 \label{Eqn:G12}
(w_{0,n},w_{1,n}):= \psi\left( \frac{\cdot}{t_n} \right)\sum_{j=1}^{J_{1}} \BA{\tW^j_n} \left( \frac{t_n}{2} \right) +
  \BA{\eps_n^{J_1}} \left( \frac{t_n}{2} \right) + \BA{r^{J_1}_n} \left( \frac{t_n}{2} \right),
\end{equation}
By finite speed of propagation and (\ref{Eqn:equals_tn}),
$$|x|\geq \left(\frac{3}{2}-\epsilon'\right)t_n\Longrightarrow \BA{\tw_n} \left( \frac{t_n}{2},x \right)= \vec{w} \left( \frac{3t_n}{2},x \right).$$
By the definition of $\psi$,  (\ref{Eqn:G11}) and (\ref{Eqn:G12}),
$$|x|\geq t_n\Longrightarrow \BA{\tw_n}  \left( \frac{t_n}{2},x \right)=\left(w_{0,n},w_{1,n}\right)(x).$$
Combining the two equalities above, we deduce that (\ref{Eqn:G2}) holds with $s_n:=\frac{3t_n}{2}$, $\epsilon:=\frac{2}{3}\epsilon'$.

Furthermore by Step 1, if $1\leq j\leq J_1$,
$$\lim_{n\to\infty}\frac{-t_{j,n}}{\lambda_{j,n}}=-\infty\text{ and } (1-2\epsilon')t_n < t_{j,n}. $$
We also claim that for $n \gg 1$

\begin{align}
t_{j,n}<(1+\eps')t_n
\label{Eqn:Boundtjn}
\end{align}
Indeed, we first observe by Lemma \ref{Lem:UtoV} and Lemma \ref{Lem:ConservQuantLin} that

\begin{align*}
\begin{array}{ll}
\left\| r^{1 - \frac{2}{m}} \partial_{r,t}
\tW_{L,n}^{j}(0) \right\|_{L^{m}}  &
= \left\| r^{1 - \frac{2}{m}} \partial_{r,t} \tW_{L}^{j} ( -t_{j,n}, x )
\right\|_{L^{m}}  \\
& \approx \left\| \left( r^{1 - \frac{2}{m}} \partial_{r} W_{0}^{j}, r^{ 1 - \frac{2}{m}} W_{1}^{j} \right) \right\|_{L^{m}}
\end{array}
\end{align*}
Then, by Proposition \ref{Prop:Huyg}, Proposition \ref{Prop:small_exterior}, and Proposition \ref{Prop:Orth}, we can choose $R \gg 1$ and $n \gg 1$ such that

\begin{align*}
\left\| r^{1 - \frac{2}{m}} \partial_{r,t}
\tW_{L,n}^{j}(0) \right\|_{L^{m} \left( \left| |x| - |t_{j,n}|   \right| \geq R \lambda_{j,n} \right)}  & \ll \left\| \left( r^{1 - \frac{2}{m}} \partial_{r} W_{0}^{j}, r^{ 1 - \frac{2}{m}} W_{1}^{j} \right) \right\|_{L^{m}}, \quad \text{and} \\
\left\| r^{1- \frac{2}{m}}  \partial_{r,t} \tW_{L,n}^{j}(0) \right\|_{L^{m} \left( |x| \geq t_n +R \right) } & \ll \left\| \left( r^{1 - \frac{2}{m}} \partial_{r} W_{0}^{j}, r^{ 1 - \frac{2}{m}} W_{1}^{j} \right) \right\|_{L^{m}}
\end{align*}
Thus if (\ref{Eqn:Boundtjn}) were not true then this would lead to a contradiction.\\
\\
Hence
$$\lim_{n\to\infty} \frac{\frac{t_n}{2}-t_{j,n}}{\lambda_{j,n}}=-\infty,\quad \text{and for large }n,\; \left|\frac{t_n}{2}-t_{j,n}\right|\leq \left(\frac{1}{2}+\epsilon'\right)t_n.$$
Hence
$$\lim_{n\to\infty}\left\|\psi\left( \frac{\cdot}{t_n} \right)\sum_{j=1}^{J_1} \BA{\tW^j_n}\left( \frac{t_n}{2} \right)\right\|_{\HHH^{s_c}}=0$$
(indeed by (\ref{Eqn:ScattProf2}), we can substitute $\BA{\tW_{\lin,n}^{j}} \left(\frac{t_{n}}{2}\right)$  for $\BA{\tW^j_n} \left( \frac{t_n}{2} \right)$
\footnote{Recall
that $ \left\| \psi \left( \frac{x}{t_{n}} \right) g \right\|_{\dot{H}^{q}} \lesssim \| g \|_{\dot{H}^{q}} $
for  $g \in \dot{H}^{q}$ and $q \in \{s_{c},s_{c}-1 \}$}; but then by finite speed of propagation the
statement is obvious if $\BA{\tW^j_{\lin}}(0)$ is compactly supported; the general case follows by a density argument).
Combining with (\ref{Eqn:G11}), (\ref{Eqn:G12}), we deduce that for all $J$,
\begin{equation}
\label{Eqn:G11'}
(w_{0,n},w_{1,n})=\sum_{j=1+J_1}^{J} \BA{\tW^j_n} \left( \frac{t_n}{2} \right)+ \left(\check{\eps}_{0,n}^{J},\check{\eps}_{1,n}^J\right),
\end{equation}
where $\left(\check{\eps}_{0,n}^{J},\check{\eps}_{1,n}^J\right)$ satisfies (\ref{Eqn:RemSmall}), in view of (\ref{Eqn:Strich}).

Observing (see Remark \ref{Rem:Perturb})  that (\ref{Eqn:G11'}) yields a profile decomposition of the sequence $(w_{0,n},w_{1,n})$, where the nonlinear profiles are the $\tW^j$, $j\geq 1+J_1$ (and thus all these nonlinear profiles scatter forward in time), we deduce from Proposition \ref{Prop:Perturb} and Remark
\ref{Rem:ConseqLWP} that the solution $w_n$ with initial data $(w_{0,n},w_{1,n})$ scatters forward in time, concluding the proof of the lemma.

\end{proof}

\subsubsection{Construction of the free wave}
\label{subsubsec:ProofFreeWave}

We are now in position to prove  Proposition \ref{Prop:FreeWave}.

\medskip

\noindent\emph{Step 1.} We prove that $\forall A \in \mathbb{R}$, there exists a solution $w_{\lin}^{A}$ of the linear wave equation such that
\begin{align}
\lim_{t \rightarrow \infty} \int_{t - A}^{\infty} \left|r \partial_{r}(w - w_{\lin}^{A})(t,r) \right|^{m} +
\left|r \partial_{t} (w -w_{\lin}^{A}) (t,r) \right|^{m} \, dr =0.
\label{Eqn:FreeWaveStep1}
\end{align}
Consider the sequence $\{ s_{n} \}_{n \in \mathbb{N}}$  and the solution $w_{n}$ of (\ref{Eqn:WaveSup}) given by Lemma \ref{Lem:HuygConseq}. Since $w_{n}$ scatters as $t \rightarrow \infty$, there exists
$\tilde{w}_{L,n}$ solution of the linear wave equation such that
\begin{align*}
\lim_{t \rightarrow \infty} \| \BA{w_{n}}(t) - \BA{\tilde{w}_{L,n}}(t) \|_{\HHH^{s_c} }
=0
\end{align*}
By finite speed of propagation $\vec{w}(t + s_{n},x) = \vec{w}_{n}(t,x)$ for $|x| \geq (1- \epsilon) s_{n} + t$ and $t \geq 0$. Hence, choosing
$n$ such that $\epsilon s_{n} \gg A $ and using Lemma \ref{Lem:HardyIneq} we see that (\ref{Eqn:FreeWaveStep1}) holds with
$w_{\lin}^{A}(t,x) := \tilde{w}_{L,n}(t-s_{n},x)$.

\medskip

\noindent\emph{Step 2.}
Since $ \left( S(-s_{n}) \vec{w}(s_{n}), \partial_{t} S (-s_{n}) \vec{w}(s_{n}) \right) $ is a bounded sequence in $\HHH^{s_c}$, one can
assume (after extraction of a subsequence) that
$$ \left( S(-s_{n}) \vec{w}(s_{n}), \partial_{t} S (-s_{n}) \vec{w}(s_{n}) \right)  \xrightharpoonup[n\to\infty]{} (v_0,v_1)  \text{ in }
\HHH^{s_c} .$$
Let $w_{\lin}(t):=S(t)(v_0,v_1)$.
We may assume that (after extraction again) that
$\vec{w}(s_{n}) - \BA{w_{\lin}^{A}}(s_{n}) $ has a profile decomposition
\begin{equation*}
\vec{w}(s_{n}) - \BA{w_{\lin}^{A}}(s_{n}) \simP \BA{w_{\lin}}(s_{n}) - \BA{w_{\lin}^{A}}(s_{n}) + \sum_{j\geq 2} \BA{W^{j}_{L,n}}(0),
\end{equation*}
where the first profile is  $ \vW^{1}_{L,n}(0):=  \vec{w}_{\lin}(s_{n}) - \BA{w_{\lin}^{A}}(s_{n})$. Hence we see from Proposition
\ref{Prop:Orth} and (\ref{Eqn:FreeWaveStep1}) that
\begin{equation}
\label{Eqn:lim0}
\lim_{n \rightarrow \infty}
\int_{s_{n} - A}^{\infty}  \left| r\partial_{r} (w_{\lin}^{A} - w_{\lin})  (s_{n},r)  \right|^{m}
+ \left| r\partial_{t} (w_{\lin}^{A} - w_{\lin})  (s_{n},r) \right|^{m} \, dr  =0.
\end{equation}
Let $n \gg 1$ such that $s_{n} \gg A $. Let $\tilde{w}_{\lin}$ (resp. $\tilde{w}^{A}_{\lin}$) be the solution of the linear wave equation with data
$ \BA{\tilde{w}_{\lin}}(s_{n})  := \left( \TTT_{s_{n} - A} w_{\lin}(s_{n}), \mathbf{1}_{\mathbb{R}^{3}/ B_{s_{n}-A}} \partial_{t}w_{\lin} (s_{n}) \right)$ ( resp. $ \BA{\tilde{w}^{A}_{\lin}}(s_{n})  :=
\left( \TTT_{s_{n} - A} w_{\lin}^{A}(s_{n}), \mathbf{1}_{\mathbb{R}^{3}/ B_{s_{n} - A}} \partial_{t} w_{\lin}^{A} (s_{n}) \right) $ ). By using finite speed of propagation and the pseudo-conservation of the generalized energy (Lemma \ref{Lem:ConservQuantLin}), we see that for $t \gg s_{n}$
\begin{multline*}
\int_{t - A}^{\infty}
\left| r \partial_{r,t} (w_{\lin}^{A} - w_{\lin}) (t,r)  \right|^{m}\,dr=
\int_{t - A}^{\infty}
\left| r \partial_{r,t} (\tw_{\lin}^{A} - \tw_{\lin}) (t,r)  \right|^{m}\,dr\\
\lesssim
\int_{0}^{\infty}
\left| r \partial_{r,t} (\tw_{\lin}^{A} - \tw_{\lin}) (s_n,r)  \right|^{m}\,dr
=\int_{s_n - A}^{\infty}
\left| r \partial_{r,t} (w_{\lin}^{A} - w_{\lin}) (s_n,r)  \right|^{m}\,dr.
\end{multline*}
Hence, by (\ref{Eqn:lim0}),
\begin{equation}
\lim_{t \rightarrow \infty} \int_{t-A}^{\infty}  \left|r\partial_{r,t} (w_{\lin}^{A} - w_{\lin})(t,r) \right|^{m}
\, dr =0 \cdot
\label{Eqn:Limitwl}
\end{equation}
Combining (\ref{Eqn:Limitwl}) with (\ref{Eqn:FreeWaveStep1}) we get (\ref{Eqn:FreeWave}). \qed

\subsection{Nonexistence of profile with exterior generalized energy} %\ref{Lem:ProfileDecompSpec}}
\label{Subsec:GlobalProfileDecompSpec}

We prove here Proposition \ref{Prop:ProfileDecompSpec}.
Let $\bar{w}$ be the solution of (\ref{Eqn:WaveSup}) such that
\begin{equation}
\lim_{t \rightarrow \infty} \| \BA{\bar{w}}(t) - \BA{w_{\lin}}(t) \|_{\HHH^{s_c}} =0.
\label{Eqn:Defbarw}
\end{equation}
Translating $w$ and $\bar{w}$ in time if necessary, we may assume without loss of generality that $\bar{w}$ is defined on $[0, \infty)$.

By the assumptions of Proposition \ref{Prop:ProfileDecompSpec}, there exists $(w_{0,n},w_{1,n})$ such that
\begin{equation}
 \label{Eqn:PropProf0}
(w_{0,n},w_{1,n})(r)=\vec{w}(t_n,r), \quad r>\rho_n,
\end{equation}
and $(w_{0,n},w_{1,n})$ has the profile decomposition
\begin{equation}
\label{Eqn:PropProf2}
(w_{0,n},w_{1,n})\simP \BA{w_{\lin}}(t_{n}) + \sum_{j\geq 1} \BA{W_{\lin,n}^{j}}(0).
\end{equation}

Let $w_{n}$ be the solution with data $(w_{0,n},w_{1,n})$ at $t=0$. By Proposition \ref{Prop:Perturb}, we see that
\begin{equation*}
\BA{w_{n}}(t,x) = \BA{\bar{w}}(t_{n}+ t,x) + \sum_{j=1}^{J} \BA{W_{n}^{j}}(t,x) + \BA{\eps_{n}^{J}}(t,x) + \BA{r_{n}^{J}}(t,x),\quad t\in[-t_n,+\infty)
\end{equation*}
with
\begin{align*}
\lim_{J \rightarrow \infty} \limsup_{n \rightarrow \infty} \sup_{t \in [-t_{n}, \infty)}  \| \BA{r_{n}^{J}}(t)
\|_{\HHH^{s_c} } =0 .
\end{align*}
Assume that (\ref{Eqn:PropProf1}) holds for $t \geq 0$. Then, using also
Proposition \ref{Prop:Orth}, (\ref{Eqn:Defbarw}), and Lemma \ref{Lem:HardyIneq}, we see that for $n \gg 1$
\begin{align*}
\forall t\geq 0,\quad \int_{\rho_{n} + t}^{\infty} \left|  r \partial_{r,t} w_{n}(t,r) - r \partial_{r,t} w_{\lin}(t_{n} + t,r) \right|^{m}\, dr \gtrsim \eta.
\end{align*}
Hence, by finite speed of propagation and (\ref{Eqn:PropProf0})
\begin{equation*}
\forall t\geq 0,\quad \int_{\rho_{n} + t}^{\infty}  \left| r \partial_{r,t} w(t_{n}+ t,r)  - r \partial_{r,t} w_{\lin}(t_{n}+t,r) \right|^{m}\,dr
\gtrsim \eta.
\end{equation*}
Letting $t\to\infty$, we see that it contradicts Proposition \ref{Prop:FreeWave}.

Next assume that (\ref{Eqn:PropProf1}) holds for $t \leq 0$. Then
\begin{align*}
\int_{\rho_{n} + t_{n}}^{\infty} \left| r  \partial_{r,t} w_{n}(-t_{n},r) - r\partial_{r,t} \overline{w}(0,r)  \right|^{m}\,dr \gtrsim \eta.
\end{align*}
Hence, by using again finite speed of propagation and (\ref{Eqn:PropProf0})
\begin{align*}
\int_{\rho_{n} + t_{n}}^{\infty} \left| r \partial_{r,t} w(0,r) - r \partial_{r,t} \overline{w}(0,r) \right|^{m}\,dr\gtrsim \eta.
\end{align*}
Letting $n \rightarrow \infty$, we see that it is impossible.

\section{Proof of Theorem in the finite maximal time of existence case}
\label{Sec:blowup}
\subsection{Outline of the proof}

This section is devoted to the proof of Theorem  \ref{Thm:Main} in the finite maximal time of existence case (i.e
$T_{+}(w) < \infty$  or  $ T_{-}(w) > -\infty$). By the invariances of the equation, we can without loss of generalities consider only positive times and assume $T_+(w)=1$.

We first show (Proposition \ref{Prop:ConstructionV}) that if $w$ is a solution of (\ref{Eqn:WaveSup}) such that $T_+(w)=1$, and
there exists a sequence $t_{n} \rightarrow 1$ along which its critical $\HHH^{s_{c}} $ norm is bounded, then there exists a solution $v_{+}$ of (\ref{Eqn:WaveSup}) such that
$w=v_{+}$ outside the light cone $|x|=1-t$. We then prove (Proposition
\ref{Prop:ProfileDecompSpecFinite}) the analog of Proposition \ref{Prop:ProfileDecompSpec} for the finite time of existence case, namely that if $t_n\to
1$  and $\BA{w}(t_n)$ is bounded in $\HHH^{s_c}$ and has a profile decomposition, then
the corresponding nonlinear profiles (except the one corresponding to the solution $v_{+}$) do not have any exterior generalized energy. The end of the proof is very close to the corresponding proof in the global case and we will only sketch it.
\begin{prop}
Assume that $T_{+}(w) =1$, and that there exist a sequence $t_{n} \rightarrow 1$ such that
(\ref{Eqn:BoundednessHsc}) holds. Then there exists a solution $v_{+}$ of (\ref{Eqn:WaveSup}), defined in a neighborhood of $t=1$, and
$T_{0} \in (0,1)$ such that
\begin{equation*}
\left(t \in [T_{0},1)\quad\text{and}\quad |x| \geq 1-t\right)\Longrightarrow v_{+}(t,x) =w(t,x).
\end{equation*}
\label{Prop:ConstructionV}
\end{prop}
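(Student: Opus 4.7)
The plan is to parallel the global-case construction of Lemma \ref{Lem:HuygConseq} together with Subsection \ref{Subsec:ProofFreeWave}, replacing the light cone at infinity by the backward light cone from $(1,0)$. At each time $s_n$ in a subsequence of $\{t_n\}$, I would truncate $\vec{w}(s_n)$ so as to kill the profiles responsible for the blow-up while keeping $\vec{w}(s_n)$ unchanged outside a slightly enlarged copy of the backward cone. The resulting solutions $w_n$ will exist on $[s_n, 1 + \delta]$ for some uniform $\delta > 0$, and the limiting solution $v_+$ will be built as the nonlinear evolution of the weak $\HHH^{s_c}$ limit of $\{\vec{w}_n(1)\}_n$ at $t = 1$.

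More concretely, I would first establish the following finite-time analog of Lemma \ref{Lem:HuygConseq}: after extraction, there exist $s_n \to 1^-$ with $\{\vec{w}(s_n)\}_n$ bounded in $\HHH^{s_c}$, a small $\varepsilon > 0$, and bounded data $(w_{0,n}, w_{1,n}) \in \HHH^{s_c}$ such that $(w_{0,n}, w_{1,n})(x) = \vec{w}(s_n,x)$ for $|x| > (1+\varepsilon)(1 - s_n)$, and the corresponding solution $w_n$ of (\ref{Eqn:WaveSup}) with data $(w_{0,n}, w_{1,n})$ at time $s_n$ is defined on $[s_n, 1 + \delta]$ with uniformly bounded $L^{4m}_t L^{4m}_x$ norm. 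The construction is by profile decomposition of $\{\vec{w}(s_n)\}_n$ as in Section \ref{Sec:ProfDecomp}: profiles whose concentration scale $\alpha_{j,n}$ (in the sense of the proof of Lemma \ref{Lem:HuygConseq}) is much larger than $1 - s_n$ are kept unchanged, while interior profiles (those with $\alpha_{j,n} \lesssim 1 - s_n$) are truncated via $\TTT_R$ using Result \ref{Res:Cont_R}, with $R$ chosen so that the resulting $\HHH^{s_c}$ norm falls below the small-data threshold of Proposition \ref{Prop:CauchyPbHsc}; the existence and uniform bound for $w_n$ then follows from Proposition \ref{Prop:Perturb}. By the Strichartz estimate \eqref{Eqn:Strich}, $\{\vec{w}_n(1)\}_n$ is bounded in $\HHH^{s_c}$; extracting again, let $(v_{+,0}, v_{+,1})$ be its weak limit, and take $v_+$ to be the nonlinear solution of (\ref{Eqn:WaveSup}) with these data at $t = 1$, defined on a neighborhood of $t = 1$ by local well-posedness.

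To conclude, finite speed of propagation applied to the agreement property gives $w_n(t,x) = w(t,x)$ for $t \in [s_n, 1)$ and $|x| > (1 + \varepsilon)(1 - s_n) + (t - s_n)$, a region that exhausts the exterior $\{|x| > 1 - t\}$ of the backward cone as $n \to \infty$. Passing to the limit via the nonlinear profile decomposition of $\vec{w}_n(1)$ (of which $\vec{v}_+(1)$ is the weak limit) and Proposition \ref{Prop:Perturb} applied backward from $t = 1$ (cf.\ Remark \ref{Rem:Perturb}) yields strong convergence of $\vec{w}_n$ to $\vec{v}_+$ on compact subsets of $\{|x| > 0\}$ for $t$ in a small neighborhood of $1$. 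This identifies $v_+(t,x) = w(t,x)$ on $\{(t,x): T_0 \leq t < 1,\, |x| > 1 - t\}$ for some $T_0 \in (0,1)$ sufficiently close to $1$. The main obstacle will be the first step: constructing the truncated data so that simultaneously every modified nonlinear profile scatters, $w_n$ enjoys a uniform $L^{4m}_t L^{4m}_x$ bound on an interval of length at least $(1 - s_n) + \delta$, and the exterior identity is preserved. This parallels the delicate case analysis in the proof of Lemma \ref{Lem:HuygConseq}, and will require treating profiles with $t_{j,n} = 0$ and $\lambda_{j,n} \sim 1 - s_n$ separately from those with $|t_{j,n}|/\lambda_{j,n} \to \pm\infty$, exploiting the concentration statements of Proposition \ref{Prop:Huyg}.
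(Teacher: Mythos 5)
Your definition of $v_+$ --- the nonlinear evolution, from a time at or near $t=1$, of a weak $\HHH^{s_c}$ limit extracted from the bounded sequence --- is the same starting point as the paper (which simply takes $\vec{w}(t_n)\rightharpoonup (v_0,v_1)$ and evolves from $t=1$, with no truncation of profiles at all). However, the argument as written has a genuine gap at the decisive step, the identification of $v_+$ with $w$ outside the backward cone. You assert that the profile decomposition of $\vec{w}_n(1)$ together with Proposition \ref{Prop:Perturb} applied backward ``yields strong convergence of $\vec{w}_n$ to $\vec{v}_+$ on compact subsets of $\{|x|>0\}$.'' This does not follow. In the topology of $\HHH^{s_c}$ the convergence is only weak: the decomposition of $\vec{w}_n(1)$ may contain nontrivial profiles with $|t_{j,n}|/\lambda_{j,n}\to\pm\infty$ and $|t_{j,n}|\to c\in(0,\infty)$, which concentrate on spheres $\{|x|=c\}$ away from the origin and do not vanish in $\HHH^{s_c}$ of any annulus containing that sphere; the remainders $\BA{\eps_n^J}$ are likewise only small in $L^{4m}_tL^{4m}_x$, not in $\HHH^{s_c}$. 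What weak convergence plus Rellich--Kondrachov actually gives is strong convergence of the \emph{data} in $H^1\times L^2$ on compact sets away from the origin --- a strictly weaker topology in which the $\HHH^{s_c}$ perturbation theory of Proposition \ref{Prop:Perturb} says nothing about the nonlinear flow. To convert this weak-topology convergence of the data into exact equality of the two nonlinear solutions on the exterior region, one needs a stability estimate for (\ref{Eqn:WaveSup}) that is localized in space and closes in such a weaker norm. The paper supplies precisely this: Lemma \ref{Lem:Linfty} gives an $L^\infty$ bound for the difference on the shrinking annuli $J_t$ via energy estimates and the radial Sobolev embedding, and a Gronwall/bootstrap argument propagates the discrepancy $\varepsilon_n\to 0$ (small by Rellich--Kondrachov) from $t=t_n$ back to $t=T$, forcing $B(T)=0$. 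Without this ingredient, or an equivalent localized uniqueness/stability mechanism, your chain ``$w=w_n$ exactly in the exterior, $w_n\to v_+$'' does not yield $w=v_+$.

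A secondary remark: the entire first step of your plan --- the finite-time analogue of Lemma \ref{Lem:HuygConseq}, truncating interior profiles so that $w_n$ extends past $t=1$ with uniform bounds --- is not needed for this proposition and imports real difficulties of its own (for instance, the support information (\ref{Eqn:G7}) in the proof of Lemma \ref{Lem:HuygConseq} relies on Proposition \ref{Prop:small_exterior}, which is stated for solutions global forward in time and has no ready analogue here). The paper's proof bypasses all of this: it never extends $w$ past $t=1$, but instead compares $w$ and $v_+$ on $[T,t_n]$ directly, entirely inside annuli contained in the exterior of the cone.
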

\begin{prop} Let $w$ be as in Proposition \ref{Prop:ConstructionV}.
Let  $\{ \rho_{n} \}_{n \in \mathbb{N}}$ be a sequence of nonnegative numbers. There does not exist a
sequence $\{ t_{n} \}_{n \in \mathbb{N}} \rightarrow 1 $  such that $\{\vw(t_n)\}_n$ has a profile decomposition for $|x|>\rho_n$:
\begin{align}
\vec{w}(t_n)\simP\vec{v}_+(1) + \sum_{j\geq 1} \BA{W_{\lin,n}^{j}}(0),\quad |x|>\rho_n,
\label{Eqn:ProfDecompPropProf}
\end{align}
where the corresponding nonlinear profiles $W^{j}$ scatter as $t \rightarrow \pm \infty$, and there exists
$j_0 \geq 1$  such that
\begin{align}
\int_{\rho_{n} + |t|}^{\infty} \left| r\partial_{r} W_{n}^{j_0}(t,r) \right|^{m}
+ \left| r \partial_{t} W_{n}^{j_0}(t,r) \right|^{m} \, dr \geq \epsilon,
\label{Eqn:PropProf1finite}
\end{align}
for some $\epsilon >0$ and for all $ t \geq 0 $ or for all $ t \leq 0 $.
\label{Prop:ProfileDecompSpecFinite}
\end{prop}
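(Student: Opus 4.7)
The plan is to follow the structure of the proof of Proposition \ref{Prop:ProfileDecompSpec}, replacing the role of the linear solution $w_{\lin}$ and its nonlinear counterpart $\bar w$ by the single nonlinear solution $\tw(t):=v_+(1+t)$. By Proposition \ref{Prop:ConstructionV}, $\tw$ solves (\ref{Eqn:WaveSup}) with initial data $\vec{v}_+(1)$ at $t=0$ and is defined on an open interval $(-a,b)$ with $a,b>0$. Let $w_n$ denote the solution of (\ref{Eqn:WaveSup}) with initial data $(w_{0,n},w_{1,n})$ at $t=0$.

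Assume first that (\ref{Eqn:PropProf1finite}) holds for every $t\geq 0$, and fix $\theta\in(0,b)$. In the profile decomposition (\ref{Eqn:ProfDecompPropProf}), the leading profile has parameters $\lambda_{0,n}=1$, $t_{0,n}=0$, and nonlinear profile $\tw$; condition (\ref{Eqn:limsup}) of Proposition \ref{Prop:Perturb} then reduces to $\theta<T_+(\tw)=b$, while for $j\geq 1$ the profiles $W^j$ scatter forward in time by hypothesis. Applying Proposition \ref{Prop:Perturb} with the constant sequence $\theta_n=\theta$ yields, for $n\gg 1$, the expansion
\begin{equation*}
\vec w_n(\theta)=\vec{\tw}(\theta)+\sum_{j=1}^{J}\vec{W}_n^j(\theta)+\vec{\eps}_n^J(\theta)+\vec{r}_n^J(\theta),
\end{equation*}
with the usual smallness of $\vec{r}_n^J$ as $J,n\to\infty$. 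A variant of the nonlinear orthogonality (\ref{Eqn:Orth:NL}) applied to this expansion, together with the lower bound (\ref{Eqn:PropProf1finite}) at $t=\theta$, then produces
\begin{equation*}
\left\|r^{1-\tfrac{2}{m}}\partial_{r,t}\bigl(w_n(\theta)-\tw(\theta)\bigr)\right\|_{L^m(\{|x|>\rho_n+\theta\})}^m\gtrsim \epsilon.
\end{equation*}

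To reach a contradiction, I exploit the defining property of $v_+$: since $(w_{0,n},w_{1,n})$ coincides with $\vec{w}(t_n)$ on $\{|x|>\rho_n\}$ and $\vec{w}(t_n)=\vec{v}_+(t_n)$ on $\{|x|>1-t_n\}$, finite speed of propagation applied to $v_+$ (defined in a neighborhood of $t=1$) yields $w_n(\theta,x)=v_+(t_n+\theta,x)$ on $\{|x|>\rho_n+\theta+(1-t_n)\}$. Since $\tw(\theta)=v_+(1+\theta)$, the lower bound above forces
\begin{equation*}
\left\|r^{1-\tfrac{2}{m}}\partial_{r,t}\bigl(v_+(t_n+\theta)-v_+(1+\theta)\bigr)\right\|_{L^m}^m\gtrsim \epsilon-o_n(1),
\end{equation*}
which is incompatible with the continuity of the flow of (\ref{Eqn:WaveSup}) in $\HHH^{s_c}$ at $t=1+\theta$ (which lies inside the interval of existence of $v_+$), combined with Lemma \ref{Lem:HardyIneq}. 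If instead (\ref{Eqn:PropProf1finite}) holds for every $t\leq 0$, the same argument works with $\theta\in(-a,0)$ using the negative-time version of Proposition \ref{Prop:Perturb}.

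The hard part is justifying the use of the nonlinear orthogonality (\ref{Eqn:Orth:NL}) when the zeroth nonlinear profile $\tw$ is not a globally scattering solution but merely exists on the small interval $(-a,b)$. One must verify that the proof of Proposition \ref{Prop:Orth} goes through provided the nonlinear profiles exist at the single time $\theta_n=\theta$, which is automatic here since $\theta\in(-a,b)$; alternatively, one can subtract $\vec{\tw}(\theta)$ first and orthogonalize the remaining sum of scattering profiles directly. A subsidiary technical point is the mismatch between the ball $\{|x|>\rho_n+\theta\}$ supplied by the orthogonality and the slightly larger ball $\{|x|>\rho_n+\theta+(1-t_n)\}$ on which finite speed of propagation identifies $w_n(\theta)$ with $v_+(t_n+\theta)$; since $1-t_n\to 0$, the contribution of the intermediate annulus is $o_n(1)$ in the weighted $L^m$-norm and does not affect the argument.
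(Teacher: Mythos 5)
Your overall strategy (perturbation theory to expand $w_n$ at a later time, the nonlinear orthogonality of Proposition \ref{Prop:Orth} to transfer the lower bound (\ref{Eqn:PropProf1finite}) to $w_n-v_+(1+\cdot)$, finite speed of propagation to identify $w_n$ with $v_+$ outside a cone, and continuity of the flow of $v_+$ to reach a contradiction) is the same as the paper's, and your treatment of the negative-time case is sound: there the annulus discrepancy involves $w(t_n+\theta)-v_+(1+\theta)$ with $1+\theta<1=T_+(w)$, so $w(t_n+\theta)$ converges \emph{strongly} in $\HHH^{s_c}$ and the error is genuinely $o_n(1)$.

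The positive-time case, however, has a real gap at the step you flag as ``subsidiary'': the claim that the integral of $\left|r^{1-\frac 2m}\partial_{r,t}\bigl(w_n(\theta)-v_+(1+\theta)\bigr)\right|^m$ over the annulus $\{\rho_n+\theta<|x|<\rho_n+\theta+(1-t_n)\}$ is $o_n(1)$. Shrinking width alone does not give this: the sequence $w_n(\theta)-v_+(1+\theta)$ is only \emph{bounded} in $\HHH^{s_c}$ (it contains the modulated profiles $W_n^j(\theta)$), and a bounded sequence in the weighted $L^m$ norm can concentrate a fixed amount of generalized energy on an annulus of width $1-t_n$ --- for instance a profile with $\lambda_{j,n}\sim 1-t_n$ whose generalized energy at time $\theta$ sits at radius $\sim\rho_n+\theta$. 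Since the total exterior integral may greatly exceed $\epsilon$, you cannot rule out that the annulus absorbs the entire lower bound, and no strong limit of $w_n(\theta)$ is available here because the absolute time $t_n+\theta$ exceeds $T_+(w)=1$. The paper sidesteps this by evaluating at the $n$-dependent time $t=1-t_n$ (absolute time $1$, reached by continuity of $t\mapsto \BA{w_n}(t)$ and $t\mapsto\BA{v_+}(t)$ in $\HHH^{s_c}$): at that time the light-cone condition $|x|>1-(t+t_n)$ degenerates, so the set where $\BA{w_n}(1-t_n)=\BA{v_+}(1)$ is exactly $\{|x|>\rho_n+(1-t_n)\}=\{|x|>\rho_n+|t|\}$, which matches the region furnished by (\ref{Eqn:PropProf1finite}) and Proposition \ref{Prop:Orth}, leaving no annulus at all; the contradiction then comes from $\|\BA{v_+}(2-t_n)-\BA{v_+}(1)\|_{\HHH^{s_c}}\to 0$. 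You should either adopt this choice of time or supply a genuine non-concentration argument for the annulus; as written, that step does not follow.
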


\subsection{Nonexistence of type II blow-up solution}
Assuming Propositions \ref{Prop:ConstructionV} and \ref{Prop:ProfileDecompSpecFinite}, the proof of the nonexistence of solutions of (\ref{Eqn:WaveSup}) that do not satisfy (\ref{Eqn:Blowuptype1}) and blow up in finite time relies on the following lemma, which is the analog of Lemma \ref{Lem:Cases} for the finite time blow-up case:
\begin{lem}
\label{Lem:Cases_Blowup}
Let $w$ be a solution of (\ref{Eqn:WaveSup}) such that $T_+(w)=1$ and such that (\ref{Eqn:Blowuptype1}) does not hold. Then, replacing $w$ by $-w$ if necessary, there exists a sequence of times $\{t_n\}_n\to 1$, in the domain of existence of $w$,
a sequence of positive number $\{\rho_n\}_n$ such that $\vw(t_n)$ has the following profile decomposition for $|x|>\rho_n$:
\begin{equation*}
 \vw(t_n)\simP \BA{v_+}(1) + \sum_{j\geq 1} \BA{W_{\lin,n}^j}(0),
\end{equation*}
and one of the following holds
\begin{itemize}
 \item \emph{Case 1.} For all $j\geq 1$, $W^j$ scatters in both time directions and there exists $\eta>0$, $j_0\geq 1$ such that the following holds for all $t\geq 0$ or for all $t\leq 0$:
\begin{equation}
 \label{Eqn:A2_blowup}
 \int_{\rho_n+|t|}^{\infty} \left|r\partial_rW_n^{j_0}(t,r)\right|^m+\left|r\partial_tW_n^{j_0}(t,r)\right|^m\,dr\geq \eta.
\end{equation}
\item  \emph{Case 2.} For all $j\geq 2$, $W^j$ scatters in both time directions and there exists $\eta>0$, ${j_0}\geq 2$ such that (\ref{Eqn:A2_blowup}) holds for all $t\geq 0$ or for all $t\leq 0$.
Furthermore,
\begin{gather*}
\lim_{n\to\infty}-\frac{t_{j_0,n}}{\lambda_{j_0,n}}\in \{\pm\infty\}\\
\forall n,\quad t_{1,n}=0\quad \text{and}\quad
(W_{0}^1,W_1^1)=\left(\TTT_{r_1}Z,0\right)
\end{gather*}
for some $r_1>0$ such that
\begin{equation*}
 \forall n,\quad \rho_n\geq r_1\lambda_{1,n}.
\end{equation*}
\item\emph{Case 3.} For all $j\geq 2$, $W^j$ scatters in both time directions,
$$\forall n,\quad t_{1,n}=0; \quad \sigma_1(W_0^1,W^1_1)<\infty; \quad \text{ and }\limsup_{n\to \infty}\frac{\rho_n}{\lambda_{1,n}}<\sigma_1(W^1_0,W^1_1).$$
 \end{itemize}
\end{lem}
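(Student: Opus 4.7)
The plan is to adapt the proof of Lemma \ref{Lem:Cases} almost verbatim, with the solution $v_+$ furnished by Proposition \ref{Prop:ConstructionV} playing the role that $w_{\lin}$ plays in the global-in-time case.

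First I extract a sequence $t_n\to 1^-$ along which $\|\vec{w}(t_n)\|_{\HHH^{s_c}}$ is uniformly bounded, which is possible because (\ref{Eqn:Blowuptype1}) fails. By Proposition \ref{Prop:ConstructionV} and finite speed of propagation, $\vec{w}(t_n,x)=\vec{v}_+(t_n,x)$ for $|x|\geq 1-t_n$, so $(f_n,g_n):=\vec{w}(t_n)-\vec{v}_+(t_n)$ is supported in $\{|x|\leq 1-t_n\}$ and bounded in $\HHH^{s_c}$. After extracting a subsequence, $(f_n,g_n)$ admits a profile decomposition with profiles $\{W_{\lin}^j\}_j$ and parameters $\{(t_{j,n},\lambda_{j,n})\}_{j,n}$ satisfying (\ref{Eqn:AssParam}); since $\vec{v}_+(t_n)\to\vec{v}_+(1)$ in $\HHH^{s_c}$ by continuity, this rewrites as
\[
\vec{w}(t_n)\simP\vec{v}_+(1)+\sum_{j\geq 1}\BA{W_{\lin,n}^j}(0).
\]

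Next I claim that at least one profile is nonzero. Indeed, if every $W_{\lin}^j$ vanished, the smallness (\ref{Eqn:RemSmall}) of the free evolution of the remainder, combined with the small-data Cauchy theory of Proposition \ref{Prop:CauchyPbHsc} and a standard perturbation argument around the reference solution $v_+$ (which exists on a fixed neighborhood of $t=1$), would continue $w$ past $t=1$, contradicting $T_+(w)=1$. I then fix $\epsilon>0$ smaller than both the small-data scattering threshold and $\|\BA{W_{\lin}^{j_*}}(0)\|_{\HHH^{s_c}}$ for some nonzero profile index $j_*$, reorder so that $\|\BA{W_{\lin}^j}(0)\|_{\HHH^{s_c}}\geq\epsilon$ precisely for $1\leq j\leq J_0$, set $\alpha_{j,n}=\lambda_{j,n}$ when $t_{j,n}\equiv 0$ and $\alpha_{j,n}=|t_{j,n}|$ otherwise, and introduce $\III\subset\{1,\dots,J_0\}$ as the indices $j$ with $\alpha_{k,n}\lesssim\alpha_{j,n}$ for all $k$, and $\JJJ$ as the indices with $t_{j,n}\equiv 0$ and $\sigma_{\pm 1}(\BA{W_{\lin}^j}(0))<\infty$ for some choice of sign.

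The three-case analysis of Lemma \ref{Lem:Cases} then transposes directly. For Case 1 ($\III\cap\JJJ=\emptyset$), I invoke Proposition \ref{Prop:DispPropScaling} on each profile of $\{1,\dots,J_0\}$ to obtain scattering modified linear profiles $\tW_{\lin}^j$ and scales $\rho_{j,n}\approx\alpha_{j,n}$ such that $\BA{\tW_{\lin,n}^j}(0,x)=\BA{W_{\lin,n}^j}(0,x)$ for $|x|>\rho_{j,n}$, with the associated nonlinear profile $\tW^j$ carrying generalized exterior energy $\geq\eta_j>0$ in one time direction; the choice $\rho_n:=\rho_{1,n}$, after reordering so that $1\in\III$ and $\rho_{1,n}\geq\rho_{j,n}$, yields the desired decomposition. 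Cases 2 and 3 correspond to $\III\cap\JJJ\neq\emptyset$, which by (\ref{Eqn:PseudoOrth}) has exactly one element; reorder to $j=1$ (and replace $w$ by $-w$ if needed so that $\sigma_1(W_0^1,W_1^1)<\infty$). The sub-cases are then separated by comparing $\limsup_n \rho_{2,n}/\lambda_{1,n}$ to $\sigma_1(W_0^1,W_1^1)$, with the rescaling trick of Case 2 of Lemma \ref{Lem:Cases} reducing to $\rho_{2,n}\geq\lambda_{1,n}\sigma_1(W_0^1,W_1^1)$ when $\limsup_n \rho_{2,n}/\lambda_{1,n}\geq\sigma_1(W_0^1,W_1^1)$ (Case 2); the strict inequality gives Case 3.

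The main obstacle is the non-triviality argument in the second paragraph: in the global case, non-scattering of $w$ immediately produces a nonzero profile, whereas here one must combine (\ref{Eqn:Pyth}), (\ref{Eqn:RemSmall}), and perturbation theory around $v_+$ to derive a contradiction with $T_+(w)=1$ from the hypothetical vanishing of all profiles. Every other step is a direct transposition of the proof of Lemma \ref{Lem:Cases}.
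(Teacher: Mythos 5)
Your proposal is correct and follows essentially the route the paper intends: the paper omits this proof precisely because it is ``almost the same as the proof of Lemma \ref{Lem:Cases}'', with $v_+$ from Proposition \ref{Prop:ConstructionV} replacing the free wave $w_{\lin}$. You have also correctly isolated the one genuinely new ingredient, namely that non-triviality of the profile decomposition now follows from the impossibility of continuing $w$ past $T_+(w)=1$ (via perturbation around $v_+$ and the smallness of the remainder) rather than from non-scattering.
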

The proof of Lemma \ref{Lem:Cases_Blowup} is almost the same as the proof of Lemma \ref{Lem:Cases}. The proof of Theorem \ref{Thm:Main} in the case $T_+(w)<\infty$, assuming Propositions \ref{Prop:ConstructionV} and \ref{Prop:ProfileDecompSpecFinite}, and Lemma \ref{Lem:Cases_Blowup} is also very close to the corresponding proof in the case $T_+(w)=\infty$ (see the paragraphs after Lemma \ref{Lem:Cases}, p. \pageref{after_cases}). We omit both proofs.

\subsection{Local strong limit outside the origin}
We prove here Proposition \ref{Prop:ConstructionV}.
%The proof of Proposition \ref{Prop:ConstructionV} is postponed to Subsection  \ref{Subsec:PropConstructionV}.\\
We will need the following lemma, based on standard energy estimates and the radial Sobolev embedding.
\begin{lem}
\label{Lem:Linfty}
For all $(a,b)\in \RR^2$ such that $0<a<b$, there exists a constant $C_{a,b}$ with the following property. Let $I$ be an interval, $\tau_{0} \in I$, $F\in L^1_t(I,L^2_x)$ and  $(u_0,u_1)\in \HHH^{s_c}$. Let $u$ be the solution of

 \begin{equation}
 \left\{
 \begin{array}{l}
\partial_{tt}u-\triangle u=F,\quad (t,x)\in I\times \RR^3\\
   \vec{u}(\tau_0,x)=(u_0(x),u_1(x)).
 \end{array}
 \right.
\end{equation}
For $t\in I$, let $J_t=[a + |t-\tau_0|, b-|t-\tau_0|]$. Then (assuming that the right-hand side is finite),
\begin{multline}
 \label{Eqn:Linfty_bnd}
\forall t \in I,\quad \|u(t)\|_{L^{\infty}_{x}(J_t)}\\
\leq C_{a,b}\left[\left(\int_{a/2\leq |x|\leq 2b} |\nabla u_0|^2+|u_0|^2+|u_1|^2\,dx\right)^{1/2}+ \left|\int_{\tau_0}^t
\|F(s)\|_{L_{x}^{\infty}(J_s)}\,ds\right|\right]
\end{multline}
\end{lem}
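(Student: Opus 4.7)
The plan is to use the classical reduction of the radial $3$-dimensional wave equation to a $1$-dimensional half-line wave equation. First I would set $v(t,r) := r u(t,r)$, so that $v$ solves $\partial_{tt}v - \partial_{rr}v = rF$ on $r>0$ with $v(t,0)=0$. Since $r \geq a + |t-\tau_0| > |t-\tau_0|$ whenever $r \in J_t$, finite speed of propagation guarantees that the boundary at $r=0$ plays no role in computing $v(t,r)$, so d'Alembert's formula gives (for $t\geq \tau_0$, the other case being symmetric) an explicit representation of $v(t,r)$ as a half-sum of $v_0(r\pm(t-\tau_0))$, a line integral of $v_1$ over $[r-(t-\tau_0), r+(t-\tau_0)]$, and a double integral of $\sigma F(s,\sigma)$ over the backward characteristic triangle, where $v_0 := r u_0$ and $v_1 := r u_1$.

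Next I would exploit the key geometric observation that for $r \in J_t$ and $s$ between $\tau_0$ and $t$, both endpoints $r\pm|t-s|$ belong to $J_s$, and in particular $r\pm(t-\tau_0) \in [a,b]$. This observation immediately converts the source contribution in d'Alembert's formula into a quantity bounded by $b\,|t-\tau_0|\int_{\tau_0}^{t}\|F(s)\|_{L^\infty_x(J_s)}\,ds$; since $|t-\tau_0| \leq b-a$ whenever $J_t$ is nonempty, this is absorbable into $C_{a,b}$ and yields the second term of \eqref{Eqn:Linfty_bnd}.

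For the initial-data contributions, the line integral of $v_1$ is controlled by $\sqrt{2(t-\tau_0)}\,\|v_1\|_{L^2([a,b])}$ via Cauchy--Schwarz, and $\|v_1\|_{L^2([a,b])}^2 = \int_a^b \sigma^2 u_1^2 \, d\sigma$ is obviously bounded by the $3$-dimensional $L^2$ norm of $u_1$ on the annulus. For $v_0$, I would use one-dimensional Sobolev embedding $H^1\hookrightarrow L^\infty$ on the interval $[a/2, 2b]$: writing $|v_0(\sigma)| \leq |v_0(s_0)| + \sqrt{2b}\,\|v_0'\|_{L^2([a/2,2b])}$ for some auxiliary $s_0\in [a/2, 3a/4]$ and then averaging over $s_0$, one controls $|v_0(\sigma)|$ uniformly on $[a,b]$ by $\|v_0\|_{L^2([a/2,3a/4])} + \|v_0'\|_{L^2([a/2,2b])}$. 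Since $v_0'(\sigma) = u_0(\sigma) + \sigma u_0'(\sigma)$, all these one-dimensional norms of $v_0$ and $v_0'$ are in turn controlled, after accounting for the spherical volume element $\sigma^2\,d\sigma$ (which is comparable to a constant depending on $a,b$ on the annulus), by the $\HHH^1$-type norm of $u_0$ on $\{a/2\leq |x|\leq 2b\}$. Dividing the resulting bound on $v(t,r)$ by $r\geq a$ then recovers the claimed bound on $u$, with all constants absorbed into $C_{a,b}$.

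The only real difficulty is the bookkeeping of constants and the consistent handling of the spherical weight $\sigma^2$ when converting one-dimensional radial norms of $v_0, v_1$ into three-dimensional $\HHH^1$-type norms on the annulus; everything else is routine given the $v=ru$ reduction. The case $t<\tau_0$ is handled by time reversal, which leaves the estimate invariant.
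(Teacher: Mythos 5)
Your argument is correct, and it proves the lemma by a genuinely different route from the paper. You work with the explicit d'Alembert representation of $v=ru$ (valid here because every backward characteristic issued from a point of $J_t$ stays in the region $r>0$, indeed in $\bigcup_s J_s$), and you bound each of the three terms directly: the source term via the key inclusion $[r-|t-s|,\,r+|t-s|]\subset J_s$, the $v_1$ term by Cauchy--Schwarz, and the $v_0$ term by one-dimensional Sobolev embedding on $[a/2,2b]$. The paper instead extends $(u_0,u_1)$ restricted to the annulus to a global $\HHH^1$ pair, replaces $F$ by $\mathbf{1}_{J_t}(r)F$, identifies the two solutions on $J_t$ by finite speed of propagation, and concludes by the abstract energy inequality together with the radial Sobolev bound $|f(r)|\lesssim r^{-1/2}\|f\|_{\dot H^1}$; note that the finite-speed step there rests on exactly the same domain-of-dependence inclusion you verify explicitly. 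Your version is more elementary and self-contained (no extension lemma, no energy identity), at the cost of being tied to the explicit radial fundamental solution in dimension $3$; the paper's version is less computational and more robust. Two minor points of bookkeeping, neither of which is a gap: nonemptiness of $J_t$ actually forces $|t-\tau_0|\le (b-a)/2$ rather than $b-a$ (either bound suffices to absorb the factor into $C_{a,b}$), and the pointwise validity of the d'Alembert formula for data that are merely locally $H^1\times L^2$ on the annulus and $F\in L^1_tL^2_x$ should be justified by density, which is routine and consistent with the paper's own use of the representation $rw_{\lin}=f(t+r)-f(t-r)$ for $\HHH^{s_c}$ data.
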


\begin{proof}
Let
$$M=\left(\int_{a/2\leq |x|\leq 2b} |\nabla u_0|^2+|u_0|^2+|u_1|^2\,dx\right)^{1/2}+\int_I \| F(s) \|_{L_{x}^{\infty}(J_s)}\,ds<\infty.$$
It is sufficient to prove
\begin{equation}
 \label{Eqn:Linfty_bnd_bis}
 \forall t\in I,\quad \|u(t)\|_{L_{x}^{\infty}(J_t)}\leq C_{a,b}M.
\end{equation}
%(Inequality (\ref{Eqn:Linfty_bnd} follows replacing $I$ by $I\cap [\tau_0,t]$.)

In all the proof, we will denote by $C$ a large constant, that may depend on $a$ and $b$ and change from line to line.

By a classical extension lemma, there exists $(\tu_0,\tu_1)\in \HHH^1$ such that
$$ (\tu_0,\tu_1)(r)=(u_0,u_1)(r),\; a\leq r\leq b,\quad \text{and }\left\|(\tu_0,\tu_1)\right\|_{\HHH^1}\leq C M.$$
Let $\tu$ be the solution of
\begin{equation}
 \label{Eqn:eq_tu}
 \left\{
 \begin{array}{l}
\partial_{tt}\tu-\triangle \tu=\mathbf{1}_{J_t}(r)F,\quad (t,x)\in I\times \RR^3\\
   \tu(\tau_0,x)=\tu_0(x),\quad \partial_t \tu(\tau_0,x)=\tu_1(x),
 \end{array}
 \right.
\end{equation}
where $\mathbf{1}_{J_t}$ is the characteristic function of $J_t$.
Note that by the assumptions of the Lemma, the right-hand side of the first equation of (\ref{Eqn:eq_tu}) is in $L_{t}^1 L_{x}^2 (I)$, so that
$$ \BA{\tu} \in C^{0}(I,\HHH^1).$$
By finite speed of propagation, $\tu(t,r)=u(t,r)$ if $t\in I$, $r\in J_t$. It then suffices to prove (\ref{Eqn:Linfty_bnd_bis}) for $\tu$ instead of $u$.
By energy estimates:
$$ \forall t\in I,\quad \|\tu(t)\|_{\dot{H}^1}\leq \sqrt{\|\tu_0\|_{\dot{H}^1}^2+\|\tu_1\|_{L^2}^2}+\int_I \left\|\mathbf{1}_{J_s}F\right\|_{L^2}\,ds,$$
which, in view of the radial Sobolev inequality:
$$ |f(r)|\lesssim \frac{1}{r^{1/2}}\|f\|_{\dot{H}^1},$$
and noting that $J_t$ has finite length smaller than $b-a$, concludes the proof of the lemma.
\end{proof}
We next prove Proposition \ref{Prop:ConstructionV}.
\begin{proof}
Let $\{t_n\}_n\to 1$, $t_n<1$, such that $\vec{w}(t_n)$ is bounded in $\HHH^{s_c}$. Extracting
a subsequence, we can assume
 \begin{equation}
  \label{Eqn:weak_CV}
  \vec{w}(t_n)\xrightharpoonup[n\to\infty]{} (v_0,v_1)\in \HHH^{s_c},
 \end{equation}
 weakly in $\HHH^{s_c}$. Let $v_+$ be the solution of (\ref{Eqn:WaveSup}) with initial data $(v_0,v_1)$ at $t=1$. Let $T$ such that
 $ T_-(v_+) < T <1$, and $(a',b')\in \RR^2$ such that
 \begin{equation}
 \label{Eqn:a'b'}
 1-T<a'<b'.
 \end{equation}
 We will prove
 \begin{equation}
  \label{Eqn:v+w}
  w(T,r)=v_+(T,r),\quad a'<r<b',
 \end{equation}
which will yield the conclusion of the proposition since $a'$ and $b'$ can be chosen to be arbitrary numbers satisfying (\ref{Eqn:a'b'}). We have
\begin{equation*}
(\partial_{tt}-\triangle)(v_+-w)=\iota \left(|v_+|^{p-1}v_+ -|w|^{p-1}w\right).
\end{equation*}
We fix a large $n$ (so that $t_n>T$), and use Lemma \ref{Lem:Linfty} with $\tau_0=t_n$, $u=v_+-w$, $a=a'-(t_n-T)$, $b=b'+t_n-T$ (so that $0<a<b$) and $I=[T,t_n]$. Note that
for these choices of $\tau_0$, $a$ and $b$,
$$J_t=[a'-(t-T),b'+t-T],\quad t\in [T,t_n].$$
Let
$$B(t)=\|v_+(t)-w(t)\|_{L^{\infty}(J_t)},\quad T\leq t\leq t_n.$$
By Lemma \ref{Lem:Linfty} there exists a constant $C_{0}:=C_{0}(a',b')$ such that
\begin{multline}
\label{Eqn:bound_B}
T\leq t\leq t_n\Longrightarrow \\
B(t)\leq C_0 \left( \varepsilon_n + \int_{t}^{t_n} B(s)\left(\|v_+(s)\|^{p-1}_{L^{\infty}(J_s)}+B^{p-1}(s)\right) \, ds \right),
\end{multline}
where $\varepsilon_n$ is defined as the following integral:
\begin{equation*}
\int_{\frac{a'-(t_n-T)}{2}\leq |x|\leq 2(b'+t_n-T)} \left(\left|\nabla v_+-\nabla w\right|^2+\left|\partial_t v_+-\partial_t
w\right|^2+\left|v_+-w\right|^2\right)(t_n,x)\,dx.
 \end{equation*}
 Since $t_n-T\leq 1-T$, we can bound this integral by the integral on the set of $x$ such that $\frac{a'-(1-T)}{2}\leq |x|\leq 2(b'+(1-T))$. By
 (\ref{Eqn:weak_CV}), and the Rellich-Kondrachov theorem,
 $$\lim_{n\to\infty}\varepsilon_n=0.$$

 Let $\epsilon>0$. Let $n$ such that $C_0\varepsilon_n<\epsilon$. We will prove by bootstrap that if $\epsilon$ is small enough,
 \begin{equation}
 \label{Eqn:bootstrap}
 \forall t\in [T,t_n], \quad B(t)< C_2\epsilon,
 \end{equation}
 where $C_2=e^{2C_0C_1(1-T)}+1$ with $C_0$ defined in (\ref{Eqn:bound_B}) and
 $$ C_1=\max_{T<s<1} \|v(s)\|^{p-1}_{L^{\infty}(J_s)},$$
 which is finite, by the radial Sobolev embedding and since $v \in C^0\left([T,1],\dot{H}^{s_c} \right)$. \\
 Note that $B(t_n)\leq C_0\eps_n<\epsilon$ by (\ref{Eqn:bound_B}). To prove (\ref{Eqn:bootstrap}), we argue by contradiction and assume that there exists $\tau\in [T,t_n]$ such that
 \begin{equation}
 \label{Eqn:bootstrap2}
 B(\tau)=C_2 \epsilon \text{ and }\forall t\in (\tau,t_n], \quad B(t)< C_2 \epsilon.
 \end{equation}
 By (\ref{Eqn:bound_B}),
 $$ \forall t\in [\tau,t_n], \quad B(t) \leq \epsilon + C_0 \int_{t}^{t_n} B(s)\left(C_1+C_2^{p-1}\epsilon^{p-1}\right)\,ds.$$
 Thus by Gronwall's Lemma,
 $$ \forall t\in [\tau,t_n], \quad B(t)\leq \epsilon e^{C_0 \left(C_1 + C_2^{p-1}\epsilon^{p-1}\right)(t_n-t)}\leq \epsilon
 e^{C_0 \left(C_1 +C_2^{p-1}\epsilon^{p-1}\right)(1-T)}.$$
 If $\epsilon$ is so small that $C_2^{p-1}\epsilon^{p-1}\leq C_1$, we obtain:
 $$B(\tau) \leq \epsilon e^{2C_0 C_1 (1-T)}<\epsilon C_2,$$
 contradicting (\ref{Eqn:bootstrap2}) and concluding the proof of (\ref{Eqn:bootstrap}).

 By (\ref{Eqn:bootstrap}),
 $$ \forall \epsilon>0,\quad B(T)<C_2 \epsilon.$$
 Thus $B(T)=0$, which concludes the proof of the proposition.
\end{proof}

\subsection{Nonexistence of profile with exterior generalized energy}
In this subsection we prove Proposition \ref{Prop:ProfileDecompSpecFinite}.

From (\ref{Eqn:ProfDecompPropProf}) we know that there exists $(w_{0,n},w_{1,n})$ such that (\ref{Eqn:PropProf0}) and (\ref{Eqn:PropProf2}) hold, with
$\BA{w_{L}}(t_n)$ replaced with $\BA{v_+}(1)$. Let $w_n$ be the solution of (\ref{Eqn:WaveSup}) with initial data $(w_{0,n},w_{1,n})$ at $t=0$. Let $\theta_0>0$ be small, so that
$$T_-(v_+)<1-\theta_0<1+\theta_0<T_+(v_+).$$
By Proposition \ref{Prop:Perturb}, we see that $w_n$ is defined on $[1-\theta_0,1+\theta_0]$ for large $n$ and
\begin{equation}
 \label{Eqn:decompo_wn}
\forall t\in [-\theta_0,+\theta_0],\quad \BA{w_n}(t,x)= \BA{v_+}(1+t,x) + \sum_{j=1}^J  \BA{W_{n}^j}(t,x) + \BA{\eps_n^J}(t,x) + \BA{r_n^J}(t,x),
\end{equation}
where
$$\lim_{J\to\infty}\limsup_{n\to\infty} \sup_{-\theta_0<t<\theta_0} \|\BA{r_n^J}(t)\|_{\HHH^{s_c}}=0.$$
By finite speed of propagation, for all $t\in [-\theta_0,\theta_0]$ such that $0<t_n+t<1$, we have:
\begin{equation}
\label{Eqn:ext_equal1}
|x|>\rho_n+|t|\Longrightarrow \BA{w_n}(t,x)=\vec{w}(t+t_n,x).
\end{equation}
By Proposition \ref{Prop:ConstructionV}, we deduce, in the same range for $t$,
\begin{equation*}
|x|>\max(1-(t+t_n),\rho_n+|t|)\Longrightarrow \BA{w_n}(t,x)=\vec{v}_+(t+t_n,x).
\end{equation*}
Using the continuity of the $\HHH^{s_c}$-valued maps $t \mapsto \BA{w_n}(t)$ and $t\mapsto \BA{v_+}(t)$, we get that the preceding equality holds also for $t=1-t_n$, i.e.
\begin{equation}
 \label{Eqn:ext_equal2}
(1-t_n\leq \theta_0\text{ and } |x|>\rho_n+1-t_n)\Longrightarrow \BA{w_n}(1-t_n,x)= \BA{v_+}(1,x).
\end{equation}

First assume that (\ref{Eqn:PropProf1finite}) holds for all $t\geq 0$. By (\ref{Eqn:decompo_wn}) and Proposition \ref{Prop:Orth}, the following holds for large $n$:
$$\int_{|x|\geq \rho_n+1-t_n}\left|r^{1-\frac{2}{m}}\partial_{r,t}\big(w_n(1-t_n,x)-v_+(2-t_n,x)\big)\right|^m\,dx\geq \frac{\epsilon}{2}.$$
Combining with (\ref{Eqn:ext_equal2}), we obtain
$$\int_{|x|\geq \rho_n+1-t_n} \left|r^{1-\frac{2}{m}}\partial_{r,t}\big(v_+(1,x)-v_+(2-t_n,x)\big)\right|^m\,dx\geq \frac{\epsilon}{2}.$$
Letting $n\to\infty$, we see by Lemma \ref{Lem:HardyIneq} that the left-hand side of the preceding inequality goes to zero, a contradiction.

Next assume that (\ref{Eqn:PropProf1finite}) holds for all $t\leq 0$. By (\ref{Eqn:decompo_wn}) and Proposition \ref{Prop:Orth}, the following holds for large $n$:
$$ \int_{|x|\geq \rho_n+t_n+\theta_0-1}\left|r^{1-\frac{2}{m}}\partial_{r,t}\big(w_n(1-\theta_0-t_n,x)-v_+(2-\theta_0-t_n,x)\big)\right|^m\,dx\geq \frac{\epsilon}{2}.$$
Hence, by (\ref{Eqn:ext_equal1}),
$$ \int_{|x|\geq \rho_n+t_n+\theta_0-1}\left|r^{1-\frac{2}{m}}\partial_{r,t}\big(w(1-\theta_0,x)-v_+(2-\theta_0-t_n,x)\big)\right|^m\,dx\geq \frac{\epsilon}{2}.$$
Since $v_+(1-\theta_0,x)=w(1-\theta_0,x)$ for $|x|>\theta_0$, we deduce
\begin{multline*}
\int_{|x|\geq \theta_0}\left|r^{1-\frac{2}{m}}\partial_{r,t}\big(v_+(1-\theta_0,x)-v_+(2-\theta_0-t_n,x)\big)\right|^m\,dx\\
+\int_{\rho_n+t_n+\theta_0-1\leq |x|\leq \theta_0}\left|r^{1-\frac{2}{m}}\partial_{r,t}\big(w(1-\theta_0,x)-v_+(2-\theta_0-t_n,x)\big)\right|^m\,dx\geq \frac{\epsilon}{2},
\end{multline*}
with the convention that the second integral on the left-hand side of the preceding line is zero if $\rho_n+t_n-1\geq 0$. Letting $n\to\infty$, we get again a contradiction.

% Using (\ref{Eqn:ext_equal1} with $t=1-\theta_0-t_n$, we see that
% $$|x|>\max(\theta_0,\rho_n-1+\theta_0+t_n)\Longrightarrow \vec{w}_n(1-\theta_0-t_n,x)=\vec{v}_+(1-\theta_0,x).$$

\appendix

\section{Proof of the localization property}
\label{App:ProofStrongHuyg}

In this appendix we prove Proposition \ref{Prop:Huyg}.

Assume that $l \in \mathbb{R}$. Then, by continuity of the linear flow in $\HHH^{s_c} $
and Lemma \ref{Lem:HardyIneq}

\begin{align*}
\lim_{n\to\infty}\left\|w_{\lin,n}(t_{n},x) -\frac{1}{\lambda_{n}^{\frac{2}{p-1}}} w_{\lin} \left(l, \frac{\cdot}{ \lambda_{n}} \right)\right\|_{\dot{H}^{s_c}}&=0\\
\lim_{n\to\infty}\left\|\partial_{t} w_{\lin,n} (t_{n},x ) -\frac{1}{\lambda_{n}^{\frac{2}{p-1} +1 }}  \partial_{t} w_{\lin} \left(l, \frac{\cdot}{
\lambda_{n}} \right) \right\|_{\dot{H}^{s_c-1}}&=0.
\end{align*}
This easily leads to (\ref{Eqn:Huyg2}), taking also into account Lemma \ref{Lem:HardyIneq}.

Next assume that $l \in \pm \infty$. Let $\epsilon > 0$. Let $\bar{\chi}$ be a smooth function such that $\bar{\chi}(x) =1$ if $|x| \leq \frac{1}{2}$ and
$\bar{\chi}(x) = 0$ if $|x| \geq 1$. Let $R > 0$ and
$(w_{0}^{R},w_{1}^{R}):= \left( \bar{\chi} \left( \frac{x}{R} \right) w_{0}, \bar{\chi} \left( \frac{x}{R} \right) w_{1} \right) $. Since
\begin{equation*}
\lim_{R \rightarrow \infty} \big\| (w_{0}^{R} - w_{0},w_{1}^{R} - w_{1}) \big\|_{\HHH^{s_c}}=0,
\end{equation*}
one can choose $\tR$ such that for $R \geq \tR$
\begin{equation*}
\big\| (w_{0}^{R} - w_{0},w_{1}^{R} - w_{1}) \big\|_{\HHH^{s_c}}\ll \epsilon
\end{equation*}
Let $w_{\lin,n}^{R}$ be the solution of the linear wave equation with data
\begin{align*}
\left( w_{0,n}^{R}(x), w_{1,n}^{R} (x) \right) :=
\left( \frac{1}{\lambda_{n}^{\frac{2}{p-1}}} w_{0}^{R} \left(  \frac{x}{\lambda_{n}} \right), \frac{1}{\lambda_{n}^{\frac{2}{p-1} + 1}} w_{1}^{R} \left(
\frac{x}{\lambda_{n}} \right)
\right) \cdot
\end{align*}
Lemma \ref{Lem:HardyIneq} yields
\begin{align*}
\left\| r^{1-\frac{2}{m}} ( \partial_{r,t}  w_{\lin,n}(t_{n})  - \partial_{r,t} w_{\lin,n}^{R}(t_{n}) ) \right\|_{L^{m}}
&\lesssim \| \BA{w_{\lin,n}}(t_{n}) - \BA{w_{\lin,n}^{R}}(t_{n}) \|_{\HHH^{s_c}} \\
& \lesssim \| (w_{0,n}^{R} - w_{0,n}, w_{1,n}^{R} - w_{1,n})  \|_{\HHH^{s_c}}
 \\&\ll \epsilon
\end{align*}
Hence
\begin{multline*}
\left( \int_{ | |t_{n}| - |x|| \geq R \lambda_{n} }  \left|r^{1-\frac{2}{m}} \partial_{r} w_{\lin,n}(t_{n})\right|^{m} + \left|r^{1-\frac{2}{m}} \partial_{t} w_{\lin,n}(t_{n})\right|^{m} \, dx
 \right)^{\frac{1}{m}}  \\
\leq   \left( \int_{ | |t_{n}| - |x|| \geq R \lambda_{n}}  \left|r^{1-\frac{2}{m}} \partial_{r} w_{\lin,n}^R(t_{n})\right|^{m}  + \left|r^{1-\frac{2}{m}} \partial_{t} w_{\lin,n}^R(t_{n})\right|^{m} \,
dx \right)^{\frac{1}{m}} + O(\epsilon)  \\
\lesssim \epsilon,
\end{multline*}
using the strong Huygens principle at the last line, i.e  $\BA{w_{\lin,n}^{R}}(t_{n})$ is supported
in the ring $ |t_{n}| - R \lambda_{n} \leq |x| \leq |t_{n}| + R \lambda_{n}$.

\section{Useful Estimates}
\label{App:useful}
\begin{res}
Let $ 0 \leq  s  <  \frac{1}{2} $. Let $R > 0$. Let $f \in \dot{H}^{s}$. Then
\begin{equation}
\| \mathbf{1}_{B_R} f \|_{\dot{H}^{s}} \lesssim \| f \|_{\dot{H}^{s}},
\nonumber
\end{equation}
%where the implicit constant is independent of $R$.
\label{Res:BoundCharac}
\end{res}
\begin{proof}
Since the $\dot{H}^{s}$ norm is invariant under the following scaling transform
\begin{equation*}
f \rightarrow \frac{1}{\lambda^{\frac 32-s}} f \left( \frac{\cdot}{\lambda} \right),
\end{equation*}
we may assume without loss of generality that $R=1$.

Recall the following estimate (see Theorem 2, p 151, of \cite{RunstSick}, and the references given in this book,
i.e \cite{Gu1,Gu2,MaSh})
\begin{align}
\| \mathbf{1}_{B_1} f  \|_{H^{s}}  \lesssim  \| f \|_{H^{s}} \cdot
\label{Eqn:BoundCharacInhom}
\end{align}
Let $\bar{\chi}$ be a smooth compactly supported function on $\mathbb{R}^{3}$ such that $\bar{\chi}(x)=1$ on $B_1$. Then,
applying (\ref{Eqn:BoundCharacInhom}) to $\bar{\chi} f$, we have
\begin{equation}
\begin{array}{ll}
\| \mathbf{1}_{B_1} f \|_{\dot{H}^{s}} & = \| \mathbf{1}_{B_1} \bar{\chi} f \|_{\dot{H}^{s}} \\
& \lesssim \| \bar{\chi}  f \|_{H^{s}} \\
& \lesssim  \| f \|_{\dot{H}^{s}},
\end{array}
\nonumber
\end{equation}
the last inequality coming from $ \| \bar{\chi} f \|_{\dot{H}^{s}} \lesssim \| f \|_{\dot{H}^{s}} $ and the following estimate
\begin{equation}
\| \bar{\chi}  f \|_{L^{2}} \lesssim \| \bar{\chi} \|_{L^{\frac 3s} } \| f \|_{L^{\frac{6}{3-2s}}}
\lesssim  \| f \|_{\dot{H}^{s}}\cdot
\nonumber
\end{equation}

\end{proof}

\begin{res}
Let $ 1 \leq s < \frac{3}{2}$. Let $R > 0$. Let $f \in \dot{H}^{s}$. Then
\begin{equation*}
\| \TTT_R (f) \|_{\dot{H}^{s}} \lesssim  \| f \|_{\dot{H}^{s}},
\end{equation*}
where again the implicit constant is independent of $R>0$.
\label{Res:BoundOp}
\end{res}
\begin{proof}
One may write $\TTT_{R} (f) = \mathbf{1}_{B_R} f(R) + f  \mathbf{1}_{\mathbb{R}^{3} \setminus B_R} $. By scaling (see proof
of Result \ref{Res:BoundCharac}), we may assume that $R=1$.

Recall that the Fourier transform of $f$ is given by
\begin{align*}
\hat{f}(\xi) = \frac{1}{|\xi|} \int_{0}^{\infty} \sin{(r |\xi|)} r f(r) \, dr
\end{align*}
Therefore, choosing
\begin{equation}
\tilde{f}(r):=
\left\{
\begin{array}{l}
r f(r), \,  r \geq 0 \\
r f(-r), \, r \leq 0,
\end{array}
\right.
\nonumber
\end{equation}
we have $ \| f \|_{\dot{H}^{s}} = \| \tilde{f} \|_{\dot{H}^{s}(\mathbb{R})} $. Hence we are reduced to show
that
\begin{equation}
\bigg\| \frac{d \widetilde{\TTT_1(f)}}{dr}  \bigg\|_{\dot{H}^{s-1}(\mathbb{R})} \lesssim \left\| \frac{d \tilde{f}}{dr}   \right\|_{\dot{H}^{s-1}(\mathbb{R})}\cdot
\nonumber
\end{equation}
We have
$$
\frac{d \widetilde{\TTT_1(f)}}{dr}
=  f(1) \mathbf{1}_{(-1,1)} + \mathbf{1}_{\mathbb{R}/ (-1,1)} \frac{d \tilde{f}}{dr}\cdot
$$
Since (see again \cite{RunstSick})
\begin{equation*}
 \bigg\|\mathbf{1}_{(-1,+1)} \frac{d\tilde{f}}{dr}\bigg\|_{\dot{H}^{s-1}(\mathbb{R})}\lesssim \left\|\tilde{f}\right\|_{\dot{H}^{s-1}(\RR)},
\end{equation*}
and taking into account that $\mathbf{1}_{(-1,1)} \in \dot{H}^{s-1}(\mathbb{R})$, we are reduced to show that
$ |f(1)|  \lesssim \left\| \frac{d \tilde{f}}{dr} \right\|_{\dot{H}^{s-1}(\mathbb{R})} $. But this follows from
\begin{align*}
|f(1)|  \lesssim \int_{0}^{1} \left| \frac{d(r f)}{dr} \right| \, ds \lesssim \left\| \frac{d(rf)}{dr}  \right\|_{L^{\frac{2}{3-2s}} ((0,1))}
\lesssim \left\| \frac{d \tilde{f}}{dr}   \right\|_{\dot{H}^{s-1}(\mathbb{R})} \cdot
\end{align*}
\end{proof}
\begin{res}
 Let $1 \leq s<3/2$ and $(f,g)\in \HHH^s$. Let $\sigma>0$. Then
 \begin{gather*}
 \lim_{R\to \sigma} \left\|\left((\TTT_R-\TTT_{\sigma})f,(\mathbf{1}_{B_R}-\mathbf{1}_{B_{\sigma}})g\right)\right\|_{\HHH^{s}}=0\\
 \lim_{R\to\infty}\left\|\left(\TTT_Rf,\mathbf{1}_{\RR^3\setminus B_R}g\right)\right\|_{\HHH^s}=0
 \end{gather*}
 \label{Res:Cont_R}
\end{res}
\begin{proof}
We prove the first estimate, the proof of the second one is close and left to the reader.

 By Results \ref{Res:BoundCharac} and \ref{Res:BoundOp} and a density argument, it is sufficient to prove this estimate for $(f,g)\in \left(C^{\infty}_0\right)^2$. For such $(f,g)$ we have obviously:
 $$ \lim_{R\to \sigma} \left\|\left((\TTT_R-\TTT_{\sigma})f,(\mathbf{1}_{B_R}-\mathbf{1}_{B_{\sigma}})g\right)\right\|_{\HHH^{1}}=0.$$
 And the results follows since for $s<s'<3/2$,
 $$\left\|\left((\TTT_R-\TTT_{\sigma})f,(\mathbf{1}_{B_R}-\mathbf{1}_{B_{\sigma}})g\right)\right\|_{\HHH^{s'}}$$
 is bounded independently of $R$ by Results \ref{Res:BoundCharac} and \ref{Res:BoundOp}.
\end{proof}

\end{document}